\newtheorem{fact}[theorem]{Fact}
\newcommand{\A}{\mathcal A}
\newcommand{\B}{\mathcal B}
\newcommand{\C}{\mathcal C}
\newcommand{\F}{\mathcal F}
\newcommand{\G}{\mathcal G}
\renewcommand{\L}{\mathcal L}
\newcommand{\M}{\mathcal M}
\renewcommand{\P}{\mathcal P}
\renewcommand{\S}{\mathcal S}
\newcommand{\T}{\mathcal T}
\newcommand{\N}{\mathbb N}
\newcommand{\Q}{\mathbb Q}
\newcommand{\Z}{\mathbb Z}
\newcommand{\lqt}{\textquotedblleft}
\newcommand{\rqt}{\textquotedblright~}
\newcommand{\la}{\langle}
\newcommand{\ra}{\rangle}
\begin{document}

\title{Three Lectures on Automatic Structures}

\author{Bakhadyr Khoussainov\inst{1} \and Mia Minnes \inst{2}}
\authorrunning{Khoussainov and Minnes} 
\institute{Department of Computer Science\\ University of Auckland\\Auckland, New Zealand\\ \email{bmk@cs.auckland.ac.nz}
\and
Mathematics Department\\ Cornell University\\Ithaca, New York 14853\\ \email{minnes@math.cornell.edu}}

\maketitle 

{\bf \large Preface}

\bigskip

This paper grew out of three tutorial lectures on automatic structures 
given at the Logic Colloquium 2007 in Wroc\l aw (Poland).
The paper will follow the outline of the tutorial lectures, supplementing some material along the way.  We discuss variants of automatic structures related to several models 
of computation: word automata, tree automata, B\"uchi automata, and Rabin automata.
Word automata process finite strings, tree automata process finite labeled trees,
B\"uchi automata process infinite strings, and Rabin automata process infinite binary
labeled trees. Finite automata are the most commonly known in the general
computer science community. Such an automaton reads finite input strings from
left to right, making state transitions along the way. Depending on its last state after processing a given string, the automaton either accepts or
rejects the input string. Automatic structures are mathematical objects which can be represented by (word, tree, B\"uchi, or Rabin) automata.
The study of properties of automatic structures is a relatively new and very active area of research. 

\smallskip

We begin with some motivation and history for studying automatic structures. 
We introduce definitions of  automatic structures,  present examples, and 
discuss decidability and definability theorems.  
Next, we concentrate on finding natural isomorphism
invariants
for classes of automatic structures. These classes include well-founded
partial
orders, Boolean algebras, linear orders, trees, and finitely generated
groups. 
Finally, we address the issue of complexity for automatic structures. In order to
measure
complexity of automatic structures we involve topology (via the Borel
hierarchy),
model theory (Scott ranks), computability theory ($\Sigma_1^1$-completeness), and computational complexity (the class P).

\smallskip

This paper consists of three sections based on the tutorial lectures.
The first lecture provides motivating  questions and historical context,
formal definitions of the different types of automata involved, examples of automatic structures, and decidability and definability  results about automatic structures.
The second lecture considers  tree and Rabin automatic structures, and outlines techniques 
for proving non-automaticity.  We then turn our attention to 
the study of algorithmic and structural properties of automatic trees,  Boolean algebras, 
and finitely generated groups.  The final lecture  presents a framework for reducing certain questions about computable structures to questions about automatic structures.  These reductions have been used to show that, in some cases, the sharp bound on complexity of automatic structures is as high as the bounds for computable structures.  We conclude by looking at Borel structures from descriptive set theory and connecting them to B\"uchi and Rabin automatic structures.

\section{
{\em Basics}}\label{sec:Background}

\subsection{Motivating questions}\label{subsec:Questions}

The study of structures has played a central role in the development of logic.  
In the course of this study, several themes have been pursued. We will see how questions related to each of these themes is addressed in the particular case of automatic structures.

\smallskip

{\it The isomorphism problem}. One of the major tasks in the study of
structures is concerned with the classification of isomorphism types.  The isomorphism problem asks: \lqt given two structures, are they isomorphic?\rqt
Ideally, we would like to define invariants
which are simpler than their associated structures and yet describe the
structures up to isomorphism. An example of such an invariant is the dimension 
of vector spaces.  For algebraically closed fields, such invariants are the 
characteristics of the fields along with their transendence degrees.  Despite this, there are well-studied classes of structures (such as abelian groups, Boolean algebras, linearly ordered sets, algebras, and graphs) for which it is
impossible to give simple isomorphism invariants.  Thus, in general, 
there is no solution to the isomorphism problem.

\smallskip

{\it The elementary equivalence problem}. Since there is no general solution to the isomorphism problem, we 
may approximate it in various ways.  One natural approximation comes from logic 
in the form of  elementary equivalence.  
While elementary equivalence may be defined with respect to any logic, we refer to 
elementary equivalence with respect to either the first-order logic or the monadic second-order (MSO) logic. 
There are several good examples of positive results in this setting: elementarily equivalent (in the first-order logic)  Boolean algebras can be characterized via Ershov-Tarski
invariants \cite{Ers64}, \cite{Tar49}; 
elementarily equivalent abelian groups (in the first-order logic) can be characterized via Shmelev invariants \cite{Shmelev}.   Moreover, there is a body of work devoted to understanding when elementary equivalence (or any of its weaker versions) imply isomorphism. An example here is Pop's conjecture: if two finitely generated 
fields are elementarily equivalent then they are isomorphic. See \cite{scanlon} for a solution of this problem.

\smallskip

{\it The model checking problem}.  The model checking problem is the name given by 
the computer science community to a particular instance of the elementary equivalence
problem.  The problem asks, for a given sentence $\varphi$ and a given structure, whether the structure satisfies $\varphi$.  For example, the sentence $\varphi$
can be the axiomatization of groups or can state that a graph is connected (in which case a stronger logic is used
than the first-order logic). 

\smallskip

{\it Deciding the theories of structures}. This is a traditional topic in logic
that seeks algorithms to decide the full first-order theory (or MSO theory)  of a given structure. Clearly, the problem can be thought of as a uniform version of the model checking problem. Often, to prove that a structure has a decidable first-order theory, 
one translates formulas into an equivalent form that has a small number of alternations of
quantifiers (or no quantifiers at all), and then shows that the resulting simpler formulas are easy to decide. This approach is intimately related to the next theme in our list.

\smallskip

{\it Characterization of definable relations}. Here, we would like to
understand properties of definable relations in a given structure.
A classical example is real closed fields, where
a relation is definable in the first-order logic if and only if it is a Boolean
combination of polynomial equations and inequations \cite{Tar48}.
Usually, but not always, a good understanding of definable relations yields a quantifier 
elimination procedure and shows that the theory of the structure is decidable.

\smallskip

It is apparent that all of the above problems are interrelated.  We will discuss these problems and their refinements with respect to classes of automatic structures.

\subsection{Background}\label{subsec:history}

The theory of automatic structures can be motivated from the standpoint of 
 computable structures.
The theory of computable structures has a
rich history going back to van der Waerden who informally introduced the
concept of an explicitly given field in the 1930s. This concept was formalized by Fr\"olich
and Shepherdson \cite{FS56}, and later extended by Rabin \cite{Rab60} in the 1950s. In the 1960s, Mal'cev initiated a
systematic study of the theory (see, for example \cite{Mal61}).  Later, computability theoretic techniques  were introduced in order to study the
effective content of algebraic and model theoretic results and
constructions. See \cite{H1}, \cite{H2} for the current state and historical background of the area.

\smallskip

A computable structure is one whose domain and basic relations are all computable by Turing machines.  As a point of comparison with finite automata, we note that 
the Turing machine represents unbounded resources and unbounded read-passes over the data.  The themes outlined in Subsection \ref{subsec:Questions} have been recast
with computability considerations in mind and have given rise to the development  of the theory of computable structures.  For example, one may ask whether a given structure 
is computable.  This is clearly a question about the isomorphism type of the structures
since it asks whether the isomorphism type of the structure contains a computable structure.  So, the study of the computable isomorphism types of structures is a  refinement of the isomorphism problem for structures. 
Another major theme  is devoted to understanding the complexity of  natural problems like the isomorphism problem and the model checking problem. In this case, complexity is often measured in terms of Turing degrees.  We again refer the reader to \cite{H1}, \cite{H2} and the survey paper \cite{Hariz98}.

\smallskip

In the 1980s, as part of their feasible mathematics program, Nerode and Remmel \cite{NeRe90} suggested the study of polynomial-time structures.  A structure is said to be polynomial-time if its domain and relations can be recognized by Turing machines
that run in polynomial time. An important yet simple result (Cenzer and Remmel, \cite{CeRe91}) is
that every computable purely relational structure is computably
isomorphic to a polynomial-time structure. While this result is positive, it
implies that solving questions about the class polynomial-time structures 
is as hard as solving them for the class of computable structures. For instance, the problem of classifying the
isomorphism types of polynomial-time structures is as hard as that of classifying
the isomorphism types of computable structures. 

\smallskip

Since polynomial-time structures and computable structures yielded similar complexity results, greater restrictions on models of computations were imposed.
In 1995, Khoussainov and Nerode suggested bringing in models of computations that have
less computational power than polynomial-time Turing machines.  The hope was that if these weaker machines were used to represent the domain and basic relations, 
then perhaps isomorphism invariants could be more easily understood.   Specifically, they suggested the use of finite state machines (automata) as the basic computation model.
Indeed, the project has been successful as we discuss below.

\smallskip

The idea of using automata to study structures goes back to the
work of B\"uchi. B\"uchi   \cite{Buc60}, \cite{Buc62} used automata to prove the decidability of of a theory called $S1S$ (monadic second-order theory of the natural numbers with one successor).  Rabin \cite{Rab69} then used automata to prove that the monadic second-order theory of two
successor functions, $S2S$, is also decidable. 
In the realm of logic, these results have been used to prove decidability of first-order or MSO theories.   B\"uchi knew that 
 automata and Presburger arithmetic (the first-order theory of the 
natural numbers with addition) are closely connected.  He used automata to give a simple (non quantifier elimination) proof of the decidability of Presburger arithmetic.  
Capturing this notion, Hodgson \cite{H82} defined automata decidable theories in 1982. 
While he coined the definition of automatic structures, throughout the 1980s his work remained unnoticed. In 1995, Khoussainov and Nerode \cite{KhN95} rediscovered 
the concept of automatic structure and initiated the systematic study of the area.

\smallskip

Thurston observed that many finitely generated groups associated with
3-manifolds are finitely presented groups with the property that finite automata recognize equality of
words and the graphs of the operations of left multiplication by a generator;
these are the Thurston  automatic groups.  These groups yield rapid algorithms 
\cite{EpsThurs} for computing topological and algebraic 
properties of interest (such as the word problem). In this development, a group is regarded
as a unary algebra including one unary operation for each generator, the
operation being left multiplication of a word by that generator.  
Among these groups are Coxeter
groups, braid groups, Euclidean groups, and others. The literature is extensive, and we do not
discuss it here. 
We emphasize that Thurston automatic groups differ from automatic groups in our sense; in particular, the vocabulary of the associated structures is starkly different.   Thurston automatic groups are represented as structures whose relations are all unary operations (corresponding to left multiplication).  
On the other hand, an automatic group in our sense deals with the group operation itself (a binary relation) and hence
must satisfy the constraint that the graph of this operation be recognizable
by a finite automaton. The Thurston requirement for automaticity applies
only to finitely generated groups but includes a wider class of finitely
generated groups than what we call automatic groups. Unlike our definition,
Thurston's includes groups whose binary operation of multiplication is not
recognizable by a finite automaton.

\smallskip

In the computer science community, an interest in automatic structures comes from 
problems related to model checking. Model checking is motivated by the quest to prove 
correctness of computer programs. This subject\ allows infinite state automata 
as well as finite state automata. Consult \cite{abl}, \cite{akj}, \cite{bem} for current topics of interest. 
Examples of infinite state automata include concurrency protocols involving
arbitrary number of processes, programs manipulating some infinite sets of
data (such as the integers or reals), pushdown automata, counter automata,
timed automata, Petri-nets, and rewriting systems. Given such an automaton and
a specification (formula) in a formal system, the model checking problem asks
us to compute all the states of the system that satisfy the specification.
Since the state space is infinite, the process of checking the specification
may not terminate. For instance, the standard fixed point computations very often do not
terminate in finite time. Specialized methods are needed to cover even the
problems encountered in practice. Abstraction methods try to represent the
behavior of the system in finite form. Model
checking then reduces to checking a finite representation of states that
satisfy the specification.  Automatic structures arise naturally in infinite state model checking since
both the state space and the transitions of infinite state systems are
usually recognizable by finite automata.   In 2000, Blumensath and Gr\"adel \cite{BluGr00} studied definability problems for automatic structures and the computational complexity of model checking for automatic
structures. 

\smallskip

There has been a series of PhD theses in the area of automatic structures (e.g. 
Rubin \cite{RubPhD}, Blumensath \cite{BluPhD}, B\'ar\'any \cite{BarPhD},
Minnes \cite{MinPhD}, and Liu \cite{LiuPhD}).  A recently published paper of Khoussainov and Nerode \cite{knOQ} discusses open questions in
the study of automatic structures. There are also survey papers on some of the areas in the subject by Nies \cite{NiesSurvey} and  Rubin \cite{RubinSurvey}. 
This avenue of research remains fruitful and active.

\subsection{Automata recognizable languages and relations}\label{sec:AutDefs}

The central models of computation in the development of the theory of automatic
structures are all finite state machines. These include finite automata,
B\"uchi automata, tree automata, and Rabin
automata.  The main distinguishing feature of the different kinds of automata is the kind of input they read.

\begin{definition}
A {\bf B\"uchi automaton} $\M$ is a tuple $(S, \iota, \Delta, F)$, where $S$ is a
finite set of states, $\iota$ is the initial state, $\Delta\subseteq S\times
\Sigma \times S$ (with $\Sigma$ a finite alphabet) is the transition table, and $F\subseteq S$ is
the set of accepting states.
\end{definition}
A B\"uchi automaton can be presented as a finite directed graph with labelled edges. The
vertices of
the graph are the states of the automaton (with designated initial
state and accepting states).  An edge labeled with $\sigma$
connects a state $q$ to a state $q'$ if and only if the triple $(q,\sigma,q')$
is
in $\Delta$. The inputs of a B\"uchi automaton are {\em infinite strings} over the alphabet $\Sigma$.  Let $\alpha=\sigma_0 \sigma_1 \ldots$ be such an infinite
string. The string labels an infinite path through the graph in a natural way.
Such a path is called a run of the automaton on $\alpha$. Formally, a run is
an
infinite sequence $q_0, q_1, \ldots,$ of states such that $q_0$ is the
initial state and $(q_i, \sigma_i, q_{i+1})\in \Delta$ for all $i\in\omega$.
The run is accepting if some accepting state appears in the run
infinitely often. Note that the automaton may have more than one
run 
on a single input $\alpha$. We say that the 
B\"uchi automaton $\M$ accepts a string $\alpha$ if $\M$ has an accepting run on $\alpha$. Thus, acceptance is an existential condition.

\begin{definition}
The set of all infinite words accepted by a B\"uchi automaton $\M$ is called the language of $\M$.  A collection of infinite words is called a {\bf B\"uchi recognizable language} if it is the language of some B\"uchi automaton.
\end{definition}

\begin{example}
The following two languages
over $\Sigma = \{0,1\}$ are B\"uchi recognizable:
\begin{itemize}
\item $\{\alpha \mid \alpha~\text{has
finitely many $1$s}\}$, 
\item $\{\alpha \mid \alpha~\text{has infinitely many $1$s and infinitely
many $0$s}\}$.
\end{itemize}
\end{example}

There are efficient algorithms to decide 
questions about B\"uchi recognizable languages.  A good reference on basic algorithms for B\"uchi automata is Thomas' survey paper \cite{Thomas}.  A central building block in these algorithms is the linear-time decidability of the emptiness problem for B\"uchi automata. The emptiness problem asks for an algorithm that, given a B\"uchi automaton, decides
if the automaton
accepts at least one string.
An equally important result in the development of the
theory of B\"uchi automata says that B\"uchi recognizable languages are closed under union, intersection, and complementation.

\begin{theorem}[B\"uchi; 1960]\label{thm:BucAlgs}\hfill
\begin{enumerate}
\item There is an algorithm that, given a B\"uchi automaton, decides (in
linear time in the size of the automaton) if there is some string accepted by the automaton.
\item The collection of all B\"uchi recognizable languages is closed under the operations of union, intersection, and complementation.
\end{enumerate} 
\end{theorem}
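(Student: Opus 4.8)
The plan is to prove the two parts separately, since they use quite different techniques.

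For part (1), the emptiness problem, I would reduce the existence of an accepting run to a purely graph-theoretic reachability condition on the directed graph of the automaton. The key observation is that a B\"uchi automaton $\M=(S,\iota,\Delta,F)$ accepts some infinite string if and only if there is an accepting state $f\in F$ that is (a) reachable from the initial state $\iota$, and (b) reachable from itself by a nonempty path. Indeed, if such an $f$ exists then we can concatenate the initial segment $\iota\to f$ with infinitely many copies of the cycle $f\to f$, producing a run that visits $f$ infinitely often; conversely, any accepting run visits some $f\in F$ infinitely often, and since $S$ is finite, between two visits to $f$ there is a finite loop witnessing (b), while the prefix witnesses (a). To check this condition in linear time, I would compute the reachable states from $\iota$ (one graph traversal), then for each reachable accepting state test whether it lies on a cycle. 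The cleanest way to organize this is via strongly connected components: $\M$ accepts some string iff there is a nontrivial strongly connected component (one containing an edge, possibly a self-loop) that is reachable from $\iota$ and contains an accepting state. Tarjan's algorithm computes the SCC decomposition in time linear in $|S|+|\Delta|$, after which the remaining checks are a single additional pass, giving the claimed linear bound.

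For part (2), closure under union and intersection is routine and I would dispatch it quickly; the real content is complementation. For union, given automata $\M_1,\M_2$, I would take the disjoint union of their state sets with a new initial state nondeterministically branching into $\iota_1$ and $\iota_2$, keeping $F_1\cup F_2$ as accepting states; an accepting run exists iff one of the component automata accepts. For intersection, the subtlety is that a naive product does not correctly track that \emph{both} machines visit their accepting states infinitely often. I would use the standard product construction augmented with a flag in $\{1,2\}$ that records which machine we are currently waiting to see an accepting state from: states are triples $(s_1,s_2,i)$, the flag flips from $1$ to $2$ when $\M_1$ hits $F_1$ and from $2$ to $1$ when $\M_2$ hits $F_2$, and the accepting states are those with $s_1\in F_1$ and flag $1$. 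Infinitely many flag-flips forces both $F_1$ and $F_2$ to be visited infinitely often, which is exactly the intersection condition.

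The hard part is complementation, and I would flag it as the main obstacle: unlike the finite-string case, one cannot simply determinize a B\"uchi automaton and swap accepting and non-accepting states, because nondeterministic B\"uchi automata are strictly more expressive than deterministic ones (the language ``finitely many $1$s'' from the Example is accepted by a nondeterministic B\"uchi automaton but by no deterministic one). I would therefore invoke B\"uchi's combinatorial approach via congruences on finite strings. The idea is to define, for a fixed automaton $\M$, an equivalence relation of finite index on $\Sigma^*$ recording for each pair of states the set of ``profiles'' of paths between them (which states are reachable, and whether an accepting state can be forced along the way). Finitely many such equivalence classes partition $\Sigma^\omega$ into finitely many sets of the form $U\cdot V^\omega$ with $U,V$ regular, and a Ramsey-type argument shows each such set is either contained in the language of $\M$ or disjoint from it. The complement is then a finite union of the ``disjoint'' pieces, each of which is itself B\"uchi recognizable, and closure under the finite union established above finishes the proof. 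Rather than reconstruct this delicate congruence-and-Ramsey argument in full, I would state the congruence, assert its finite index, and cite the combinatorial saturation lemma, since the details are standard and available in Thomas' survey \cite{Thomas}.
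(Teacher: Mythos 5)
Your proposal is correct and follows essentially the same route as the paper: the lasso characterization for emptiness (the paper runs a breadth-first search for the lasso where you use an SCC decomposition, both linear time), product-style constructions for union and intersection, and deferral of complementation to B\"uchi's Ramsey-type congruence argument as treated in Thomas' survey \cite{Thomas}. The only difference is one of detail, not of method: you spell out the flag-based product for intersection and the failure of determinization, which the paper compresses into the phrase \lqt standard product construction\rqt and a citation.
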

\begin{proof}
The first part of the theorem is easy:
the B\"uchi automaton accepts an infinite
string if and only if there is a path from the initial state to an
accepting state which then loops back to the accepting state. Thus, the emptiness algorithm executes a breadth-first search for such a \lqt lasso\rqt. 
Closure under union and intersection follows from a standard  product construction. It is considerably more difficult to prove closure under complementation. See \cite{Thomas} for a discussion of the issues related to the complementation problem for B\"uchi automata. \qed
\end{proof}

\smallskip

Theorem \ref{thm:BucAlgs} is  true effectively: given two automata we can construct
an automaton that recognizes all the strings accepted by either of the
automata (the union automaton), we can also construct an automaton that
 recognizes all the strings accepted by both automata (the intersection automaton).
 Likewise, there is an algorithm that given an automaton builds a new automaton (called the complement automaton) that accepts all the strings rejected by the original automaton.  We emphasize that complementation constructions have deep significance in
modern automata theory and its applications.  

\smallskip

Since we will be interested in using automata to represent structures, we need to define what it means for a relation to be recognized by an automaton.
Until now, we have discussed B\"uchi automata recognizable {\em sets} of infinite strings.  It is easy to generalize this notion to relations on $\Sigma^\omega$.  Basically, to process a tuple of infinite strings, we read each string in parallel.  More formally, we define the {\bf convolution} of infinite strings $\alpha_1, \ldots, \alpha_k \in \Sigma^\omega$ as the infinite string $c(\alpha_1, \ldots, \alpha_k) \in \left(\Sigma^k\right)^\omega$ whose value at position $i$ is the tuple $\la \alpha_1(i), \ldots, \alpha_k(i) \ra$.  The convolution of a $k$-ary relation $R$, denoted by $c(R)$, is the set of infinite strings over $\Sigma^k$ which are the convolutions of tuples of $R$.  We say that the relation $R$ is B\"uchi recognizable if and only if the set $c(R)$ is B\"uchi recognizable.

\smallskip

We will now describe some operations which preserve automaticity. Let $A$ be
a language of infinite strings over $\Sigma$ and $R$ be 
a relation of arity $k$ on $\Sigma^\omega$.
The {\bf cylindrification} operation on a relation $R$ (with respect to $A$)  
produces the new relation
\[
cyl(R) = \{ \la a_1, \ldots, a_k, a \ra : \la a_1, \ldots, a_k \ra \in R ~\text{and}~a \in A \}.
\]
The {\bf projection}, or $\exists$, operation (with respect to $A$)
is defined by
\[
\exists x_i R = \{ \la a_1, \ldots, a_{i-1}, a_{i+1}, \ldots, a_k \ra  : \exists a \in A ( \la a_1, \ldots, a_{i-1}, a, a_{i+1}, \ldots, a_k \ra \in R ) \}.
\]
The {\bf universal projection}, or $\forall$, operation (with respect to $A$) is defined as
\[
\forall x_i R = \{ \la a_1, \ldots, a_{i-1}, a_{i+1}, \ldots, a_k \ra  : \forall a \in A ( \la a_1, \ldots, a_{i-1}, a, a_{i+1}, \ldots, a_k \ra \in R ) \}.
\]
In all these operations ($cyl, \exists, \forall$), if $R$ and $A$ are both B\"uchi recognizable then the resulting relations are also B\"uchi recognizable. 
The {\bf instantiation} operation is defined by
\[
I(R,c) = \{ \la x_1, \ldots, x_{k-1} \ra : \la x_1, \ldots, x_{k-1},c \ra \in R \}.
\]
If $R$ is B\"uchi automatic then $I(R,c)$ is B\"uchi automatic if and only if $c$ is an ultimately periodic infinite string.  An ultimately periodic word is one of the form $u v^{\omega} = u v v v \cdots$ where $u$ and $v$ are finite strings.  The {\bf rearrangement} operations permute the coordinates of a relation.  If $\pi: k \to k$ is a permutation on the set of $k$ elements and $R$ is a $k$-ary B\"uchi automatic relation, then
\[
\pi R = \{ \la x_{\pi(1)}, \ldots, x_{\pi(k)} \ra : \la x_1, \ldots, x_k \ra \in R \}
\]
is B\"uchi automatic.
The {\bf linkage} operation identifies the last coordinates of some relation with the first coordinates of another.  Given the relations $R$ (of arity $m_1$) and $S$ (of arity $m_2$) and index $i < m_1$, the linkage of $R$ and $S$ on $i$ is the relation of arity $m_2 + i-1$ defined by
\[
L(R^{m_1}, S^{m_2}; i) = \{ \la a_1, \ldots, a_{m_2 + i-1} \ra : \la a_1, \ldots, a_{m_1} \ra \in R ~\&~ \la a_i, \ldots, a_{m_2 + i-1} \ra \in S \}.
\]
For example, $L(R^3, S^4; 2) = \{ \la a_1, a_2, a_3, a_4, a_5 \ra : \la a_1, a_2, a_3 \ra \in R~ \&~ \la a_2, a_3, a_4, a_5 \ra  \in S \}$. If $R$ and $S$ are B\"uchi recognizable relations then so is $L(R, S; i)$.

\smallskip

The closure of B\"uchi recognizable relations under Boolean operations (Theorem \ref{thm:BucAlgs}) connects B\"uchi recognizability and propositional logic.  The $\exists$ and $\forall$ operations bring us to the realm of first-order logic.  We now connect automata with the monadic second-order (MSO) logic of the successor structure $(\omega; S)$. The MSO logic is built on top of the first-order logic as follows. There is one non-logical membership symbol $\in$,
and there are set variables $X,Y, \ldots$ that range over subsets of
$\omega$. Formulas are defined inductively in a standard way via the Boolean connectives
and quantifiers over set variables as well as over individual variables.

\begin{example}
In MSO of $(\omega; S)$, the following relations and properties are expressible:
 the subset relation $X\subseteq Y$, the natural order
$\leq$, finiteness of sets, and whether a set is a singleton.  For example, 
\[
Single(X) := \exists x (x \in X ~\&~ \forall y (y \in X \iff x =y) ).
\]
\end{example}

The definability of singletons allows us to
transform any MSO formula into an equivalent MSO formula {\em all} of whose
variables are set variables. We can also interpret addition on natural
numbers
as follows. Associate with every finite set $X$ the binary string
$\sigma(X)$ that has $1$ at position $i$ if and only if $i\in X$.  We use the rules of binary addition to express the statement that 
$X$, $Y$, $Z$ are finite sets and $\sigma(X)+_2 \sigma(Y)=\sigma(Z)$ in the MSO logic 
logic of $(\omega; S)$. 

\smallskip

There is a natural correspondence between $k$-ary relations on $\P(\omega)$ (the power set of the natural numbers) and $k$-ary relations on $\{0,1\}^\omega$. For example, consider the case of $k=2$. Any binary relation $R$ on $\P(\omega)$ is a collection of pairs $(X,Y)$ of subsets. Identify $X$ and $Y$ with their characteristic functions $\alpha_X$ and $\alpha_Y$, each of which is an infinite string over $\{0,1\}$.  Recall that the convolution of $(X,Y)$ is an infinite string over $\{0,1\}^2$ such that $c(\alpha_X,\alpha_Y) (i) = (\alpha_X(i), \alpha_Y(i) )$.  The convolution of $R$ is the language $c(R) = \{c(\alpha_X, \alpha_y) : (X,Y) \in R\}$. With this correspondence between relations on $\P(\omega)$ and relations on $\{0,1\}^\omega$, B\"uchi proved a general characterization theorem that links the  MSO definable relations of the successor structure $(\omega; S)$  and B\"uchi automata.

\begin{theorem}[B\"uchi, 1960]\label{thm:Buchi}
A relation $R\subseteq \P(\omega)^k$ is definable in MSO logic if and only
if $R$ is B\"uchi recognizable. Given an MSO definable relation $R$, there is an algorithm which builds a B\"uchi automaton recognizing $R$. In particular, the MSO theory of $(\omega; S)$, denoted as $S1S$, is decidable. \qed
\end{theorem}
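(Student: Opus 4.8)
The plan is to prove both implications by translating between MSO formulas and B\"uchi automata, and then to read off decidability of $S1S$ from the emptiness algorithm of Theorem~\ref{thm:BucAlgs}. Throughout I identify a $k$-tuple $(X_1, \ldots, X_k) \in \P(\omega)^k$ with its convolution, an infinite string over the alphabet $\{0,1\}^k$ whose $i$-th letter records, for each $j$, whether $i \in X_j$. Since every subset of $\omega$ (finite or infinite) corresponds to a genuine infinite binary string, this encoding involves no padding and convolution behaves cleanly. As a first reduction I would use the definability of singletons (already noted before the theorem) to put every MSO formula into an equivalent form in which all variables are set variables and the only atomic formulas are $X \subseteq Y$, a singleton predicate $\mathrm{Sing}(X)$, and a successor predicate $\mathrm{Succ}(X,Y)$ asserting that $X = \{m\}$ and $Y = \{m+1\}$.

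For the direction from MSO to B\"uchi recognizability I would argue by induction on the structure of the normalized formula. In the base case each atomic relation is directly recognizable: $X \subseteq Y$ is a safety condition (reject exactly when some position carries the letter $(1,0)$, accept otherwise), while $\mathrm{Sing}(X)$ and $\mathrm{Succ}(X,Y)$ are recognized by small automata checking that exactly one $1$ appears in the relevant tracks in the correct relative position. For the inductive step, conjunction and disjunction correspond to intersection and union, and negation corresponds to complementation; all three preserve B\"uchi recognizability by Theorem~\ref{thm:BucAlgs}. Existential set quantification $\exists X\, \varphi$ corresponds exactly to the projection (the $\exists$ operation) that erases the track for $X$ from the alphabet $\{0,1\}^k$, which again preserves recognizability. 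Because each step of this induction is effective, the construction is uniform, yielding the promised algorithm that builds an automaton for $R$ from its defining formula.

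For the converse, given a B\"uchi automaton $\M = (S, \iota, \Delta, F)$ with $S = \{q_0, \ldots, q_{n-1}\}$, I would write an MSO formula asserting the existence of an accepting run. Introduce set variables $Q_0, \ldots, Q_{n-1}$ with the intended meaning that $i \in Q_j$ exactly when the run is in state $q_j$ at position $i$. The formula states that the $Q_j$ partition $\omega$, that position $0$ lies in $Q_\iota$, that every consecutive pair of positions respects $\Delta$ (a local condition expressed using membership of the position in the input tracks $X_1, \ldots, X_k$ to read off the current letter), and finally that some $Q_j$ with $q_j \in F$ is infinite, infiniteness being MSO-definable as already observed. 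Existentially quantifying $Q_0, \ldots, Q_{n-1}$ produces an MSO formula defining precisely the language of $\M$, hence the relation $R$.

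Finally, decidability of $S1S$ follows: a sentence $\varphi$ has no free variables, so the MSO-to-automaton construction yields a B\"uchi automaton whose language is nonempty precisely when $\varphi$ holds in $(\omega; S)$, and the emptiness algorithm of Theorem~\ref{thm:BucAlgs} then decides $\varphi$. The main obstacle is the negation step, which is exactly closure under complementation, the deep part of Theorem~\ref{thm:BucAlgs} that the excerpt flags as difficult. Every other case, namely the atomic automata, the product constructions for $\wedge$ and $\vee$, and the alphabet projection for $\exists$, is routine, so the whole argument rests on having complementation available.
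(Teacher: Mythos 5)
Your proposal is correct: it is the standard proof of B\"uchi's theorem, and it is consistent with everything in the paper. Note, however, that the paper never actually proves this statement---it is quoted as a classical result (the proof-end symbol sits inside the theorem statement itself), and the paragraph following it only illustrates how the theorem is used: an automaton is constructed by induction on the complexity of the defining formula, and a sentence is then decided by the emptiness test of Theorem~\ref{thm:BucAlgs}. Your forward direction (small automata for the atomic predicates $X \subseteq Y$, $\mathrm{Sing}$, $\mathrm{Succ}$ after normalizing to set variables; union, intersection, and complementation for the Boolean connectives; projection for existential set quantification; effectiveness at every step) is precisely that induction spelled out, and you correctly identify complementation as the single deep ingredient on which the whole construction rests. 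The converse direction---guessing an accepting run via set variables $Q_0, \ldots, Q_{n-1}$ that partition $\omega$, enforcing the initial and transition conditions locally and the acceptance condition by requiring some $Q_j$ with $q_j \in F$ to be infinite, infiniteness being MSO-definable---is the standard argument and is sound; the paper does not discuss this direction at all, so here your write-up supplies genuinely more detail than the source.
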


We illustrate the theorem on a simple example: checking whether $\exists X \forall Y~ R(X,Y)$ is true in $(\omega;S)$, where $R$ is a definable relation.
Theorem \ref{thm:Buchi} says that $R$ is definable in MSO logic if and only if $c(R)$ 
 is B\"uchi recognizable. Using Theorem \ref{thm:Buchi}, we build an automaton recognizing $R$. This is done 
 inductively based on the complexity of the formula defining $R$. We then construct an
 automaton for  the set of infinite strings $\{ X : \forall Y~ R(X,Y)\}$.   We use Theorem \ref{thm:BucAlgs} again to check if the set $\{ X : \forall Y ~R(X,Y) \}$ is empty.
If it is empty, the sentence is false; otherwise, it is true.  B\"uchi's  theorem is a quintessential example of how understanding the definable relations in a
structure helps us decide the theory of the structure (see Subsection \ref{subsec:Questions}).

\smallskip

We briefly turn our attention to {\bf word automata}.  The underlying graph definition of a word automaton is identical to that of a
B\"uchi automaton.  As mentioned earlier, the difference between these two models of computation lies in 
that word automata process {\em finite} strings rather than infinite
strings.  Let $\M = (S, \iota, \Delta, F)$ be a word automaton over the finite alphabet $\Sigma$, and let $v$ be a finite string over $\Sigma$.  Then $v$ labels
a path in the underlying directed graph of $\M$, starting from the initial state. This path is called a run
of $\M$ on input $v$. If the last state in the run is accepting then the
run is an accepting run. The automaton $\M$ accepts a string if there is some
accepting run of $\M$ on the string. The collection of all finite strings accepted by a
word automaton is called a {\bf regular} (or, equivalently, a FA recognizable) language. As in the setting of B\"uchi automata, we have the following theorem:

\begin{theorem}[Kleene, 1951]\label{thm:wordBA} \hfill
\begin{enumerate}
\item There is an algorithm that, given a word automaton, decides (in
linear time in the size of the automaton) if some string is accepted by the automaton.
\item The collection of all regular languages is closed under the operations of union, intersection, and complementation.
\end{enumerate}
\end{theorem}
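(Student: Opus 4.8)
The plan is to mirror the structure of the proof of Theorem \ref{thm:BucAlgs}, while exploiting the one feature that distinguishes finite-string automata from B\"uchi automata: determinizability. For part (1), I would observe that a word automaton accepts some string if and only if its underlying directed graph contains a path from the initial state $\iota$ to some state in $F$. This is a pure reachability question, so a breadth-first (or depth-first) search from $\iota$, marking all reachable states and testing whether any lies in $F$, decides emptiness in time linear in the number of states plus the number of transitions, i.e. linear in the size of the automaton.

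For part (2), union and intersection go through the standard product construction, exactly as in the B\"uchi case. Given $\M_1 = (S_1, \iota_1, \Delta_1, F_1)$ and $\M_2 = (S_2, \iota_2, \Delta_2, F_2)$ over a common alphabet $\Sigma$, I would form the product automaton on state set $S_1 \times S_2$, with initial state $(\iota_1, \iota_2)$ and transitions that advance both coordinates in lockstep on each input symbol. Taking the accepting set to be $F_1 \times F_2$ yields a machine accepting exactly $L(\M_1) \cap L(\M_2)$; the union can be obtained either by a disjoint-union construction with a fresh initial state branching into $\iota_1$ and $\iota_2$, or again by the product, now with accepting set $(F_1 \times S_2) \cup (S_1 \times F_2)$ once both machines have been completed. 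In each case one verifies by induction on the length of the input that a run of the product corresponds to a pair of runs of the factors, so the acceptance condition captures the intended Boolean combination.

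The genuinely new ingredient, and the step I expect to be the main obstacle, is complementation. Here the existential nature of acceptance is the obstruction: negating \lqt some run ends in $F$\rqt gives \lqt every run avoids $F$\rqt, which is \emph{not} captured by merely swapping accepting and rejecting states in a nondeterministic machine. The remedy, unavailable for B\"uchi automata in this easy form, is determinization. I would apply the subset (powerset) construction to $\M$, producing a deterministic automaton $\M^{d}$ whose states are subsets of $S$, whose initial state is $\{\iota\}$, whose transition on $\sigma$ sends a set $Q$ to the set of all $\sigma$-successors of states in $Q$, and whose accepting sets are precisely those that meet $F$. An induction on input length shows that $\M^{d}$ accepts the same language as $\M$ and, crucially, that $\M^{d}$ is deterministic and complete, so every string has exactly one run. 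Complementation is then immediate: interchange the accepting and non-accepting states of $\M^{d}$. Since each string has a unique run, the resulting automaton accepts a string precisely when $\M^{d}$ rejected it, establishing closure under complementation, with at worst an exponential blow-up in the number of states that does not affect regularity.
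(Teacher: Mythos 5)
Your proposal is correct, but note that the paper itself offers no proof of this theorem at all: it is stated as a classical result (Kleene, 1951), with the reader implicitly referred to the literature. So the comparison can only be made against the proof the paper \emph{does} give for the B\"uchi analogue (Theorem~\ref{thm:BucAlgs}), and there your parts (1) and (2) line up exactly where they can: reachability search for emptiness (your search for a path from $\iota$ to $F$ is the finite-string simplification of the paper's \lqt lasso\rqt search, since no loop back to an accepting state is needed), and the product construction for union and intersection. Your treatment of complementation supplies precisely the ingredient the paper's B\"uchi proof declines to give --- and for the right reason: the subset construction determinizes word automata, after which swapping accepting and non-accepting states is sound because every string has a unique run, whereas B\"uchi automata are not determinizable in this way, which is why the paper can only point to \cite{Thomas} in that setting. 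Your argument is the standard one and is complete; the only minor points worth keeping explicit (which you already flag) are that the union-via-product requires both machines to be completed first, and that the fresh initial state in the disjoint-union alternative must be accepting exactly when the empty string is accepted by either machine.
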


\begin{example}The following sets of strings are regular languages.
\begin{itemize}
\item $\{w101: w \in \{0,1\}^\star~\text{has no sub-word of the form}~101\}$
\item $\{w: w \in \{0,1\}^\star$ ~is a binary representation of some positive integer with the least significant bit
first$\}$.
\end{itemize}
\end{example}

\begin{example}
Regular languages can be naturally embedded into B\"uchi recognizable sets.  That is, $W$ is regular if and only if $W \Diamond^{\omega}$ is B\"uchi recognizable (where $\Diamond$ is a new symbol).
\end{example}


Previously, we defined B\"uchi recognizable relations.  We would like an analogous notion for word automata. 
To define regular relations, we need to define the {\bf convolution} of finite strings.  
Let $x_1, \ldots, x_k \in \Sigma^*$.  If $x_1, \ldots, x_k$ are all of the same length, 
the $i^{th}$ element of the convolution $c(x_1, \ldots, x_k)$ is the tuple $\la x_1(i), \ldots, x_k(i) \ra$ (as in the infinite string case).  Otherwise, 
assume without loss of generality that $x_n$ is the longest string among $x_1, \ldots, x_k$.  For each $m\neq n$, append a new symbol $\Diamond$ to the end of $x_m$ as many times as necessary to make the padded version of $x_m$ have the same length as $x_n$.  Then $c(x_1,\ldots, x_k)$ is the convolution of the padded strings.   If $R$ is a $k$-ary relation on $\Sigma^*$, $R$ is called {\bf regular} if its convolution $c(R)$ is a regular language.

\smallskip

As before, we can develop a calculus of regular relations.  Given a regular relation $R$ and regular set $A$, the cylindrification $c(R)$, projection $\exists x_i R$, and universal projection $\forall x_i R$ are all recognizable by finite automaton.  Likewise, given a regular relation $R$ and any finite string $c$, the instantiation $I(R,c)$ is a regular relation.  Similarly, the rearrangement and linkage operations preserve regularity of relations.

\subsection{B\"uchi and word automatic structures}\label{subsec:AutStructs}

The main focus of this tutorial is the study of structures defined by automata. 
We now give a formal definition of this concept and provide several examples of such structures.
Recall that a structure $\A$ is a tuple $(A; R_1, \ldots, R_m)$ where $A$ is a non-empty set called the domain and
$R_1$, $\ldots$, $R_m$ are basic (or atomic) relations on $A$.

\begin{definition}
A structure is {\bf word automatic} if its domain and basic relations are
regular.
A structure is {\bf B\"uchi automatic} if its domain and basic
relations are B\"uchi recognizable.
\end{definition}

Often, we refer to word automatic structures and B\"uchi automatic structures simply as automatic structures. 
The type of automaticity will be clear from the context. We present some examples of word automatic structures.  
We begin with structures whose domains are $\{1\}^\star$ (automatic structures over the one letter alphabet $\{1\}$ are called {\em unary automatic structures}; they have been studied in \cite{BluPhD}, \cite{RubPhD}, \cite{KLM}).

\begin{example}
The structure  $(1^\star; \leq, S)$, where $1^m \leq 1^n \iff m \leq n$ and $S(1^n) = 1^{n+1}$, is  word automatic. 
\end{example}

\begin{example}
The structure $(1^\star ; \mod_1, \mod_2, \ldots, \mod_n)$, where $n$ is a fixed positive integer, is word automatic.   The word automata recognizing the modular relations contain cycles of appropriate lengths.  
\end{example}

Next, we move to structures with a binary alphabet $\{0,1\}$.  It is not too hard to see that any automatic structure over a finite alphabet is isomorphic to an automatic structure over a binary alphabet \cite{RubPhD}.  Clearly, any word automatic structure has a countable domain.

\begin{example}
The structure $(\{0,1\}^\star; \vee, \wedge, \neg)$ is word automatic because bit-wise operations on binary strings can be recognized by finite automata.
\end{example}

\begin{example} Presburger arithmetic, the structure $(\{0,1\}^\star \cdot 1; +_2, \leq)$, where $+_2$ is binary addition if the binary strings are interpreted as the least significant bit first base-$2$ expansion of natural numbers.  The usual algorithm for adding binary  numbers involves a single carry bit, and therefore a small word automaton can recognize the relation $+_2$.   
\end{example}

\begin{example} 
Instead of Presburger arithmetic, we may consider the structure $(\{0,1\}^\star \cdot 1; +_2, \mid_2)$.  This is arithmetic with weak divisibility: $w \mid_2 v$ if $w$ represents a power of $2$ which divides the number represented by $v$.  Since we encode natural numbers by their binary representation, weak divisibility is a regular relation.
\end{example}

\begin{example}
If we treat binary strings at face value rather than as representations of natural numbers, we arrive at a different automatic structure: 
$$
(\{0,1\}^\star; \preceq, Left, Right, EqL),
$$
where $\preceq$ is the prefix relation, $Left$ and $Right$ denote the functions which append a $0$ or $1$ to the binary string (respectively), and $EqL$ is the equal length relation.  It is easy to show that this structure is automatic.  We will see later that this structure has a central role in the study of automatic structures. 
\end{example}

\begin{example}\label{ex:conf}
 A  useful example of a word automatic structure is the configuration space of a 
 Turing machine.  The configuration space is a graph whose nodes are the 
 configurations of the machine (the state, the contents of the tape, and the position of the read/write head).  An edge exists between two nodes if there is a one-step transition of the machine which moves it between the configurations represented by these nodes.  
\end{example}

\begin{example} Any word automatic structure is B\"uchi automatic, but the converse is not true. In the next subsection, we will see examples of B\"uchi automatic structures which have uncountable domains.  These structures cannot be word automatic. 
\end{example}

We mention an automata theoretic version of the theorem of 
L\"owenheim and Skolem. Recall that the classical theorem states that any infinite structure over a countable language
has a countable elementary substructure.  Recall that an elementary substructure is one which has the same first-order theory as the original structure in the language expanded by naming all the elements  of the substructure.  For B\"uchi automatic structures, the analogue of a countable substructure is the substructure consisting of ultimately periodic words.

\begin{theorem}[Hjorth, Khoussainov, Montalb\'an, Nies; 2008]\label{thm:LowSkoForBuchi}
Let $\A$ be a B\"uchi automatic structure and consider the substructure $\A'$ whose domain is
\[
A' = \{ \alpha \in A : \alpha ~\text{is ultimately periodic} \}.
\]
Then $\A'$ is a computable elementary substructure of $\A$.  
Moreover, there is an algorithm which from a first-order formula $\varphi(\bar{a}, \bar{x})$ with parameters $\bar{a}$ from $A'$ produces a B\"uchi automaton which accepts exactly those tuples $\bar{x}$ of ultimately periodic words which make the formula true in the structure.
\end{theorem}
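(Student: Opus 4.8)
The plan is to reduce the theorem to two facts already available in the excerpt: the closure properties of B\"uchi recognizable relations (Theorem \ref{thm:BucAlgs} together with the projection, instantiation, and rearrangement operations), and the \lqt lasso\rqt characterization of nonempty $\omega$-regular languages used in the emptiness algorithm of Theorem \ref{thm:BucAlgs}. From the first I obtain the \emph{moreover} clause and the B\"uchi recognizability of first-order definable relations; from the second I obtain that ultimately periodic words always supply witnesses, which is exactly what the Tarski--Vaught criterion needs to conclude that $\A'$ is an elementary substructure of $\A$.

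First I would prove the \emph{moreover} statement, since the elementary substructure claim will fall out of it. Proceeding by induction on the complexity of $\varphi$, I build a B\"uchi automaton for the relation it defines. Atomic subformulas are handled by the automata given with $\A$ (using linkage and rearrangement to align variables); the Boolean connectives $\neg,\wedge,\vee$ are handled by complementation, intersection, and union (Theorem \ref{thm:BucAlgs}); and the quantifiers $\exists y$ and $\forall y$ are handled by the projection and universal projection operations with respect to the domain $A$. To incorporate the parameters $\bar a$ from $A'$, I repeatedly apply the instantiation operation $I(\,\cdot\,,a_i)$, first using a rearrangement to move the relevant coordinate into last position. This is where the hypothesis $\bar a\in A'$ is used in an essential way: instantiation preserves B\"uchi recognizability precisely because each $a_i$ is ultimately periodic. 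The output is an automaton accepting exactly the convolutions of those ultimately periodic tuples $\bar x$ with $\A\models\varphi(\bar a,\bar x)$, and since the construction is uniform in $\varphi$ and $\bar a$ it yields the desired algorithm.

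Next I would establish $\A'\preceq\A$ through the Tarski--Vaught test. Given a formula $\varphi(\bar x,y)$ and $\bar a\in A'$ with $\A\models\exists y\,\varphi(\bar a,y)$, the previous step produces a B\"uchi automaton for the nonempty set $R=\{y\in A:\A\models\varphi(\bar a,y)\}$. Running the emptiness algorithm of Theorem \ref{thm:BucAlgs} exhibits a lasso, that is, a path from the initial state to an accepting state together with a cycle through that state; the label of this lasso is an ultimately periodic word $uv^{\omega}$ lying in $R\subseteq A$ and hence in $A'$. This ultimately periodic witness is exactly the element of $A'$ demanded by Tarski--Vaught, so $\A'$ is an elementary substructure of $\A$.

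Finally I would verify that $\A'$ is computable. Each element of $A'$ is coded by a pair $(u,v)$ of finite strings via $uv^{\omega}$, and since the convolution of ultimately periodic words is again ultimately periodic (take a common period), every basic relation restricted to $A'$ can be decided by the finite check of whether its automaton accepts a given lasso word; equality of two codes is decidable for the same reason. Thus the domain and the relations of $\A'$ are computable. The main obstacle is not any single hard step but the tight interplay between the two driving facts: the class of ultimately periodic words is simultaneously the exact class on which instantiation keeps us inside B\"uchi recognizability and the exact class of witnesses delivered by lassos. Matching these two roles is what forces the substructure of ultimately periodic words to be the correct automata theoretic analogue of a countable elementary submodel, and getting the bookkeeping of parameters, coordinates, and padding to line up is the part that requires care rather than cleverness.
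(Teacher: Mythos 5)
The paper never proves this theorem: it is stated as a citation to Hjorth, Khoussainov, Montalb\'an, and Nies (with a remark that B\'ar\'any and Rubin proved it independently), so there is no in-paper argument to compare yours against. Judged on its own merits, your proof is correct, and it is in fact the standard argument for this result: (i) the inductive automaton construction for first-order definable relations, with the parameters $\bar a$ absorbed by the instantiation operation, which is exactly where ultimate periodicity of the parameters is needed; (ii) the Tarski--Vaught test discharged by the lasso property, since every nonempty B\"uchi recognizable language contains a word of the form $uv^{\omega}$; and (iii) computability of $\A'$ via the pair coding $(u,v)\mapsto uv^{\omega}$ and decidability of acceptance of ultimately periodic words. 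Two small points are worth tightening. First, the \emph{moreover} clause must be read as \lqt among ultimately periodic tuples, the automaton accepts exactly the true ones\rqt rather than \lqt the language equals the set of true ultimately periodic tuples\rqt: the latter is impossible in general, because the set of all ultimately periodic words is not B\"uchi recognizable (otherwise its nonempty complement would be a nonempty $\omega$-regular language containing no ultimately periodic word, contradicting the lasso property). Your automaton for the full relation $\{\bar x \in A^n : \A \models \varphi(\bar a,\bar x)\}$ is precisely the right object under the correct reading. Second, since the coding of $A'$ by pairs is many-to-one, a complete write-up should pass to canonical representatives (say, the length-lexicographically least code of each word), which your observation that code-equality is decidable makes routine; one should also note that $A'$ is nonempty (apply the lasso property to the domain automaton) so that it is a substructure at all.
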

This theorem has also been independently proved by B\'ar\'any and Rubin.  

\subsection{Presentations and operations on automatic structures}\label{sec:AutPresent}

The isomorphism type of a structure is the class of all structures isomorphic to it.
We single out those isomorphism types that contain automatic structures.

\begin{definition}
A structure $\A$ is called (word or B\"uchi) {\bf automata presentable} if it is isomorphic to some (word or B\"uchi) automatic structure $\B$.  In this case, $\B$ is called an {\bf automatic presentation} of $\A$.
\end{definition}

We sometimes abuse terminology and call automata presentable structures automatic.  Let $\B=(D^{\B}; R_1^{\B}, \ldots, R_s^{\B})$ be an automatic presentation of $\A$. Since $\B$ is automatic, the sets $D^{\B}, R_1^{\B}, \ldots, R_s^{\B}$ are all recognized by automata, say by $\M, \M_1,\ldots, \M_s$. Often the automatic presentation of $\A$ is identified with the finite sequence
$\M, \M_1,\ldots, \M_s$ of automata. From this standpoint, automata presentable structures have finite presentations.

\smallskip

Examples of automata presentable structures arise as specific mathematical objects of independent interest, or as the result of closing under automata presentability preserving relations.  
For example, if a class of structures is defined inductively and we know that the base structures have automata presentations and that each of the closure operations on the class preserve automaticity, then we may conclude that each member of the class is automata presentable.  To aid in this strategy, we present some automaticity preserving operations on structures.  The automaton constructions are often slight modifications of those used to show that recognizable sets form a Boolean algebra.  

\begin{proposition}\label{prop:ProdDisjoint}
If $\A$ and $\B$ are (word or B\"uchi) automatic structures then so is their Cartesian product $\A \times \B$.  If $\A$ and $\B$ are (word or B\"uchi) automatic structures then so is their disjoint union $\A + \B$.  
\end{proposition}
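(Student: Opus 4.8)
The plan is to handle the two operations separately, treating the word-automatic and B\"uchi-automatic cases in parallel, and to present them in order of increasing difficulty: the disjoint union first, then the Cartesian product. Throughout, assume $\A = (A; R_1, \ldots, R_t)$ and $\B = (B; R_1, \ldots, R_t)$ share a common signature, so that $\A \times \B$ has domain $A \times B$ with $R_i(\la a_1,b_1\ra, \ldots, \la a_k,b_k\ra)$ holding iff $R_i^{\A}(a_1, \ldots, a_k)$ and $R_i^{\B}(b_1, \ldots, b_k)$, and $\A + \B$ has domain $A \sqcup B$ with each $R_i$ interpreted as $R_i^{\A} \cup R_i^{\B}$ on the disjoint domain. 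Every construction below is a modification of the product/union automata witnessing closure under intersection and union (Theorem \ref{thm:BucAlgs}, Theorem \ref{thm:wordBA}); the only genuine bookkeeping concerns the padding symbol $\Diamond$ in the finite-string case, which is absent for B\"uchi automata.

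For the disjoint union, I would first make the two domains physically disjoint by tagging: represent $a \in A$ by $0a$ and $b \in B$ by $1b$ over the combined alphabet (for B\"uchi, $0\alpha$ and $1\beta$). Since regular (resp.\ B\"uchi recognizable) languages are closed under prefixing by a letter and under union, the domain $0\cdot A \cup 1\cdot B$ is recognizable. For a $k$-ary relation, a tuple lies in $R_i^{\A+\B}$ only if all $k$ elements carry the same tag; so I would build an automaton that reads position $0$, checks that the $k$ tags agree (rejecting mixed tuples), and then simulates $\M_{R_i}^{\A}$ if the common tag is $0$ or $\M_{R_i}^{\B}$ if it is $1$ on the remaining positions --- which are exactly the convolution of the untagged tuple that the original automaton was built to read. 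This is a union of two recognizable relations, hence recognizable.

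For the Cartesian product, I would encode the pair $\la a,b\ra$ by the convolution $c(a,b)$, a string over $\Sigma_A \times \Sigma_B$ (with $\Diamond$-padding when $|a| \ne |b|$). The domain is then $c(A \times B)$, the convolution of the relation $A \times B$, which is recognizable since $A$ and $B$ are. For a $k$-ary relation $R_i$, the convolved $k$-tuple of encodings carries, at each position, both a layer of $\Sigma_A$-symbols and a layer of $\Sigma_B$-symbols; I would run a product automaton that projects onto the $\Sigma_A$-layer and simulates $\M_{R_i}^{\A}$ there while simultaneously projecting onto the $\Sigma_B$-layer and simulating $\M_{R_i}^{\B}$, accepting iff both sub-runs accept. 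For B\"uchi automata this is a clean product construction whose acceptance condition is the conjunction of the two B\"uchi conditions (realized by a standard flag product); in the word case it is again a product of the two deterministic automata.

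The main obstacle is the padding in the word-automatic product. Encoding $\la a,b\ra$ as $c(a,b)$ already introduces $\Diamond$ inside a single element when $a$ and $b$ differ in length, and convolving the $k$ encodings introduces a second, outer layer of $\Diamond$ when the encodings differ in length. I must verify that after projecting to the $\Sigma_A$-layer one recovers \emph{exactly} the padded convolution $c(a_1, \ldots, a_k)$ that $\M_{R_i}^{\A}$ expects --- that is, that the two levels of padding interact consistently and that the extra $\Diamond$ positions are handled so neither sub-automaton is fed a malformed input. This is routine but error-prone; alternatively it can be bypassed by expressing $R_i^{\A \times \B}$ purely through the already-established calculus of recognizable relations (cylindrification, rearrangement, and intersection), which packages the padding bookkeeping into the lemmas proved earlier. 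The B\"uchi case has no such obstacle, since infinite strings are never padded.
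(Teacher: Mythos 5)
Your proposal is correct and follows exactly the route the paper intends: the paper gives no explicit proof of this proposition, saying only that the constructions are ``slight modifications of those used to show that recognizable sets form a Boolean algebra,'' which is precisely what you carry out (tagging plus union for $\A + \B$, layered product simulation for $\A \times \B$). Your identification of the two-level padding interaction in the word-automatic Cartesian product as the one non-trivial point, and the fallback to the cylindrification/rearrangement/intersection calculus to absorb it, is sound and in fact supplies the detail the paper elides.
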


\begin{proposition} \label{Prop:quotient}
 If $\A$ is word automatic and $E$ is a regular equivalence relation, then the quotient $\A / E$ is word automatic. 
\end{proposition}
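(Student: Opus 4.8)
The plan is to realize $\A/E$ concretely by selecting from each $E$-class a single canonical string as its representative, and then to show that the set of representatives, equipped with the induced relations, is a word automatic structure isomorphic to $\A/E$. The natural choice is the length-lexicographically least element of each class. First I would fix the length-lexicographic order $<_{llex}$ on $\Sigma^\star$, where $u <_{llex} v$ holds when $u$ is strictly shorter than $v$, or when $u$ and $v$ have equal length and $u$ precedes $v$ lexicographically. A finite automaton reading the convolution $c(u,v)$ decides this by first comparing lengths (detecting which track is padded with $\Diamond$ first) and, in the equal-length case, locating the first position of disagreement; hence $<_{llex}$ is a regular relation.

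The key step is to show that the set of representatives is regular. Define
\[
\text{Rep} = \{ a \in A : \neg \exists b\, (\, b \in A ~\&~ b <_{llex} a ~\&~ E(a,b)\,) \}.
\]
The binary relation $b <_{llex} a ~\&~ E(a,b)$ is regular, being an intersection of regular relations (using that $A$ and $E$ are regular). Applying the projection operation $\exists b$, which preserves regularity, yields a regular set of those $a$ that are \emph{not} least in their class; taking the complement and intersecting with $A$ then shows that $\text{Rep}$ is regular. Since $<_{llex}$ well-orders $\Sigma^\star$, every (nonempty) $E$-class contains exactly one element of $\text{Rep}$, so the map sending each class $[a]$ to its unique representative is a bijection between the domain of $\A/E$ and $\text{Rep}$.

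It remains to transfer the relations. For each basic relation $R_i$ of arity $k$, I would define its image on the representatives by
\[
\tilde R_i = \{ \la a_1, \ldots, a_k \ra \in \text{Rep}^k : R_i^{\A}(a_1, \ldots, a_k) \},
\]
which is regular as the intersection of the regular relation $R_i$ with the regular set $\text{Rep}^k$ (the latter obtained from $\text{Rep}$ by cylindrification in each coordinate). Because $E$ is a congruence for $\A$, the truth of $R_i$ on a tuple depends only on the $E$-classes of its entries, so $\tilde R_i$ holds of representatives exactly when $R_i^{\A/E}$ holds of the corresponding classes. This makes the representative map an isomorphism from $\A/E$ onto the word automatic structure $(\text{Rep}; \tilde R_1, \ldots, \tilde R_m)$.

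The main obstacle is the regularity of $\text{Rep}$, and the essential tool there is closure of regular relations under projection together with the Boolean operations; everything else reduces to routine applications of the calculus of regular relations described earlier (indeed, both $\text{Rep}$ and each $\tilde R_i$ are first-order definable over the regular relations $A$, $E$, $<_{llex}$, and $R_i$, so their regularity follows uniformly from these closure properties). One subtlety worth flagging is the standing assumption that $E$ is a congruence, compatible with each $R_i$; this is precisely what guarantees that $\tilde R_i$ is independent of the choice of representatives and hence that the quotient is a well-defined structure.
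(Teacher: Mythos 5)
Your proof is correct and takes essentially the same approach as the paper: both select the length-lexicographically least element of each $E$-class, observe that this set $\mathrm{Rep}$ is first-order definable from the regular relations $A$, $E$, and $<_{llex}$ (hence regular by closure under Boolean operations and projection), and restrict the basic relations of $\A$ to $\mathrm{Rep}$ to obtain the automatic presentation of $\A/E$. Your extra remarks---the explicit automaton argument for $<_{llex}$ and the flag that $E$ must be a congruence for the restricted relations to agree with the quotient relations---only make explicit what the paper leaves implicit.
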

\begin{proof} 
To represent the structure $\A / E$, consider the set
$$Rep=\{x\in A \mid \forall y(y <_{llex} x ~\&~ y\in A \rightarrow (x,y)\not \in E)\},$$ where $<_{llex}$ is the length-lexicographical linear order. 
Recall that for any finite alphabet, the length lexicographic order is defined as $x \leq_{llex} y$ if and only if $|x| < |y|$ or $|x|=|y|$ and $|x| \leq_{lex} |y|$.
Since $Rep$ is first-order definable in $(\A, E)$, it is
regular and constitutes the domain of the quotient structure. Restricting the basic relations of $\A$ to the set $Rep$ yields regular relations. Hence, the quotient structure is automatic. \qed
\end{proof}

A straightforward analysis of countable B\"uchi recognizable languages shows that a countable structure has a word automatic presentation if and only if it has B\"uchi automatic presentation.
It is natural to ask whether Proposition \ref{Prop:quotient} can be extended to countable B\"uchi automatic structures. This has recently been answered positively in \cite{BKR} by a delicate analysis of B\"uchi  recognizable  equivalence relations with countably many equivalence classes.

\begin{theorem}[B\'ar\'any, Kaiser, Rubin; 2008]
For a B\"uchi automatic structure $\A$ and a B\"uchi recognizable equivalence relation $E$ with countably many equivalence classes, the quotient structure  $\A / E$ is 
B\"uchi automatic.
\end{theorem}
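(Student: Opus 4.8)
The plan is to present the quotient on a carefully chosen set of representatives. First I would reduce the theorem to the single task of producing a B\"uchi recognizable \emph{transversal} $T \subseteq A$ that meets every $E$-class in exactly one point. Once such a $T$ is in hand, the induced structure on $T$ is automatic for soft reasons: for each basic relation $R$ of $\A$ (say of arity $n$) the quotient relation, pulled back to representatives, is
\[
\{\la t_1,\ldots,t_n\ra \in T^n : \exists a_1 \cdots \exists a_n\,(\textstyle\bigwedge_{i} E(t_i,a_i)\ \&\ R(a_1,\ldots,a_n))\},
\]
which is built from the B\"uchi recognizable relations $E$, $R$ and $T$ using cylindrification, intersection and existential projection. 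By Theorem \ref{thm:BucAlgs} and the closure of B\"uchi recognizable relations under these operations, the result is again B\"uchi recognizable, so $(T;\ldots)$ is a B\"uchi automatic copy of $\A / E$.

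The whole difficulty therefore sits in constructing $T$. Here I would use the remark preceding the theorem: a countable B\"uchi recognizable language consists entirely of ultimately periodic words. Since $E$ has countably many classes, any transversal is countable, so any recognizable $T$ must live inside the set $A'$ of ultimately periodic words. This splits the construction into two subproblems: (A) show that every $E$-class actually contains an ultimately periodic word, so that a transversal inside $A'$ exists at all; and (B) select one such word per class in a recognizable way.

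Granting (A), part (B) can be handled by reduction to the word automatic case. By Theorem \ref{thm:LowSkoForBuchi}, $A'$ carries a computable elementary substructure $\A'$, and the computability clause translates $E \restriction A'$ into a \emph{regular} relation on the finite ``lasso'' codes $uv^\omega \mapsto (u,v)$ of ultimately periodic words. Under this coding $\A'$ together with $E \restriction A'$ becomes a genuinely word automatic structure equipped with a regular equivalence relation, so Proposition \ref{Prop:quotient} applies: the length-lexicographically least codes form a regular transversal on the code side, which decodes back to a B\"uchi recognizable set $T \subseteq A'$ picking one ultimately periodic representative from each class. Equivalently, this exhibits $\A / E \cong \A' / (E \restriction A')$ as word automatic, and since it is countable the remark before the theorem upgrades this to B\"uchi automatic.

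The main obstacle is (A): an $E$-class can be uncountable, and the naive approach fails, since pumping an accepting B\"uchi run on $c(\alpha,\alpha)$ only yields a periodic word $E$-equivalent to itself, not to $\alpha$. Note also that one cannot simply saturate the recognizable periodic words, because the set $A'$ is \emph{not} B\"uchi recognizable. I would instead analyze the automaton $\M$ for $c(E)$ by a Ramsey argument over transition profiles: color each pair of positions $i < j$ by the set of state-to-state behaviours of $\M$ on the block in between (recording visits to accepting states), extract a homogeneous sequence of cut points, and use the resulting periodic ``skeleton'' to produce an ultimately periodic word with the same $E$-type as a given $\alpha$. A complementary tool is the Borel dichotomy: a B\"uchi recognizable $E$ is Borel, so by Silver's theorem it has either countably many classes or a perfect set of pairwise inequivalent words; in our case the first alternative holds, and one leverages this tameness to show each class is itself recognizable, whence it contains a lasso by the emptiness analysis of Theorem \ref{thm:BucAlgs}. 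This structural analysis of countable-index B\"uchi equivalence relations is the delicate core of the argument.
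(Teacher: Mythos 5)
First, a point of comparison: the paper itself gives no proof of this theorem --- it defers entirely to \cite{BKR}, describing that proof as a ``delicate analysis'' of B\"uchi recognizable equivalence relations of countable index. Your overall architecture (reduce everything to producing a B\"uchi recognizable transversal $T$, then define the quotient relations on $T$ via cylindrification, intersection and projection) is correct and matches how the cited result works, but both steps by which you try to produce $T$ have genuine gaps. For (A), your Ramsey sketch never uses the hypothesis that $E$ has countably many classes, and without that hypothesis the claim is false: equality on $\{0,1\}^\omega$ is a B\"uchi recognizable equivalence relation, and the class of any non-ultimately-periodic word contains no ultimately periodic word. Ramsey factorization of a single $\alpha$ does produce an ultimately periodic ``skeleton'' $uv^\omega$, but nothing forces $(\alpha, uv^\omega)\in E$; in the actual argument of B\'ar\'any--Kaiser--Rubin the countability hypothesis enters through a dichotomy (either the algebraic analysis yields an $\omega$-regular set of representatives, or one constructs a perfect set of pairwise inequivalent words), and that dichotomy is the entire difficulty. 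Your Silver-theorem detour does not repair this: countable index is already given, and the step ``countably many classes, hence each class is recognizable'' is unjustified --- recognizability of a class is normally \emph{deduced} from its having an ultimately periodic member (instantiation $I(E,c)$ is recognizable only for ultimately periodic $c$), so this route is circular.

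Second, step (B) would fail even granting (A). Theorem \ref{thm:LowSkoForBuchi} gives a \emph{computable} elementary substructure on the ultimately periodic words; it does not say that $E$ restricted to them becomes a \emph{regular} relation on lasso codes, and in general it does not: already equality of ultimately periodic words, $\{\la u_1,v_1,u_2,v_2\ra : u_1v_1^\omega = u_2v_2^\omega\}$, is not a regular relation on tuples of finite words (for $u_i=\lambda$ it amounts to the commutation condition $v_1v_2=v_2v_1$, and a state-counting argument on convolutions of pairs $(w, w\tilde{w})$ with $|w|=|\tilde{w}|$ kills regularity). The decoding direction breaks as well: the image of a regular set of codes under $(u,v)\mapsto uv^\omega$ need not be B\"uchi recognizable --- for example $\{(0^n1)^\omega : n\ge 1\}$ is the image of a regular set of pairs but is not $\omega$-regular, by pumping inside a single block of $0$s. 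The passage between countable B\"uchi automatic structures and word automatic ones is legitimate only once one already has a \emph{countable $\omega$-regular} domain in hand, since such a language admits a normal form as a finite union of sets $U_i v_i^\omega$ with finitely many fixed periods $v_i$; producing such an $\omega$-regular transversal is exactly the content of the theorem, so your step (B) presupposes what it is supposed to construct.
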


A long-standing open question had asked whether B\"uchi automatic structures behave as nicely as word automatic structures with respect to B\"uchi recognizable equivalence relations.  In other words, whether the countability assumption can be removed from the theorem above. A counterexample in \cite{HKMN08} recently settled this question. 
We will outline a proof of the following theorem in the last lecture.

\begin{theorem}[Hjorth, Khoussainov, Montalb\'an, Nies; 2008]\label{thm:BucEquiv}
The class of B\"uchi automatic structures is not closed under the quotient operation with respect to B\"uchi recognizable equivalence relations.
\end{theorem}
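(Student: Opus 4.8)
The plan is to establish Theorem~\ref{thm:BucEquiv} by exhibiting a concrete B\"uchi automatic structure $\A$ together with a B\"uchi recognizable congruence $E$ and then showing that the isomorphism type of $\A/E$ possesses \emph{no} B\"uchi automatic presentation at all. The guiding principle is to isolate an isomorphism invariant $P$ such that every B\"uchi automata presentable structure enjoys $P$ while $\A/E$ provably fails it. The starting observation is that B\"uchi recognizable sets and relations are Borel (they lie in the Boolean algebra generated by $\Sigma^0_2$ subsets of $\Sigma^\omega$), so every B\"uchi automatic structure is in particular a Borel structure of a very low level in the Borel hierarchy. Two consequences will be used as the skeleton of $P$: by the perfect set property for Borel sets, every B\"uchi recognizable set is countable or of size $2^{\aleph_0}$, and hence every first-order (indeed every B\"uchi recognizable) definable subset, and every equivalence class of a recognizable equivalence relation, obeys the same dichotomy. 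The theorem of B\'ar\'any, Kaiser, and Rubin forces us into the regime where the quotient has uncountably --- that is, continuum many --- classes, since with countably many classes the quotient is automatically B\"uchi automatic; so the obstruction cannot be a crude failure of total cardinality, which would in any case be sensitive to the continuum hypothesis, but must instead be a genuinely structural, CH-independent feature of the \emph{shape} of the induced equivalence relation.

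Accordingly, I would take the counterexample to be an equivalence structure. Let $\A=(D;E,F)$ have domain a B\"uchi recognizable $D\subseteq(\Sigma^k)^\omega$ carrying two B\"uchi recognizable equivalence relations with $E\subseteq F$, where $E$ is the congruence we quotient by and $F$ descends to an equivalence relation $\overline{F}$ on the quotient set $Q=D/E$. Then $\A/E=(Q;\overline{F})$ is itself an equivalence structure, and its isomorphism invariant is the descriptive-set-theoretic complexity of $\overline{F}$ together with the combinatorial profile recording how many $\overline{F}$-classes of each cardinality occur. The design goal is to rig $E$ and $F$ so that, in \emph{any} B\"uchi presentation of $(Q;\overline{F})$, the recognizable copy of $\overline{F}$ would have to be a Borel equivalence relation lying outside the class achievable by $\omega$-regular equivalence relations --- for instance, one forced to be non-smooth in a way incompatible with the low-level Borel structure of B\"uchi recognizable relations, or forced to realize a simultaneous spectrum of class sizes that the structure theory of recognizable equivalence relations excludes. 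This is precisely the point at which the descriptive set theory flagged in the preface (\lqt connecting Borel structures to B\"uchi and Rabin automatic structures\rqt) enters, and it is why the argument can remain CH-independent: Borel-reducibility class and class-size profile are absolute structural invariants, not artifacts of a particular encoding.

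The routine verifications come first and are mild: one checks by standard product and cylindrification constructions (Proposition~\ref{prop:ProdDisjoint} and the closure properties following Theorem~\ref{thm:BucAlgs}) that $\A$ is B\"uchi automatic and that $E,F$ are B\"uchi recognizable, that $E$ is a congruence for $F$ so that $\overline{F}$ is well defined, and that $\A/E$ indeed has continuum many $\overline{F}$-classes. I expect the decisive and by far the hardest step to be the proof that the chosen $\overline{F}$ is \emph{not realizable} by any B\"uchi recognizable equivalence relation on any recognizable domain. Establishing this requires the fine structure theory of B\"uchi recognizable equivalence relations --- pumping and lasso arguments on the accepting runs controlling how classes can merge, combined with the Borel dichotomies of descriptive set theory to pin down which class-size profiles and which reducibility types can actually be produced. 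This is the same delicate analysis of B\"uchi recognizable equivalence relations with uncountably many classes that underlies the B\'ar\'any--Kaiser--Rubin boundary, pushed to the point where it yields a negative realizability result rather than a positive quotient theorem.
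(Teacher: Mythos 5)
Your general instincts point at the right ingredients --- the proof does run through Fact \ref{ft:BucBorel} (B\"uchi automatic implies Borel) and the non-smoothness of eventual equality (Fact \ref{ft:BorelStar}) --- but your proposed counterexample cannot work, and the reason it cannot is exactly the idea your proposal is missing. You take the quotient to be a pure equivalence structure $(Q;\overline F)$. The isomorphism type of such a structure is completely determined by its class-size profile (how many classes of each cardinality), and these profiles are too coarse to witness non-presentability: for instance, the profile ``continuum many classes, each countably infinite,'' which is what any $E_0$-style construction produces, is realized by the B\"uchi automatic structure $(\{0,1\}^\omega;=^\star)$ itself, since eventual equality is B\"uchi recognizable. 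Your fallback invariant, the Borel-reducibility class of $\overline F$, is not an isomorphism invariant at all: the relation ``$x=y$'' on pairs $(x,n)\in\{0,1\}^\omega\times\omega$ (encode pairs by interleaving, so this structure is even B\"uchi automatic) is smooth, yet as an abstract equivalence structure it is isomorphic to the non-smooth $(\{0,1\}^\omega;=^\star)$, since both have continuum many classes, all countably infinite. The point is that an abstract isomorphism between presentations need not be Borel, so no descriptive-set-theoretic property of one realization transfers to another. This is the precise gap: you need a device that forces an arbitrary isomorphism between presentations to be Borel, and nothing in your proposal supplies one; the ``hardest step'' you defer (non-realizability of $\overline F$ by any recognizable equivalence relation) is, for pure equivalence structures, false for the natural candidates and unsupported in general.

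The paper closes this gap by not quotienting a bare equivalence structure but by packaging three things into one structure: a copy of $\B=(\P(\N);\subseteq)$ left unquotiented, a second copy that gets collapsed by $=^\star$, and the canonical projection linking them. Concretely, $\A$ is the disjoint union of two presentations $\B_0,\B_1$ of $\B$, with a unary predicate $U^0$ picking out $B_0$ and the identity bijection $R^0:B_0\to B_1$; the equivalence relation $E$ is equality on $B_0$ and $=^\star$ on $B_1$, so that $\A/E\cong(\P(\N)\sqcup\P(\N)/{=^\star};\le,U,R)$. The rigidity device is the power-set Boolean algebra: any isomorphism between two Borel presentations of $(\P(\N);\subseteq)$ is automatically Borel, because it is determined coordinatewise by the countably many atoms ($\Psi([x])=[y]$ iff for all $n$, $a_n\le x \iff b_n\le' y$), which is a countable intersection of Borel conditions. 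Hence, if $\A/E$ had a B\"uchi (hence, by Fact \ref{ft:BucBorel}, Borel) presentation $\C$, the restriction $G$ of the isomorphism to the unquotiented copy would be Borel, and composing with the interpretation of the projection in $\C$ would give a Borel map $F$ with $X=^\star Y\iff F(X)=F(Y)$, contradicting Fact \ref{ft:BorelStar}. It is the retained unquotiented copy together with the definable projection, anchored by the atoms of $(\P(\N);\subseteq)$, that converts an abstract isomorphism into a Borel reduction; without that mechanism, the non-smoothness of $=^\star$ has no purchase on the isomorphism type of the quotient.
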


We now give some natural examples of automata presentable structures.
 Note that many of the automatic structures that we mentioned earlier arise as the presentations of the following automata presentable structures.  

\begin{example}
The natural numbers under addition and order have a word automata presentation.  The automatic presentation here is the word structure $(\{0,1\}^\star \cdot 1; +_2, \leq)$.
\end{example}
\begin{example}
The real numbers under addition have a B\"uchi automata presentation.  The presentation is $( \{0,1\}^\star \cdot \{ \star\} \cdot \{0,1\}^\omega; +_2)$.
\end{example}
\begin{example}\label{ex:abfg}
Finitely generated abelian groups are all word automata presentable.  Recall that a group $\G$ is finitely generated if there is a finite set $S$ such that $\G$ is the smallest group containing $S$; a group $\G$ is called abelian if the group operation is commutative $a \cdot b = b \cdot a$.  Since every such group is isomorphic to a finite direct sum of $(\Z; +)$ and $(\Z_n; +)$ \cite{Rotman}, and since each of these has an automatic presentation, any finitely generated abelian group has an automatic presentation.
\end{example}
\begin{example}
The Boolean algebra of finite and co-finite subsets of $\omega$ is word automata presentable.  An automatic presentation is the structure whose domain is $\{0,1\}^\star \cup \{2,3\}^\star$ where words in $\{0,1\}^\star$ represent finite sets and words in $\{2,3\}^\star$ represent cofinite sets.
\end{example}
\begin{example}
The Boolean algebra of all subsets of $\omega$ is B\"uchi automata presentable.  A presentation using infinite strings treats each infinite string as the characteristic function of a subset of $\omega$.  The union, intersection, and complementation operations act bitwise on the infinite strings and are recognizable by B\"uchi automata.
\end{example}
\begin{example}
The linear order of  the rational numbers $(\Q; \leq$) has a  word automata presentation: $(\{0,1\}^\star \cdot 1; \leq_{lex})$, where $u \leq_{lex} v$ is the lexicographic ordering and holds if and only if $u$ is a prefix of $v$ or $u = w0x$ and $v=w1y$ for some $w,x,y \in \{0,1\}^\star$.
\end{example}

\subsection{Decidability results for automatic structures}\label{sec:Decide}

B\"uchi's theorem in Subsection \ref{sec:AutDefs} uses automata to prove the decidability of the MSO theory of the successor structure $(\omega; S)$.   We now explore other decidability consequences of algorithms for automata.    The  foundational theorem of Khoussainov and Nerode \cite{KhN95} uses the closure of regular   relations  (respectively, B\"uchi recognizable relations) under Boolean and projection operations to prove the decidability of the first-order theory of any automatic structure.

\begin{theorem}[Khoussainov, Nerode; 1995. \ Blumensath, Gradel; 2000] \label{thm:decidKN}
There is an algorithm that, given a (word or B\"uchi) automatic structure $\A$  and a first-order formula $\varphi(x_1, \ldots, x_n)$, produces an automaton recognizing those tuples $\la a_1, \ldots, a_n \ra$ that make the formula true in $\A$.  
\end{theorem}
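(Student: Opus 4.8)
The plan is to prove the statement by induction on the structure of the first-order formula $\varphi$, building the automaton compositionally from automata for the subformulas. Throughout, I maintain the invariant that the automaton constructed for a formula with free variables among $x_1, \ldots, x_n$ recognizes exactly the convolutions $c(a_1, \ldots, a_n)$ of those $n$-tuples from the domain $A$ that satisfy the formula. The entire argument rests on the closure properties catalogued earlier: closure of recognizable relations under union, intersection, and complementation (Theorem~\ref{thm:wordBA} in the word case, Theorem~\ref{thm:BucAlgs} in the B\"uchi case), together with the fact that cylindrification, projection, universal projection, rearrangement, and instantiation all preserve recognizability. Since each of these closure results is effective, the resulting construction is an algorithm.

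For the base case I would handle atomic formulas $R(x_{i_1}, \ldots, x_{i_k})$ and equalities $x_i = x_j$. By the definition of an automatic structure, each basic relation $R$ and the equality relation are recognized by given automata, so the only work is bookkeeping: I apply rearrangement to align the coordinates $i_1, \ldots, i_k$ with the standard ordering, and cylindrification (with respect to the domain automaton for $A$) to introduce dummy coordinates for the free variables $x_j$ that do not actually occur in the atomic formula. If the language includes a constant symbol $c$, I use instantiation $I(R, c)$; here the B\"uchi case requires $c$ to name an ultimately periodic word, which is exactly the constraint flagged earlier when we discussed the instantiation operation.

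For the inductive step, the Boolean connectives are immediate: $\psi_1 \wedge \psi_2$ and $\psi_1 \vee \psi_2$ correspond to intersecting and unioning the automata for $\psi_1$ and $\psi_2$ (after cylindrifying each to a common list of free variables), while $\neg \psi$ corresponds to complementation. The one subtlety is that negation must be taken relative to the domain: I complement the recognized language and then intersect with the automaton recognizing $A^n$ (the $n$-fold convolution of the domain), so that the result still contains only convolutions of genuine tuples from $A$. The existential quantifier $\exists x_i \, \psi$ is handled by the projection operation $\exists x_i$ taken with respect to $A$, which deletes the $i$th coordinate; the universal quantifier is treated dually by the universal projection operation (or equivalently rewritten as $\neg \exists x_i \neg \psi$).

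The main obstacle is complementation, which underlies both the negation case and hence the universal quantifier. In the word case this is routine (via determinization), but for B\"uchi automata complementation is genuinely hard and the automaton can blow up substantially in size; we are, however, entitled to invoke it as an effective operation by Theorem~\ref{thm:BucAlgs}. Beyond this, the remaining care needed is entirely in the variable management --- ensuring that every subformula's automaton operates over a consistent alphabet $\Sigma^n$ and that quantified variables are correctly restricted to range over the domain $A$ rather than all of $\Sigma^\star$ (or $\Sigma^\omega$), which is precisely what performing projection and cylindrification relative to the domain automaton accomplishes.
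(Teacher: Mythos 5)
Your proposal is correct and follows essentially the same route as the paper's proof: induction on the complexity of $\varphi$, using the effectiveness of the closure of recognizable relations under the Boolean operations and the projection operations from Subsection~\ref{sec:AutDefs}. The paper states this in one line; your additional care with cylindrification, relativizing negation and quantification to the domain $A$, and the instantiation constraint for constants in the B\"uchi case simply makes explicit the bookkeeping that the paper leaves implicit.
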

\begin{proof}
We go by induction on the complexity of the formula, using the fact that automata recognizable relations are closed under
the Boolean operations and the projection operations as explained in Subsection \ref{sec:AutDefs}. \qed
\end{proof}

The Khoussainov and Nerode decidability theorem can be applied to individual formulas to yield Corollary \ref{cr:KNdecidability1}, or uniformly to yield Corollary \ref{cr:KNdecidability2}.

\begin{corollary}\label{cr:KNdecidability1}
Let $\A$ be a word automatic structure and $\varphi(\bar{x})$ be a first-order formula. There is a linear-time algorithm that, given $\bar{a} \in A$, checks if $\varphi(\bar{a})$ holds in $\A$.
\end{corollary}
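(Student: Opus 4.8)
The plan is to reduce each query to a single run of a fixed finite automaton. Since both the structure $\A$ and the formula $\varphi(\bar{x})$ are held fixed in the statement, the only quantity that varies between queries is the tuple $\bar{a}$. The strategy is therefore to isolate everything that depends on $\A$ and $\varphi$ into a preprocessing phase whose cost is a constant independent of $\bar{a}$, and then to show that the remaining per-query work is linear in the length of the input.

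First I would apply Theorem \ref{thm:decidKN} to $\A$ and $\varphi$ to construct a word automaton $\M_\varphi$ recognizing exactly the convolutions $c(\bar{a})$ of tuples $\bar{a}$ that satisfy $\varphi$ in $\A$. The automaton produced by the inductive construction may be nondeterministic, so next I would determinize it via the subset construction, obtaining a deterministic automaton $D_\varphi$. Crucially, both steps depend only on $\A$ and $\varphi$; since these are fixed, $D_\varphi$ is a fixed object of constant size, and its construction does not contribute to the per-query running time. Even though the subset construction may incur an exponential blow-up, this is absorbed into the constant.

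The per-query algorithm is then straightforward: given $\bar{a} = \la a_1, \ldots, a_n \ra$, form the convolution $c(\bar{a})$ by padding each component with the symbol $\Diamond$ to the common length $\max_i |a_i|$, and feed $c(\bar{a})$ to $D_\varphi$, accepting if and only if the unique run ends in an accepting state. Computing $c(\bar{a})$ takes time linear in $\sum_i |a_i|$, and a deterministic automaton makes exactly one state transition per input symbol, so the run also takes linear time. Hence the query is answered in time linear in the size of $\bar{a}$.

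I expect the only real subtlety to be the precise accounting of the linear-time bound, namely the observation that the automaton construction and determinization are one-time costs that are constant precisely because $\A$ and $\varphi$ are fixed parameters rather than inputs to the per-query algorithm. Once this is made explicit, correctness is immediate from the correctness of $\M_\varphi$ guaranteed by Theorem \ref{thm:decidKN}, and the linear-time claim follows from the standard fact that a deterministic finite automaton processes an input string in time linear in its length.
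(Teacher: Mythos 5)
Your proposal is correct and follows essentially the same route as the paper: build the automaton for $\varphi$ once via Theorem \ref{thm:decidKN} (a cost independent of the query), then decide membership of the convolution $c(\bar{a})$ in time linear in the length of $\bar{a}$. Your explicit determinization step is a harmless refinement the paper leaves implicit, since even direct simulation of the nondeterministic automaton costs only a constant factor per symbol once the automaton is fixed.
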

\begin{proof}
Let $\A_\varphi$ be an automaton for $\varphi$.    
Given $\bar{a}$, the algorithm runs through the state space of $\A_{\varphi}$ and checks if $\A_{\varphi}$ accepts the tuple. This can be done in linear time in the size of the tuple.   \qed
\end{proof}

\begin{corollary}\label{cr:KNdecidability2}[Hodgson; 1982]
The first-order theory of any automatic structure is decidable.  
\end{corollary}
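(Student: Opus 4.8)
The plan is to reduce the decision problem for the first-order theory to the emptiness problem for automata, which has already been shown to be solvable in linear time by Theorems \ref{thm:BucAlgs} and \ref{thm:wordBA}. A first-order sentence is simply a formula with no free variables, so I would apply Theorem \ref{thm:decidKN} in the special case $n = 0$ and then read off truth from nonemptiness.

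First I would take an arbitrary first-order sentence $\varphi$ in the language of $\A$ and feed it into the construction of Theorem \ref{thm:decidKN}. The output is an automaton $\A_\varphi$ recognizing exactly those tuples of elements that satisfy $\varphi$ in $\A$. When $\varphi$ has no free variables, the relevant tuple is the empty tuple, so the recognized relation is a subset of the zero-fold Cartesian power of the domain: it is either the single-point set, when $\varphi$ is true in $\A$, or the empty set, when $\varphi$ is false. Concretely, the induction of Theorem \ref{thm:decidKN} terminates with an automaton over a trivial convolution alphabet whose language is nonempty precisely when $\varphi$ holds in $\A$.

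Next I would invoke the emptiness algorithm for the appropriate model: for a word automatic $\A$, Theorem \ref{thm:wordBA}(1), and for a B\"uchi automatic $\A$, Theorem \ref{thm:BucAlgs}(1). Running this algorithm on $\A_\varphi$ decides whether its language is empty, and therefore whether $\varphi$ is true or false in $\A$. Since both stages are effective, building $\A_\varphi$ and then testing its emptiness, their composition yields a single decision procedure for the full first-order theory of $\A$, as required.

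There is no serious obstacle here: the mathematical content of the corollary resides entirely in the already-established Theorem \ref{thm:decidKN}, and the present statement is essentially its specialization to closed formulas. The only point demanding genuine care is the bookkeeping for the $n = 0$ case, namely verifying that the inductive automaton construction behaves sensibly once every variable has been quantified away, so that satisfaction of the closed formula corresponds exactly to nonemptiness of the resulting automaton's language rather than to acceptance of some distinguished input word.
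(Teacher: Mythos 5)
Your proof is correct and follows exactly the route the paper intends: Corollary \ref{cr:KNdecidability2} is stated as an immediate consequence of applying Theorem \ref{thm:decidKN} uniformly to sentences and then testing emptiness via Theorem \ref{thm:wordBA}(1) or Theorem \ref{thm:BucAlgs}(1), which is precisely the same two-stage procedure the paper itself illustrates after Theorem \ref{thm:Buchi}. Your care with the $n=0$ case (empty tuple versus nonemptiness of the language) is a reasonable way to handle the bookkeeping the paper leaves implicit.
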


The connection between automatic structures and first-order formulas goes in the reverse direction as well.  That is, first-order definability can produce new automatic structures.  We say that a structure $\B= (B; R_1, \ldots R_n)$ is first-order definable in a structure $\A$ if there are first-order formulas  $\varphi_B$ and $\varphi_1, \ldots, \varphi_n$ (with parameters from $\A$) which define $B, R_1, \ldots, R_n$ (respectively) in the structure $\A$.  Khoussainov and Nerode's theorem immediately gives the following result about first-order definable structures.

\begin{corollary}\label{cr:Definability}
If $\A$ is (word or B\"uchi) automatic and $\B$ is first-order definable in $\A$, then $\B$ is also (word or B\"uchi) automatic.  
\end{corollary}

In fact, automatic structures can yield algorithms for properties expressed in logics stronger than first-order.  We denote by $\exists^{\infty}$ the \lqt there are countably infinitely many\rqt quantifer, and by $\exists^{n,m}$ the \lqt there are $m$ many mod $n$\rqt quantifiers.  Then ($FO + \exists^{\infty} + \exists^{n,m}$) is the first-order logic extended with these quantifiers.  The following theorem from \cite{KhRS04} extends the Khoussainov and Nerode theorem to this extended logic.  We note that Blumensath noted the $\exists^{\infty}$ case first, in \cite{BluPhD}.

\begin{theorem}[Khoussainov, Rubin, Stephan; 2003]\label{thm:decidKRS}
There is an algorithm that, given a word automatic structure $\A$  and a ($FO + \exists^{\infty} + \exists^{n,m}$) formula $\varphi(x_1, \ldots, x_n)$ with parameters from $\A$, produces an automaton recognizing those tuples $\la a_1, \ldots, a_n\ra$ that make the formula true in $\A$.  
\end{theorem}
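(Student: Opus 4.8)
The plan is to extend the inductive proof of Theorem~\ref{thm:decidKN} by supplying two new inductive steps, one for each of the new quantifiers; all of the propositional connectives and the ordinary quantifiers $\exists, \forall$ are handled exactly as before, via closure of regular relations under the Boolean and projection operations. So I fix a subformula $\psi(\bar{x}, y)$ for which the induction hypothesis already provides an automaton $\M$ recognizing the $(k{+}1)$-ary relation $R = \{(\bar{a}, b) : \A \models \psi(\bar{a}, b)\}$, and I must produce automata for $\exists^{\infty} y\, \psi$ and for $\exists^{n,m} y\, \psi$.

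For the $\exists^\infty$ step the key is a pumping lemma for sections. Writing $Y_{\bar{a}} = \{b : (\bar{a}, b) \in R\}$ for the fiber over a parameter tuple $\bar{a}$, I would prove that there is a constant $N$, namely the number of states of $\M$, such that $Y_{\bar{a}}$ is infinite if and only if it contains a word $b$ with $|b| > \max_j |a_j| + N$. The forward direction is immediate; for the converse, observe that in the convolution $c(\bar{a}, b)$ every coordinate of $\bar{a}$ has been padded with $\Diamond$ once its real symbols are exhausted, so beyond position $\max_j |a_j|$ the automaton is reading a single genuine coordinate. If $b$ is long enough, the run of $\M$ on $c(\bar{a}, b)$ must repeat a state in this tail, and pumping the corresponding loop yields infinitely many distinct words still lying in $Y_{\bar{a}}$. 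The length condition $|b| > \max_j |a_j| + N$ is itself regular (length comparison with a fixed offset), so $\exists^\infty y\,\psi(\bar{x}, y)$ is equivalent to $\exists y\,(\psi(\bar{x}, y) \wedge |y| > \max_j |x_j| + N)$, and an automaton for it is obtained by intersecting $\M$ with the automaton for the length condition and then projecting out $y$.

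The $\exists^{n,m}$ step is the crux of the argument. By combining with the finiteness test of the previous paragraph I may restrict to the finite case, where every witness $b \in Y_{\bar{a}}$ has length at most $\max_j |a_j| + N$, so I need an automaton that computes $|Y_{\bar{a}}| \bmod n$ uniformly in $\bar{a}$. The plan is to first determinize $\M$, so that each witness corresponds to exactly one accepting run and counting runs coincides with counting witnesses. I would then build a transfer automaton that reads the $\bar{x}$-coordinates of the convolution and carries in its state a vector recording, for each state $s$ of $\M$, the number modulo $n$ of ways to complete the remaining $y$-coordinate from $s$ to an accepting configuration; these residue vectors range over a finite set, so the construction stays finite-state. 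The residue count must be updated correctly across the two regimes of the convolution, the positions where $\bar{a}$ still carries real symbols and the $\Diamond$-padded tail, and the final automaton accepts $\bar{a}$ exactly when the accumulated residue equals $m$.

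The step I expect to be the main obstacle is this last modular-counting construction. The difficulty is bookkeeping the number of completions modulo $n$ through the padding region of the convolution, where the witness coordinate may still be active while the parameter coordinates are already exhausted; getting the transfer relation between residue vectors to be well-defined and finite-state, and correctly combining the contributions of witnesses shorter and longer than the parameters, is where the real care is needed. The $\exists^\infty$ pumping argument, by contrast, is routine once the length bound is isolated.
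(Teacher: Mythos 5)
The paper itself offers no proof of Theorem~\ref{thm:decidKRS}: the result is imported from \cite{KhRS04} (with the $\exists^{\infty}$ case credited to \cite{BluPhD}), and only its corollaries are discussed. So your proposal can only be measured against the standard argument in those references, which it essentially reconstructs. Your $\exists^\infty$ step is correct and complete as sketched: the equivalence of \lqt $Y_{\bar a}$ is infinite\rqt with \lqt $Y_{\bar a}$ contains a witness of length $> \max_j |a_j| + N$\rqt is exactly the pumping argument used in the literature, the length condition is a regular relation, and intersection plus projection finishes the step. One convention you use silently: you treat $\exists^{n,m}$ as false (or as reducible to the finite case) when the witness set is infinite; that matches the semantics of \cite{KhRS04}, but it is worth stating, since otherwise \lqt restrict to the finite case\rqt is not justified.

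The one place where your construction, as literally written, would fail is the residue vector for $\exists^{n,m}$. You propose that an automaton reading the $\bar x$-tracks left to right carry, for each state $s$ of $\M$, \lqt the number mod $n$ of ways to complete the remaining $y$-coordinate from $s$ to acceptance.\rqt That quantity depends on the portion of $\bar a$ not yet read, so it is not a function of the input prefix and cannot be stored in the state of a forward-reading automaton. There are two standard repairs, both within the spirit of your sketch: either maintain the dual, prefix-determined vector --- for each $s$, the count mod $n$ of valid $y$-track prefixes that bring the determinized $\M$ to $s$, with separate counts for tracks still carrying real symbols and tracks already padded with $\Diamond$ --- and only at the end of $\bar a$ combine it with the precomputed, input-independent counts of accepting completions of length at most $N$ through the all-$\Diamond$ region; or else run your completion-count automaton right to left and invoke closure of regular languages under reversal. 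With either repair the argument goes through, so this is a fixable imprecision rather than a wrong approach; the remaining bookkeeping you flag (witnesses shorter versus longer than $\max_j|a_j|$, the $\Diamond$-suffix constraint on the $y$-track) is real but routine.
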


\begin{corollary}
The ($FO + \exists^{\infty} + \exists^{n,m}$)-theory of any word automatic structure is decidable.
\end{corollary}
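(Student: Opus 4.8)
The plan is to mimic the proof of Corollary~\ref{cr:KNdecidability2}, the only change being that the stronger Theorem~\ref{thm:decidKRS} replaces Theorem~\ref{thm:decidKN} as the engine that turns formulas into automata. To decide the $(FO + \exists^{\infty} + \exists^{n,m})$-theory of a word automatic structure $\A$ means to exhibit an algorithm that, given any sentence $\varphi$ in this logic, determines whether $\varphi$ holds in $\A$. Since a sentence is just a formula with no free variables, the idea is to feed $\varphi$ into the automaton-construction algorithm and then read off the answer from an emptiness test.

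First I would apply Theorem~\ref{thm:decidKRS} to the input sentence $\varphi$, which produces an automaton $\A_\varphi$ recognizing exactly the tuples that satisfy $\varphi$ in $\A$. Because $\varphi$ has no free variables, this is a $0$-ary relation, which is either all of the (single-point) tuple space --- corresponding to $\varphi$ being true in $\A$ --- or empty --- corresponding to $\varphi$ being false. To avoid the degenerate bookkeeping of a $0$-ary automaton, I would instead run the construction on the equivalent formula $\varphi \wedge (x = x)$ with a single dummy free variable $x$; using that the domain $A$ is nonempty, the resulting automaton accepts every element of $A$ when $\varphi$ is true and accepts nothing when $\varphi$ is false. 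In either formulation, the truth of $\varphi$ in $\A$ is equivalent to the nonemptiness of the language of the constructed automaton.

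The final step is to decide this nonemptiness, which is precisely the emptiness problem for word automata. By Theorem~\ref{thm:wordBA}(1) there is a linear-time algorithm that decides whether a given word automaton accepts some string. Running this algorithm on $\A_\varphi$ returns the truth value of $\varphi$, and chaining the two effective procedures yields the desired decision algorithm for the whole theory.

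There is no genuine obstacle here: all the difficulty has been absorbed into Theorem~\ref{thm:decidKRS}, whose inductive construction must correctly realize the $\exists^{\infty}$ and $\exists^{n,m}$ quantifiers as automaton operations. Relative to that theorem, the only point requiring care is the reduction from the truth of a sentence to an emptiness query --- in particular handling the $0$-ary relation cleanly --- after which the classical emptiness algorithm finishes the argument.
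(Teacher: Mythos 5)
Your proof is correct and is exactly the argument the paper intends: the corollary is stated as an immediate consequence of Theorem~\ref{thm:decidKRS}, with truth of a sentence reduced to nonemptiness of the constructed automaton's language, decided via Theorem~\ref{thm:wordBA}(1). Your handling of the $0$-ary case with a dummy free variable (using nonemptiness of the domain) is a sound way to make this bookkeeping explicit.
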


The next corollary demonstrates how the extended logic can be used in a straightforward way.

\begin{corollary}
If $L$ is a word automatic partially ordered set, the set of all pairs $\la x,y\ra$ such that the interval $[x,y]$ has an even number of elements is regular.
\end{corollary}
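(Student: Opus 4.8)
The plan is to reduce the claim directly to Theorem~\ref{thm:decidKRS}. The key observation is that membership of a third element $z$ in the interval $[x,y]$ is itself a first-order condition, namely $x \leq z \wedge z \leq y$, where $\leq$ is the basic (hence regular) relation of the poset $L = (A; \leq)$. The cardinality of $[x,y]$ is then exactly the number of witnesses $z$ to this condition, so the parity of $|[x,y]|$ is precisely what the modular counting quantifier $\exists^{2,0}$ (``there are $0$ many mod $2$'') is designed to detect. In particular, ordinary first-order logic would not suffice to express parity, which is why the extended logic of Theorem~\ref{thm:decidKRS} is the natural tool here.

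Concretely, I would introduce the formula
\[
\varphi(x,y) \;:=\; \neg \exists^{\infty} z\,(x \leq z \wedge z \leq y) \;\wedge\; \exists^{2,0} z\,(x \leq z \wedge z \leq y),
\]
whose first conjunct asserts that the interval is finite and whose second conjunct asserts that the number of its elements is congruent to $0$ modulo $2$, i.e.\ \emph{even}. Since $\leq$ is regular and $\varphi$ is a formula of the extended logic $(FO + \exists^{\infty} + \exists^{n,m})$, Theorem~\ref{thm:decidKRS} produces an automaton recognizing exactly the tuples $\la x,y\ra$ that satisfy $\varphi$ in $L$. Hence the set of such pairs is regular, as claimed. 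No genuine automaton construction is required beyond what the cited theorem already provides; the whole content of the corollary is the faithful translation of the phrase ``the interval has an even number of elements'' into the extended logic.

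The one point that requires care — and the place where I would expect to have to be explicit — is the interaction between the counting quantifier and infinite intervals. An infinite interval has no well-defined parity, so one must ensure the formula does not spuriously report such a pair as ``even.'' Including the guard $\neg\exists^{\infty} z\,(x \leq z \wedge z \leq y)$ resolves this: it restricts attention to finite intervals, on which $\exists^{2,0}$ behaves exactly as intended. (Depending on the precise convention adopted for $\exists^{n,m}$ on infinite sets in Theorem~\ref{thm:decidKRS}, this guard may be redundant, but keeping it makes the intended meaning unambiguous.) A minor secondary check is the edge case $x \not\leq y$, where $[x,y]=\emptyset$ has $0$ elements and is therefore counted as even; this is harmless for the conclusion, since we only claim regularity of the resulting set of pairs.
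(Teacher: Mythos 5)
Your proof is correct and is exactly the argument the paper intends: the corollary is stated as a direct application of Theorem~\ref{thm:decidKRS}, obtained by expressing the parity of $|[x,y]|$ with the modular counting quantifier $\exists^{2,0} z\,(x \leq z \wedge z \leq y)$ over the regular relation $\leq$. Your additional guard $\neg\exists^{\infty} z\,(x \leq z \wedge z \leq y)$ to exclude infinite intervals is a sensible clarification of the same approach, not a departure from it.
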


Another interesting application is an automata theoretic version of K\"onig's lemma.
Recall that the classical version of K\"onig's Lemma says that every infinite finitely branching tree contains an infinite path.  In Lecture 2, we discuss K\"onig's lemma in greater detail.

\begin{corollary}\label{cr:autKL1}
Let $\T=(T; \leq)$ be a word automatic infinite finitely branching tree. There exists an infinite regular path in $\T$.
\end{corollary}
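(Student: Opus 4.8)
The plan is to construct a canonical infinite branch explicitly and then argue that it is regular by exhibiting it as a first-order definable set in a suitable automatic expansion of $\T$. Write $\leq$ for the tree order and let $\mathit{root}$ denote the unique $\leq$-minimal element, which is first-order definable by $\forall y\,(x \leq y)$. The engine of the argument is the predicate
\[
\mathit{Inf}(x) := \exists^{\infty} y\,(x \leq y),
\]
which holds exactly when the subtree above $x$ is infinite; because $\T$ is finitely branching, König's lemma makes $\mathit{Inf}(x)$ equivalent to the existence of an infinite branch through $x$. By Theorem \ref{thm:decidKRS}, applied to the word automatic $\T$, the set defined by $\mathit{Inf}$ is regular. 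Since the domain of $\T$ is a regular set of strings, the length-lexicographic order $\leq_{llex}$ restricted to it is also a regular relation. Hence the expansion $\T^{+} = (T; \leq, \leq_{llex}, \mathit{Inf})$ is again word automatic, and every relation first-order definable in $\T^{+}$ is regular by Theorem \ref{thm:decidKN}.

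Inside $\T^{+}$ I would single out, for each infinite node $x$, its canonical successor $g(x)$: the $\leq_{llex}$-least child $z$ of $x$ with $\mathit{Inf}(z)$. The predicate ``$z = g(x)$'' is first-order in $\T^{+}$, built from the first-order child relation $\mathit{Child}(x,z) := x < z \wedge \neg \exists w\,(x < w \wedge w < z)$ together with $\mathit{Inf}$ and $\leq_{llex}$. The branch I want is the orbit $\{\mathit{root}, g(\mathit{root}), g^{2}(\mathit{root}), \ldots\}$. The main obstacle is that such an orbit, defined by iterating a function, is not obviously first-order. The key trick is to replace the global iteration by a purely local condition on ancestors: since the ancestors of any node form a chain, the step from an ancestor $y$ toward $x$ is the first-order term $\mathit{next}(y,x)$, the unique child of $y$ lying below $x$. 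I then define
\[
P(x) := \mathit{Inf}(x) \wedge \forall y\,\bigl(y < x \rightarrow \mathit{next}(y,x) = g(y)\bigr).
\]
This asserts that $x$ has an infinite subtree and that every step along the way from the root to $x$ was the canonical one. A straightforward induction on depth shows that $P$ holds exactly of the orbit elements $g^{n}(\mathit{root})$, so $P$ defines the desired branch; being first-order in $\T^{+}$, it is regular.

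It remains to verify that $P$ really is an infinite path. First, $\mathit{Inf}(\mathit{root})$ holds because $\T$ is infinite and the whole tree sits above the root. Second, whenever $\mathit{Inf}(x)$ holds, the finitely many children of $x$ cannot all have finite subtrees (pigeonhole), so $g(x)$ is defined and again satisfies $\mathit{Inf}$; by induction every $g^{n}(\mathit{root})$ exists and is infinite, yielding a strictly increasing, hence infinite, chain. Finally, the orbit is a chain in which consecutive elements are parent and child, so it is an infinite branch of $\T$. Thus $P$ is an infinite regular path, as required. The only genuinely delicate point is the one highlighted above --- turning the iterative ``always descend into the $\leq_{llex}$-least infinite child'' rule into a single first-order formula --- and the ancestor-wise reformulation is exactly what makes the definability argument go through.
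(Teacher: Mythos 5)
Your proof is correct and takes essentially the same route as the paper's: both construct the left-most infinite path by an $(FO + \exists^{\infty})$ definition over the automatic tree augmented with the length-lexicographic order, and then invoke the extended decidability theorem (Theorem \ref{thm:decidKRS}, together with Theorem \ref{thm:decidKN}) to conclude the path is regular. If anything, your formalization is more careful than the paper's displayed formula: by defining $g(y)$ as the $\leq_{llex}$-least child satisfying $\mathit{Inf}$, you correctly descend into the least \emph{infinite} child at each step, whereas the paper's formula as literally written compares the chosen child against all siblings, which would cut the path short whenever the globally left-most branch is finite.
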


\begin{proof}
We assume that the order $\leq$ is such that the root of the tree is the least element.
We use the auxiliary automatic relations $\leq_{llex}$ (the length lexicographic order) to give a $(FO + \exists^\infty)$ definition of an infinite path of $\T$.  Recall that the length lexicographic order is defined as $x \leq_{llex} y$ if and only if $|x| < |y|$ or $|x|=|y|$ and $|x| \leq_{lex} |y|$.
\[
P = \{ x : \exists^{\infty} y  (x \leq y) ~\&~\forall (y \leq x) \forall (z\neq z' \in S(y) ) [z \leq x \implies z <_{llex} z'] \}
\]
In words, $P$ is the left-most infinite path in the tree.  The first clause of the definition restricts our attention to those nodes of the tree which have infinitely many descendants.  Since $\T$ is finitely branching, these nodes are exactly those which lie on infinite paths. It is easy to see that the above definition guarantees that $P$ is closed downward, linearly ordered, and finite.  Moreover, it is regular by the decidability theorem, Theorem \ref{thm:decidKN}. \qed
\end{proof}

Recently, Kuske and Lohrey generalized the decidability theorem for ($FO + \exists^{\infty} + \exists^{n,m}$) to B\"uchi automatic structures \cite{KL06}.

\subsection{Definability in automatic structures}\label{sec:CompAutStr}

Throughout computer science and logic, there are classifications of problems or sets into hierarchies.  Some examples include time complexity, relative computability, and proof-theoretic strength.  In each case, there is a notion of reducibility between members in the hierarchy.  One often searches for a complete, or typical, member at each level of the hierarchy.  More precisely, an element of the hierarchy is called complete for a particular level if it is in that level, and if all other elements of the level are reducible to it.  We may view automatic structures as a complexity class.  In that context, we would like to find complete structures.  For this question to be well-defined, we must specify a notion of reducibility.  
In light of the results of the previous section, it seems natural to consider logical definability of structures as the notion of reducibility. For B\"uchi automatic structures, B\"uchi's theorem immediately gives a complete structure with respect to MSO definability.

\begin{corollary}
A structure is B\"uchi automatic if and only if it is definable in the MSO logic of the successor structure $(\omega; S)$.
\end{corollary}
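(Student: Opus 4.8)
The plan is to read this off directly from B\"uchi's characterization theorem (Theorem \ref{thm:Buchi}) together with the correspondence, described just above that theorem, that identifies each subset of $\omega$ with its characteristic infinite binary string and hence each $k$-ary relation on $\P(\omega)$ with a relation on $\{0,1\}^\omega$. First I would pin down what \emph{definable in the MSO logic of $(\omega;S)$} should mean for a whole structure rather than a single relation: by analogy with the first-order case treated in Corollary \ref{cr:Definability}, a structure $\B=(B;R_1,\dots,R_n)$ is MSO-definable in $(\omega;S)$ if there is an MSO formula $\varphi_B(X)$ carving out the domain $B$ as a set of subsets of $\omega$, together with MSO formulas $\varphi_i(X_1,\dots,X_{k_i})$ defining each relation $R_i$. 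Replacing each $\varphi_i$ by $\varphi_i \wedge \bigwedge_j \varphi_B(X_j)$ if necessary, I may assume every defined relation already lives on the defined domain.

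For the converse direction, suppose $\A$ is MSO-definable in $(\omega;S)$ via $\varphi_B,\varphi_1,\dots,\varphi_n$. Each of these formulas defines a relation on $\P(\omega)$ of the appropriate arity (with $\varphi_B$ unary, the case $k=1$), so Theorem \ref{thm:Buchi} applies verbatim: each such relation is B\"uchi recognizable. Under the characteristic-function identification this says exactly that the domain and all basic relations of $\A$ are B\"uchi recognizable languages of infinite binary strings, which is precisely the definition of $\A$ being B\"uchi automatic.

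For the forward direction, suppose $\A=(A;R_1,\dots,R_n)$ is B\"uchi automatic over a finite alphabet $\Sigma$. The only genuine work is to reconcile an arbitrary $\Sigma$ with the binary alphabet for which Theorem \ref{thm:Buchi} is phrased. I would fix $\ell$ with $2^\ell\ge|\Sigma|$ and encode each letter of $\Sigma$ by a block in $\{0,1\}^\ell$, so that an $\omega$-word over $\Sigma$ becomes an $\ell$-tuple of subsets of $\omega$; this encoding carries a B\"uchi recognizable set over $\Sigma$ to a B\"uchi recognizable relation over $\{0,1\}$ of correspondingly larger arity. Applying the other direction of Theorem \ref{thm:Buchi} (B\"uchi recognizable implies MSO-definable), the encoded domain and the encoded relations $R_i$ are all MSO-definable on $\P(\omega)$, which exhibits an isomorphic copy of $\A$ as an MSO-definable structure in $(\omega;S)$.

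The step I expect to be the main obstacle is the alphabet bookkeeping in the forward direction: verifying that the binary block-encoding preserves B\"uchi recognizability and that the side conditions (that a tuple of subsets actually codes a legitimate sequence of letters and lands in the domain) are themselves MSO-expressible over $(\omega;S)$. Everything else is an immediate appeal to Theorem \ref{thm:Buchi} in one direction or the other, which is exactly why the statement is a corollary rather than a theorem.
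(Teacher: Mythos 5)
Your proposal is correct and matches the paper's intent exactly: the paper offers no separate argument, stating only that the corollary follows immediately from B\"uchi's theorem (Theorem \ref{thm:Buchi}) via the identification of subsets of $\omega$ with their characteristic infinite binary strings, which is precisely your two-directional application of that theorem. The alphabet bookkeeping you flag as the main obstacle is the one detail the paper glosses over (it elsewhere cites the fact that structures over arbitrary finite alphabets can be re-coded over a binary alphabet), and your block-encoding handles it correctly.
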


For word automatic structures, it turns out that first-order definability suffices.  Blumensath and Gr\"adel identify the following complete structure \cite{BluGr00}.  (Each of the basic relations in the following structure is defined in Subsection \ref{sec:AutDefs}.)

\begin{theorem}[Blumensath, Gr\"adel; 2000]\label{thm:defin}
A structure is word automatic if and only if it is 
first-order definable in the word structure $(\{0,1\}^\star; \preceq, Left, Right, EqL)$.
\end{theorem}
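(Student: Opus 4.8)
The plan is to prove both directions of the equivalence, where the forward implication is an immediate consequence of results already established and the reverse implication carries all of the technical weight. For the easy direction, suppose $\B$ is first-order definable in the word structure $W := (\{0,1\}^\star; \preceq, Left, Right, EqL)$. An earlier example records that $W$ is itself word automatic, and Corollary \ref{cr:Definability} states that any structure first-order definable in a word automatic structure is again word automatic. Composing these two facts immediately yields that $\B$ is word automatic, so no further work is needed here.

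For the hard direction, I would show that every word automatic structure $\A = (A; R_1, \ldots, R_m)$ is first-order definable in $W$. By the reduction to a binary alphabet noted earlier, assume $A \subseteq \{0,1\}^\star$, so that the domain of $\A$ sits inside the domain of $W$. It then suffices to prove the core lemma that every regular relation $R \subseteq (\{0,1\}^\star)^k$ is definable by a first-order formula $\varphi_R(x_1, \ldots, x_k)$ over $W$, without parameters. Applying this lemma to the regular set $A$ and to each regular relation $R_i$ produces the defining formulas $\varphi_A, \varphi_{R_1}, \ldots, \varphi_{R_m}$ that witness first-order definability of $\A$ in $W$.

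To prove the core lemma I would first set up a bit-reading calculus inside $W$. The prefixes of a string $u$ are exactly the $y$ with $y \preceq u$, and they are ordered by length; the $i$-th bit of $u$ is recovered by locating the prefix $p$ of $u$ of length $i+1$ and testing whether $p = Left(q)$ or $p = Right(q)$ for $q$ the prefix of length $i$. Positions in different strings are aligned by quantifying over prefixes of equal length, detected via $EqL$, and length comparison $|u| \leq |v|$ is expressed as $\exists v'(v' \preceq v ~\&~ EqL(v', u))$, which also lets us detect the padding symbol $\Diamond$ of the convolution as a position lying beyond the end of some $x_j$. With this calculus in hand, let $\M = (S, \iota, \Delta, F)$ recognize the convolution language $c(R)$, with $S = \{s_0, \ldots, s_{\ell-1}\}$. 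I would encode an accepting run of $\M$ on $c(x_1, \ldots, x_k)$ by existentially quantifying over binary strings $z_0, \ldots, z_{\ell-1}$, each of length one more than the longest argument, where $z_s$ marks the steps at which the run visits state $s$. The formula $\varphi_R$ then asserts that each position carries exactly one state, that the initial bit encodes $\iota$, that the final bit encodes a member of $F$, and, universally over positions $i$, that the transition recorded between steps $i$ and $i+1$ is consistent with $\Delta$ given the tuple of symbols read off at position $i$ through the bit-reading calculus.

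The main obstacle is entirely in this last construction: faithfully simulating an automaton run using only the four primitives of $W$. The delicate points are reading a fixed bit of a single string, reading the bits of all $k$ arguments simultaneously at one common position, handling the $\Diamond$-padding of shorter arguments through length comparisons, and verifying the transition relation locally by relating positions $i$ and $i+1$ through $Left$ and $Right$. Once these encodings are fixed, each clause of $\varphi_R$ is manifestly first-order and the remaining verifications are routine but tedious, so I would only sketch them rather than write out every formula.
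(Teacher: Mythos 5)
Your proposal is correct and follows essentially the same route as the paper: the easy direction via Corollary \ref{cr:Definability}, and the hard direction by encoding an accepting run of the automaton through existentially quantified state-indicator strings, using the same prefix/$EqL$/$Left$/$Right$ bit-reading calculus. The only difference is that you spell out the $k$-ary convolution and $\Diamond$-padding details explicitly, which the paper's proof handles only implicitly by treating the unary case and noting that relations follow likewise.
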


\begin{proof}
One direction is clear because automatic structure are closed under first-order definability.  For the converse, suppose that $\A$ is an automatic structure.  We will show that it is definable in $(\{0,1\}^\star; \preceq, Left, Right, EqL)$.  

\smallskip

It suffices to show that every regular relation on $\{0,1\}^\star$ is definable in the word structure. Without loss of generality, assume that $L$ is a language recognized by the
word automaton  $\M = (S, \iota, \Delta, F)$.  Assume $S = \{1, \ldots, n \}$ with $1 = \iota$ (the initial state).  We define a formula $\varphi_\M(v)$ in the language of $(\{0,1\}^\star; \preceq, Left, Right, EqL)$ which will hold of the string $v$ if and only if $v$ is accepted by $\M$.   We use the following auxiliary definable relations in our definition of $\varphi_\M$.
\begin{itemize}
\item Length order, $|p| \leq |x|$, is defined by $\exists y ( y \preceq x ~\&~EqL(y,p) )$);
\item The digit test relation, the digit of $x$ at position $|p|$ is $0$, is defined by 
$\exists y\exists z (z \preceq y \preceq x ~\&~ EqL(y,p) ~\&~ Left(z,y) )$);
\item The distinct digits relation states that the digits of $x_1$ and $x_2$ at position $|p|$ are distinct.
\end{itemize}
The following paragraph describing a run of $\M$ on input $v$ can now be translated into a single first-order sentence $\varphi_\M$ with parameter $v$. \lqt There are strings $x_1, \ldots, x_n$ each of length $|v|+1$.  For each $p$, if $|p| \leq |v|+1$, there is exactly one $x_j$ with digit $1$ at position $|p|$ (the strings $x_1, \ldots, x_n$ describe which states we're in at a given position in the run). If $x_i$ has digit $1$ at position $|p|$, and $\sigma$ is the digit of $v$ at position $|p|$, and $\Delta(i,\sigma) = j$ then $x_j$ has digit $1$ at positions $|p|+1$.  The digit in the first position of $x_1$ is $1$.  There is some $x_j$ such that $j \in F$ and for which the digit in the last position is $1$.\rqt  Therefore, $L$ is first-order definable in $(\{0,1\}^\star; \preceq, Left, Right, EqL)$.  Since the domain and each of the relations of an automatic structure are regular, they are also first-order definable. \qed
\end{proof}

If we use weak monadic second-order, WMSO, logic (where set quantification is only over finite sets) instead of first-order logic as our notion of reducibility, we arrive at a more natural complete structure.  Blumensath and Gr\"adel \cite{BluGr00} show that, in this case, the successor structure is complete.  This result has nice symmetry with the B\"uchi case, where a structure is B\"uchi recognizable if and only if it is MSO definable in $(\omega; S)$.

\begin{corollary}
A structure is word automatic if and only if it is WMSO definable in the successor structure $(\omega; S)$.
\end{corollary}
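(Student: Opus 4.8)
The plan is to exhibit a mutual interpretability between $(\omega; S)$ equipped with WMSO and the word structure $W = (\{0,1\}^\star; \preceq, Left, Right, EqL)$ equipped with first-order logic, and then to combine this with Theorem \ref{thm:defin}. The guiding principle is the analogy with the B\"uchi case (Theorem \ref{thm:Buchi}): there arbitrary subsets of $\omega$ correspond to infinite strings and full MSO yields B\"uchi automaticity, whereas here finite subsets of $\omega$ correspond to finite strings, so restricting set quantification to finite sets (WMSO) should match exactly the finite-string world of word automata.

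First I would treat the direction ``word automatic $\Rightarrow$ WMSO definable''. By Theorem \ref{thm:defin}, a word automatic $\A$ is first-order definable in $W$, so it suffices to give a WMSO interpretation of $W$ itself in $(\omega; S)$. I code a finite string by a pair of finite sets $(D, X)$, where $D = \{0, 1, \ldots, \ell - 1\}$ is an initial segment recording the length $\ell$ and $X \subseteq D$ records the positions carrying a $1$; being an initial segment is expressed (even first-order) by closure under predecessor. Under this coding $EqL$ becomes $D_1 = D_2$, the prefix relation $\preceq$ becomes ``$D_1 \subseteq D_2$ and $X_1, X_2$ agree on $D_1$'', and $Left, Right$ become ``extend $D$ by its least absent element, leaving $X$ unchanged (respectively adding the new position to $X$)''. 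All of these are WMSO-definable over $(\omega; S)$. Composing the first-order definitions in $W$ with these WMSO definitions, and recalling that first-order logic is a fragment of WMSO, yields a WMSO interpretation of $\A$ in $(\omega; S)$. Note that a single finite set will not suffice to code a string, since appending trailing zeros does not change the set while it does change the string; this is exactly why the explicit length marker $D$ is needed.

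For the converse, ``WMSO definable $\Rightarrow$ word automatic'', I would give a first-order interpretation of $(\omega; S)$-with-WMSO back into $W$ and then invoke Theorem \ref{thm:defin}. Here I represent a position $n \in \omega$ by any string $p$ with $|p| = n$, so that equality of positions is $EqL$ and $S$ is ``length plus one'', both first-order in $W$; and I represent a finite set $X$ by a characteristic string $s$ with $s(i) = 1 \iff i \in X$, so that quantification over finite sets becomes ordinary first-order string quantification in $W$. The membership relation $n \in X$ then translates to ``the digit of $s$ at position $|p|$ is $1$'', which is first-order definable in $W$ using the length-order and digit-test relations already built in the proof of Theorem \ref{thm:defin} (with the convention that positions beyond $|s|$ read $0$). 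Every WMSO formula over $(\omega; S)$ thus translates to a first-order formula over $W$, and Theorem \ref{thm:defin} converts first-order definability in $W$ into word automaticity.

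The step requiring the most care is the bookkeeping around the finite-set/finite-string correspondence $X \mapsto \sigma(X)$, which is surjective but not injective. The explicit length marker $D$ in one direction, and the digit-test and length-order relations in the other, are precisely the devices that repair this mismatch. This same feature is what makes the \emph{weak} logic, rather than full MSO, produce the word-automatic characterization, yielding the clean symmetry with the B\"uchi case.
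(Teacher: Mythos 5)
Your proposal is correct, and for the substantive direction it takes the same route as the paper: reduce via Theorem \ref{thm:defin} to giving a WMSO interpretation of the word structure $(\{0,1\}^\star; \preceq, Left, Right, EqL)$ inside $(\omega; S)$. The only difference there is the coding device: you represent a string by a \emph{pair} of finite sets (an initial segment $D$ recording the length, together with the set $X \subseteq D$ of $1$-positions), whereas the paper folds the length marker into a single set, $Rep(v) = \{ i : v(i) = 1 \} \cup \{ |v|+1 \}$, which keeps the interpretation one-dimensional; both devices solve the trailing-zeros ambiguity that you correctly flag as the crux. Where you genuinely go beyond the paper is the converse direction: the paper's written proof only establishes ``word automatic $\Rightarrow$ WMSO definable'' and leaves the other implication implicit, while you close the loop explicitly by translating WMSO over $(\omega; S)$ into first-order logic over the word structure (positions as string lengths, finite sets as characteristic strings, membership via the digit-test and length-order relations) and then applying the other direction of Theorem \ref{thm:defin} together with closure of automatic structures under first-order definability (Corollary \ref{cr:Definability}); this is the classical automata--WMSO correspondence recast as an interpretation, and it makes your argument self-contained where the paper's is not. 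The one convention you rely on tacitly is that definability may be multi-dimensional (elements coded by pairs of sets, or by tuples of strings); this is standard and harmless here, since convolution turns a first-order definable set of string tuples back into a regular language, but it is worth stating, because the paper's official definition of definability is one-dimensional and its own coding is chosen precisely so that each string corresponds to a single finite set.
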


\begin{proof}
By Theorem \ref{thm:defin}, it suffices to give a WMSO definition of the structure $(\{0,1\}^\star; \preceq, Left, Right, EqL)$ in $(\omega; S)$.  To do so, interpret each $v \in \{0,1\}^\star$ by the set $Rep(v) = \{ i : v(i) = 1 \}  \cup \{ |v|+1 \}$.  Then $Rep(v)$ is a finite set and for each nonempty finite set $X$ there is a string $v$ such that $Rep(v)= X$.  Moreover, under this representation, each of $Left, Right, \preceq, EqL$ is a definable predicate. \qed
\end{proof}

We refer the reader to \cite{BluGr00}, \cite{RubPhD} ,\cite{RubinSurvey}  for issues related to definability in automatic structures. 


\section{
{\em Characterization Results}}


\subsection{Automata on trees and tree automatic structures}\label{sec:TreeAut}

The two flavours of automata we have presented so far operate on linear inputs: finite and infinite strings.  We now take a slight detour and consider labelled trees as inputs for automata.  Our archetypal tree is the two successor structure,
$$
\T = (\{ 0,1 \}^\star; Left, Right).
$$  
The root of this binary tree is the empty string, denoted as $\lambda$.  Paths in $\T$ are defined by infinite strings in $\{0,1\}^\omega$.  Let $\Sigma$ be a finite alphabet.  A $\Sigma$-labelled tree $(\T,v)$ associates a mapping $v : \T \to \Sigma$ to the binary tree.  The set of all $\Sigma$-labelled trees is denoted by $Tree(\Sigma)$.  

\smallskip

A {\bf Rabin automaton} $\M$ is specified by $\M = (S, \iota, \Delta, \F)$, where $S$ and $\iota$ are the finite set of states and the initial state, the transition 
relation is $\Delta \subset S \times \Sigma \times (S \times S)$,
and the accepting condition is given by $\F \subset \P(S)$. An input to a Rabin automaton is a labelled tree $(\T, v)$.  A run of $\M$ on $(\T, v)$ is a mapping $r: \T \to S$ which respects the transition relation in that
\[
r(\lambda) = \iota, \qquad
 \forall x \in \T\big[\big(r(x), v(x), r (Left(x)), r(Right(x)) \big) \in \Delta \big].
\]
The run $r$ is accepting if for every path $\eta$ in $\T$, the set 
\[
In(\eta) = \{s \in S : s ~\text{appears on}~ \eta~ \text{infinitely many times} \}
\]
 is an element of $\F$.  The language of a Rabin automaton $\M$, denoted as $L(\M)$, is the set of all $\Sigma$-labelled trees $(\T, v)$ accepted by $\M$.  

\begin{example}
Here are a few examples of Rabin automata recognizable sets of $\{0,1\}$-labelled trees.
\begin{itemize}
\item $\{ (\T, v) : v(x) = 1 ~\text{for only finitely many $x \in \T$} \}$
\item $\{ (\T, v) : \text{each path has infinitely many nodes labelled $1$} \}$
\item $\{ (\T, v) : \forall x \in \T ( v(x) = 1 \implies \text{the subtree rooted at $x$ is labelled by $0$s only}) \}$
\item $\{ (\T, v) : \exists x \in \T (v(x) = 1) \}$.
\end{itemize}
\end{example}

As you may recall from Subsection \ref{sec:AutDefs}, fundamental facts about B\"uchi automata gave us algorithms for checking emptiness and for constructing new automata from old ones.  Rabin's breakthrough theorems in \cite{Rab69} yield analogous results for Rabin automata.

\begin{theorem}[Rabin; 1969] \hfill
\begin{enumerate}
\item There is an algorithm that, given a Rabin automaton $\M$, decides if $L(\M)$ is empty.
\item The class of all Rabin recognizable tree languages is effectively closed under the operations of union, intersection, and complementation.
\end{enumerate}
\end{theorem}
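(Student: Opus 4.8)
The plan is to handle both parts through the \emph{acceptance game} of a tree automaton, which separates the finite-state combinatorics of $\M$ from the particular infinite input tree. First I would set up, for a given $\M = (S, \iota, \Delta, \F)$, a two-player game played on the finite arena built from $S$ (or $S \times \Sigma$): one player, say Builder, chooses at each node a label and a pair of successor states consistent with $\Delta$, while the other player, Pathfinder, chooses which of the two children to descend into, thereby singling out one path $\eta$; Builder wins the resulting play iff $In(\eta) \in \F$. The winning condition is an $\omega$-regular (Muller) condition on the sequence of states along the chosen branch. The central structural fact I would rely on throughout is that such games on finite arenas are determined and admit \emph{finite-memory} (for parity conditions, memoryless) winning strategies.

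For Part 1 (emptiness), the key observation is that $L(\M)\neq\emptyset$ asks exactly for the existence of \emph{some} $\Sigma$-labelling together with an accepting run, i.e.\ a run with $In(\eta)\in\F$ on \emph{every} path. This is precisely the condition that Builder has a winning strategy in the game above. Since the arena is finite, I would compute the winner by a finite combinatorial search, and, invoking memoryless/finite-memory determinacy, conclude that whenever Builder wins she wins memorylessly; unravelling such a strategy produces a regular witness tree carrying an accepting run. This both decides emptiness and bounds its complexity.

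Part 2 splits sharply. Union and intersection are the routine constructions. For union I would take the disjoint union of the state sets of $\M_1,\M_2$ and let a nondeterministic initial transition guess which machine to simulate, with acceptance family the lifted $\F_1\cup\F_2$. For intersection I would run both machines in lockstep on $S_1\times S_2$, noting that a state of $S_i$ occurs infinitely often along a path exactly when some pair with that $i$-th coordinate does; hence
\[
\F = \{\, T \subseteq S_1\times S_2 : \pi_1(T)\in\F_1 ~\&~ \pi_2(T)\in\F_2 \,\}
\]
yields an automaton recognizing $L(\M_1)\cap L(\M_2)$. Both are effective.

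Complementation is where all the difficulty lives, and it is the main obstacle. The cleanest route is through \emph{alternating} automata. I would first note that acceptance of $\M$ on a \emph{fixed} tree is itself a game (Builder's nondeterministic choices against Pathfinder's universal choice of path), so $\M$ corresponds to an alternating tree automaton. For alternating automata complementation is syntactically trivial: dualize the transition conditions, swapping the two players, and complement the acceptance condition. The genuine obstacle is the \emph{simulation theorem} — converting the dual alternating automaton back into an ordinary nondeterministic Rabin automaton. This back-translation rests on the deep fact that the underlying infinite games, after translating the Muller condition into a parity condition via a latest-appearance-record construction, are determined with \emph{memoryless} strategies (the Gurevich--Harrington / positional determinacy theorem). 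Memoryless determinacy is exactly what lets the complement automaton guess a finite-state strategy and verify it along every branch. I expect this determinacy result to be the crux; granting it, the back-translation is bookkeeping, all steps remain effective, and the effectively closed complementation of~(2) follows.
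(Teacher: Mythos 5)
Your proposal cannot be matched against a proof in the paper, because the paper gives none: this theorem is stated as a citation of Rabin's 1969 results, exactly as the paper earlier declined to prove B\"uchi complementation and deferred to Thomas's survey. So your sketch supplies an argument where the paper supplies a reference, and the argument you supply is the modern game-theoretic proof (Gurevich--Harrington, later streamlined via alternating automata by Muller--Schupp and Emerson--Jutla), which is genuinely different from, and far more digestible than, Rabin's original induction-on-automata argument. Your pieces are sound: reducing emptiness to solving a Muller game on a finite arena is correct, including the point that finite-memory determinacy turns a winning strategy for Builder into a regular witness tree; the product construction for intersection is correct precisely because, with the paper's Muller-type acceptance condition $\F \subseteq \P(S)$, the infinity set of a path in the product automaton projects onto the infinity sets of the two component runs; union is routine. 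Two caveats on complementation, neither fatal. First, determinacy is needed not only for the simulation theorem but already for the correctness of dualization itself: to conclude that the dual alternating automaton accepts the complement, you must know that whenever Builder has no winning strategy in the acceptance game on a fixed tree, Pathfinder has one --- that is exactly determinacy of these games, so the ``syntactically trivial'' step silently consumes the same deep fact. Second, in Muller (or Rabin-pairs) games only the player with the Rabin-type objective has memoryless winning strategies; the opponent in general needs finite memory via a latest-appearance-record, so your parenthetical ``memoryless'' should uniformly read ``finite-memory,'' and calling the back-translation ``bookkeeping'' understates it: guessing the strategy as a bounded annotation of the input tree and verifying every branch against a deterministic word automaton for the complemented condition is the actual content of the simulation theorem. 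With those adjustments your outline is a faithful sketch of the standard modern proof.
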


In the setting of sequential automata, B\"uchi's theorem (Theorem \ref{thm:Buchi}) connected automata and logic.   In particular, the logic used was MSO, where we allow quantification both over elements of the domain and over subsets of the domain.  MSO on the binary tree $\T$ can be used to express properties of sets of trees such as $Path(X), Open(X), Clopen(X)$.   Rabin's theorem \cite{Rab69} connects MSO definability and automaton recognizability.  Note that, as in the B\"uchi case, convolutions can be used to define Rabin recognizability of relations on trees.

\begin{theorem}[Rabin; 1969] 
A relation $R \subseteq \P (\T)^k$ is definable in the MSO logic of the two successor structure if and only if $R$ is recognizable by a Rabin automaton.  In particular, the MSO theory of $\T$, denoted as S2S, is decidable.
\end{theorem}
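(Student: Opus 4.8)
The plan is to prove both implications by adapting the automata--logic correspondence already established for B\"uchi automata (Theorem~\ref{thm:Buchi}) to the branching setting, where paths through the tree replace positions in a string and the Rabin condition replaces the B\"uchi condition. As there, the forward direction (MSO-definable $\Rightarrow$ Rabin recognizable) proceeds by induction on the structure of the defining formula, while the reverse direction (Rabin recognizable $\Rightarrow$ MSO-definable) is proved by writing down an MSO formula that asserts the existence of an accepting run.

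For the forward direction, I would first put the formula into the normal form in which all variables are set variables, exploiting that singletons are MSO-definable on $\T$ (a singleton being a nonempty set with no proper nonempty subset, exactly as for $(\omega;S)$). The atomic relations --- membership, and the graphs of $Left$ and $Right$ --- are each recognized by a small Rabin automaton that merely checks a local labelling condition and accepts unconditionally on every path. For the inductive step, conjunction, disjunction, and negation are handled by the effective closure of the Rabin recognizable relations under intersection, union, and complementation (the preceding closure and emptiness theorem of Rabin). Existential set quantification $\exists X\,R$ corresponds to projection: from a Rabin automaton for the relation $c(R)\subseteq Tree(\Sigma\times\{0,1\})$ one obtains an automaton for the projected relation by allowing it to guess, nondeterministically, the component of the label that encodes $X$. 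This is where the nondeterminism of Rabin automata is essential, and it is precisely here that complementation must already be available so that the preceding negation step is legitimate.

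For the reverse direction, fix a Rabin automaton $\M=(S,\iota,\Delta,\F)$ and encode a run $r:\T\to S$ by the tuple $(X_s)_{s\in S}$ of its state-preimages $X_s=\{x:r(x)=s\}$. The MSO formula asserts the existence of sets $(X_s)_{s\in S}$ that partition $\T$ (forcing $r$ to be a genuine function), that satisfy the initial condition $\lambda\in X_\iota$, and that respect $\Delta$ node by node --- the latter being a first-order condition on the membership of $x$, $Left(x)$, $Right(x)$ in the various $X_s$ together with the label at $x$. The acceptance condition is the only genuinely infinitary part: using that the ancestor order and the predicate $Path(\eta)$ (``$\eta$ is a maximal branch through the root'') are MSO-definable on $\T$, I express ``$s$ occurs infinitely often on $\eta$'' by $\forall x\,(x\in\eta\to\exists y\,(x\prec y\,\wedge\,y\in\eta\,\wedge\,y\in X_s))$, and then, since $S$ is finite and $\F\subseteq\P(S)$ is a finite family, I write the Rabin condition as the finite disjunction over $F\in\F$ of ``the set of infinitely-occurring states on $\eta$ is exactly $F$.'' Guarding this by $\forall\eta\,(Path(\eta)\to\cdots)$ completes the translation.

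The decidability of S2S then follows exactly as in the B\"uchi case: given a sentence $\varphi$ over $\T$, the forward direction produces a Rabin automaton $\M_\varphi$ for the parameter-free relation it defines, and $\varphi$ holds in $\T$ if and only if $L(\M_\varphi)\neq\emptyset$, which the emptiness algorithm of the preceding theorem decides. The main obstacle is not visible at the level of this reduction: it is hidden inside the closure of Rabin recognizable languages under complementation, which powers the negation step of the induction and is by far the deepest ingredient of Rabin's work. Since that closure is granted by the preceding theorem, the remaining delicate points in the present proof are the faithful MSO encoding of the Rabin acceptance condition --- in particular the definability of paths and of ``infinitely often'' along a path --- and the verification that the nondeterministic projection construction preserves acceptance under existential set quantification.
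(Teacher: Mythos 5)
The paper offers no proof of this theorem at all: it is stated as Rabin's celebrated result and cited to \cite{Rab69}, so there is no internal argument to compare yours against. Judged on its own merits, your proposal is the standard and essentially correct derivation of the equivalence from the closure and emptiness properties granted by the preceding theorem: the forward direction by induction on formulas in set-variable normal form, with Boolean connectives handled by closure under union, intersection, and complementation, and existential set quantification handled by nondeterministic projection; the reverse direction by encoding a run as the tuple of state-preimage sets and expressing the partition, initial, local transition, and Rabin acceptance conditions in MSO (using the MSO-definability of the prefix order and of maximal paths, and the fact that along an infinite path ``unboundedly often'' coincides with ``infinitely often''); and decidability via the emptiness test, exactly parallel to the B\"uchi case. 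Two caveats are worth making explicit. First, your proof is \emph{relative}: all of the genuine mathematical depth of Rabin's theorem is concentrated in the complementation closure that you import from the preceding theorem, so what you have is an honest reduction of the logic--automata equivalence to that closure property, not a self-contained proof of Rabin's result --- this matches how the paper organizes the material, but it should be stated. Second, your claim that the atomic automata ``accept unconditionally on every path'' holds only for purely local atomic relations such as $X \subseteq Y$ and the set-graphs of $Left$ and $Right$; if singleton-hood were taken as an atomic relation (as in the usual $S2S$ normal form), the corresponding automaton needs a nontrivial acceptance condition to force the distinguished label to actually occur on the branch that is still waiting for it. Your setup avoids this by defining $Sing(X)$ as an MSO formula, so the outline stands, but you should pin down exactly which relations you treat as atomic so that this step is airtight.
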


This theorem has led to numerous applications in logic and theoretical computer science.  Many of these applications involve proving the decidability of a particular theory by reducing it to the MSO theory of the binary tree.

\smallskip

Rabin automata have natural counterparts which work on finite binary trees.   Let $L(X)$ denote the set of leaves (terminal nodes) of a finite binary tree $X$. A finite $\Sigma$-tree is a pair $(X, v)$ where $X$ is a finite binary tree and $v : X \setminus L(X) \to \Sigma$ is a mapping which labels non-leaf nodes in $X$ with elements of the alphabet.  
A (top-down) {\bf tree automaton} is $\M = (S, \iota, \Delta, F)$ where $S, \iota, F$ are as in the word automatic case, and 
$\Delta \subset S \times \Sigma \times (S \times S)$
is the transition relation.  A run of $\M$ on a finite $\Sigma$-tree $(X,v)$ is a map $r: X \to S$ which satisfies $r(\lambda) = \iota$ and 
$\big(r(x), v(x), r(Left(x)), r(Right(x))\big) \in \Delta$.
 The run is accepting if each leaf node $x \in L(x)$ is associated to an accepting state $r(x) \in F$.  The language of a tree automaton is the set of finite $\Sigma$-trees it accepts.  As before, tree automata have pleasant algorithmic properties (see the discussions in \cite{Doner65}, \cite{TW65}, \cite{Rab69}).

\begin{theorem}[Doner; 1965.  Thatcher, Wright; 1965.  Rabin; 1969]\hfill
\begin{enumerate}
\item There is an algorithm that, given a tree automaton $\M$, decides if $L(\M)$ is empty.
\item The class of all tree automata recognizable languages is effectively closed under the operations of union, intersection, and complementation.
\end{enumerate}
\end{theorem}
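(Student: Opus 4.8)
The plan is to follow the same three-part template as Theorems \ref{thm:BucAlgs} and \ref{thm:wordBA}, now with tree-shaped runs in place of linear ones.

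For part (1), I would decide emptiness by a least-fixed-point computation of the set $G \subseteq S$ of \emph{productive} states, where $q \in G$ is meant to say that some finite $\Sigma$-tree admits an accepting run whose root carries state $q$. Initialize $G_0 = F$, since a single leaf labelled by an accepting state already constitutes an accepting one-node run, and iterate by placing $q$ into $G_{n+1}$ whenever there exist $\sigma \in \Sigma$ and $q_L, q_R \in G_n$ with $(q, \sigma, (q_L, q_R)) \in \Delta$. The sequence stabilizes after at most $|S|$ rounds, and the key claim is that $L(\M) \neq \emptyset$ if and only if $\iota \in G$; correctness follows by induction on tree height in one direction and on the round at which a state first enters $G$ in the other. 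The whole computation runs in time polynomial in the size of $\M$.

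For part (2), union and intersection are handled by direct constructions on top-down automata. For intersection I would run the two automata in lockstep on the same input tree: state set $S_1 \times S_2$, initial state $(\iota_1, \iota_2)$, the transition firing $((p,q), \sigma, ((p',q'),(p'',q'')))$ exactly when both component transitions are legal, and accepting set $F_1 \times F_2$, so that an accepting product run codes a pair of accepting runs on the tree. For union I would take the disjoint union of the two state sets and introduce a single new initial state whose outgoing transitions copy those of $\iota_1$ and of $\iota_2$; a run then nondeterministically commits at the root to one of the two automata and thereafter stays within its states.

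Complementation is the hard part, and I expect it to be the main obstacle. Because acceptance is an existential condition over runs, one cannot complement a nondeterministic top-down tree automaton by merely swapping accepting and rejecting states, nor by an ``or'' accepting set in the product (a universal condition on leaves does not distribute over the two factors). The plan is instead: first convert $\M$ to an equivalent nondeterministic \emph{bottom-up} tree automaton; then determinize it by a subset construction, in which a deterministic bottom-up automaton labels each node with the set of states the nondeterministic machine could assume there, so that each input tree receives a unique run; and finally complement the resulting deterministic automaton by exchanging its accepting and non-accepting state sets. The technical heart, where effectiveness is retained but an exponential state blow-up appears, is proving that the subset construction preserves the recognized language, so that the unique deterministic run faithfully records all nondeterministic behaviour. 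Once that is in hand, the final swap is immediate, exactly paralleling the role complementation played in the B\"uchi setting of Theorem \ref{thm:BucAlgs}.
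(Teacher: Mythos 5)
The paper itself offers no proof of this theorem: it simply states the result and defers to the cited works of Doner, Thatcher--Wright, and Rabin (``see the discussions in...''), so there is no in-paper argument to compare yours against. Your outline supplies the standard proof from the tree-automata literature, and it is correct. For part (1), the productive-state fixed point is sound: with $G_0 = F$ and the monotone iteration you describe, the two inductions (on tree height, and on the round at which a state enters $G$) establish that $\iota \in G$ iff $L(\M) \neq \emptyset$, and stabilization within $|S|$ rounds gives a polynomial-time bound. For part (2), the product construction for intersection and the fresh-root construction for union are both right; the only detail to add is that in the paper's formalism leaves carry no $\Sigma$-labels, so the one-node tree is accepted exactly when the initial state is accepting, and you must therefore declare your new initial state accepting precisely when $\iota_1 \in F_1$ or $\iota_2 \in F_2$. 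Your complementation route --- reverse the top-down automaton into a nondeterministic bottom-up one, determinize by the subset construction, then swap accepting and rejecting states --- is the correct one, and your instinct that the detour is unavoidable is also correct: deterministic \emph{top-down} tree automata are strictly weaker than nondeterministic ones (they recognize only path-closed languages), so no direct state-swap on $\M$ can work. Two small points make the final step airtight: keep the subset automaton complete (the empty set is a legitimate state, so every tree has a unique run), and take acceptance of the determinized automaton to be ``the root's subset contains $\iota$,'' so that the complement accepts exactly the trees whose root subset omits $\iota$. With those refinements your sketch is a faithful reconstruction of the proof the paper leaves to its references.
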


Automata which work on trees can be used to define tree automatic structures.  In this context, domain elements of structures are represented as trees rather than strings.

\begin{definition}
A structure is {\bf tree automatic} if its domain and basic relations are recognizable by tree automata.
A structure is {\bf Rabin automatic} if its domain and basic relations are recognizable by Rabin automata.
\end{definition}

Every tree automatic structure is Rabin automatic (we can pad finite trees into infinite ones).  Since strings embed into trees, it is easy to see that every word automatic structure is tree automatic and that every B\"uchi automatic structure is Rabin automatic.  However, this inclusion is strict.  For example, the natural numbers under multiplication $(\omega; \times)$ is a tree automatic structure but (as we will see in the next subsection) it is not word automatic.  Similarly, the countable atomless Boolean algebra is tree automatic but not word automatic \cite{KhNieRuSte04}.  In Lecture 3, we will discuss a recent result which separates B\"uchi and Rabin structures.
Although there is a strict separation between the classes of sequential-input and branching-input automatic structures, their behaviour in terms of definability is very similar.  

\begin{theorem}[Rabin; 1969]
A structure is Rabin automatic if and only if it is MSO definable in the binary tree $\T$.
\end{theorem}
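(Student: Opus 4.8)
The plan is to obtain this theorem as a structure-level lift of the preceding relation-level characterization of Rabin (that a relation $R \subseteq \P(\T)^k$ is MSO definable in the two-successor structure exactly when it is Rabin recognizable), in the same way that the corresponding B\"uchi corollary was read off from B\"uchi's theorem. The first thing I would fix is the coding dictionary between labelled trees and tuples of subsets of $\T$. Choosing $\Sigma = \{0,1\}^m$, a $\Sigma$-labelled tree $(\T, v)$ corresponds to the $m$-tuple $(X_1, \ldots, X_m)$ with $X_j = \{x \in \T : v(x)_j = 1\}$; thus a single domain element is a point of $\P(\T)^m$, a $k$-tuple of elements is a point of $\P(\T)^{mk}$, and a $k$-ary relation on $Tree(\Sigma)$ is a subset of $\P(\T)^{mk}$. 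Under this dictionary the convolution $c(R)$ of a $k$-ary relation on $Tree(\Sigma)$ is exactly an $(mk)$-ary relation on $\P(\T)$ (a set of $Tree(\{0,1\}^{mk})$ trees), so \lqt Rabin recognizable\rqt in the structure sense coincides with \lqt Rabin recognizable\rqt in the $\P(\T)$ sense of Rabin's theorem.

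With this dictionary in place both directions become applications of Rabin's theorem. For the forward direction I would assume $\A$ is Rabin automatic, so its domain $D$ and each basic relation $R_i$ are Rabin recognizable; read as relations on $\P(\T)$, Rabin's theorem produces MSO formulas $\varphi_D$ and $\varphi_i$ defining them in $\T$, exhibiting $\A$ as MSO definable in $\T$ (via an interpretation of dimension $m$). For the converse I would assume $\A$ is MSO definable in $\T$ by formulas $\varphi_D, \varphi_1, \ldots, \varphi_n$; each defines a relation on $\P(\T)$, which is Rabin recognizable by Rabin's theorem, and relativizing the relations to the tuples satisfying $\varphi_D$ (using closure under the Boolean and projection operations to perform the relativization) yields Rabin automata for the domain and for every basic relation, so $\A$ is Rabin automatic. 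The converse could equally be carried out from first principles by the standard induction on formula complexity: the atomic relations $Left$ and $Right$ are Rabin recognizable, and recognizability is closed under the Boolean operations and under the projection matching set-quantification; this induction is, in fact, the content of Rabin's theorem.

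I expect the only genuinely delicate part to be the coding bookkeeping rather than any new automaton construction. I would have to check that the convolution indexing on $Tree(\Sigma^k)$ matches the $\P(\T)^{mk}$ indexing of Rabin's theorem coordinate-for-coordinate, so that a relation is recognized in one framing precisely when it is in the other; I would have to relativize every quantifier in the defining formulas to the MSO-definable domain $D$, so that the interpreted structure is literally $\A$ and not a proper extension; and I would note that MSO formulas may carry set parameters, which under the dictionary are fixed labelled trees and hence fixed Rabin-recognizable points, so parameters cause no trouble in either direction. None of these steps is deep, but getting the dimension $m$ and the domain-relativization exactly right is where a careless argument would break.
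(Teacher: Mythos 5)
Your core argument is correct and is essentially the route the paper intends: the paper states this theorem without proof, treating it as the structure-level reading of the immediately preceding relation-level theorem (Rabin recognizability coincides with MSO definability for relations on $\P(\T)$), exactly parallel to the way the B\"uchi corollary is read off from B\"uchi's theorem. Your coding dictionary ($\Sigma = \{0,1\}^m$, a $\Sigma$-labelled tree as an $m$-tuple of subsets of $\T$, convolution indexing matched to $\P(\T)^{mk}$) and the relativization of the defining formulas to the domain via closure under the Boolean and projection operations are precisely the bookkeeping the paper leaves implicit.

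However, your closing remark about parameters is wrong, and it matters because the paper itself records the contrary phenomenon. You claim set parameters are harmless because they are ``fixed Rabin-recognizable points.'' But a singleton $\{(\T,v)\}$ is Rabin recognizable if and only if $v$ is a \emph{regular} tree (only finitely many distinct subtrees up to isomorphism); for a non-regular labelled tree the singleton is not recognizable, and instantiating a parameter at such a tree can destroy recognizability --- for instance the formula $X = Y$ with parameter $Y = P$ defines exactly $\{P\}$. This is the same obstruction the paper notes for B\"uchi automata, where $I(R,c)$ is B\"uchi recognizable only when $c$ is ultimately periodic, and it is why the L\"owenheim--Skolem theorem for Rabin automatic structures restricts parameters to regular trees. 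So the theorem must be read with parameter-free MSO definitions (or with parameters that are regular trees); under a reading that allows arbitrary set parameters, the ``definable implies Rabin automatic'' direction fails. Your proof stands once that remark is deleted or restricted accordingly.
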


\begin{theorem}[Rabin; 1969]
A structure is tree automatic if and only if it is WMSO definable in the  binary tree $\T$.
\end{theorem}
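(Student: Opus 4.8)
The plan is to reduce the structure-level statement to a relation-level correspondence, exactly as in the Blumensath--Gr\"adel argument (Theorem \ref{thm:defin}) and its WMSO corollary. The forward direction---that a WMSO-definable structure is tree automatic---follows once we know that every WMSO-definable relation over finite subsets of $\T$ is recognizable by a finite tree automaton, since then the domain and each basic relation of the structure are recognizable and the structure is tree automatic by definition. The reverse direction follows from the converse statement: every tree-automaton recognizable relation is WMSO definable in $\T$. So the whole theorem rests on the relation-level equivalence ``WMSO definable over finite sets $\iff$ finite-tree-automaton recognizable,'' which is the tree analogue of B\"uchi's Theorem \ref{thm:Buchi}.

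First I would fix an encoding matching the two worlds. A domain element of a tree automatic structure is a finite $\Sigma$-labelled tree $(X,v)$; I would encode it by a tuple of finite subsets of $\{0,1\}^\star$: one finite set $D$ recording the node set $X$, together with finitely many finite sets $X_\sigma$ (one per letter $\sigma \in \Sigma$) recording which non-leaf nodes carry label $\sigma$. Because $X$ is finite, all of these sets lie in the range of WMSO quantification, and the properties ``$D$ is the node set of a finite binary tree'' and ``the $X_\sigma$ partition the non-leaf nodes of $D$'' are themselves WMSO-expressible using only $Left$, $Right$, and finite-set quantifiers. This encoding is what makes the finite-set restriction of MSO the correct logic here, and lining it up correctly---rather than any automaton construction---is the main obstacle.

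For the direction tree automatic $\Rightarrow$ WMSO definable, I would imitate the run-description argument from the proof of Theorem \ref{thm:defin}. Given a tree automaton $\M = (S, \iota, \Delta, F)$ with $S = \{1,\dots,n\}$, an accepting run on $(X,v)$ is a map $r : X \to S$; since $X$ is finite I can encode $r$ by finite sets $R_1, \dots, R_n$ with $R_j = \{x \in X : r(x) = j\}$. The WMSO formula $\varphi_\M$ then asserts the existence of finite sets $R_1,\dots,R_n$ that partition the encoded node set, assign $\iota$ to the root $\lambda$, respect $\Delta$ at every non-leaf node (reading the label off the $X_\sigma$ and the children via $Left, Right$), and assign an accepting state to every leaf. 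Every quantifier here ranges over finite sets, so $\varphi_\M$ is a genuine WMSO formula defining the language of $\M$; applying this to the domain automaton and to each relation automaton of a tree automatic structure shows it is WMSO definable.

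For the converse, WMSO definable $\Rightarrow$ tree automatic, I would argue by induction on the structure of the WMSO formula, using the closure properties of tree automata recorded above. Atomic formulas and the relations $Left, Right$ are handled by base-case automata; the Boolean connectives use closure under union, intersection, and complementation; and both first-order and finite-set existential quantifiers correspond to a projection-and-product construction on tree automata (the finite-set quantifier projects away one of the encoding sets $X_\sigma$ or $R_j$), which again yields a tree automaton. Thus every WMSO-definable relation is tree-automaton recognizable, so a structure all of whose relations are WMSO definable in $\T$ is tree automatic. The one point requiring care throughout is that finite-set quantification never forces us to guess an infinite labelling, which is precisely what distinguishes this result from the Rabin/MSO characterization and keeps us within the class of finite tree automata.
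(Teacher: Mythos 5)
The paper itself offers no proof of this theorem: it is stated as a citation to Rabin (1969), immediately after its MSO/Rabin-automaton counterpart, so there is no in-paper argument to compare yours against. On its own merits, your proposal is a correct outline of the classical Doner--Thatcher--Wright argument, and it is exactly the tree analogue of the technique the paper \emph{does} spell out for the word case: your run-encoding direction mirrors the proof of Theorem \ref{thm:defin} (existentially quantifying the sets $R_1,\dots,R_n$ that record which state the run assigns to each node, just as that proof quantifies the strings $x_1,\dots,x_n$), and your converse direction mirrors the induction-on-formulas argument of Theorem \ref{thm:decidKN} and the WMSO corollary following Theorem \ref{thm:defin}.

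Two small points deserve explicit attention if this were written out in full. First, your induction for the direction WMSO $\Rightarrow$ tree automatic needs closure of tree-recognizable relations under projection and cylindrification to handle the quantifiers; the paper records Boolean closure for tree automata (the Doner--Thatcher--Wright--Rabin theorem) but never states projection closure for them, so you are invoking a standard fact that is not actually ``recorded above'' and should be proved or cited separately. Second, recognizability of \emph{relations} on finite trees requires a convolution of trees of possibly different shapes, hence a padding convention analogous to the $\Diamond$-padding of finite strings; the paper only gestures at this (``convolutions can be used to define Rabin recognizability of relations on trees''), and your encoding of a labelled tree as a tuple $(D,(X_\sigma)_{\sigma\in\Sigma})$ of finite sets quietly depends on fixing such a convention consistently on both sides of the equivalence. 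Neither point is a gap in the approach; both are standard technicalities of the classical proof.
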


Similarly, we have a L\"owenheim-Skolem Theorem for Rabin automatic structures akin to Theorem \ref{thm:LowSkoForBuchi} for B\"uchi automatic structures.  Recall that a $\Sigma$-labelled tree is called {\bf regular} if it has only finitely many isomorphic sub-trees.

\begin{theorem}[Hjorth, Khoussainov, Montalb\'an, Nies; 2008]
Let $\A$ be a Rabin automatic structure and consider the substructure $\A'$ whose domain is
\[
\A' = \{ \alpha \in A : \alpha ~\text{is a regular tree} \}.
\]
Then $\A'$ is a computable elementary substructure of $\A$.  
Moreover, there is an algorithm that from a first-order formula $\varphi(\bar{a}, \bar{x})$ with parameters $\bar{a}$ from $A'$ produces a Rabin automaton which accepts exactly those regular tree tuples $\bar{x}$ which make the formula true in the structure.
\end{theorem}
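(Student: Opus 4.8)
The plan is to follow the template of Theorem~\ref{thm:LowSkoForBuchi}, replacing \lqt ultimately periodic word\rqt by \lqt regular tree\rqt throughout. The engine of the whole argument is a basis theorem for Rabin automata: \emph{every nonempty Rabin recognizable set of $\Sigma$-labelled trees contains a regular tree.} This is the tree analogue of the fact (used implicitly in the B\"uchi case) that a nonempty B\"uchi recognizable language contains an ultimately periodic word. I would establish it exactly as the emptiness algorithm behind Rabin's theorem does: nonemptiness of $L(\M)$ reduces to the existence of a winning strategy for the \lqt Automaton\rqt player in a finite game played on the state space of $\M$ with the induced $\omega$-regular acceptance condition. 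By the finite-memory determinacy of such games, a win can be witnessed by a finite-memory strategy, and unfolding this finite strategy over $\T$ produces an accepted tree with only finitely many distinct subtrees, i.e. a regular tree. The reliance on finite-memory (positional, in the genuine Rabin case) determinacy is the one deep ingredient, and it is the step I expect to be the main obstacle; I would cite it rather than reprove it.

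Given the basis theorem, elementarity of $\A'$ follows from the Tarski--Vaught criterion. First I would record that instantiating a Rabin recognizable relation at a regular-tree parameter again yields a Rabin recognizable relation, the regular-tree analogue of the instantiation clause for ultimately periodic words in Subsection~\ref{sec:AutDefs}; this is what lets parameters from $A'$ be absorbed into automata. Now fix a first-order formula $\varphi(\bar{x}, y)$ and parameters $\bar{a}$ from $A'$ with $\A \models \exists y\, \varphi(\bar{a}, y)$. By Rabin's definability theorem together with the instantiation remark, the solution set $\{\, b \in A : \A \models \varphi(\bar{a}, b)\,\}$ is Rabin recognizable, and it is nonempty by hypothesis; the basis theorem then supplies a witness that is a regular tree, hence an element of $A'$. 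This verifies Tarski--Vaught, so $\A' \preceq \A$. The same instantiation-plus-basis argument, applied to formulas of the form $y = f(\bar{a})$, shows that $A'$ is closed under any functions in the signature, so $\A'$ is genuinely a substructure; it is nonempty since applying the basis theorem to the domain automaton produces a regular tree in $A$.

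For computability and the uniform algorithm, I would exploit the finite descriptions of regular trees. A regular tree is determined by its finite set of subtree-types together with the successor map on types, so regular trees can be coded by finite data and enumerated effectively; membership in $A'$ then amounts to deciding whether the domain automaton accepts the coded tree. The key routine is therefore: given a Rabin automaton and a regular tree presented by such a finite code, decide acceptance. This is decidable because feeding a regular tree into a Rabin automaton makes the behaviour on every path governed by the finitely many type-transitions, reducing acceptance to a finite $\omega$-regular check on the induced finite graph. Combining this with Rabin's effective construction of an automaton for $\varphi(\A, \bar{x})$ from $\varphi$ and the automata presenting $\A$ yields, uniformly in $\varphi$ and the regular-tree parameters $\bar{a}$, a Rabin automaton whose regular-tree solutions are exactly the tuples satisfying $\varphi$ in $\A$; restricting the presenting automata to regular-tree inputs makes the domain and each basic relation of $\A'$ decidable, so $\A'$ is a computable structure.
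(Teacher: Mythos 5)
The paper states this theorem without proof, attributing it to \cite{HKMN08}, so there is no in-paper argument to compare against; judged on its own merits, your proposal is correct and is essentially the proof used in that reference. Your three ingredients are exactly the right regular-tree analogues of the ultimately-periodic-word argument behind Theorem~\ref{thm:LowSkoForBuchi}: Rabin's basis theorem (every nonempty Rabin recognizable tree language contains a regular tree, extracted from finite-memory determinacy of the emptiness game), preservation of Rabin recognizability under instantiation at regular-tree parameters, and the Tarski--Vaught test, with effectiveness riding on Rabin's closure constructions. The only point worth tightening is the computability claim: a regular tree has many finite codes, so you should also note that equality of coded regular trees is decidable (e.g.\ by a product or bisimulation check), which lets you fix canonical codes and upgrade your decidability of acceptance on coded inputs to a genuine computable presentation of $\A'$.
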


\subsection{Proving non-automaticity}\label{sec:NonAut}

Thus far, our toolbox contains several ways to prove that a given structure is automatic (explicitly exhibiting the automata, using extended first-order definitions, and using interpretations in the complete automatic structures).  However, the only proof we've seen so far of non-automaticity is restricted to word automata and uses cardinality considerations (cf. Subsections \ref{sec:AutDefs}, \ref{sec:AutPresent}).  We could also use general properties of automatic structures to prove that a given structure is not automatic; for example, if a structure has undecidable first-order theory then it is not automatic by Corollary \ref{cr:KNdecidability2}.  We will now see a more careful approach to proving non-automaticity.

\smallskip

In finite automata theory, the Pumping Lemma is a basic tool for showing non-regularity.  Recall that the lemma says that if a set $L$ is regular then there is some $n$ so that for every $w \in L$ with $|w| > n$, there are strings $x,u,v$ with $|u|>0$ such that $w = xuv$ and for all $m$, $xu^m v \in L$.  The constant $n$ is the number of states of the automaton recognizing $L$.  The following Constant Growth Lemma uses the Pumping Lemma to arrive at an analogue for automatic structures \cite{KhN95}. 

\begin{lemma}[Khoussainov, Nerode; 1995]\label{lm:ConstantGrowth}
If $f: D^n \to D$ is a function whose graph is a regular relation, there is a constant $C$ (which is the number of states of the automaton recognizing the graph of $f$) such that for all $x_1, \ldots, x_n \in D$
\[
|f(x_1, \ldots, x_n)| \leq \max \{ |x_1|, \ldots, |x_n| \} + C.
\]
\end{lemma}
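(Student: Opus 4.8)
The plan is to reduce the statement to a single application of the Pumping Lemma for the word automaton recognizing the graph of $f$. Let $R \subseteq D^{n+1}$ be the graph of $f$, which is regular by hypothesis, and let $\M = (S, \iota, \Delta, F)$ be a word automaton recognizing the convolution language $c(R)$, with $|S| = C$ states. Fix inputs $x_1, \ldots, x_n$, write $y = f(x_1, \ldots, x_n)$ and $M = \max\{|x_1|, \ldots, |x_n|\}$, and suppose toward a contradiction that $|y| > M + C$.

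The key observation concerns the shape of the convolution word $w = c(x_1, \ldots, x_n, y)$. Since $|y| > M$, the word $w$ has length $|y|$, and at every position beyond $M$ the first $n$ tracks have been exhausted and therefore read the padding symbol $\Diamond$, while the last track still reads a genuine symbol of $y$. I call the final $|y| - M$ positions the \emph{tail} of $w$. Because $\M$ accepts $w$, it has an accepting run $r_0, r_1, \ldots, r_{|y|}$, and I restrict attention to the states visited at the boundaries of the tail, namely $r_M, r_{M+1}, \ldots, r_{|y|}$.

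First I would apply the pigeonhole principle: the list $r_M, r_{M+1}, \ldots, r_{|y|}$ contains $|y| - M + 1 > C$ states, all drawn from the $C$-element set $S$, so two of them coincide, say $r_p = r_q$ with $M \le p < q \le |y|$. The loop from $p$ to $q$ reads a nonempty block whose first $n$ tracks are entirely $\Diamond$ and whose last track is a nonempty factor of $y$. Pumping this loop up once yields an accepted word of the form $c(x_1, \ldots, x_n, y')$: the first $n$ tracks are unchanged, since we only lengthen a run of trailing $\Diamond$'s (a legal padding), while the last track becomes a strictly longer string $y'$ because the pumped block contributes at least one genuine symbol of $y$. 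Hence $(x_1, \ldots, x_n, y') \in R$, so $f(x_1, \ldots, x_n) = y'$; but $y' \neq y$ as $|y'| > |y|$, contradicting that $f$ is a function.

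The only point requiring care—and the main obstacle—is verifying that the pumped word is still a genuine convolution. This is exactly where the tail matters: the loop must lie strictly after position $M$, so that the only tracks altered are the last one (which simply grows) and the already-exhausted tails of the first $n$ tracks (which stay $\Diamond$). Had the repeated state occurred before position $M$, pumping could have disturbed the alignment of the $x_i$ and produced a word that is not the convolution of any tuple. Confining the loop to the tail, where all but the last track are frozen at $\Diamond$, is what makes the argument go through, and it is precisely the reason the additive constant $C$ is measured against $\max_i |x_i|$ rather than against the total length of the input.
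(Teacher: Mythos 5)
Your proposal is correct and follows essentially the same route as the paper's proof: a pigeonhole/pumping argument on the run of the automaton over the padded tail of the convolution, where all tracks but the last read $\Diamond$, yielding a second accepted tuple $(x_1, \ldots, x_n, y')$ with $y' \neq y$ that contradicts functionality. In fact your write-up is slightly sharper than the paper's sketch, which leaves open a disjunction (either extra tuples are accepted or non-convolution strings are accepted), whereas you verify directly that the pumped word is a genuine convolution by confining the loop to the tail.
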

\begin{proof}
Suppose for a contradiction that $|f(x_1, \ldots, x_n)| - \max \{ |x_1|, \ldots, |x_n| \} > C$.  Therefore, the convolution of the tuple $(x_1, \ldots, x_n, f(x_1, \ldots, x_n) )$ contains more than $C$ $\diamond$'s appended to each $x_i$. In particular, some state in the automaton is visited more than once after all the $x_i$'s have been read.  As in the Pumping Lemma, we can use this to obtain infinitely many tuples of the form $(x_1, \ldots, x_n, y)$ with $y \neq f(x_1, \ldots x_n)$ accepted by the automaton, or it must be the case that the automaton accepts strings which do not represent convolutions of tuples.  Both of these cases contradict our assumption that the language of the automaton is the graph of the function $f$. \qed
\end{proof}

The Constant Growth Lemma can be applied in the settings of automatic monoids and automatic structures in general to give conditions on automaticity \cite{KhNieRuSte04}.  Recall that a monoid is a structure $(M; \cdot)$ whose binary operation $\cdot$ is associative.

\begin{lemma}[Khoussainov, Nies, Rubin, Stephan; 2004]\label{lm:GenMonoids}
If $(M; \cdot)$ is an automatic monoid, there is a constant $C$ (the number of states in the automaton recognizing $\cdot$) such that for every $n$ and every $s_1, \ldots, s_n \in M$
\[
|s_1 \cdot s_2 \cdot  \cdots \cdot s_n | \leq \max \{ |s_1|, |s_2|, \ldots, |s_n| \} + C \cdot \lceil \log (n) \rceil.
\]
\end{lemma}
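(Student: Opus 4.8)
The plan is to reduce the bound on the $n$-fold product to the two-factor case, which is already controlled by the Constant Growth Lemma (Lemma~\ref{lm:ConstantGrowth}), and then to exploit associativity to combine the factors along a \emph{balanced} binary tree rather than from left to right. First I would apply Lemma~\ref{lm:ConstantGrowth} to the monoid operation itself: since $(M;\cdot)$ is automatic, the graph of $\cdot : M \times M \to M$ is regular, so there is a constant $C$ (the number of states of the automaton recognizing $\cdot$) with
\[
|s \cdot t| \leq \max\{|s|,|t|\} + C \qquad \text{for all } s,t \in M.
\]
This single-step estimate is the only place the automaticity hypothesis enters; everything afterwards is combinatorics enabled by associativity.

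The heart of the argument is the following claim, proved by induction on the number $k$ of factors: writing $M_0 = \max\{|s_1|,\dots,|s_n|\}$, for any consecutive block $s_i \cdot s_{i+1} \cdots s_j$ with $k = j-i+1$ factors we have
\[
|s_i \cdots s_j| \leq M_0 + C \cdot \lceil \log k \rceil.
\]
The base case $k=1$ is immediate since $\lceil \log 1 \rceil = 0$. For the inductive step I would split the block into a left part of $k_1 = \lceil k/2 \rceil$ factors and a right part of $k_2 = \lfloor k/2 \rfloor$ factors; by associativity the product equals $P_1 \cdot P_2$, where $P_1,P_2$ are the two sub-products. Applying the single-step estimate together with the inductive hypothesis to the longer (left) half gives
\[
|P_1 \cdot P_2| \leq \max\{|P_1|,|P_2|\} + C \leq M_0 + C \cdot \lceil \log k_1 \rceil + C.
\]

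To close the induction I need the identity $\lceil \log_2 \lceil k/2 \rceil \rceil + 1 = \lceil \log_2 k \rceil$ for $k \geq 2$. I would verify it by setting $m = \lceil \log_2 k \rceil$, so that $2^{m-1} < k \leq 2^m$; then $2^{m-2} < \lceil k/2 \rceil \leq 2^{m-1}$, whence $\lceil \log_2 \lceil k/2 \rceil \rceil = m-1$. Feeding $k=n$ into the claim yields the lemma. The step I expect to be the genuine crux is the balanced splitting itself, not this bookkeeping: a naive left-to-right evaluation accumulates $C$ at every multiplication and yields only the useless linear bound $M_0 + C(n-1)$. The essential point is that associativity allows the additive cost $C$ to be paid once per \emph{level} of a depth-$\lceil \log n \rceil$ evaluation tree rather than once per factor, and the combinatorial identity above is exactly what certifies that the balanced tree has this depth.
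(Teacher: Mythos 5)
Your proposal is correct and follows essentially the same route as the paper's proof: both reduce the $n$-fold product to the two-factor bound from Lemma~\ref{lm:ConstantGrowth} and induct on the number of factors with a balanced split into halves of sizes $\lfloor n/2 \rfloor$ and $\lceil n/2 \rceil$. The only difference is cosmetic: you explicitly verify the ceiling-logarithm identity $\lceil \log_2 \lceil k/2 \rceil \rceil + 1 = \lceil \log_2 k \rceil$, which the paper silently absorbs into its final inequality.
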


\begin{proof}
Let $C$ be the number of the states in the automaton recognizing the graph of the monoid multiplication.  We proceed by induction on $n$.  In the base case, the inequality is trivial: $|s_1| \leq |s_1|$.  For $n > 1$, write $n = u+v$ such that $u= \lfloor \frac{n}{2} \rfloor$.  Note that $u < n$ and $v < n$.  Let $x_1 = s_1 \cdot \cdots \cdot s_u$ and $x_2 = s_{u+1} \cdot \cdots \cdot s_n$.  By the induction hypothesis, $|x_1|\leq \max \{ |s_1|, \ldots, |s_u| \} + C \cdot \lceil \log(u)\rceil$ and $|x_2|\leq \max \{ |s_{u+1}|, \ldots, |s_n| \} + C \cdot \lceil \log(v)\rceil$.  Applying Lemma \ref{lm:ConstantGrowth}, 
\begin{align*}
|s_1 \cdot \cdots \cdot s_n| &= |x_1 \cdot x_2| \leq \max\{|x_1|, |x_2|\} + C \\
&\leq \max\{ |s_1|, \ldots, |s_n| \} + C \max \{ \lceil \log(u) \rceil, \lceil \log(v)\rceil \} + C\\
&\leq \max\{ |s_1|, \ldots, |s_n| \} + C \lceil \log(n) \rceil.
\end{align*}\qed
\end{proof}

Let $\A = (A; F_0, F_1, \ldots, F_n)$ be an automatic structure.   Let $X =\{ x_1, x_2, \ldots\}$ be  a subset of $A$.  The generations of $X$ are the elements of $A$ which can be obtained from $X$ by repeated applications of the functions of $\A$.  More precisely, 
\[
G_1(X) = \{ x_1\}~~\text{and}~~G_{n+1} (X) = G_n(X) \cup \{ F_i (\bar{a} ) : \bar{a} \in G_n(X) \} \cup \{ x_{n+1} \}.  
\]
The Constant Growth Lemma  dictates the rate at which generations of $X$ grow and yields the following theorem.

\begin{theorem}[Khoussainov, Nerode; 1995.  Blumensath; 1999]\label{thm:GrowthGen}
Suppose $X \subset A$ and there is a constant $C_1$ so that in the length lexicographic listing of $X$ ($x_1 <_{llex} x_2 <_{llex} \cdots$) we have $|x_n| \leq C_1 \cdot n$ for all $n\geq 1$.  Then there is a constant $C$ such that $|y| \leq C \cdot n$ for all $y \in G_n(X)$.  In particular, 
\[
G_n(X) \subseteq \Sigma^{ \leq C \cdot n} 
\]
if $|\Sigma| > 1$, and $|G_n(X)| \leq C \cdot n$ if $|\Sigma| = 1$.
\end{theorem}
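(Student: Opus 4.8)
The plan is to run a single induction on the generation index $n$, using Lemma \ref{lm:ConstantGrowth} to control how much each function application can lengthen a string. First I would fix, for each function $F_i$, the constant guaranteed by the Constant Growth Lemma (the number of states of the automaton recognizing the graph of $F_i$), and set $C_0$ to be the maximum of these over all $i$. Since there are only finitely many functions, $C_0$ is well defined, and by Lemma \ref{lm:ConstantGrowth} every single application satisfies $|F_i(\bar{a})| \leq \max_j |a_j| + C_0$. I would then set $C = \max\{C_0, C_1\}$, where $C_1$ is the hypothesized linear bound on the generators, and prove the claim in the sharp form $|y| \leq C\cdot n$ for all $y \in G_n(X)$.

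For the base case, $G_1(X) = \{x_1\}$ and $|x_1| \leq C_1 \leq C$, as required. For the inductive step I would treat separately the three kinds of elements that make up $G_{n+1}(X)$ according to its defining equation. Elements already present in $G_n(X)$ satisfy $|y| \leq C\cdot n \leq C\cdot(n+1)$ by the induction hypothesis. The fresh generator $x_{n+1}$ satisfies $|x_{n+1}| \leq C_1\cdot(n+1) \leq C\cdot(n+1)$ by the growth hypothesis on $X$ together with $C \geq C_1$. Finally, an element of the form $F_i(\bar{a})$ with $\bar{a}$ a tuple drawn from $G_n(X)$ satisfies
\[
|F_i(\bar{a})| \leq \max_j |a_j| + C_0 \leq C\cdot n + C_0 \leq C\cdot n + C = C\cdot(n+1),
\]
using the induction hypothesis and $C \geq C_0$. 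This closes the induction and establishes $|y| \leq C\cdot n$ throughout $G_n(X)$.

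For the ``in particular'' clauses, the length bound immediately gives $G_n(X) \subseteq \Sigma^{\leq C\cdot n}$ when $|\Sigma| > 1$. When $|\Sigma| = 1$, each length is realized by a unique string, so $G_n(X) \subseteq \{\lambda, 1, \ldots, 1^{C\cdot n}\}$ has at most $C\cdot n + 1$ elements; absorbing the additive constant into the multiplicative one (valid for $n \geq 1$, since $C\cdot n + 1 \leq (C+1)\cdot n$) yields a constant $C'$ with $|G_n(X)| \leq C'\cdot n$.

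I do not expect a genuine obstacle: the content is entirely carried by Lemma \ref{lm:ConstantGrowth}, and the remainder is a bookkeeping induction. The two points requiring care are that the maximum defining $C_0$ must range over all finitely many functions, so that one bound works regardless of which $F_i$ is applied, and that the additive slack $C_0$ incurred at each generation must be offset by taking the per-generation allowance $C$ at least as large as $C_0$; both are handled by the single choice $C = \max\{C_0, C_1\}$.
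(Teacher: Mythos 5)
Your proof is correct and follows exactly the route the paper intends: the paper states this theorem without proof, remarking only that it follows from the Constant Growth Lemma (Lemma \ref{lm:ConstantGrowth}), and your bookkeeping induction with $C = \max\{C_0, C_1\}$ is precisely that argument carried out in detail. Your handling of the unary-alphabet count (absorbing the $+1$ into the constant) is a minor but legitimate adjustment, since the theorem only asserts the existence of some constant.
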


Just as the Pumping Lemma allows immediate identification of certain non-regular sets, the above theorem lets us determine that certain structures are not automatic.  

\begin{corollary}
The free semigroup $(\{0,1\}^\star; \cdot)$ is not word automatic.  Similarly, the free group $F(n)$ with $n > 1$ generators is not word automatic.
\end{corollary}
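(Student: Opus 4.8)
The plan is to derive a contradiction from Theorem~\ref{thm:GrowthGen} by exhibiting a two-element set $X$ whose generations grow far faster than the theorem permits. Suppose $(\{0,1\}^\star;\cdot)$ had a word automatic presentation $\A=(A;\cdot)$ over some finite alphabet $\Sigma$, and let $a,b\in A$ be the images of the two free generators $0$ and $1$. Take $X=\{a,b\}$. Since $X$ is finite, its length-lexicographic listing trivially satisfies the hypothesis $|x_n|\le C_1\cdot n$ (with $C_1$ the maximum of the lengths of $a$ and $b$), so Theorem~\ref{thm:GrowthGen} applies and furnishes a constant $C$ with $G_n(X)\subseteq\Sigma^{\le C\cdot n}$ when $|\Sigma|>1$, and $|G_n(X)|\le C\cdot n$ when $|\Sigma|=1$. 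In either case $|G_n(X)|$ is bounded above by a single exponential in $n$: at most $|\Sigma|^{C\cdot n+1}$ in the first case, and linearly in the second.

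Next I would bound $|G_n(X)|$ from below in the abstract free semigroup, where it is cleanest to count; because $G_n(X)$ is defined purely from the generators and the operation, its cardinality is an isomorphism invariant and hence equals the cardinality computed inside $\A$. From the recursion $G_{n+1}(X)=G_n(X)\cup\{u\cdot v:u,v\in G_n(X)\}\cup\{x_{n+1}\}$, any word of length $\ell$ over $\{a,b\}$ can be assembled by a balanced binary tree of concatenations of depth $\lceil\log_2\ell\rceil$ whose leaves are generators, and both generators already lie in $G_2(X)$. Hence every one of the $2^{\ell}$ words of length $\ell=2^{\,n-2}$ belongs to $G_n(X)$, giving $|G_n(X)|\ge 2^{\,2^{\,n-2}}$. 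This doubly exponential lower bound contradicts the single exponential upper bound from Theorem~\ref{thm:GrowthGen} for all large $n$, so the free semigroup cannot be word automatic.

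For the free group $F(k)$ with $k>1$, I would reduce to the semigroup case. The submonoid of $F(k)$ generated by two of the free generators $a_1,a_2$ under the group multiplication consists of the positive words in $a_1,a_2$, which are pairwise distinct reduced words; thus it is a copy of the free semigroup on two generators. Taking $X=\{a_1,a_2\}$ and noting that closing under the group operations (multiplication, inverse, identity) only enlarges $G_n(X)$, the same balanced-tree construction shows $|G_n(X)|\ge 2^{\,2^{\,n-2}}$, again in conflict with Theorem~\ref{thm:GrowthGen}.

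The one point that requires care --- and the likely main obstacle --- is that the length bound in Theorem~\ref{thm:GrowthGen} refers to lengths of strings in the automatic presentation, which need not agree with the natural word length in the abstract free semigroup. I avoid this mismatch by phrasing the contradiction entirely in terms of the cardinality $|G_n(X)|$, which is preserved by the isomorphism; the theorem's inclusion $G_n(X)\subseteq\Sigma^{\le C\cdot n}$ is exactly what converts its length bound into a usable single-exponential cardinality bound.
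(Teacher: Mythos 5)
Your proposal is correct and takes essentially the same route as the paper: both invoke Theorem~\ref{thm:GrowthGen} and contradict its single-exponential bound with a doubly-exponential lower bound on $|G_n(X)|$, obtained by building all words of length up to $2^{n-O(1)}$ via balanced products (the paper's induction $\{0,1\}^{<2^n}\subseteq G_{n+1}(\{0,1\})$ is exactly your balanced-tree argument). Your two additions --- the explicit isomorphism-invariance of $|G_n(X)|$ and the reduction of the free group case to the positive-word submonoid --- are careful completions of details the paper leaves implicit rather than a different method.
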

\begin{proof}
We give the proof for the free semigroup with two generators.  Consider $X = \{0,1\}$. By induction, we see that for each $n$, $\{0,1\}^{< 2^n} \subseteq G_{n+1} (\{0,1\})$.  Therefore, $|G_{n+1}(X)| \geq 2^{2^n - 1} - 1$ and hence can't be bounded by $2^{C \cdot n}$ for any constant $C$. \qed
\end{proof}
Similarly, one can prove the following.
\begin{corollary}
For any bijection $f: \omega\times \omega \to \omega$, the structure $(\omega; f)$ is not word automatic.
\end{corollary}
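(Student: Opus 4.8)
The plan is to apply Theorem~\ref{thm:GrowthGen} with a single generator and derive a contradiction from the $|\Sigma|=1$ clause, exactly as in the free-semigroup corollary but now exploiting the fact that a \emph{bijection} $f$ forces generations to grow as fast as possible. The key observation is that if $(\omega; f)$ were word automatic, then $f$ would be a regular binary function, and we could seed a generating set $X$ with a single element and watch $G_n(X)$ explode. First I would fix any automatic presentation of $(\omega; f)$ over some alphabet $\Sigma$ and take $X = \{x_1\}$ a singleton (so the hypothesis $|x_n| \le C_1 \cdot n$ of Theorem~\ref{thm:GrowthGen} holds trivially, since after the first element no new generators are added). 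The conclusion of the theorem then pins down $|G_n(X)|$: if $|\Sigma|=1$ we would have $|G_n(X)| \le C \cdot n$, and if $|\Sigma|>1$ we would have $G_n(X) \subseteq \Sigma^{\le C\cdot n}$, so in either case $|G_n(X)| \le |\Sigma|^{C\cdot n}$, i.e.\ the generations grow at most singly-exponentially.

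The heart of the argument is a lower bound on $|G_n(X)|$ that outpaces this. Because $f$ is a bijection from $\omega \times \omega$ onto $\omega$, each application of $f$ to a pair of previously-generated elements produces a \emph{distinct} new element, and crucially, the pairing injects $G_n(X) \times G_n(X)$ into $G_{n+1}(X)$. I would argue by induction that $|G_{n+1}(X)| \ge |G_n(X)|^2$: the map $(a,b) \mapsto f(a,b)$ restricted to $G_n(X)\times G_n(X)$ is injective (since $f$ is a bijection, it is in particular injective as a function of the pair), and its image lies in $G_{n+1}(X)$ by definition of the generations. Hence $|G_{n+1}(X)| \ge |G_n(X)|^2$, which gives $|G_n(X)| \ge 2^{2^{n-2}}$ or similar \emph{doubly}-exponential growth once $|G_n(X)| \ge 2$, achieved after a couple of steps.

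Comparing the two estimates finishes the proof: a doubly-exponential lower bound $|G_n(X)| \ge 2^{2^{\Omega(n)}}$ cannot be dominated by a singly-exponential upper bound $|G_n(X)| \le |\Sigma|^{C\cdot n}$ for any constants $|\Sigma|, C$, a contradiction. Therefore no automatic presentation of $(\omega; f)$ exists.

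The step I expect to require the most care is verifying that $f$ restricted to $G_n(X)\times G_n(X)$ genuinely injects into $G_{n+1}(X)$ with the right multiplicity, i.e.\ that we are not overcounting or that elements of $G_{n+1}(X)$ obtained as $f$-images are all new and distinct. Injectivity of $f$ as a bijection handles distinctness of the images, so the main obstacle is really bookkeeping: confirming that $G_n(X)\times G_n(X) \subseteq G_n(X) \times G_n(X)$ is a valid domain for the recursion clause $\{F_i(\bar a): \bar a \in G_n(X)\}$ in the definition of $G_{n+1}(X)$, and thus that the squaring bound is legitimate rather than merely heuristic. Once that is pinned down, the contradiction is immediate from Theorem~\ref{thm:GrowthGen}.
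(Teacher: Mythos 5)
Your strategy is the one the paper intends (it derives this corollary ``similarly'' to the free-semigroup one): use Theorem~\ref{thm:GrowthGen} to get an at-most-singly-exponential upper bound on $|G_n(X)|$, and use injectivity of $f$ on pairs to force doubly-exponential growth. However, there is a genuine gap in how you start the lower bound. You take $X=\{x_1\}$ an \emph{arbitrary} singleton and assert that $|G_n(X)|\geq 2$ is ``achieved after a couple of steps.'' That is false in general: nothing prevents $f(x_1,x_1)=x_1$, and in that case $G_n(X)=\{x_1\}$ for every $n$, your inequality $|G_{n+1}(X)|\geq |G_n(X)|^2$ holds vacuously as $1\geq 1$, and no contradiction ever appears. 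This is not a contrived worry: the standard Cantor pairing bijection $p(x,y)=(x+y)(x+y+1)/2+y$ satisfies $p(0,0)=0$, so with seed $\{0\}$ your generations never grow at all. Since the entire contradiction rests on the generations actually exploding, the proof as written fails for such $f$ and such a seed; the delicate point is not the bookkeeping you flagged (the inclusion $f(G_n(X)\times G_n(X))\subseteq G_{n+1}(X)$ is immediate from the definition of generations) but the seeding.

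The repair is short, but it must be made. Either seed with two distinct elements $X=\{x_1,x_2\}$ (the paper's definition of generations puts $x_2$ into $G_2(X)$, so $|G_2(X)|\geq 2$ and then your squaring inequality gives $|G_n(X)|\geq 2^{2^{n-2}}$), or keep a singleton seed but choose $x_1$ with $f(x_1,x_1)\neq x_1$. Such an $x_1$ always exists: if $f(a,a)=a$ for all $a$, then $f(0,1)=b$ for some $b$, and $f(b,b)=b$ as well, contradicting injectivity of $f$ since $(0,1)\neq(b,b)$. With either fix, doubly-exponential growth of $|G_n(X)|$ contradicts the conclusion $G_n(X)\subseteq \Sigma^{\leq C\cdot n}$ of Theorem~\ref{thm:GrowthGen}, so the argument goes through. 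One further small inaccuracy: your summary ``in either case $|G_n(X)|\leq |\Sigma|^{C\cdot n}$'' is literally wrong when $|\Sigma|=1$ (there $|\Sigma|^{C\cdot n}=1$); what the theorem gives in that case is the even stronger linear bound $|G_n(X)|\leq C\cdot n$, which of course still yields the contradiction.
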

The proofs of the next two corollaries require a little more work but employ growth arguments as above.
\begin{corollary}
The natural numbers under multiplication $(\omega; \times)$ is not word automatic.
\end{corollary}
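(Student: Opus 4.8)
The plan is to derive a contradiction from the growth bound for automatic monoids, Lemma~\ref{lm:GenMonoids}, by exploiting unique factorization into primes. Suppose, for contradiction, that $(\omega; \times)$ admits a word automatic presentation over an alphabet $\Sigma$ with $d = |\Sigma| \geq 2$. Since multiplication is associative with identity $1$, this is an automatic monoid, so Lemma~\ref{lm:GenMonoids} supplies a constant $C$ such that for all $n$ and all $s_1, \ldots, s_n$,
\[
|s_1 \cdots s_n| \leq \max\{|s_1|, \ldots, |s_n|\} + C \lceil \log n \rceil .
\]
The essential feature I want to use is that the slack here is only \emph{logarithmic} in the number of factors.

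Next I would fix $k$ distinct primes $p_1, \ldots, p_k$, where $k$ is a constant (depending only on $C$ and $d$) to be chosen at the very end, and set $L = \max_i |p_i|$, a constant. For a growth parameter $t$, I consider the products $p_1^{e_1} \cdots p_k^{e_k}$ with each $e_i \leq \lfloor t/k \rfloor$. By unique factorization these tuples yield pairwise distinct elements of $\omega$, so there are at least $(t/k)^k$ of them. On the other hand, each such product is a product of at most $t$ of the primes $p_1, \ldots, p_k$, so the displayed inequality bounds its string length by $L + C \lceil \log t \rceil$. Thus all $(t/k)^k$ of these elements are represented by strings of length at most $L + C\lceil \log t\rceil$.

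The counting step then compares two quantities. The number of strings over $\Sigma$ of length at most $L + C\lceil \log t\rceil$ is at most $d^{\,L + C\lceil \log t\rceil + 1}$, which is bounded by a fixed polynomial in $t$ of degree $C \log_2 d$ (the constant factor depends on $k$ but not on $t$). Hence
\[
(t/k)^k \leq d^{\,L+1+C} \cdot t^{\,C \log_2 d},
\]
so that $t^{\,k - C\log_2 d} \leq k^{k}\, d^{\,L+1+C}$ for every $t$. This is where $k$ is chosen: taking $k > C \log_2 d$ makes the left-hand side tend to infinity as $t \to \infty$ while the right-hand side is a constant, a contradiction. Therefore no such presentation exists.

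The main obstacle is conceptual rather than computational: one must resist the temptation to bound the \emph{value} of an element by its string length (the presentation is abstract, so this comparison is unavailable), and instead run a purely cardinality-based argument. The trick that makes it work is to combine many primes with moderately large exponents, so that unique factorization forces a count of degree $k$ in $t$, while the logarithmic slack in Lemma~\ref{lm:GenMonoids} confines all these elements to strings short enough that only polynomially-many in $t$ (of the fixed degree $C\log_2 d$) are available; choosing $k$ to exceed this fixed degree closes the argument. One could alternatively phrase this through the generation bound of Theorem~\ref{thm:GrowthGen}, but routing it through the monoid lemma avoids having to track generation counts and the length-lexicographic listing of the primes inside the presentation.
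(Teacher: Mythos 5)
Your proof is correct and takes essentially the approach the paper intends: the paper gives no explicit proof, saying only that the corollary "employ[s] growth arguments as above," and your combination of the monoid growth bound (Lemma~\ref{lm:GenMonoids}) with unique factorization and a counting of short strings is the standard instantiation of exactly that sketch. The only cosmetic point is your restriction to $|\Sigma| \geq 2$, which is harmless since any presentation over a unary alphabet is already a presentation over a binary alphabet (and the unary case falls to the same counting with $k=1$).
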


\begin{corollary}
The structure $(\omega; \leq, \{ n! : n \in \omega\})$, where the added unary predicate picks out the factorials, is not word automatic.
\end{corollary}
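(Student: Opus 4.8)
The plan is to argue by contradiction using the growth machinery of Lemma~\ref{lm:ConstantGrowth}. Suppose $(\omega; \leq, \{n! : n\in\omega\})$ has a word automatic presentation over an alphabet $\Sigma$, and let $a_0 <_{\leq} a_1 <_{\leq} \cdots$ enumerate its domain in the order $\leq$, so that $a_n$ is the string representing the number $n$. Since $\leq$ is a discrete linear order with least element, both the successor function $S$ and the predecessor function are first-order definable, and hence their graphs are regular. Applying Lemma~\ref{lm:ConstantGrowth} to $S$ and to the predecessor yields a constant $C$ with $\big||a_{n+1}| - |a_n|\big| \leq C$ for all $n$; that is, consecutive elements of the order differ in representation length by at most $C$.

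First I would isolate the factorials as a definable function. Writing $F$ for the unary factorial predicate, the partial ``next factorial'' function
\[
g(x) = y \iff F(x) \wedge F(y) \wedge x < y \wedge \neg\exists z\,(F(z) \wedge x < z < y)
\]
is first-order definable in the structure, so by the construction behind Corollary~\ref{cr:KNdecidability2} and Theorem~\ref{thm:decidKN} its graph is regular. Setting $f_k := a_{k!}$ for the representative of $k!$, a further application of Lemma~\ref{lm:ConstantGrowth} to $g$ gives $|f_{k+1}| \leq |f_k| + C$, and hence, by induction,
\[
|f_k| \leq |f_0| + C\,k \qquad\text{for all } k .
\]
Thus the representatives of the factorials have length growing only \emph{linearly} in the index $k$.

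The second, and decisive, ingredient is a matching \emph{lower} bound. The element $f_k$ has exactly $k!$ order-predecessors, so any faithful encoding of the order must separate $f_k$ from all $k!$ smaller elements, and the aim is to show that this forces $|f_k|$ to grow like $\log_{|\Sigma|}(k!) = \Theta(k \log k)$, which is super-linear in $k$ and contradicts the linear upper bound above for all sufficiently large $k$. Note that this is exactly the point at which the generation-counting technique of Theorem~\ref{thm:GrowthGen} does \emph{not} directly apply: the structure $(\omega;\leq,F)$ has no definable operation that manufactures super-exponentially many elements, so the argument requires more care than the free-semigroup and multiplication cases. The main obstacle is that the crude pigeonhole estimate (there are at most $|\Sigma|^{\ell+1}$ strings of length $\le \ell$) bounds only the \emph{maximal} length occurring among $a_0,\ldots,a_{k!}$, not $|f_k|$ itself; two-sided constant growth allows the length profile to dip back down at the index $k!$, so the lower bound on $|f_k|$ genuinely needs the regularity of $g$ and not merely counting.

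To overcome this I would run a pumping argument on the automaton recognizing the graph of $g$. For large $k$ an accepting run on $c(f_k, f_{k+1})$ must repeat a state, and pumping produces convolutions $c(x,y)$ that the automaton is forced to accept, so that $y$ is the next factorial after $x$. The crux is that the ratio of consecutive factorials $(k+1)!/k! = k+1$ is unbounded, whereas pumping perturbs the order-theoretic positions of $x$ and $y$ in a controlled way; pitting the unboundedness of $(k+1)!/k!$ against the bounded effect of the pump yields a pair $(x,y)$ violating the defining property of $g$. Packaging this estimate so that the growth of the factorial gaps is set directly against the regularity of $g$ is where essentially all the difficulty lies; once the required lower bound $|f_k| = \Omega(k\log k)$ is in hand, it contradicts $|f_k| \le |f_0| + Ck$ and the proof closes immediately.
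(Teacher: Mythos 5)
Your first half is correct, and it is exactly the growth argument the paper gestures at: the ``next factorial'' function $g$ is first-order definable in $(\omega;\leq,F)$, hence has regular graph by Theorem \ref{thm:decidKN}, and Lemma \ref{lm:ConstantGrowth} gives $|f_{k+1}|\leq|f_k|+C$, so $|f_k|\leq|f_0|+Ck$. You have also correctly diagnosed the subtlety in the lower bound: crude counting controls only the maximum length among $a_0,\ldots,a_{k!}$, not $|f_k|$ itself. The genuine gap is in your proposed repair. Pumping an accepting run of the automaton for the graph of $g$ can only produce convolutions of \emph{other} pairs in the graph of $g$, i.e.\ other pairs of consecutive-factorial representatives; it tells you nothing about \emph{which} factorials these are. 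Your plan to ``pit the unboundedness of $(k+1)!/k!$ against the bounded effect of the pump'' presupposes a quantitative link between perturbing a string and shifting its position in the order $\leq$, and no such link is established (or true in general): pumping gives no control whatsoever over order-theoretic position. As described, that step would not go through.

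The missing idea is to pump the automaton for $\leq$ rather than the one for $g$. Since the order type is $\omega$, every element has only finitely many $\leq$-predecessors. If $y\leq x$ and $|y|>|x|+C_\leq$, where $C_\leq$ is the number of states of the $\leq$-automaton, then in the accepting run on $c(y,x)$ the segment read after position $|x|$ (where every symbol carries $\diamond$ in the $x$-coordinate) must repeat a state; pumping that loop produces infinitely many distinct $y'$ with $y'\leq x$, contradicting finiteness. This is the same pumping as in the paper's proof of Lemma \ref{lm:ConstantGrowth}, just applied to the order relation. Consequently all $k!$ predecessors of $f_k$ have length at most $|f_k|+C_\leq$, so $k!\leq|\Sigma|^{|f_k|+C_\leq+1}$, i.e.\ $|f_k|\geq\log_{|\Sigma|}(k!)-C_\leq-1$, which grows like $k\log k$ and contradicts your linear upper bound for large $k$. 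The paper itself leaves the proof unstated (``growth arguments as above''), but this combination---constant growth on $g$ for the upper bound, pumping on $\leq$ plus counting for the lower bound---is the intended and standard route; your proposal supplies the first ingredient and misidentifies the second.
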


We now switch our focus to subclasses of (word) automatic structures.  For some of these classes, structure theorems have been proved which lead to good decision methods for questions like the isomorphism problem.  Such structure theorems must classify both the members of a class and the non-members.  Hence, techniques for proving non-automaticity become very useful.  In other cases, as we will see in the next lecture, complexity results give evidence that no nice structure theorems exists.  The classes we consider below are partial orders, linear orders, trees, Boolean algebras, and finitely generated groups.

\subsection{Word automatic partial orders}\label{sec:AutPo}

The structure $(A; \leq)$ is a {\bf partially ordered set} if the binary relation $\leq$ is reflexive $\forall x (x \leq x)$, anti-symmetric $\forall x\forall y (x \leq y ~\&~ y \leq x \rightarrow x=y)$, and transitive $\forall x \forall y \forall z (x \leq y ~\&~ y \leq z \rightarrow x \leq z$).  A partially ordered set $(A;  \leq)$ is word automatic if and only if $A$ and $\leq$ are both recognized by word automata.  For the rest of this section, we deal only with word automatic partial orders and for brevity, we simply call them automatic. 

\begin{example}
We have already seen some examples of automatic partial orders: the full binary tree under prefix order $(\{0,1 \}^\star; \preceq)$; the finite and co-finite subsets of $\omega$ under subset inclusion; the linear order of the rational numbers.
\end{example}

Recall that a linear order is one where $\leq$ is also total: for any $x,y$ in the domain, either $x \leq y$ or $y \leq x$. 

\begin{example}\label{ex:ordinals}
Small ordinals (such as $\omega^n$ for $n$ finite) \cite{KhN95} are automatic partial orders.  In fact, Delhomm\'e showed that automatic ordinals are exactly all those ordinals below $\omega^\omega$ \cite{Del04}. We will see the proof of this fact later in this subsection.
\end{example}

\begin{example}
The following example was one of the first examples of a non-trivial automatic linear order \cite{RubPhD}. Given an automatic linear order $L$ and a polynomial $f(x)$ with positive integer coefficients, consider the linear order
\[
\Sigma_{x \in \omega} (L + f(x))
\] 
where we have a copy of $L$, followed by a finite linear order of length $f(0)$, followed by another copy of $L$, followed by a finite linear order of length $f(1)$, etc.  This linear order is automatic.  In fact, the linear order obtained by the same procedure where the function $f$ is an exponential $f(x) = a^{b \cdot x + c}$ with $a,b,c \in \omega$ is also automatic.  \end{example}

The last example involves the addition of linear orders.  The sum of orders $L_1$ and $L_2$ is the linear order in which we lay down the order $L_1$ and then we place all of $L_2$.   Since $L_1 + L_2$ is first-order definable from the disjoint union of $L_1$ and $L_2$,  Proposition \ref{prop:ProdDisjoint} and Corollary \ref{cr:Definability} imply that the sum operation preserves automaticity. Another basic operation on linear orders also preserves automaticity: the product linear order $L_1 \cdot L_2$ is one where a copy of $L_1$ is associated with each element of $L_2$; each of the copies of $L_1$ is ordered as in $L_1$, while the order of the copies is determined by $L_2$. The order  $L_1 \cdot L_2$ is also first-order definable from the disjoint union of $L_1$ and $L_2$, and hence is automatic if $L_1$ and $L_2$ are automatic.

\smallskip

We use several approaches to study the class of automatic partial orders.  First, we restrict to well-founded partial orders and consider their ordinal heights.  We will see that automatic partial orders are exactly those partial orders with relatively low ordinal heights.  This observation parallels Delhomm\'e's previously mentioned result that automatic ordinals are exactly those below $\omega^\omega$ \cite{Del04}.  Next, we study automatic linear orders.  We present results about the Cantor-Bendixson ranks of automatic linear orders, and see the implications of these results for decidability questions.  In particular, we see that the isomorphism problem for automatic ordinals is decidable. 
 Finally, we consider partial orders as trees and consider the branching complexity of automatic partial order trees.  We present several automatic versions of K\"onig's famous lemma about infinite trees.

\smallskip

We now introduce well-founded partial orders and ordinal heights.   A binary relation $R$ is called {\bf well-founded} if there is no infinite chain of elements $x_0, x_1, x_2, \ldots$ such that $(x_{i+1}, x_i) \in R$ for all $i$.  For example, $(\Z^+; S)$ is a well-founded relation but $(\Z^-; S)$ is not well-founded (we use $\Z^+$ and $\Z^-$ to denote the positive and negative natural numbers).  Given a well-founded structure $\A = (A;R)$ with domain $A$ and binary relation $R$, a {\bf ranking function} for $\A$ is an ordinal-valued function $f$ on $A$ such that $f(y) < f(x)$ whenever $(y,x) \in R$.  We define $ord(f)$ as the least ordinal larger than or equal to all values of $f$. It is not hard to see that 
$\A = (A;R)$ is well-founded if and only if $\A$ has a ranking function.

\smallskip

Given a well-founded structure $\A$, its {\bf ordinal height} (denoted $r(\A)$) is the least ordinal $\alpha$ which is $ord(g)$ for some ranking function $g$ for $\A$.  An equivalent definition of the ordinal height uses an assignment of rank to each element in the domain of $\A$.  If $x$ is an $R$-minimal element of $A$, set $r_{\A} (x) = 0$.  For any other element in $A$, put $r_{\A}(z) = \sup\{ r(y) + 1 : (y,z) \in R \}$.  Then, we define $r(\A) = \sup \{r_{\A}(x) : x \in A \}$.    The following property of ordinal heights is useful when we work with substructures of well-founded relations.

\begin{lemma}\label{lm:wfElementRank}
Given a well-founded structure $\A = (A, R)$, if $r(\A) = \alpha$ and $\beta < \alpha$ then there is $x \in A$ such that $r_{\A}(x) = \beta$.
\end{lemma}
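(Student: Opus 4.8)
The plan is to show that the set of realized ranks $S = \{ r_{\A}(x) : x \in A \}$ is an initial segment of the ordinals; that is, it is downward closed, in the sense that whenever $\delta \in S$ and $\gamma < \delta$, then $\gamma \in S$ as well. Once this is established, the lemma follows immediately: since $\alpha = r(\A) = \sup S$ and $\beta < \alpha$, the ordinal $\beta$ cannot be an upper bound for $S$, so there is some $\delta \in S$ with $\beta < \delta$; downward closure then yields $\beta \in S$, which produces exactly the required element.

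To prove downward closure I would argue by transfinite induction on $\delta$, establishing the statement: for every $x$ with $r_{\A}(x) = \delta$ and every $\gamma \le \delta$, there exists $z \in A$ with $r_{\A}(z) = \gamma$. The case $\gamma = \delta$ is trivial (take $z = x$). For $\gamma < \delta$, note that $\delta > 0$ forces $x$ to be non-minimal, so by definition $\delta = \sup \{ r_{\A}(y) + 1 : (y,x) \in R \}$. Since $\gamma < \delta$, the ordinal $\gamma$ is not an upper bound for this supremum, so there is a predecessor $y$ with $(y,x) \in R$ satisfying $\gamma < r_{\A}(y) + 1$, hence $\gamma \le r_{\A}(y)$; moreover $r_{\A}(y) + 1 \le \delta$ gives $r_{\A}(y) < \delta$. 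Applying the induction hypothesis to $y$, whose rank is strictly smaller than $\delta$, together with $\gamma \le r_{\A}(y)$, produces the desired $z$ with $r_{\A}(z) = \gamma$.

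The one subtlety worth flagging is that a single application of the recursive definition only supplies a predecessor $y$ of rank $\ge \gamma$, not of rank exactly $\gamma$; this is precisely why an inductive argument is needed. Viewed iteratively rather than inductively, one repeatedly passes to a predecessor whose rank strictly decreases yet stays $\ge \gamma$, and the well-ordering of the ordinals guarantees that this descent terminates, necessarily at an element of rank exactly $\gamma$. This termination is the crux of the argument; everything else is a direct unwinding of the recursive definition of $r_{\A}$. In particular, no appeal to automaticity is required here, as the statement is a general fact about well-founded relations.
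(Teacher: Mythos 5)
Your proof is correct. The paper states this lemma without any proof (it is treated as a standard fact about rank functions on well-founded relations), so there is no paper argument to compare against; your route --- showing the set of realized ranks is downward closed by transfinite induction, using that a rank-$\delta$ element must have a predecessor of rank in $[\gamma,\delta)$ whenever $\gamma<\delta$ --- is the natural and complete way to fill this gap, and your flagged subtlety (a single unwinding only gives rank $\geq\gamma$, so one needs the induction, or equivalently the terminating descent) is exactly the right point to be careful about.
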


The ordinal height can be used to measure the depth of a structure.  In our exploration of automatic structures, we study the ordinal heights attained by automatic well-founded relations.  As a point of departure, recall that any automatic structure is also a computable structure (see Subsection \ref{subsec:history}).  We therefore begin by considering the ordinal heights of computable structures.  An ordinal is called computable if it is the order-type of some computable well-ordering of the natural numbers.

\begin{lemma}\label{lm:wfComputable}
Each computable ordinal is the ordinal height of some computable well-founded relation.  Conversely, the ordinal height of each computable well-founded relation is a computable ordinal.
\end{lemma}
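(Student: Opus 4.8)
The plan is to treat the two directions separately; the first is a direct construction and the second rests on a standard transfer of well-foundedness to a well-ordering.

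For the forward direction, given a computable ordinal $\alpha$, I would fix a computable well-ordering $(\N;\prec)$ of order type $\alpha+1$. Such a copy exists because $\alpha+1$ is computable whenever $\alpha$ is: one simply adjoins a new top element to a computable presentation of $\alpha$. Viewing the strict order $\prec$ itself as the well-founded relation $R$, I would verify by transfinite induction that $r_{\A}(x)$ equals the order type of the set of $\prec$-predecessors of $x$; in particular the top element receives rank exactly $\alpha$ while every other element gets a strictly smaller rank, so that $r(\A)=\sup\{r_{\A}(x)\}=\alpha$. Passing to type $\alpha+1$ rather than $\alpha$ is precisely what makes the height come out to $\alpha$ uniformly in both the limit and successor cases. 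The resulting structure is computable by construction, which settles the first claim.

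For the converse, let $\A=(A;R)$ be a computable well-founded relation. The key idea is to pass to the tree $T$ of finite $R$-descending sequences $(a_0,\dots,a_n)$, those with $(a_{i+1},a_i)\in R$ for all $i$. This tree is computable because $A$ and $R$ are, and it is well-founded precisely because $R$ admits no infinite descending chain. A short transfinite induction shows that the tree-rank of a node $(a_0,\dots,a_n)$ equals $r_{\A}(a_n)$, so the rank of the root, namely $\sup\{r_{\A}(a)+1:a\in A\}$, dominates $r(\A)$. I would then invoke the Kleene--Brouwer ordering $<_{KB}$ on $T$: since $T$ is a computable well-founded tree, $<_{KB}$ is a computable well-ordering, and its order type $\theta$ is at least the rank of $T$, hence at least $r(\A)$. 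Finally, because an initial segment of a computable well-ordering is again a computable well-ordering, every ordinal $\le\theta$ is computable; as $r(\A)\le\theta$, we conclude that $r(\A)$ is a computable ordinal.

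The first direction is routine bookkeeping. The main obstacle is the converse, and within it the one nontrivial ingredient is the passage through the Kleene--Brouwer ordering together with the inequality $r(\A)\le\mathrm{ot}(<_{KB})$. Rather than reprove these, I would lean on the classical facts that $<_{KB}$ well-orders exactly the well-founded trees and that its order type bounds the tree-rank. Once those are in hand, downward closure of the computable ordinals does the rest.
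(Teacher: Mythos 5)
Your proof is correct. Note that the paper states this lemma without proof, treating it as a classical fact of computability theory, so there is no in-text argument to compare against; what you have written is essentially the standard argument. Both halves are sound. In the forward direction, working with a computable copy of $\alpha+1$ rather than $\alpha$ is exactly the right move: under the paper's definition $r(\A)=\sup\{r_{\A}(x) : x\in A\}$, a well-ordering of successor type $\beta+1$ has height $\beta$, so a copy of $\alpha$ itself would undershoot when $\alpha$ is a successor, while a copy of $\alpha+1$ gives height $\alpha$ in all cases. In the converse, the chain you describe --- pass to the computable tree of finite $R$-descending sequences, identify the tree-rank of $(a_0,\dots,a_n)$ with $r_{\A}(a_n)$ so that the root's rank dominates $r(\A)$, bound that rank by the order type of the Kleene--Brouwer ordering, and finish by downward closure of the computable ordinals --- is precisely the classical route by which one shows that the height of any computable well-founded relation lies below $\omega_1^{CK}$. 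The only items left implicit are the two classical facts you explicitly lean on (that $<_{KB}$ on a computable well-founded tree is a computable well-ordering, and that its order type dominates the tree's rank); citing them rather than reproving them is reasonable, and both are easily verified by the kind of transfinite induction you sketch elsewhere in the proposal.
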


Since any automatic structure is a computable structure, Lemma \ref{lm:wfComputable} gives us an upper bound on the ordinal heights of automatic well-founded relations.  We now ask whether this upper bound is sharp.  We will consider this question both in the setting of all automatic well-founded relations (in Lecture 3), and in the setting of automatic well-founded partial orders (now).  The following theorem characterizes automatic well-founded partial orders in terms of their ordinal heights.

\begin{theorem}[Khoussainov, Minnes; 2007]\label{thm:wfpo}
An ordinal $\alpha$ is the ordinal height of an automatic well-founded partial order if and only if $\alpha < \omega^\omega$.  
\end{theorem}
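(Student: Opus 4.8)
The statement is a two-directional characterization, so I would prove the two inclusions separately, disposing of the constructive (sufficiency) direction first and concentrating on the upper bound (necessity), which is the real work.

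\textbf{Sufficiency.} I would first record the elementary fact that a linear well-order of order type $\gamma$ is in particular a well-founded partial order, and that the element occupying position $\eta$ has rank exactly $\eta$: this follows by an immediate transfinite induction from the recursion $r_{\mathcal A}(z)=\sup\{r_{\mathcal A}(y)+1 : y<z\}$. Consequently the ordinal height of such an order is $\sup\{\eta:\eta<\gamma\}$, which equals $\delta$ whenever $\gamma=\delta+1$. Hence the well-order of order type $\alpha+1$ has ordinal height \emph{exactly} $\alpha$. Since $\alpha<\omega^\omega$ if and only if $\alpha+1<\omega^\omega$, and since every ordinal below $\omega^\omega$ is automatic (Example \ref{ex:ordinals}; alternatively, one builds it from $(\omega;\leq)$ using that sum and product of linear orders preserve automaticity, via Proposition \ref{prop:ProdDisjoint} and Corollary \ref{cr:Definability}), this exhibits, for each $\alpha<\omega^\omega$, an automatic well-founded partial order of height $\alpha$. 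This settles one direction.

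\textbf{Necessity.} I would fix an automatic well-founded poset $(A;\leq)$ with $\leq$ recognized by an automaton $\mathcal M$ having $k$ states over alphabet $\Sigma$, and aim to show $r(\mathcal A)<\omega^m$ for a constant $m$ determined by $k$ and $|\Sigma|$. The engine is a pumping/decomposition argument in the style of Delhomm\'e. For a string $u$, let $\mathcal A_u$ denote the sub-poset induced on the extensions $\{uv\in A\}$, recorded together with the \emph{interface data} describing how these extensions compare, under $\leq$, with elements of length $\leq|u|$ and with extensions of the other length-$|u|$ prefixes. Applying the Pumping Lemma to $\mathcal M$ (in the spirit of its use in Lemma \ref{lm:ConstantGrowth}) shows that this interface data depends only on the tuple of $\mathcal M$-states reachable after processing the length-$|u|$ diagonal; so as $u$ ranges over all strings, only finitely many residual types occur, and a pumped copy of any residual piece embeds order-isomorphically, forcing each residual height to be already attained by some short $u$.

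I would then convert this finiteness into an ordinal bound by induction on $m$. Grading $A$ by length, the rank recursion expresses $r(\mathcal A)$ as a supremum of expressions of the form ``rank inside a residual piece $\mathcal A_u$ plus a bounded offset,'' glued along prefixes; the bounded-length part contributes only finite height. Because there are finitely many residual types, each of strictly smaller associated height, one checks that assembling residuals of height $<\omega^{m-1}$ over one further string-level keeps the total height below $\omega^m$, whence $r(\mathcal A)<\omega^\omega$. The main obstacle is exactly this last stage: proving the decomposition lemma rigorously — that pumping forces finitely many residual poset-with-interface types controlled by the automaton states — and then carrying out the ordinal arithmetic correctly. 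The two delicate points are (i) making the interface record enough of the cross-cone comparabilities that the global order, and hence any fast-growing rank, is reconstructible from finitely many local types (so such a rank would already appear in a residual piece), and (ii) verifying the inequality that a supremum over a finite-height index of finitely many pieces each of height $<\omega^{m-1}$ remains below $\omega^m$, which is precisely what pins the bound at $\omega^\omega$ rather than higher; here Lemma \ref{lm:wfElementRank} is convenient for locating the witnessing ranks.
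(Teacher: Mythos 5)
Your sufficiency direction is correct and matches the paper's (indeed it is slightly more careful, since passing to order type $\alpha+1$ pins the height at exactly $\alpha$). The problem is the necessity direction: what you have written is a plan whose two crucial steps --- the ``poset-with-interface'' decomposition lemma and the level-by-level ordinal-arithmetic assembly --- you yourself flag as unresolved obstacles, and these are exactly the points where your route is harder than it needs to be. Recording enough interface data to reconstruct the global order from finitely many local types is a genuine difficulty (cones over different prefixes interleave arbitrarily with each other and with the short elements), and the proposed induction ``assembling residuals over one further string-level'' has no clear well-founded parameter, since string lengths are unbounded and the gluing would have to be iterated over $\omega$ many levels. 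Note also that pumping only yields order-\emph{embeddings} in general, and embeddings do not preserve ordinal height, so ``each residual height is already attained by some short $u$'' needs isomorphism, not pumping, to justify.

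The paper's proof avoids both difficulties by arguing by contradiction, never attempting a direct upper bound $\omega^m$ or any global reconstruction. Suppose $r(\A)\geq\omega^\omega$. By Lemma \ref{lm:wfElementRank}, for every $n$ there is $u_n\in A$ whose down-set has height exactly $\omega^n$. Partition $u_n\downarrow$ into the finite set of predecessors of length $<|u_n|$ and the cones $X_v^{u_n}=\{vw\in A : vw<u_n\}$, one for each $v$ with $|v|=|u_n|$; this is a \emph{finite} partition, so Lemma \ref{lm:RankLemma} (subadditivity of height with respect to the natural sum) together with the fact that $\omega^n$ is closed under natural sums of smaller ordinals yields a single cone $X_{v_n}^{u_n}$ of height exactly $\omega^n$. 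This localization step is what replaces your delicate point (i): no cross-cone comparabilities are ever needed, because one cone already carries the entire rank. Now classify the pairs $(u_n,v_n)$ by finitely many pieces of automaton-state data (the state of $\M_A$ after reading $v_n$ and the state of $\M_{\leq}$ after reading the convolution of $v_n$ with $u_n$); pairs with equal state data give a map $vw\mapsto v'w$ that is an order-isomorphism between the corresponding cones, hence rank-preserving. By pigeonhole there are $m\neq n$ with $(u_m,v_m)$ and $(u_n,v_n)$ equivalent, whence $\omega^m=r(X_{v_m}^{u_m})=r(X_{v_n}^{u_n})=\omega^n$, a contradiction. This pigeonhole-on-exact-ranks contradiction is the idea missing from your proposal; it eliminates your delicate point (ii) entirely, since no transfinite assembly or ordinal arithmetic beyond Lemma \ref{lm:RankLemma} and additive indecomposability of $\omega^n$ is required.
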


One direction of the proof of the characterization theorem is easy: each ordinal below $\omega^\omega$ is automatic (Example \ref{ex:ordinals}) and is an automatic well-founded total order.  Moreover, the ordinal height of an ordinal is itself.  

\smallskip

For the converse, we will use a property of the natural sum of ordinals.  The {\bf natural sum} of $\alpha$ and $\beta$, denoted $\alpha +' \beta$, is defined recursively as $\alpha +' 0 = \alpha$, $0 +' \beta = \beta$, and $\alpha +' \beta$ is the least ordinal strictly greater than $\gamma +' \beta$ for all $\gamma < \alpha$ and strictly greater than $\alpha +' \gamma$ for all $\gamma < \beta$.  An equivalent definition of the natural sum uses the {\bf Cantor normal form} of ordinals.  Recall that any ordinal can be written in this normal form as 
\[
\alpha = \omega^{\beta_1} n_1 + \omega^{\beta_2} n_2 + \cdots + \omega^{\beta_k} n_k,
\] 
where $\beta_1 > \beta_2 > \cdots > \beta_k$ and $k, n_1, \ldots, n_k \in \N$. We define
\[
(\omega^{\beta_1} a_1 + \cdots + \omega^{\beta_k} a_k) +' (\omega^{\beta_1} b_1 + \cdots + \omega^{\beta_k} b_k)  = \omega^{\beta_1} (a_1+b_1) + \cdots + \omega^{\beta_k} (a_k+b_k). 
\]
The following lemma gives sub-additivity of ordinal heights of substructures with respect to the natural sum of ordinals.
\begin{lemma}\label{lm:RankLemma}
Suppose $\A = (A; \leq)$ is a well-founded partial order and $A_1, A_2$ form a partition of $A$ ($A_1 \sqcup A_2 = A$, a disjoint union).  Let $\A_1 = (A_1; \leq_1), \A_2 = (A_1; \leq_2)$ be obtained by restricting $\leq$ to $A_1, A_2$.  Then $r(\A) \leq r(\A_1) +' r(\A_2)$.
\end{lemma}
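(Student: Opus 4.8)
The plan is to build an explicit ranking function for $\A$ out of the ranks of $\A_1$ and $\A_2$, glued together by the natural sum, and then to read off the bound from the defining strict monotonicity of $+'$. Concretely, I would first extend the two element-rank functions to all of $A$ by well-founded recursion on $\A$: for $x \in A$ and $i \in \{1,2\}$ set
\[
\rho_i(x) \;=\; \sup\{\, \rho_i(y)+1 \;:\; y \in A_i \text{ and } y < x \,\},
\]
with the empty supremum equal to $0$. For $x \in A_i$ this agrees with the $\A_i$-rank $r_{\A_i}(x)$, while for a general $x$ the value $\rho_i(x)$ records how far $x$ sits above its $A_i$-predecessors as measured inside $\A$. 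I then propose the candidate $f(x) = \rho_1(x) +' \rho_2(x)$.

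The first key step is to check that $f$ is a ranking function for $\A$, i.e. $f(y) < f(x)$ whenever $y < x$. Fix such a pair; since $A_1, A_2$ partition $A$, the element $y$ lies in exactly one part, say $y \in A_1$. Then $y$ is counted in the supremum defining $\rho_1(x)$, so $\rho_1(y)+1 \le \rho_1(x)$ and in particular $\rho_1(y) < \rho_1(x)$. In the other coordinate, transitivity of $\le$ shows that every $z \in A_2$ with $z < y$ also satisfies $z < x$, so the supremum defining $\rho_2(y)$ ranges over a subset of the one defining $\rho_2(x)$, giving $\rho_2(y) \le \rho_2(x)$. Feeding these into the natural sum and using that $+'$ is strictly increasing in each argument — which is exactly its recursive definition, $\gamma +' \beta < \alpha +' \beta$ for $\gamma < \alpha$ — yields $f(y) = \rho_1(y) +' \rho_2(y) \le \rho_1(y) +' \rho_2(x) < \rho_1(x) +' \rho_2(x) = f(x)$. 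Hence $f$ is a ranking function and $r(\A) \le ord(f)$.

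The second step is to bound $ord(f)$. Here I would verify that $\rho_i(x) \le r(\A_i)$ for every $x \in A$: for $x \in A_i$ this is immediate since $\rho_i(x) = r_{\A_i}(x)$, and for $x \notin A_i$ every term $r_{\A_i}(z)+1$ appearing in $\rho_i(x)$ is bounded by the height $r(\A_i)$ of $\A_i$. Combining these with monotonicity of $+'$ gives $f(x) \le r(\A_1) +' r(\A_2)$ for all $x$, whence $ord(f) \le r(\A_1) +' r(\A_2)$ and the lemma follows. The conceptual crux — and the place to be careful — is the cross-part comparison in the first step: it is precisely the commutativity together with the two-sided strict monotonicity of the natural sum that makes the argument close, since a predecessor of $x$ may lie in either part and must be ``absorbed'' no matter which side it falls on. Ordinary ordinal addition would fail here, being neither commutative nor strictly monotone in its left argument. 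A secondary point worth tracking is the additive-constant bookkeeping in the second step, so that the height of a well-order agrees with its order type and the bounds on $\rho_i$ align exactly with $r(\A_i)$; the two-element chain split across the two parts is the natural sanity check.
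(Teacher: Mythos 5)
Your proof is correct and takes essentially the same route as the paper: your $\rho_i(x)$ is precisely the ordinal height $r(\A_{i,x})$ of the set $A_{i,x} = \{z \in A_i : z < x\}$ of $A_i$-predecessors of $x$ (downward closed in $\A_i$ by transitivity), so your $f(x) = \rho_1(x) +' \rho_2(x)$ coincides with the paper's ranking function $f(x) = r(\A_{1,x}) +' r(\A_{2,x})$, and you merely spell out the strict/weak monotonicity of $+'$ that the paper leaves implicit. The additive-constant caveat you flag is genuine---the argument (and indeed the lemma itself, as the two-element chain split across the parts shows) requires reading the height as the least ordinal strictly greater than all element ranks---but the paper's own proof rests on exactly the same convention, so this is not a point of divergence.
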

\begin{proof}
For each $x \in A$, consider the sets $A_{1,x} = \{ z \in A_1 : z < x \}$ and $A_{2,x} = \{ z \in A_2 : z < x \}$.  The structures $\A_{1,x}, \A_{2,x}$ are substructures of $\A_1, \A_2$ respectively. Define a ranking function of  $\A$ by $f(x) = r(\A_{1,x}) +' r(\A_{2,x})$.  The range of $f$ is contained in $r(\A_1) +' r(\A_2)$.  Therefore, $r(\A) \leq r(\A_1) +' r(\A_2)$. \qed
\end{proof}

We now outline the proof of the non-trivial direction of the characterization theorem of automatic well-founded partial orders.  Note that this proof follows Delhomm\'e's proof that ordinals larger than $\omega^\omega$ are not automatic \cite{Del04}.  We assume for a contradiction that there is an automatic well-founded partial order $\A = (A; \leq)$ such that $r(\A) = \alpha \geq \omega^\omega$.  Let $\M_A = (S_A, \iota_A, \Delta_A, F_A)$, $\M_{\leq} = (S_\leq, \iota_\leq, \Delta_\leq, F_\leq)$ be the word automata which recognize $A$ and $\leq$ (respectively).  For each $u \in A$, the set of predecessors of $u$, denoted by $u \downarrow$, can be partitioned into finitely many disjoint pieces as
\[
u \downarrow = \{ x \in A: |x| < |u| ~\&~ x < u \} ~\sqcup~ \bigsqcup_{v \in \Sigma^\star: |v|=|u|} X_v^u
\]
where $X_v^u = \{ vw \in A : vw < u \}$ (extensions of $v$ which are predecessors of $u$).
Since $r(\A) \geq \omega^\omega$, Lemma \ref{lm:wfElementRank} guarantees that for each $n$, there is an element $u_n \in A$ with $r_{\A}(u) = \omega^{n}$.  Moreover, Lemma \ref{lm:RankLemma} implies that if a structure has ordinal height $\omega^n$, any finite partition of the structure contains a set of ordinal height $\omega^n$.  In particular, for each $u_n$ there is $v_n$ such that $|u_n| = |v_n|$ and $r(X_{v_n}^{u_n}) = r(u_n) \downarrow = \omega^n$.  We now use the automata $\M_A, \M_\leq$ to define an equivalence relation of finite index on pairs $(u,v)$: $(u,v) \sim (u',v')$ if and only if $\Delta_A(\iota_A, v) = \Delta_A(\iota_A, v')$ and $\Delta_{\leq}(\iota_\leq, \binom{v}{u}) = \Delta_{\leq}(\iota_\leq, \binom{v'}{u'})$.  Suppose that $(u,v) \sim (u',v')$.  Then the map $f: X_v^u \to X_{v'}^{u'}$ defined as $f(vw) = v'w$ is an order-isomorphism.  Hence, $r(X_v^u) = r(X_{v'}^{u'})$.  Also, there are at most $|S_A| \times |S_\leq|$ $\sim$equivalence classes.  Therefore, the sequence $\{(u_n, v_n)\}$ contains some $m,n$ $(m \neq n)$ such that $(u_m, v_m) \sim (u_n, v_n)$.  But, $\omega^m = r(u_m) = r(X_{v_m}^{u_m}) = r(X_{v_n}^{u_n}) = r(u_n) = \omega^n$.  This is a contradiction with $m \neq n$.  Thus, there is no automatic well-founded partial order whose ordinal height is greater than or equal to $\omega^\omega$. \qed

\smallskip

The above characterization theorem applies to automatic {\em well-founded} partial orders.  We now examine a different class of automatic partial orders: linear orders. 
We seek a similar characterization theorem for automatic linear orders based on an alternate measure of complexity.  Let $(L; \leq)$ be a linear order.  Then $x,y \in L$ are called $\equiv_F$-equivalent if there are only finitely many elements between them.  We can use $\equiv_F$ equivalence to measure how far a linear order is from \lqt nice\rqt linear orders like $(\omega; \leq)$ or $(\Q; \leq)$.  To do so, given a linear order, we take its quotient with respect to the $\equiv_F$ equivalence classes as many times as needed to reach a fixed-point.  The first ordinal at which the fixed-point is reached is called the {\bf Cantor-Bendixson rank} of the linear order, denoted $CB(L; \leq)$.  
Observe that $CB(\Q; \leq) = 0$ and $CB(\omega; \leq) = 1$.  Moreover, the fixed-point reached after iteratively taking quotients of any linear order will either be isomorphic to the rational numbers or the linear order with a single element. A useful lemma tells us that automaticity is preserved as we take quotients by $\equiv_F$.

\begin{lemma}\label{lm:loQuotient}
If $\L = (L; \leq)$ is an automatic linear order then so is the quotient linear order $\L / \equiv_F$.  
\end{lemma}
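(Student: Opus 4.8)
The plan is to reduce the statement to the quotient machinery already in hand. By Proposition~\ref{Prop:quotient}, as soon as we know that $\equiv_F$ is a \emph{regular} equivalence relation on $L$, the quotient $\L / \equiv_F$ is word automatic; so the whole burden of the argument is to show that $\equiv_F$ is recognized by a word automaton, together with the bookkeeping that the induced order on representatives is genuinely the quotient order.

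First I would express $\equiv_F$ as a formula in the extended logic $(FO + \exists^{\infty})$. By definition $x \equiv_F y$ holds exactly when only finitely many elements lie between $x$ and $y$, which is captured by
\[
x \equiv_F y \;\iff\; \neg \exists^{\infty} z \,\big( (x \leq z ~\&~ z \leq y) \;\vee\; (y \leq z ~\&~ z \leq x) \big).
\]
Since $\L$ is word automatic, Theorem~\ref{thm:decidKRS} produces an automaton recognizing exactly the pairs $\la x, y \ra$ satisfying this formula, so $\equiv_F$ is a regular relation.

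Next I would check that $\equiv_F$ is indeed an equivalence relation, so that Proposition~\ref{Prop:quotient} applies: reflexivity and symmetry are immediate, and transitivity follows because, if $x \equiv_F y$ and $y \equiv_F z$, the elements lying between $x$ and $z$ form a subset of the union of the two finite intervals determined by the pairs $x,y$ and $y,z$ (both of which contain $y$, so their union is again an interval and hence finite). I would also record the key structural fact that each $\equiv_F$-class is \emph{convex}: if $x \equiv_F y$ and $x \leq w \leq y$, then $w$ lies in the same class.

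Finally, applying Proposition~\ref{Prop:quotient} to the regular relation $\equiv_F$ yields a word automatic structure on the set $Rep$ of length-lexicographically least class representatives, with the order given by $\leq$ restricted to $Rep$. The one genuinely delicate point — and the main obstacle — is to verify that this restricted order is \emph{isomorphic} to $\L / \equiv_F$ rather than merely some relation on representatives. Here convexity does the work: since distinct $\equiv_F$-classes are disjoint convex subsets of a linear order, for representatives $r, r'$ of classes $C, C'$ we have $r \leq r'$ if and only if every element of $C$ precedes every element of $C'$, i.e.\ if and only if $C$ precedes $C'$ in the quotient. Thus $Rep$ under the restriction of $\leq$ is exactly a copy of $\L / \equiv_F$, completing the argument. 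In short, the automata construction is handed to us by Theorem~\ref{thm:decidKRS}; the real care is in confirming that the representative-based quotient of Proposition~\ref{Prop:quotient} faithfully captures the intended quotient linear order.
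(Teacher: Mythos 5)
Your proposal is correct and takes essentially the same route as the paper: the paper's proof consists precisely of the $(FO + \exists^\infty)$ definition of $\equiv_F$ that you wrote down, followed by an appeal to Proposition~\ref{Prop:quotient} (with Theorem~\ref{thm:decidKRS} supplying regularity of definable relations). The extra verifications you include --- transitivity of $\equiv_F$, convexity of the classes, and the check that the order restricted to representatives really is the quotient order --- are left implicit in the paper but are sound and harmless additions.
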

\begin{proof}
By Proposition \ref{Prop:quotient}, it suffices to show that $\equiv_F$ is definable in the extended logic $(FO + \exists^\infty)$ of $(L; \leq)$.  The definition is 
\[
x \equiv_F y \iff \neg \exists^\infty z [( x \leq z ~\&~ z \leq y ) \vee ( y \leq z ~\&~ z \leq x ) ]
\] \qed
\end{proof}

Theorem \ref{thm:wfpo} showed that well-founded automatic partial orders have relatively low ordinal heights.  In this vein, it is reasonable to expect a low bound on the Cantor-Bendixson rank of automatic linear orders as well.  The following characterization theorem does just that.  This characterization theorem and its implications for linear orders and ordinals are discussed in \cite{KhRS03}, \cite{KhRS04}.

\begin{theorem}[Khoussainov, Rubin, Stephan; 2003]\label{thm:loCB}
An ordinal $\alpha$ is the Cantor-Bendixson rank of an automatic linear order if and only if it is finite.   
\end{theorem}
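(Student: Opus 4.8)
The plan is to prove the two directions separately, with the realizability direction being routine and the finiteness direction requiring a pumping argument in the style of Theorem~\ref{thm:wfpo}.

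For the easy direction I would exhibit, for each finite $n$, an automatic linear order of Cantor--Bendixson rank exactly $n$. The rationals $(\Q;\leq)$ have rank $0$ and are automatic, so assume $n\geq 1$ and take the ordinal $\omega^n$, which is automatic by Example~\ref{ex:ordinals} since $\omega^n<\omega^\omega$. A short induction shows $CB(\omega^n)=n$: the $\equiv_F$-classes of $\omega^{n+1}$ are exactly the maximal blocks on which all Cantor-normal-form terms above $\omega^0$ agree, each such class is a copy of $\omega$, and there are $\omega^n$ of them arranged in order type $\omega^n$, so $\omega^{n+1}/\!\equiv_F\;\cong\omega^n$; combined with $CB(\omega)=1$ this yields $CB(\omega^n)=n$. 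Hence every finite ordinal is realized.

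For the converse I would argue by contradiction, assuming $L=(L;\leq)$ is automatic with $CB(L)\geq\omega$, and mimic the Delhomm\'e-style pigeonhole used for Theorem~\ref{thm:wfpo}. First I would attach to each point a \emph{local} Cantor--Bendixson rank measuring at which stage of the iterated condensation its class is absorbed; the analogue of Lemma~\ref{lm:wfElementRank} then shows that $CB(L)\geq\omega$ forces points $x_n\in L$ of local rank exactly $n$ for every finite $n$. Each such rank is witnessed by a nested system of intervals around $x_n$ recording, level by level, the \lqt infinitely many elements on one side\rqt conditions produced by successive condensations. Writing the comparison automaton as $\M_\leq=(S_\leq,\iota_\leq,\Delta_\leq,F_\leq)$, I would assign to each witnessing configuration a finite \emph{type} given by the states that $\Delta_\leq$ reaches on the relevant convoluted endpoint strings, exactly as the pair $(u,v)$ was classified in the proof of Theorem~\ref{thm:wfpo}. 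Since there are only finitely many such types but infinitely many $n$, the pigeonhole principle yields $m\neq n$ whose witnessing configurations share a type. Matching the two configurations coordinate by coordinate, as in the isomorphism $f(vw)=v'w$, produces an order-isomorphism between the local neighbourhoods of $x_m$ and $x_n$, forcing their local ranks to coincide and hence $m=n$, the desired contradiction.

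The main obstacle is isolating the correct finite local invariant. A naive induction using Lemma~\ref{lm:loQuotient} --- \lqt each condensation lowers the rank, and the quotient is again automatic\rqt --- fails, because the quotient construction of Proposition~\ref{Prop:quotient} may increase the number of states, so iterating gives no uniform bound. The work is therefore in defining a point-rank that is read off from boundedly much data on the fixed automata for $L$ and $\leq$, and that is genuinely determined by the order-isomorphism type of a bounded neighbourhood, so that equal types force equal ranks. Once such an invariant is in place, the finiteness of $S_\leq$ (together with the domain automaton) caps the attainable local ranks, and hence $CB(L)$, at a finite value.
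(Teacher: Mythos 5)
Your easy direction is fine and matches the paper's: $(\Q;\leq)$ realizes rank $0$, and $\omega^n$ is automatic with $CB(\omega^n)=n$. The hard direction, however, contains a genuine gap, and it is exactly the one you flag yourself in your final paragraph: the Delhomm\'e-style pigeonhole needs an invariant whose ``type'' consists of boundedly much automaton data, and the invariant you propose does not have this property. A witnessing configuration for local rank $n$ --- a nested system of intervals recording, level by level, the ``infinitely many elements on one side'' conditions of $n$ successive condensations --- involves a number of endpoint strings that grows with $n$ (indeed, each level requires infinitely many witnesses, so it is not even finite data). Hence the collection of ``states that $\Delta_\leq$ reaches on the relevant convoluted endpoint strings'' is a tuple whose length is unbounded as $n$ grows, the number of possible types is not finite uniformly in $n$, and the step ``finitely many types but infinitely many $n$'' collapses. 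Since you explicitly concede that the correct bounded point-invariant remains to be constructed, and that construction is precisely the mathematical content of the theorem, what you have is a strategy outline rather than a proof.

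The paper closes this gap not by ranking points but by ranking intervals. Its key lemma (Lemma~\ref{lm:CBinterval}) says that if $CB(\L)=\alpha$ and $\beta\leq\alpha$, then some interval $[x,y]$ of $\L$ has $CB([x,y])=\beta$. An interval is coded by just two strings, so the automata for the domain and for $\leq$ induce an equivalence relation on intervals with only finitely many classes (classifying the pair $(x,y)$ by the states reached on the relevant convolutions, exactly as the pairs $(u,v)$ were classified in the proof of Theorem~\ref{thm:wfpo}), and intervals in the same class have the same Cantor--Bendixson rank. If $CB(\L)\geq\omega$, Lemma~\ref{lm:CBinterval} supplies intervals of every finite rank; two of them must fall in the same class, contradicting the choice of distinct ranks. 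In other words, the bounded invariant you were searching for is simply the Cantor--Bendixson rank of $[x,y]$, attached to a \emph{pair} of elements rather than to a single point together with an unbounded witnessing configuration; the realization lemma for intervals is the missing idea that makes the pigeonhole legitimate. Incidentally, your observation that iterating Lemma~\ref{lm:loQuotient} gives no uniform bound --- because the quotient construction of Proposition~\ref{Prop:quotient} can increase the number of states --- is correct, and it is exactly why the paper's proof must proceed through intervals rather than through repeated condensation.
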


The proof of Theorem \ref{thm:loCB} has many common features with the proof of Theorem \ref{thm:wfpo}.  One direction is easy: for $n< \omega$, $\omega^n$ is automatic and $CB(\omega^n) =n$.  The hard direction relies on understanding the Cantor-Bendixson ranks of suborders of a given linear order.  In particular, we make use of suborders determined by intervals of a linear order: sets of the form $\{ z : x \leq z \leq y \}$ for some $x$ and $y$.

\begin{lemma}\label{lm:CBinterval}
For any linear order $\L$ and ordinal $\alpha$, if $CB(\L) = \alpha$ and $\beta \leq \alpha$ then there is an interval $[x,y]$ of $\L$ with $CB([x,y]) = \beta$.
\end{lemma}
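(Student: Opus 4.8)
The plan is to argue by transfinite induction on $\alpha = CB(\L)$, resting on two structural principles: a \emph{monotonicity} principle saying that passing to a sub-interval never increases the Cantor--Bendixson rank, and a \emph{pull-back} principle saying that one application of the condensation $\equiv_F$ raises the rank of an interval by exactly one. The key elementary observation underlying both is that if $I$ is an interval (a convex subset) of $\L$, then for $x,y \in I$ the set of elements lying strictly between them is the same whether computed in $I$ or in $\L$; hence the restriction of $\equiv_F$ to $I$ coincides with the $\equiv_F$ relation of $I$ regarded as a linear order in its own right.

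First I would establish monotonicity, namely
\[
CB(I) \leq CB(\L) \quad\text{whenever } I \text{ is an interval of } \L.
\]
By the observation above, the condensation map sends $I/\equiv_F$ isomorphically onto a convex subset of $\L/\equiv_F$. Since a convex subset of a dense order is again dense or a singleton, and a convex subset of a singleton is a singleton, a straightforward induction on the stage shows that the $\gamma$-th condensation of $I$ is an interval of the $\gamma$-th condensation of $\L$. In particular $I$ reaches its fixed point no later than $\L$ does, giving the displayed inequality.

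Next comes the inductive construction of intervals of prescribed rank. The base case $\alpha = 0$ is immediate: $\L$ is dense (or a point), so every interval is dense or a point and the only value to realize, $\beta = 0$, is witnessed by any singleton $[x,x]$. For the step I would pass to the condensation $\L' = \L/\equiv_F$, whose rank satisfies $1 + CB(\L') = \alpha$. Given a successor value $\beta$ with $1 \leq \beta \leq \alpha$, I would apply the induction hypothesis inside $\L'$ to obtain an interval $[\bar{x},\bar{y}]$ of $\L'$ with $CB([\bar{x},\bar{y}]) = \beta - 1$, then pick genuine representatives $x,y \in L$ in the $\equiv_F$-classes $\bar{x},\bar{y}$ and set $I = [x,y]$. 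Since $I$ contains at least one element of each class $\bar{x},\bar{y}$ together with all classes strictly between them, its condensation $I/\equiv_F$ is exactly $[\bar{x},\bar{y}]$, and as the classes are non-trivial this first condensation is proper; hence $CB(I) = 1 + CB([\bar{x},\bar{y}]) = \beta$. Together with the singleton witnessing $\beta = 0$, this realizes all the values I can reach by a single descent.

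The hard part, and the main obstacle, is the behaviour at limit and infinite ranks. The recursion $CB(\L) = 1 + CB(\L/\equiv_F)$ is contaminated by the absorption $1 + \gamma = \gamma$: when the rank is infinite, a single condensation need \emph{not} lower it, so the clean successor step must be supplemented by a cofinality argument that realizes a limit value $\beta$ as the supremum of the ranks of an increasing chain of intervals and then transfers this to one interval. More seriously, the requirement that the witnessing interval be \emph{bounded}, of the form $[x,y]$, can fail exactly at $\beta = \alpha$ when $\L$ has no largest element — for instance the top $\equiv_F$-class may have order type $\omega$, so that no $y$ caps it. In that situation the value $\alpha$ is realized only by $\L$ itself, viewed as the maximal interval, while every strictly smaller $\beta$ is realized by a bounded interval as above. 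This is precisely the form in which the lemma is applied in the proof of Theorem \ref{thm:loCB}, where from an order of infinite rank one only needs to extract the bounded intervals realizing the finitely many values $\beta < \alpha$.
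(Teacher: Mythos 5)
The paper never actually proves this lemma (it is stated bare and imported from \cite{KhRS03}), so your proposal stands on its own; unfortunately it has a genuine gap exactly where the lemma is used. Your induction on $\alpha$ passes to the condensation $\L'=\L/\equiv_F$ and invokes the induction hypothesis for $\L'$. That is legitimate only when $CB(\L')<CB(\L)$, i.e.\ only when $\alpha$ is finite; for infinite $\alpha$ the absorption $1+\gamma=\gamma$ forces $CB(\L')=CB(\L)$, so the induction is circular and the proposal proves \emph{nothing} when $\alpha\geq\omega$ --- not even for finite target values $\beta$. But the one place the paper uses the lemma (the proof of Theorem \ref{thm:loCB}) is precisely this case: $\L$ of infinite rank, $\beta$ ranging over the finite ordinals. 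You acknowledge the problem and defer it to an unspecified ``cofinality argument,'' but no such argument can exist in the generality you hope for, because limit values of $\beta$ are genuinely not realized by bounded intervals. For $\L=\omega^\omega$ (rank $\omega$), every interval $[x,y]$ sits inside some $\omega^n$ and so has finite rank, so $\beta=\omega=\alpha$ is missed; worse, for $\L=\omega^\omega\cdot 2$ (rank $\omega+1$) the bounded intervals realize exactly the finite ranks together with $\omega+1$, so the \emph{intermediate} limit value $\beta=\omega<\alpha$ is missed as well. This also refutes your proposed repair, namely that only $\beta=\alpha$ can fail and that ``every strictly smaller $\beta$ is realized'': the obstruction has nothing to do with $\L$ lacking a largest element (the order $\omega$ has no largest element, yet any $[x,y]$ with $x<y$ realizes its rank $1$); it occurs exactly at limit ordinals $\beta$, whether or not $\beta=\alpha$.

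The fix is to abandon the induction and realize $0$ (by a singleton) and every successor $\beta=\gamma+1\leq\alpha$ directly. Your two structural principles --- monotonicity, and the fact that on a convex subset the relation $\equiv_F$ computed intrinsically agrees with the restriction of $\equiv_F$ from $\L$ --- are the right tools, but they should be applied \emph{once, at stage $\gamma$}, not stage by stage through an induction: iterating the compatibility observation shows that the $\gamma$-fold condensation of $[x,y]$ is isomorphic to the interval of the $\gamma$-fold condensation of $\L$ spanned by the classes of $x$ and $y$. Since $\gamma<\alpha$, the $\gamma$-fold condensation of $\L$ is not yet a fixed point, so it contains distinct points $\bar{u}<\bar{v}$ with only finitely many points between them; choosing representatives $x\in\bar{u}$, $y\in\bar{v}$ makes the $\gamma$-fold condensation of $[x,y]$ a finite order with at least two points, whence $CB([x,y])=\gamma+1$ exactly. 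This yields all successor values $\beta\leq\alpha$ --- in particular every finite nonzero one, which is all that Theorem \ref{thm:loCB} needs --- and it is the strongest form of the lemma that is actually true for intervals of the form $[x,y]$; as stated, with arbitrary $\beta\leq\alpha$ allowed, the lemma is false.
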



To prove that any linear order with infinite Cantor-Bendixson rank is not automatic, we go by contradiction.  We suppose that such a linear order exists, and use the associated automata to define an equivalence relation on intervals which has only finitely many equivalence classes.  Moreover, intervals in the same equivalence class have the same Cantor-Bendixson rank.  However, since we assume that the linear order has infinite Cantor-Bendixson rank, Lemma \ref{lm:CBinterval} allows us to pick out intervals with every finite Cantor-Bendixson rank.  Therefore, two such intervals must be in the same equivalence class.  But this contradicts our choice of intervals with distinct Cantor-Bendixson ranks. \qed

\smallskip

Theorem \ref{thm:loCB} has been incredibly productive for decidability results.  In particular, it yields algorithms for computing the Cantor-Bendixson rank of a given automatic linear order, and for studying scattered linear orders and ordinals.  A linear order is called {\bf dense} if for each $x$ and $y$ with $x \leq y$, there is some $z$ such that $x \leq z \leq y$.  The linear orders with zero elements or one element are trivially dense.  For countable linear orders, there are exactly four isomorphism types of non-trivial dense linear orders (the rational numbers restricted to $(0,1)$, the rational numbers restricted to $[0,1)$, the rational numbers restricted to $(0,1]$,  the rational numbers restricted to $[0,1]$).  Note that being dense is a first-order definable property, so Corollary \ref{cr:KNdecidability2} tells us we can decide if a given automatic linear order is dense.  A linear order is called {\bf scattered} if it contains no non-trivial dense sub-order.   A linear order is an ordinal if it is well-founded.

\begin{corollary}
There is an algorithm which, given an automatic linear order $\L$, computes the Cantor-Bendixson rank of $\L$.
\end{corollary}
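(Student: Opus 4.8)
The plan is to convert the definition of Cantor-Bendixson rank directly into a halting loop that iterates the $\equiv_F$-condensation and detects the fixed point by a decidable first-order test. I would set $\L_0 = \L$ and, at stage $n$, ask whether the automatic linear order $\L_n$ is dense. As noted just before the statement, the fixed point of the iterated $\equiv_F$-quotient is always a dense linear order (the rationals, or a zero- or one-element order, which are trivially dense), and conversely any dense $\L_n$ already has trivial $\equiv_F$ and is therefore a fixed point. Hence the least $n$ at which $\L_n$ is dense is exactly $CB(\L)$. Since density is first-order expressible, Corollary \ref{cr:KNdecidability2} makes this test decidable for each automatic $\L_n$, so at every stage I can decide whether to halt and output $n$.

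For the loop to make sense I must produce, effectively and uniformly, the automata defining $\L_{n+1} = \L_n / \equiv_F$ from those defining $\L_n$. This is exactly the constructive content of Lemma \ref{lm:loQuotient}: the relation $\equiv_F$ is $(FO + \exists^\infty)$-definable over $\L_n$, so Theorem \ref{thm:decidKRS} builds an automaton for $\equiv_F$ from the automaton for $\leq$, and then Proposition \ref{Prop:quotient} yields an automaton presentation of the quotient. Iterating, each $\L_n$ remains automatic and its defining automata are computed from those of the previous stage.

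Termination is where Theorem \ref{thm:loCB} does the essential work. That theorem guarantees that $CB(\L)$ is finite for every automatic linear order $\L$, so the condensation sequence $\L_0, \L_1, \L_2, \ldots$ reaches a dense fixed point after finitely many steps. Consequently the density test succeeds at stage $n = CB(\L)$ and the algorithm halts with the correct value.

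The main obstacle is not the correctness of the output but justifying honest effectiveness: I must ensure that each $\L_n$ is genuinely automatic with a computable presentation, and that the fixed-point detection avoids the (potentially undecidable) isomorphism comparison between $\L_n$ and $\L_{n+1}$. The first point rests on the uniform constructivity of Lemma \ref{lm:loQuotient} together with the extended decidability result Theorem \ref{thm:decidKRS}; the second is handled precisely by replacing the isomorphism check with the decidable density test on the single structure $\L_n$. Once these are secured, the finiteness bound of Theorem \ref{thm:loCB} turns the transfinite condensation into a finite, checkable loop.
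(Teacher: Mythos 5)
Your proposal is correct and follows essentially the same route as the paper: iteratively test density (decidable since it is a first-order property), construct the automatic quotient $\L_n/\equiv_F$ via Lemma \ref{lm:loQuotient} when the test fails, and rely on Theorem \ref{thm:loCB} to guarantee the loop halts after exactly $CB(\L)$ iterations. Your added remarks on the uniformity of the quotient construction and on replacing an isomorphism-based fixed-point check with the density test are just explicit articulations of what the paper's proof implicitly uses.
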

\begin{proof}
Check if $\L$ is dense.  If it is, output $CB(\L) = 0$.  Otherwise, Lemma \ref{lm:loQuotient} tells us that the quotient $\L / \equiv_F$ is automatic.  We iterate checking if the quotient is dense and, if it is not, constructing the next quotient and incrementing the counter.  Each of these steps is effective because denseness is a first-order question.  Moreover, this procedure eventually stops by Theorem \ref{thm:loCB}.  Once we reach a dense quotient structure, we output the value of the counter. \qed
\end{proof}

The following two corollaries about automatic scattered linear orders use trivial modifications of the above algorithm.

\begin{corollary}
It is decidable if a given automatic linear order is scattered. 
\end{corollary}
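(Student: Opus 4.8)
The plan is to reuse the algorithm for computing the Cantor--Bendixson rank almost verbatim, adding only a single test at termination. The key structural fact I would first establish is that an automatic linear order $\L$ is scattered if and only if the fixed point of the iterated condensation $\L, \L/\equiv_F, (\L/\equiv_F)/\equiv_F, \ldots$ is trivial, i.e.\ has at most one element.

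First I would run the condensation procedure exactly as in the preceding corollary: starting from $\L_0 = \L$, repeatedly form $\L_{n+1} = \L_n /\equiv_F$, which remains automatic by Lemma \ref{lm:loQuotient} and can be produced effectively from $\L_n$. By Theorem \ref{thm:loCB} the Cantor--Bendixson rank of $\L$ is finite, so after finitely many steps the quotient becomes dense and equal to its own condensation (one checks that $\equiv_F$ is the identity precisely when the order is dense). Since denseness is a first-order property, Corollary \ref{cr:KNdecidability2} lets me detect effectively when this dense fixed point $\L^\ast$ is reached.

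At that point, instead of returning a counter, I would test whether $\L^\ast$ is trivial, which is the first-order sentence $\forall x \forall y\,(x = y)$ and hence decidable by Corollary \ref{cr:KNdecidability2}. If $\L^\ast$ has at most one element, I output that $\L$ is scattered; otherwise $\L^\ast$ is a non-trivial dense linear order and I output that $\L$ is not scattered.

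The main obstacle is justifying the characterization rather than the computation. For one direction, if $\L^\ast$ is non-trivial dense, then choosing one representative from each condensation block yields a non-trivial dense suborder of $\L$: the blocks are convex, so the order on representatives agrees with that of $\L^\ast$, and density transfers because between any two blocks there is a third. Hence $\L$ is not scattered. For the converse, a non-trivial dense suborder $D \subseteq \L$ has $\equiv_F$ equal to the identity on $D$ (between any two of its points there are infinitely many), so $D$ survives every condensation step intact and forces $\L^\ast$ to be non-trivial. The two points needing care are verifying that density genuinely transfers between $\L^\ast$ and its representative suborder, and that the finite-step termination guaranteed by Theorem \ref{thm:loCB} coincides exactly with reaching the dense fixed point.
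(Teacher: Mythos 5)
Your proposal is correct and takes essentially the same approach as the paper: the paper's proof is exactly the ``trivial modification'' of the Cantor--Bendixson rank algorithm that you describe, namely iterating the automatic condensation $\L \mapsto \L/\equiv_F$ until the dense fixed point is reached and then deciding the first-order sentence asserting triviality. The extra work you do to justify the characterization (scattered if and only if the fixed point has at most one element) is the standard fact the paper leaves implicit, and your argument for it is sound.
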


\begin{corollary}
Given an automatic linear order $\L$ that is not scattered, there is an algorithm which computes an automatic dense suborder of $L$.
\end{corollary}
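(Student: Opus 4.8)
The plan is to iterate the $\equiv_F$-quotient construction and then extract the dense fixed point as a genuine suborder. First I would invoke Theorem \ref{thm:loCB}: since $\L$ is automatic, its Cantor--Bendixson rank $k = CB(\L)$ is finite, and the algorithm of the previous corollary computes it. Because $\L$ is not scattered, the fixed point reached at stage $k$ is isomorphic to $(\Q; \leq)$ rather than to a one-element order, so after finitely many quotients we arrive at a dense order.

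The second ingredient is to realize each quotient as a suborder rather than as an abstract quotient structure. Following Proposition \ref{Prop:quotient}, I would represent $\L / \equiv_F$ on the regular set $Rep$ of length-lexicographically least elements of the $\equiv_F$-classes. The key observation is that each $\equiv_F$-class is convex in the linear order (if $x \equiv_F y$ and $x \leq z \leq y$ then $z \equiv_F x$), so the order induced on $Rep$ as a subset of $L$ coincides with the quotient order. Setting $\L_0 = \L$ and $\L_{i+1} = \L_i / \equiv_F$ realized on representatives, each $\L_i$ is automatic by Lemma \ref{lm:loQuotient}, and its domain $Rep_i$ is a regular subset of $L$ with $Rep_0 \supseteq Rep_1 \supseteq \cdots$. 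By the convexity observation applied at each stage, $(Rep_k; \leq)$, with $\leq$ the original order restricted, is order-isomorphic to $\L_k \cong (\Q; \leq)$, hence dense; and it is automatic since both $Rep_k$ and the restriction of $\leq$ to it are regular.

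For the algorithm itself, I would iterate: construct $\L_{i+1}$ from $\L_i$ using the $(FO + \exists^\infty)$-definition of $\equiv_F$ furnished by Lemma \ref{lm:loQuotient}, and after each step test whether the current order is dense. Denseness is first-order, hence decidable by Corollary \ref{cr:KNdecidability2}, and the loop halts after exactly $k = CB(\L)$ steps by Theorem \ref{thm:loCB}. When the dense (hence $\Q$-isomorphic, since $\L$ is non-scattered) fixed point is reached, output the automaton for $Rep_k$ together with the restricted-order automaton.

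The main obstacle I anticipate is not termination or definability---those follow directly from the cited results---but verifying that the representative realization yields a true suborder whose induced order agrees with the iterated quotient order. This is exactly where convexity of the $\equiv_F$-classes is essential: without it, the length-lexicographic representatives would carry an order that need not match the quotient, and the resulting suborder might fail to be dense. Making the inductive composition $Rep_0 \supseteq \cdots \supseteq Rep_k$ precise, and checking that regularity is preserved through the composition of the representative selections, is the one step that requires care.
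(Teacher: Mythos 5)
Your proposal is correct and follows essentially the same route as the paper: the paper proves this corollary as a ``trivial modification'' of its rank-computation algorithm, namely iterating the $\equiv_F$-quotient (realized on length-lexicographically least representatives via Proposition~\ref{Prop:quotient} and Lemma~\ref{lm:loQuotient}), testing denseness at each stage by Corollary~\ref{cr:KNdecidability2}, and halting by Theorem~\ref{thm:loCB}. Your explicit convexity argument showing that the representative sets form genuine suborders of $\L$ whose induced order agrees with the quotient order is precisely the detail the paper leaves implicit, and it is a welcome addition rather than a deviation.
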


We now apply Theorem \ref{thm:loCB} to the subclass of linear orders which are well-founded, the ordinals.  We will see that we can effectively check if a given automatic linear order is an ordinal; and given two automatic ordinals, we can check if they are isomorphic.  The isomorphism question is one of the central motivating questions in the study of automatic structures (see Subsection \ref{subsec:Questions}).  The class of automatic ordinals was one of the first contexts in which a positive answer to this question was found. 

\begin{corollary}
If $\L$ is an automatic linear order, there is an algorithm which checks if $\L$ is an ordinal.
\end{corollary}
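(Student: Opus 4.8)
The plan is to reduce the question to well-foundedness and to decide well-foundedness by peeling off the order one condensation at a time. Since every ordinal is scattered, I would first run the decision procedure for scatteredness established in the preceding corollary; if $\L$ is not scattered it contains a copy of the rationals, hence a descending $\omega^{*}$, and so is not an ordinal. The substantive case is when $\L$ is scattered, and here I would exploit the finite condensation $\equiv_{F}$ together with the fact (Theorem~\ref{thm:loCB}) that an automatic linear order always has finite Cantor--Bendixson rank.

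The key structural observation I would isolate is a decomposition lemma: writing $\L$ as the $\L/\!\equiv_{F}$-indexed sum of its $\equiv_{F}$-classes, $\L$ is well-ordered if and only if every $\equiv_{F}$-class is well-ordered and the condensation $\L/\!\equiv_{F}$ is well-ordered. One direction is immediate (a class, or a choice of representatives across a descending chain of classes, embeds into $\L$); the converse follows by locating the least element of a nonempty set first in the least occupied class of the condensation and then inside that class. Now each $\equiv_{F}$-class is an order in which every interval is finite, so it has order type finite, $\omega$, $\omega^{*}$ or $\Z$; among these the well-ordered ones are exactly those possessing a least element. Hence the assertion that every $\equiv_{F}$-class is well-ordered is equivalent to the sentence
\[
\forall x\, \exists y\, \big( y \equiv_{F} x ~\&~ \forall z\, (z \equiv_{F} x \rightarrow y \leq z) \big),
\]
which lives in the logic $(FO+\exists^{\infty})$ because $\equiv_{F}$ is $(FO+\exists^{\infty})$-definable (Lemma~\ref{lm:loQuotient}); its truth in $\L$ is therefore decidable by Theorem~\ref{thm:decidKRS}.

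This yields the algorithm. Having checked scatteredness, I would iterate: test the displayed sentence on the current order (declaring $\L$ not an ordinal if it fails), and otherwise replace the order by its condensation, which is again automatic by Lemma~\ref{lm:loQuotient}. Because $\L$ is scattered, repeatedly condensing reaches a single point after exactly $CB(\L)$ steps, a number that is finite by Theorem~\ref{thm:loCB}; unfolding the decomposition lemma over these finitely many levels shows that $\L$ is well-ordered precisely when every level passes the test, so the procedure halts with the correct answer. The main obstacle is the decomposition lemma --- in particular verifying that passing the class test at every level of the finite condensation tower really is equivalent to global well-foundedness, and that the local condition that each class has a least element faithfully captures well-orderedness of a single class; once these are in place the decidability statements quoted above do the rest.
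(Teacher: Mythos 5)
Your proposal is correct and takes essentially the same route as the paper's own proof: both iterate the finite-condensation quotient (automatic by Lemma~\ref{lm:loQuotient}), at each level decide an $(FO+\exists^{\infty})$-expressible condition asserting that the $\equiv_F$-classes are well-ordered, terminate after finitely many iterations by Theorem~\ref{thm:loCB}, and read off the answer from the trivial fixed point. The only cosmetic differences are that you test well-orderedness of a class via existence of a least element rather than via the absence of infinitely many smaller elements within the class, and you front-load the scatteredness check instead of testing density at each iteration.
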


\begin{proof}
To check if a given automatic linear order $\L$ is an ordinal, we need to check if it has an infinite descending sequence.   Note that infinite descending sequences can occur either within an $\equiv_F$ equivalence class or across such classes.  We begin by checking whether $\L$ is not dense and $\forall (x \in L) \exists^\infty y (x \equiv_F y ~\&~ y < x )$. If this condition holds, we form the quotient  $\L /\equiv_F$  and check the condition again for the quotient linear order.  We iterate until the condition fails, which must occur after finitely many iterations because $CB(\L)$ is finite.   If the resulting linear order has exactly one element, output that $\L$ is an ordinal, and otherwise output that it is not. If $\L$ is an ordinal then the $\equiv_F$ equivalence classes are all finite or isomorphic to $\omega$ and all quotient linear orders of $\L$ are also ordinals.  Therefore, the algorithm will stop exactly when the quotient is dense, in which case it will have exactly one element.  If $\L$ is not an ordinal, then either there will be a stage of the algorithm at which there is an infinite descending chain within a single $\equiv_F$ equivalence class or the final dense linear order will contain an infinite descending chain.  In either case, the algorithm will recognize that $\L$ is not an ordinal. \qed
\end{proof}

\begin{corollary}
The Cantor normal form of a given automatic ordinal is computable. 
\end{corollary}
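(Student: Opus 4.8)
The plan is to compute the Cantor normal form recursively, peeling off one power of $\omega$ at a time. The whole scheme rests on the fact (Example~\ref{ex:ordinals}, \cite{Del04}) that every automatic ordinal $\alpha$ satisfies $\alpha < \omega^\omega$, so its Cantor normal form is a finite ``polynomial in $\omega$,'' $\alpha = \omega^k n_k + \cdots + \omega n_1 + n_0$ with all $n_i \in \N$. Writing $\alpha = \omega\cdot\gamma + n_0$ with $n_0 < \omega$, the two pieces I extract at each stage are the constant term $n_0$ and (recursively) the Cantor normal form of $\gamma$; from these the normal form of $\alpha$ is reassembled by shifting every exponent of $\gamma$ up by one and appending $n_0$ as the new constant term.

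So, given an automatic ordinal $\L \cong \alpha$, I would first decide whether the domain is finite (finiteness of a regular language is decidable); if it is, I output $n_0 = |L|$ and stop — this is the base case. Otherwise $\alpha \ge \omega$, and I extract $n_0$ as the size of the final finite block. Concretely, define $F = \{x \in L : \neg\exists^\infty y\,(x < y)\}$, the set of elements having only finitely many successors. A structural analysis of $\equiv_F$ on well-orders shows that for $\alpha = \omega\gamma + n_0$ with $\gamma \ge 1$, the elements of $F$ are exactly the $n_0$ elements of the final block (those $\ge \omega\gamma$, the last limit ordinal); hence $F$ is finite with $|F| = n_0$. By Theorem~\ref{thm:decidKRS} the predicate $F$ is definable in $(FO + \exists^\infty)$ and therefore regular, and being a finite regular set its cardinality $n_0$ is computable by enumeration.

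Next I remove this tail to expose a limit ordinal. The restriction $\L_{\lim} = \L\!\restriction\!(L\setminus F)$ is automatic (the complement of a regular set, with $\le$ restricted, is regular) and is isomorphic to the initial segment $\omega\gamma$. The reason for deleting $F$ before quotienting is crucial: $\omega\gamma$ is a limit ordinal with no finite tail, so its $\equiv_F$-classes are precisely its $\gamma$ maximal $\omega$-blocks and the quotient is $\L_{\lim}/\!\equiv_F \;\cong\; \gamma$ exactly, with no stray extra class. By Lemma~\ref{lm:loQuotient} this quotient is again automatic, and it is an ordinal with leading exponent one smaller than $\alpha$, so I recurse on it to obtain the Cantor normal form of $\gamma$. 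Finally I reassemble, multiplying the normal form of $\gamma$ by $\omega$ (shifting each exponent up by one) and adding $n_0$. Termination is immediate because the leading exponent drops by exactly one per level, so the recursion halts after $k$ steps (and the already-established corollary computing the Cantor--Bendixson rank furnishes an a priori bound if one is wanted).

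The main obstacle is not the algorithm but justifying the structural claim that drives it: that on a well-order $\equiv_F$ has exactly the maximal $\omega$-blocks as classes, that the definable set $F$ captures precisely the final finite block and so literally counts $n_0$, and that deleting $F$ before quotienting removes the ``off-by-one'' ambiguity (the spurious extra class that turns $\gamma$ into $\gamma+1$ whenever $n_0 > 0$). This reduces to the additive decomposition $\alpha = \omega\gamma + n_0$ together with the additive indecomposability of the powers $\omega^j$; once these are in hand, correctness of the reassembly step and computability of each $n_0$ via the extended first-order decidability of Theorem~\ref{thm:decidKRS} are routine.
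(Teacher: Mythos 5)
Your proof is correct, and it reaches the result by a related but mechanically different route from the paper's. The paper's (very terse) proof peels off exponents by passing to the definable \emph{sub-order} of limit ordinals: if $\alpha = \omega^m a_m + \cdots + \omega a_1$, the limit ordinals strictly below $\alpha$ form an automatic ordinal whose Cantor normal form is shifted down by one exponent, and the coefficients are read off using first-order definitions of maximal elements; this stays entirely within plain first-order definability (Theorem~\ref{thm:decidKN}, Corollary~\ref{cr:Definability}). You instead divide by $\omega$ via the \emph{quotient} $\L_{\lim}/\equiv_F$ of Lemma~\ref{lm:loQuotient}, after first deleting the finite tail $F$, which you identify and count with an $\exists^\infty$ formula through Theorem~\ref{thm:decidKRS}. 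Both arguments share the same skeleton --- iterate a definable, automaticity-preserving ``divide by $\omega$'' operation, extract one coefficient per round, and terminate because automatic ordinals lie below $\omega^\omega$ (Example~\ref{ex:ordinals}) --- so each needs the underlying constructions to be effective, which they are. What your version buys is cleaner boundary bookkeeping: the limit-point operation has off-by-one effects that depend on whether the current constant term vanishes (indeed the paper's displayed formula for the limit ordinals below $\alpha$ is only exact under its implicit assumption $a_0=0$, and is off by one at the final, finite stage), which the paper silently absorbs into its maximal-element step; your explicit removal of $F$ before quotienting eliminates exactly this ambiguity, at the modest price of invoking the extended logic $(FO+\exists^\infty)$ and the quotient construction rather than ordinary first-order definability alone.
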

\begin{proof}
Given an automatic ordinal $\L$, we use first-order definitions of maximal elements and the set of limit ordinals in $\L$ to iteratively determine the coefficients in the Cantor normal form.  The set of limit ordinals play a role because if $\alpha = \omega^m a_m + \cdots + \omega a_1$, then the Cantor normal form of the set of limit ordinals strictly below $\alpha$ is $\omega^{m-1} a_m + \cdots + \omega^1 a_2 + a_1$. \qed
\end{proof}

Since two ordinals are isomorphic if and only if they have the same Cantor normal form, the following corollary is immediate.

\begin{corollary}
The isomorphism problem for automatic ordinals is decidable. 
\end{corollary}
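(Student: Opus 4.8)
The plan is to reduce the isomorphism problem to a syntactic comparison of Cantor normal forms, using the machinery already in place. Given two automatic ordinals $\L_1$ and $\L_2$, I would first apply the preceding corollary to each, obtaining their Cantor normal forms as explicit finite data: the strictly decreasing sequences of exponents together with their natural-number coefficients.

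The decision procedure then rests on a single classical fact: two ordinals are order-isomorphic if and only if they are equal as ordinals, and by the uniqueness of the Cantor normal form this holds exactly when their normal-form expressions agree term by term. Thus the algorithm computes both normal forms and tests whether the two finite sequences are identical, reporting that $\L_1 \cong \L_2$ precisely in that case.

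I expect no genuine obstacle at this stage, since the substantive work has already been discharged by the two preceding corollaries: recognizing that each input presents an ordinal, and extracting its Cantor normal form. The only delicacy worth flagging is that correctness of the final comparison invokes the uniqueness direction of the Cantor normal form theorem, which guarantees that equality of the computed expressions is equivalent to isomorphism of the underlying orders rather than merely a necessary condition for it.
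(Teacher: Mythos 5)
Your proposal is correct and follows exactly the paper's own route: the paper derives this corollary immediately from the computability of the Cantor normal form of an automatic ordinal, together with the fact that two ordinals are isomorphic if and only if their Cantor normal forms coincide. Your additional remark about invoking uniqueness of the Cantor normal form is the right justification for that final comparison step.
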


We do not know whether the isomorphism problem for automatic linear orders is decidable.

\subsection{Word automatic trees}\label{sec:trees}

 We now focus our interest on partial orders which form trees.  We begin with the following definition of trees.

\begin{definition}
A {\bf (partial order) tree} is $\A = (A; \leq)$ where $\leq$ is a partial order such that $A$ has a $\leq$-least element and the set $x \downarrow$ (the $\leq$-predecessors of $x$) is linearly ordered and finite for all $x \in A$.
\end{definition}

For any regular language $L$, if $L$ is prefix-closed then the structure $(L; \preceq)$ where $\preceq$ is the prefix relation is a (word) automatic tree.  The two most famous such examples are the full binary tree $(\{0,1\}^\star; \preceq)$ and the countably branching tree $\omega^{< \omega} \cong (\{0,1\}^\star \cdot 1; \preceq)$.  In the following, for a given tree $\T = (T; \leq)$ we denote by $\T_x$ the subtree of $\T$ which is rooted at node $x$; that is, $\T_x$ is the tree $(\{ y \in T : x \leq y \}; \leq)$.

\smallskip

There is a natural connection between trees and topological spaces: given a tree $\T = (T; \leq)$, define an associated topological space whose elements are infinite paths of $\T$ and whose basic open sets are collections of infinite paths defined by $\{ P : x \in P \}$ for each $x \in T$.  Then the topological Cantor-Bendixson rank transfers to trees in a straightforward way.  
As in the case of linear orders, we can compute this rank via an iterative search for a fixed-point.  The derivative  is defined to be $d(\T)$, the subtree of $\T$ containing all nodes $x \in T$ such that $x$ belongs to two distinct infinite paths of $\T$.  The derivative can be carried along the ordinals by letting $d^{\alpha+1}(\T) = d(d^\alpha (\T))$ and, for limit ordinals $\gamma$, $d^{\gamma}(\T) = \cap_{\beta < \gamma} d^{\beta}(\T)$.  The first ordinal $\alpha$ for which $d^{\alpha}(\T) = d^{\alpha+1}(\T)$ is called the Cantor-Bendixson rank of $\T$ and is denoted by $CB(\T)$.  The proof of the following lemma gives some intuition about Cantor-Bendixson ranks and is left to the reader.

\begin{lemma}\label{lm:CBsucc}
If a tree has only countably many paths, its Cantor-Bendixson rank is $0$ or a successor ordinal.
\end{lemma}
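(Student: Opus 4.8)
The plan is to argue by contradiction, showing that the rank $\rho = CB(\T)$ cannot be a limit ordinal. The strategy rests on two observations about the derivative operator $d$: first, that for a tree with countably many paths the stable value $d^{\rho}(\T)$ must be empty, and second, that the root of $\T$ survives every derivative $d^{\beta}(\T)$ so long as that derivative is nonempty. These two facts are in direct tension at a limit stage, and exploiting that tension yields the result.

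First I would establish that a \emph{nonempty} fixed point of $d$ carries uncountably many paths. Suppose $S$ is a nonempty subtree with $d(S) = S$; then every node of $S$ lies on at least two distinct infinite paths of $S$, and one checks that this property is inherited by every subtree $S_x$ rooted at a node $x \in S$ (any infinite path of $S$ through some $y \geq x$ automatically passes through $x$, hence lies in $S_x$). Using this, I would build a Cantor scheme: an injection $\sigma \mapsto x_\sigma$ from $\{0,1\}^\star$ into $S$ with $x_\emptyset$ the root and with $x_{\sigma 0}, x_{\sigma 1}$ incomparable extensions of $x_\sigma$ inside $S$ (available because $x_\sigma$ lies on two paths that must eventually split above it). Distinct elements of $\{0,1\}^\omega$ then determine distinct paths, so $S$ has $2^{\aleph_0}$ paths. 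Since any path of $d^{\rho}(\T)$ is a path of $\T$, and $d^{\rho}(\T)$ is a fixed point of $d$ by the definition of $\rho$, the countability hypothesis forces $d^{\rho}(\T) = \emptyset$.

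Next I would track the root. A short transfinite induction shows each $d^\beta(\T)$ is downward closed: the base case holds because any predecessor of a node lying on two paths also lies on those paths; the successor step applies $d$ to the already downward-closed tree $d^\beta(\T)$; and the limit step uses that an intersection of downward-closed sets is downward closed. A nonempty downward-closed subset of a tree contains its least element, so whenever $d^\beta(\T) \neq \emptyset$ it contains the root $r$. Moreover, for every $\beta < \rho$ we have $d^\beta(\T) \neq \emptyset$ (otherwise the derivative sequence would have stabilized before $\rho$, contradicting minimality of $\rho$), hence $r \in d^\beta(\T)$ for all $\beta < \rho$.

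Finally, suppose toward a contradiction that $\rho$ is a limit ordinal. Then $d^{\rho}(\T) = \bigcap_{\beta < \rho} d^{\beta}(\T)$ contains $r$ by the previous paragraph, so $d^{\rho}(\T) \neq \emptyset$, contradicting the fact that the stable derivative is empty. Therefore $\rho$ is either $0$ or a successor ordinal. I expect the main obstacle to be the first step: verifying carefully that a nonempty fixed point of $d$ is \emph{perfect} in the sense needed for the Cantor scheme, and in particular that the two-paths property passes to subtrees so that the branching can be iterated at every level. Everything afterwards is routine bookkeeping about downward closure and the behaviour of intersections at limit stages.
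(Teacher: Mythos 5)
Your proof is correct. The paper does not actually supply an argument here --- it explicitly leaves this lemma ``to the reader'' --- and your proof is exactly the standard one that the authors presumably had in mind: the stable derivative $d^{\rho}(\T)$ must be empty when there are only countably many paths (via the Cantor-scheme/perfect-kernel argument), yet every nonempty derivative stage contains the root by downward closure, so stabilization cannot first occur at a nonzero limit ordinal.
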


As in our discussion of linear orders, it is natural to look for low bounds on the complexity of automatic trees.  In particular, the Cantor-Bendixson ranks of trees and linear orders are intimately connected.  A linear order can be associated with each automatic finitely branching tree.  The {\bf Kleene-Brouwer ordering} of the tree $\T= (T; \leq)$ is given by $x \leq_{KB_\T} y$ if and only if $x =y$ or $y \leq x$ or there are $u,v,w \in T$ such that $v,w $ are immediate successors of $u$ and $v \leq_{llex} w$ and $v \leq x$ and $w \leq y$.  Note that the definition of the Kleene-Brouwer order uses the length-lexicographic ordering inherited by $T$ as a subset of $\Sigma^\star$ for some finite alphabet $\Sigma$. It is not hard to see that the Kleene-Brouwer order is a linear order.  For example, the Kleene-Brouwer order associated to the tree which has exactly two infinite paths $(0^\star \cup 1^\star; \preceq)$ is isomorphic to $\omega^\star + \omega^\star$.  If $\T$ is an automatic tree then the relation $\leq_{KB_\T}$ is first-order definable in it.  Hence, the results in Subsection \ref{sec:AutPo} imply that the Kleene-Brouwer ordering of an automatic finitely branching tree is automatic.

\begin{lemma}[Khoussainov, Rubin, Stephan; 2003]\label{lm:CBtreeLo}
If $\T$ is an automatic finitely branching tree with countably many paths then $CB(\T)= CB(KB_\T)$.
\end{lemma}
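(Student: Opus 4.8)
The plan is to induct on the Cantor--Bendixson rank, which is finite by Theorem~\ref{thm:loCB} (on the linear-order side) and, by Lemma~\ref{lm:CBsucc}, is $0$ or a successor (on the tree side). The organizing principle is that one application of the tree derivative $d$ should correspond to one application of the finite condensation $\equiv_F$ on the Kleene--Brouwer order, so that each operation lowers the relevant rank by exactly one. Throughout I read $KB_\T$ concretely as the post-order (leftmost-first) listing of the nodes of $\T$, in which an infinite subtree produces a block of order type $\omega^\star$ with no least element and a finite subtree produces a finite block.

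First I would dispose of the base cases $CB(\T)\le 1$. If $\T$ has no infinite path then, being finitely branching, it is finite by K\"onig's lemma, so $KB_\T$ is a finite linear order and both ranks equal $1$ (assuming $\T$ is a nontrivial infinite object; the single-node tree is the one genuinely degenerate case, where $CB(\T)=1$ but $CB(KB_\T)=0$, and must be excluded or noted separately). If $\T$ has exactly one infinite path then $d(\T)=\emptyset$, so $CB(\T)=1$; on the other side every node lies within finitely many KB-positions of that path, so $KB_\T$ has order type $\omega^\star$ with finite decorations, condenses to a single point in one step, and $CB(KB_\T)=1$ as well.

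For the inductive step, suppose $CB(\T)=n+1\ge 2$. The heart of the argument is a step-lemma: the condensation $KB_\T/\equiv_F$ is itself the Kleene--Brouwer order of a finitely branching tree $\hat{\T}$ with countably many paths and $CB(\hat{\T})=n$. Granting this, note that since $CB(\T)\ge 1$ the order $KB_\T$ is not dense, so by the definition of $CB$ via iterated $\equiv_F$ we have $CB(KB_\T)=CB(KB_\T/\equiv_F)+1$; combining with $KB_\T/\equiv_F\cong KB_{\hat{\T}}$ (automaticity is preserved by Lemma~\ref{lm:loQuotient}) and the induction hypothesis applied to $\hat{\T}$, which has strictly smaller rank, gives $CB(KB_\T)=CB(KB_{\hat{\T}})+1=CB(\hat{\T})+1=n+1=CB(\T)$.

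To build $\hat{\T}$ and control its rank I would analyze the $\equiv_F$-classes of $KB_\T$ directly as subtree regions: a maximal KB-block separated from the rest by only finitely many nodes is exactly a maximal appendage whose hanging subtrees carry at most one infinite path, and these are precisely the regions erased when one removes the isolated infinite paths of $\T$. Contracting each such class to a point should leave a tree $\hat{\T}$ whose infinite paths are exactly the non-isolated infinite paths of $\T$, with its derivative structure that of $d(\T)$ shifted down by one level. The main obstacle is proving this contraction is correct and rank-dropping: one must give a clean description of the $\equiv_F$-classes as subtree appendages and show the contracted object is again a finitely branching tree with countably many paths of rank exactly $n$. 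The delicate point, which rules out the naive identification $KB_\T/\equiv_F\cong KB_{d(\T)}$, is illustrated by two isolated paths meeting at a single branch node: the derivative $d$ collapses both path-ends onto that one node, whereas $\equiv_F$ retains one representative per end, so one cannot use $d(\T)$ verbatim and must instead verify equality of \emph{ranks}. Reconciling this branch-node discrepancy, together with the finite and single-path degenerate cases, is where the real work lies.
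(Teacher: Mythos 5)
Your strategy is genuinely different from the paper's, and its load-bearing step is both missing and, as formulated, false. The paper never tries to match one application of $\equiv_F$ on $KB_\T$ with one application of the derivative $d$ on $\T$. Instead it fixes infinite paths and decomposes: for any infinite path of $\T$, the order $KB_\T$ is a countable sum of linear orders, each trivial or equal to $KB_\S$ for a subtree $\S$ hanging off the path; the induction on $CB(\T)=\beta+1$ is then routed through the subtree with domain $X=\{x\in T : CB(\T_x)=\beta+1\}$, which has finitely many (and at least one) infinite paths, and the rank of $KB_\T$ is estimated from the ranks of the summands, to which the induction hypothesis applies. Your proposal instead rests entirely on the step-lemma that $KB_\T/\equiv_F$ is again the Kleene--Brouwer order of a finitely branching tree $\hat{\T}$ with countably many paths and $CB(\hat{\T})=n$; you explicitly defer exactly this point (``where the real work lies''), so what is written is only the bookkeeping surrounding an unproved lemma.

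That lemma is false, so the route cannot be completed as described. Consider the comb with domain $1^\star 0^\star$ under the prefix order: spine $x_i=1^i$, with an infinite tooth $1^i0, 1^i00,\ldots$ at each $x_i$; this is an automatic, finitely branching tree with countably many infinite paths. Since $1^i0 <_{llex} 1^{i+1}$, every tooth precedes the remainder of the spine's subtree in KB order, so $KB_\T \cong \omega^\star\cdot\omega+\omega^\star$, and $KB_\T/\equiv_F \cong \omega+1$ (each tooth is one $\equiv_F$-class, the reversed spine another). But $\omega+1$ is not $KB_{\hat{\T}}$ for any finitely branching tree $\hat{\T}$: a finite tree has a finite KB order, while an infinite finitely branching tree contains an infinite path by K\"onig's lemma, and the nodes of that path form an infinite KB-descending chain, so its KB order is not a well-order. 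Hence the object your induction hypothesis needs does not exist. The same mechanism undercuts your single-path base case: a subtree hanging on the $<_{llex}$-early side of the path lies KB-below the entire tail of the path, hence infinitely far (in the KB order) from every path node; already a single such leaf turns $KB_\T$ into $1+\omega^\star$, whose condensation has two classes rather than one (which shows that exact equality of the two ranks is delicate under the survey's conventions, not just hard). This sensitivity to which side of the path the decorations hang is precisely why no clean ``one condensation step per derivative step'' correspondence is available, and why the proof in the paper and in \cite{KhRS03} controls ranks through sum decompositions along paths rather than through an order-isomorphism with a derived tree.
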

\begin{proof}
The outline of the proof is as follows; for details, see \cite{KhRS03}.  We claim that if $\T$ is an automatic finitely branching tree with countably many paths then $KB_\T$ is scattered. Given any infinite path of $\T$, the linear order $KB_\T$ can be expressed as a countable sum of linear orders which are either trivial or $KB_{\S}$ for $\S$ a subtree of $\T$.  By induction, we will show that $CB(\T) = CB(KB_\T)$.  If $CB(\T) = 0$ then it is empty so $KB_\T$ is also empty and $CB(KB_\T) = 0$.  For the inductive step, we suppose that $CB(\T) = \beta + 1$ and notice that the subtree of $\T$ whose domain is $X = \{ x \in T : CB( \T_x) = \beta +1\}$ has a finite and nonzero number of infinite paths.  Each of these paths give rise to a sum expression of $KB_{\T_x}$, where $CB(KB_{\T_x}) = \beta +1$.  Then $KB_\T$ is the sum of these finitely many linear orders $KB_{\T_x}$ and so $KB_{\T} = \beta+1$ as well.
\qed
\end{proof}

The lemma above allows us to transfer the bound on Cantor-Bendixson ranks of automatic linear orders to a bound on the Cantor-Bendixson ranks of automatic finitely branching trees with countably many paths.

\begin{theorem}[Khoussainov, Rubin, Stephan; 2003]\label{thm:CBfinbrcountpathpotree}
If $\T$ is an automatic finitely branching tree with countably many paths then $CB(\T)$ is finite.
\end{theorem}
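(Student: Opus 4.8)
The plan is to reduce the statement about trees entirely to the already-established bound on the Cantor-Bendixson ranks of automatic linear orders, using the Kleene-Brouwer ordering as the bridge. Since Lemma~\ref{lm:CBtreeLo} identifies $CB(\T)$ with $CB(KB_\T)$, and Theorem~\ref{thm:loCB} forces the Cantor-Bendixson rank of \emph{any} automatic linear order to be finite, the bulk of the work has been isolated into those two prior results; what remains is to verify that they apply and to chain them together.

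First I would confirm that the Kleene-Brouwer order $KB_\T$ is an automatic linear order. The relation $\leq_{KB_\T}$ was defined above as a first-order combination of $\leq$, the immediate-successor relation, and the length-lexicographic order $\leq_{llex}$, all of which are automatic (or automatically definable) relations on $\T$. Hence $\leq_{KB_\T}$ is first-order definable in the automatic structure $\T$, and Corollary~\ref{cr:Definability} guarantees that $(T; \leq_{KB_\T})$ is itself automatic. Being a linear order, as noted in the text preceding Lemma~\ref{lm:CBtreeLo}, it is an automatic linear order.

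Next I would invoke Lemma~\ref{lm:CBtreeLo}, whose hypotheses---$\T$ automatic, finitely branching, and with countably many paths---are exactly those of the present theorem, to obtain the equality $CB(\T) = CB(KB_\T)$. Finally, applying Theorem~\ref{thm:loCB} to the automatic linear order $KB_\T$ yields that $CB(KB_\T)$ is finite. Combining the two equalities gives that $CB(\T)$ is finite, as desired.

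The step I expect to carry all the genuine difficulty is Lemma~\ref{lm:CBtreeLo} itself, which I am permitted to assume: the inductive argument there must show that one tree derivative corresponds precisely to one $\equiv_F$-collapse of the associated Kleene-Brouwer order, and the subtlety lies in controlling how the finitely many infinite paths through a node of rank $\beta+1$ decompose $KB_{\T_x}$ into a countable sum whose own rank is again $\beta+1$. Were that lemma not available, reproving it---rather than performing the clean reduction above---would be the main obstacle.
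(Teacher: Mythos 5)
Your proposal is correct and follows essentially the same route as the paper's own proof: establish that $KB_\T$ is an automatic linear order via first-order definability, apply Theorem~\ref{thm:loCB} to bound $CB(KB_\T)$, and transfer the bound back to $CB(\T)$ through Lemma~\ref{lm:CBtreeLo}. The only difference is the (immaterial) order in which the two prior results are invoked.
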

\begin{proof}
Suppose $\T$ is as above.  Then $KB_\T$ is also automatic.   By Theorem \ref{thm:loCB}, $CB(KB_\T)$ is finite.  Lemma \ref{lm:CBtreeLo} then gives that  $CB(\T)$ is finite. \qed
\end{proof}

We will improve Theorem \ref{thm:CBfinbrcountpathpotree} by weakening its hypotheses.  Our goal is to show that the Cantor-Bendixson rank of each automatic partial order tree is finite.

\begin{theorem}[Khoussainov, Rubin, Stephan; 2003]\label{thm:CBfinbranchpotree}
If $\T$ is an automatic finitely branching tree then $CB(\T)$ is finite.
\end{theorem}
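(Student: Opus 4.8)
The plan is to argue by contradiction, assuming that $CB(\T) \geq \omega$, and to show that automaticity cannot sustain the unbounded growth of Cantor-Bendixson ranks that such an assumption forces. The only genuinely new feature compared with Theorem \ref{thm:CBfinbrcountpathpotree} is the possible presence of a nonempty \emph{perfect kernel} $K$, the eventual stationary value of $d^\alpha(\T)$, on which the derivative acts as the identity; $K$ is empty exactly when $\T$ has countably many paths, which is the situation already handled. The key observation is that the kernel never interferes with the rank computation: the derivative deletes only nodes lying on at most one infinite path, so every deletion happens at a node outside $K$, and the uncountably-branching core contributes nothing to the growth of the rank.

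First I would localize the derivative to subtrees. A direct induction on $\gamma$ shows that $d^\gamma(\T_x) = (d^\gamma\T)_x$ for every node $x$, where $(d^\gamma\T)_x = \{ y \geq x : y \in d^\gamma\T\}$; the inductive step uses that an infinite path of $\T_x$ through a node $y \geq x$ is exactly an infinite path of $\T$ through $y$. From this I obtain the crucial local computation: if $x$ is deleted at a finite stage $m$, i.e. $x \in d^m\T \setminus d^{m+1}\T$, then $x$ lies on at most one infinite path of $d^m\T$, so $(d^m\T)_x$ has at most one infinite path, whence $d^{m+1}(\T_x) = \emptyset$ while $d^m(\T_x) \ni x$. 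Therefore $CB(\T_x) = m+1$. Next, a short stabilization argument shows that $CB(\T) \geq \omega$ can occur only if deletions take place at cofinally many finite stages: if no node is deleted beyond some finite stage $m_0$, then $d^{m_0}\T = d^{m_0+1}\T$ and the derivative is already stationary, giving $CB(\T) = m_0 < \omega$. Consequently there are nodes $x_m$, for infinitely many finite $m$, with $CB(\T_{x_m}) = m+1$; that is, $\T$ contains subtrees of arbitrarily large finite Cantor-Bendixson rank.

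It remains to derive a contradiction from the existence of subtrees of unboundedly large finite rank inside a single automatic tree. Here I would follow the finite-index strategy used in the proofs of Theorems \ref{thm:wfpo} and \ref{thm:loCB}: using the automata $\M_A$ and $\M_{\leq}$ recognizing the domain and the order, define an equivalence $\sim$ on nodes by recording the states reached while reading a node together with the relevant order-comparison states, so that $\sim$ has at most $|S_A|\cdot|S_{\leq}|$ classes. The goal is to prove that $x \sim x'$ implies $CB(\T_x) = CB(\T_{x'})$. Granting this, among the infinitely many nodes $x_m$ with pairwise distinct finite ranks $m+1$ two must be $\sim$-equivalent, forcing $m+1 = m'+1$ for some $m \neq m'$, a contradiction. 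Hence $CB(\T) < \omega$, which also subsumes the countable-paths case of Theorem \ref{thm:CBfinbrcountpathpotree}.

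The hard part is the implication $x \sim x' \Rightarrow CB(\T_x) = CB(\T_{x'})$. Unlike in the well-founded partial order argument, equivalent nodes need not have \emph{isomorphic} subtrees --- an automatic finitely branching tree may carry infinitely many pairwise non-isomorphic subtrees, for instance finite paths of every length --- so I cannot simply transport an isomorphism as in the $X_v^u \cong X_{v'}^{u'}$ step. Instead I expect to need a genuine pumping argument, in the spirit of the Constant Growth Lemma (Lemma \ref{lm:ConstantGrowth}), showing that the finite automaton data attached to a node determines the Cantor-Bendixson \emph{rank} of the subtree above it even when it does not determine the subtree up to isomorphism: a node of large finite rank forces a deeply nested pattern of branch points, each recursively of one lower rank, and pumping a repeated state-configuration along this nesting would manufacture a subtree of strictly larger, or ill-defined, rank. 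Controlling this nesting in the presence of the perfect kernel, so that the pumped branch points still lie outside $K$ and genuinely raise the rank, is the main obstacle, and is precisely the place where the countable-paths hypothesis was previously used. Alternatively, one could try to reduce directly to Theorem \ref{thm:CBfinbrcountpathpotree} by surgically replacing $K$ with a perfect subtree having only countably many paths while preserving every finite deletion stage, but verifying that such surgery leaves $CB(\T)$ unchanged appears at least as delicate as the pumping argument.
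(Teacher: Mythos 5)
Your preliminary reductions are correct, and they even make explicit a point the paper's own proof leaves terse: the localization $d^\gamma(\T_x) = (d^\gamma\T)_x$, the fact that a node deleted at finite stage $m$ has $CB(\T_x)=m+1$, and the observation that $CB(\T)\geq\omega$ forces subtrees of unboundedly large finite rank are all sound. But the proposal has a genuine gap exactly where you flag it: the implication $x\sim x' \Rightarrow CB(\T_x)=CB(\T_{x'})$ is never proved, and no actual pumping argument is given --- only the expectation that one should exist. This is not a routine verification. The Delhomm\'e-style transport in Theorem \ref{thm:wfpo} works because the pieces $X_v^u$ are string cones, so $vw\mapsto v'w$ is available; by contrast the subtree $\T_x=\{y : x\leq y\}$ of an automatic tree is not a union of cones compatible with the tree order (elements $y\geq x$ need not extend $x$ as strings), and even after decomposing $\T_x$ into cones over length-$|x|$ prefixes, the Cantor-Bendixson derivative is a global operation on paths that does not respect such a decomposition piecewise. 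So the contradiction your argument needs at the end is precisely the statement whose proof is missing.

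The paper closes this gap by a different and much shorter route that avoids any equivalence relation on nodes: since $\T$ is automatic and finitely branching, its Kleene-Brouwer order $KB_\T$ is an automatic linear order, hence $CB(KB_\T)=n$ is finite by Theorem \ref{thm:loCB}. For every node $x$ lying on at most countably many infinite paths, $KB_{\T_x}$ is an interval of $KB_\T$, so $CB(KB_{\T_x})\leq n$, and Lemma \ref{lm:CBtreeLo} transfers this back to $CB(\T_x)\leq n$ --- a uniform finite bound over all such $x$, which is exactly what your pigeonhole/pumping step was supposed to deliver. Combined with your localization, every node deleted at any finite stage is already gone by stage $n$, while nodes on uncountably many paths are never deleted; hence $d^{n+1}(\T)=d^n(\T)$ and $CB(\T)\leq n$. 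Your worry about ``surgery'' on the perfect kernel also evaporates here: the kernel is simply the stable set $d^n(\T)$, and nothing needs to be replaced. The lesson, if you want to repair your approach, is that the linearization $KB_\T$ is the device that converts the global derivative on trees into a question about automatic linear orders, where the finiteness theorem has already been established.
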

\begin{proof}
We consider two kinds of nodes on $\T$: those which lie on at most countably many infinite paths, and those which lie on uncountably many infinite paths.  For each $x$ which lies on at most countably many infinite paths consider $\T_x$, the subtree of $\T$ rooted at $x$.  Note that $KB_{\T_x}$ is an interval of $KB_\T$ for each $x$.  By Lemma \ref{lm:CBtreeLo}  and Theorem \ref{thm:loCB}, $CB(\T_{x}) = CB(KB_{\T_x}) \leq CB(KB_\T) = n$ for some $n$.  Therefore, $d^n (\T)$ contains only nodes which lie on uncountably many infinite paths.  In particular, this implies that $d^{n+1}(\T) = d^n(T)$ so $CB(\T) = n$.
\qed
\end{proof}

Finally, we remove the requirement that the tree is finitely branching.

\begin{theorem}[Khoussainov, Rubin, Stephan; 2003]\label{thm:CBpotree}
If $\T$ is an automatic tree then $CB(\T)$ is finite.
\end{theorem}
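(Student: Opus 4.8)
The plan is to reduce the general case to the finitely branching case already settled in Theorem \ref{thm:CBfinbranchpotree}. The only obstruction to applying that theorem directly is that an automatic tree may have nodes of infinite (necessarily countable) out-degree. I would therefore manufacture, from $\T$, an automatic \emph{finitely} branching tree $\widehat{\T}$ whose Cantor--Bendixson rank dominates that of $\T$, and then read off finiteness of $CB(\T)$ from $CB(\widehat{\T}) < \omega$.

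First I would build $\widehat{\T}$ by replacing, at every node $x$, its set of immediate successors by a binary ``comb''. The observation that makes this canonical and automatic is that the length--lexicographic order well-orders $\{0,1\}^\star$ in order type $\omega$, so the immediate successors of any node $x$ form a set that is well-ordered by $<_{llex}$ of order type at most $\omega$; hence each such successor has a well-defined $<_{llex}$-next sibling. I would take as spine the nodes $\langle x, c\rangle$ (one for each immediate successor $c$ of $x$), ordering the spine by $<_{llex}$ of the second coordinate, letting the left child of $\langle x, c\rangle$ be the copy of $c$ and its right child be $\langle x, c'\rangle$, where $c'$ is the $<_{llex}$-next sibling of $c$. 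The resulting $\widehat{\T}$ is binary branching, its domain is a regular set of pairs, and the successor relations are first-order definable from $\leq$ and $<_{llex}$; by Corollary \ref{cr:Definability}, together with the automaticity of $<_{llex}$, the structure $\widehat{\T}$ is automatic.

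Next I would compare ranks. Every infinite path of $\T$ that leaves $x$ through its $k$-th successor is mirrored in $\widehat{\T}$ by the path that walks $k-1$ steps along the spine at $x$ and then turns left; this assignment is injective and sends the branch point of two $\T$-paths to the branch point of their images, so the tree of infinite paths of $\T$ embeds, meet-preservingly, into that of $\widehat{\T}$. Since the node-derivative $d$ precisely peels off nodes lying on a single infinite path, such a meet-preserving embedding cannot decrease rank, which would give $CB(\T) \leq CB(\widehat{\T})$. Theorem \ref{thm:CBfinbranchpotree} applies to the automatic finitely branching tree $\widehat{\T}$ and yields $CB(\widehat{\T}) < \omega$, whence $CB(\T) < \omega$.

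The main obstacle is precisely this rank comparison, because the comb creates \emph{spurious} infinite paths with no counterpart in $\T$: whenever $x$ has infinitely many successors, the all-right spine $\langle x, c_1\rangle, \langle x, c_2\rangle, \ldots$ is an infinite path of $\widehat{\T}$. I would need to verify that these extra paths only inflate the derivative (which is consistent with the inequality we want) and never let a spine node survive a stage at which the corresponding $\T$-structure has already been exhausted; the delicate point is to control the derivative ranks of the spine nodes, and in particular to treat nodes lying on uncountably many paths separately, exactly as in the kernel argument used to pass from Theorem \ref{thm:CBfinbrcountpathpotree} to Theorem \ref{thm:CBfinbranchpotree}. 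An alternative route that isolates the same difficulty is to observe that the Kleene--Brouwer order $KB_\T$ is first-order definable in any automatic tree, hence automatic with $CB(KB_\T) < \omega$ by Theorem \ref{thm:loCB}, and then to extend Lemma \ref{lm:CBtreeLo} beyond the finitely branching setting.
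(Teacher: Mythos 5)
Your proposal is correct and takes essentially the same route as the paper: both reduce to Theorem \ref{thm:CBfinbranchpotree} by converting $\T$ into a finitely branching automatic tree via the left-child/right-sibling (comb) encoding definable from $\leq$ and $\leq_{llex}$, then argue that the infinite paths of $\T$ embed injectively and continuously into those of the new tree, giving $CB(\T) \leq CB(\widehat{\T}) < \omega$. The only difference is cosmetic: the paper realizes the comb on the same domain $T$ by redefining the order (each node's $\leq'$-successors are its $\leq_{llex}$-least child and its $\leq_{llex}$-next sibling), rather than introducing new spine nodes $\langle x, c\rangle$.
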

\begin{proof}
Given an automatic tree $\T = (T; \leq)$, we consider a finitely branching tree which is first-order definable in $\T$ and whose Cantor-Bendixson rank is no less than that of $\T$.  The tree $\T'$ has domain $T$ and partial order $\leq'$ defined by 
\[
x \leq' y \iff x \leq y \vee \exists v \exists w [ x \in S(v) ~\&~ w \in S(v) ~\&~ x \leq_{llex} w ~\&~ w \leq y ),
\]
where $S$ is the immediate successor function with respect to $\leq$.  To picture $\T'$, note that each node has at most two immediate successors: its $\leq_{llex}$-least successor from $\T$, and its $\leq_{llex}$-least sibling in $\T$.  We can define a continuous and injective map from the infinite paths of $\T$ to the infinite paths of $\T'$.  Analysing the derivatives of $\T$ and $\T'$ via this map leads to the conclusion that $CB(\T) \leq CB(\T')$.
Applying Theorem \ref{thm:CBfinbranchpotree} to $\T'$, $CB(\T)$ is finite. \qed
\end{proof}

\begin{corollary}
An ordinal $\alpha$ is the Cantor-Bendixson rank of an automatic (partial order) tree if and only if it is finite.   
\end{corollary}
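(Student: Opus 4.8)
The plan is to prove the two directions separately, with essentially all the work in the ``if'' direction. The ``only if'' direction is immediate: if $\alpha$ is the Cantor-Bendixson rank of an automatic tree, then $\alpha$ is finite by Theorem~\ref{thm:CBpotree}. So it remains to show that every finite ordinal is realised, i.e.\ for each $n < \omega$ to exhibit an automatic tree $\T$ with $CB(\T) = n$.

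For this I would use an explicit regular family. Fix distinct letters $x_1, \ldots, x_n$ and set
\[
\T_n = (x_1^\star x_2^\star \cdots x_n^\star;~ \preceq),
\]
the prefix-closed regular language of words whose letters appear in nondecreasing index order, under the prefix relation. The language is regular and $\preceq$ is automatic, so each $\T_n$ is a word automatic tree: its root is $\lambda$ and each $x\downarrow$ is a finite chain. For rank $0$ I would take $\T_0 = (\{0,1\}^\star; \preceq)$, the full binary tree, in which every node lies on two distinct infinite paths, so $d(\T_0) = \T_0$ and $CB(\T_0) = 0$.

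The heart of the argument is to show $CB(\T_n) = n$ for $n \ge 1$ by induction. The infinite paths of $\T_n$ are exactly the words $x_1^{k_1} \cdots x_{m-1}^{k_{m-1}} x_m^{\omega}$ with $1 \le m \le n$: finitely many of each early letter, then one letter repeated forever. A node $w$ survives the derivative precisely when at least two distinct infinite paths pass through it, equivalently when the subtree rooted at $w$ has at least two infinite paths. Since the subtree below a node whose last letter is $x_m$ is a copy of $x_m^\star \cdots x_n^\star$, it has two or more infinite paths exactly when $m < n$. Thus for $n \ge 2$ the surviving nodes are exactly the words avoiding $x_n$, giving $d(\T_n) = x_1^\star \cdots x_{n-1}^\star \cong \T_{n-1}$; while $\T_1 = x_1^\star$ is a single infinite chain, so $d(\T_1) = \emptyset$ and $CB(\T_1) = 1$. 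Since $d(\T_n) \cong \T_{n-1} \subsetneq \T_n$, we get $CB(\T_n) = CB(\T_{n-1}) + 1$, and hence $CB(\T_n) = n$ by induction. Combined with Theorem~\ref{thm:CBpotree}, this proves the biconditional.

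The one place requiring care is this derivative computation: one must correctly list the infinite paths of $\T_n$ and check that a node survives exactly when its subtree still admits two distinct infinite continuations, which is where the boundary case $n = 1$ (a single chain, whose derivative is already empty) must be separated from the inductive step. Everything else is routine. As a sanity check, each $\T_n$ with $n \ge 1$ has only countably many infinite paths and positive finite rank, consistent with Lemma~\ref{lm:CBsucc}, whereas $\T_0$ has uncountably many paths and rank $0$.
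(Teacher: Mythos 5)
Your proof is correct and takes essentially the approach the paper intends: the ``only if'' direction is exactly Theorem~\ref{thm:CBpotree}, and the ``if'' direction just needs automatic witnesses of each finite rank, which the paper leaves implicit. Your family $\T_n = (x_1^\star x_2^\star \cdots x_n^\star; \preceq)$ (with the full binary tree for rank $0$) checks out: the derivative computation $d(\T_n) \cong \T_{n-1}$, the base case $d(\T_1) = \emptyset$, and the resulting induction $CB(\T_n) = n$ are all sound.
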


One of the most fundamental tools available for the study of finitely branching trees is K\"onig's lemma: every finitely branching infinite tree has an infinite path. It is natural to wonder whether such a result holds in the context of automatic structures.  There are several ways one can transfer results from mathematics to include feasibility constraints.  For example, we can ask whether every automatic finitely branching infinite tree has a regular infinite path.  A stronger result would be to find a regular relation which picks out the infinite regular paths through automatic trees.  We will now develop several automatic versions of K\"onig's lemma in this spirit.  As a starting point, recall Corollary \ref{cr:autKL1}: every infinite automatic finitely branching tree has a regular infinite path.
A surprising feature of the landscape of automatic structures  is that a version of K\"onig's lemma holds even when we remove the finitely branching assumption.

\begin{theorem}[Khoussainov, Rubin, Stephan; 2003]\label{thm:KL2}
If an infinite automatic tree has an infinite path, it has a regular infinite path.
\end{theorem}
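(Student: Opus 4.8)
The plan is to produce the path explicitly as the \emph{leftmost infinite branch}, once one knows that the set of nodes lying on an infinite branch is regular. Write $B = \{x \in T : x \text{ lies on an infinite chain of } \T\}$. This set is $\le$-downward closed, and, crucially, every $x \in B$ has at least one immediate successor in $B$: since $B$ is downward closed and $x$ has a $B$-descendant strictly above it, the child of $x$ below such a descendant must itself lie in $B$. Granting that $B$ is regular, I would define $P = \{x \in B : \text{for every ancestor } z \text{ of } x, \text{ the immediate successor of } z \text{ towards } x \text{ is the } \le_{llex}\text{-least child of } z \text{ lying in } B\}$. This is first-order over the regular relations $\le$, $\le_{llex}$, $S$, and $B$, hence regular; and because every node of $B$ has a $B$-child, $P$ never terminates and so is an infinite branch. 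Thus the entire difficulty is concentrated in showing that $B$ is regular.

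To get a handle on $B$ I would characterize it as a fixed point. Let $\Phi(X) = \{x : \exists^\infty y\,(x \le y ~\&~ y \in X)\}$. Each $\Phi(X)$ is $(FO+\exists^\infty)$-definable from $X$, hence regular whenever $X$ is (Theorem~\ref{thm:decidKRS}), and $\Phi(X)$ is always $\le$-downward closed. One checks that $B$ is the greatest fixed point of $\Phi$: on the one hand $B \subseteq \Phi^\alpha(T)$ for every ordinal $\alpha$, since a node on an infinite chain has infinitely many descendants that themselves lie on infinite chains; on the other hand, any fixed point $X = \Phi(X)$ is a downward-closed subtree in which every node has infinitely many descendants inside $X$, and a greedy König-style construction then produces an infinite branch of $X$ through each of its nodes, so every fixed point is contained in $B$. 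Each approximant $\Phi^\alpha(T)$ is regular, but the approximation can descend properly through infinitely many (indeed transfinitely many) stages, so regularity of the limit $B$ does not follow merely from regularity of the approximants.

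The main obstacle is exactly this regularity of $B$, which is the heart of the theorem: a priori $B$ is only a greatest fixed point of an $\exists^\infty$-operator and need not stabilize at any bounded stage, so I expect this to be where the real work lies. To attack it I would pass to the finitely branching binary tree $\T'$ given by the first-child/next-sibling encoding from the proof of Theorem~\ref{thm:CBpotree}: it is first-order definable in $\T$, hence automatic, and by the continuous injection used there it has an infinite path because $\T$ does. In $\T'$ the set $B' = \{z : \exists^\infty w\,(z \le' w)\}$ of nodes on infinite $\le'$-paths is directly $(FO+\exists^\infty)$-definable, so $(B'; \le')$ is an infinite automatic finitely branching tree to which Corollary~\ref{cr:autKL1} applies. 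The delicate point, and the place where the argument must do its real work, is that an infinite $\le'$-path projects to an infinite chain of $\T$ only when it uses infinitely many ``first-child'' edges, rather than degenerating into an eventual ``next-sibling'' ray (which merely witnesses an infinitely branching node of $\T$). Thus the encoding localizes the whole difficulty to isolating, as a regular subset of $\T'$, the nodes admitting an infinite path with infinitely many first-child edges — a B\"uchi-type condition whose regularity is precisely the regularity of $B$ in disguise — after which the leftmost branch through that set projects back to the desired regular infinite path of $\T$.
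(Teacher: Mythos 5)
Your overall architecture matches the intended proof: let $B$ be the set of nodes lying on infinite chains, show $B$ is regular, then take the length-lexicographically leftmost branch through $B$, which is first-order definable from $\leq$, $\leq_{llex}$, the (definable) immediate-successor relation, and $B$, hence regular. Your first paragraph is correct, and your fixed-point analysis correctly explains why the naive $(FO+\exists^\infty)$ definition that worked in Corollary \ref{cr:autKL1} breaks down for infinitely branching trees. But there is a genuine gap exactly where you say the ``real work'' lies: you never establish that $B$ is regular, and both of your reductions end by restating that problem. The greatest-fixed-point characterization is sound but, as you note, yields no regularity (a descending transfinite intersection of regular sets need not be regular). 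The first-child/next-sibling encoding $\T'$ is also sound, but the set you then need --- nodes of $\T'$ admitting an infinite $\leq'$-path with infinitely many first-child edges --- is, as you yourself put it, ``the regularity of $B$ in disguise''; Corollary \ref{cr:autKL1} applied to $(B';\leq')$ only yields a path that may degenerate into an eventual sibling ray, so nothing has been gained.

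The missing idea, which is the heart of the paper's proof, is to change the machine model: code an infinite chain of $\T$ as a single infinite string, and build an auxiliary \emph{B\"uchi} automaton recognizing the pairs consisting of a node $x$ (padded to an infinite string) together with a code $\alpha$ of an infinite chain through $x$. The condition you correctly identify as ``B\"uchi-type'' (infinitely many genuine $\leq$-steps along the path) is exactly what a B\"uchi acceptance condition can enforce, but only on a machine reading infinite inputs. The second ingredient is a decoding/projection principle: for a B\"uchi recognizable relation, projecting out the $\omega$-string coordinate leaves a set of finite words $x$ (those for which some $\alpha$ with $(x\Diamond^\omega,\alpha)$ accepted exists) that is recognizable by an ordinary word automaton. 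Applying this to the path-coding automaton shows $B$ is regular, after which your first paragraph finishes the proof. Without this passage through B\"uchi automata and back, your argument is a correct chain of reformulations rather than a proof.
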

\begin{proof}
The first-order definition of an infinite path in Corollary \ref{cr:autKL1} relied on a definition of the set of nodes which lie on any infinite path.  However, in the context of trees which may have nodes with infinite degree, such a definition is harder to find.  To show that this set of nodes is regular, we define an auxiliary B\"uchi recognizable set which tracks nodes on infinite paths.  Our desired regular set is then achieved via applications of the projection operation and decoding of the B\"uchi automaton.  For more details, see \cite{KhRS03}. \qed
\end{proof}

A full automatic version of K\"onig's lemma would produce a regular relation which codes  all infinite paths through a given automatic tree.  The following theorem  from \cite{KhRS03} does just this.

\begin{theorem}[Khoussainov, Rubin, Stephan; 2003]\label{thm:KL3}
Given an automatic tree $\T$, there is a regular relation $R(x,y,z)$ satisfying the following properties:
\begin{itemize}
\item $\exists y\exists z~ \big (R(x,y,z) \iff  \T_x ~\text{has at most countably many infinite paths} \big) $;
\item for each $y \in \Sigma^\star$ and for each $x$ such that $\T_x$ has at most countably many infinite paths, the set $R_{x,y} = \{ z : R(x,y,z) \}$ is either empty or is an infinite path through $\T_x$; and
\item for each $x$ such that $\T_x$ has at most countably many infinite paths, if $\alpha$ is an infinite path through $\T_x$ then there is a $y \in \Sigma^\star$ such that $R_{x,y} = \alpha$.
\end{itemize}
\end{theorem}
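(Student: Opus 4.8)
The plan is to build the coding relation on top of the fact that automatic trees have finite Cantor--Bendixson rank (Theorem \ref{thm:CBpotree}). Fix $n = CB(\T) < \omega$. If $\T_x$ has at most countably many infinite paths then $d^n(\T_x) = \emptyset$, since a nonempty fixed point of the derivative would be a perfect subtree and hence carry uncountably many paths. Consequently every infinite path $\alpha$ of such a $\T_x$ has a well-defined \emph{rank} $\rho(\alpha) = \beta < n$, namely the largest $\beta$ for which $\alpha$ is still an infinite path of $d^\beta(\T_x)$; and because $\alpha$ fails to be a path of $d^{\beta+1}(\T_x)$, there is a node $w$ on $\alpha$ below which $\alpha$ is the \emph{unique} infinite path of $d^\beta(\T_x)$ through $w$. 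This yields a finite coding scheme: let the parameter $y$ encode a pair $(\beta, w)$ with $\beta < n$ and $w$ a node, and declare $R(x,y,z)$ to hold exactly when $x \leq z \leq w$, or when $w \leq z$ and $z$ lies on the unique infinite path of $d^\beta(\T_x)$ through $w$. Since $n$ is a fixed constant, $\beta$ ranges over a finite set, so $y$ is just a node carrying a bounded tag.

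The real work, and the main obstacle, is to show that the three ingredients ``$z$ lies on an infinite path of $\T_x$'', ``$v \in d^\beta(\T_x)$'', and ``$z$ lies on the unique infinite $d^\beta(\T_x)$-path through $w$'' are all \emph{regular}, uniformly in the node parameters and for each fixed $\beta < n$. Since $\T$ may have nodes of infinite degree, ``$\T_v$ is infinite'' does not capture ``$v$ lies on an infinite path'' (König's lemma is unavailable), so one genuinely must quantify over infinite paths. Here I would reuse the technique behind Theorem \ref{thm:KL2}: lift to the B\"uchi setting, where an infinite path is literally an infinite string; express ``$v$ lies on an infinite path of $\T_x$'' and ``$v$ lies on two distinct infinite paths of $\T_x$ splitting below $v$'' as B\"uchi recognizable relations in the parameters; and then project back down to regular relations on finite strings. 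Membership in $d(\T_x)$ is precisely the second condition, and iterating it a constant number $\beta < n$ of times, each step taken relative to the previously obtained regular subtree, delivers regularity of $v \in d^\beta(\T_x)$. From this one also obtains regularity of the predicate $\mathit{Ctble}(x)$ asserting $d^n(\T_x) = \emptyset$, and of the unique-path selection inside $d^\beta(\T_x)$ (when the $d^\beta$-path through $w$ is unique, its nodes are exactly the $z \geq w$ that lie on an infinite path of $d^\beta(\T_x)$).

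With these regular ingredients in hand I would assemble $R$ using closure of regular relations under Boolean, projection, and $\exists^\infty$ operations (Subsection \ref{sec:AutDefs} and Theorem \ref{thm:decidKRS}), building into the automaton a validity check that $(\beta,w)$ satisfies $w \in d^\beta(\T_x)$ with a unique infinite $d^\beta(\T_x)$-path through $w$, and intersecting with $\mathit{Ctble}(x)$ so that $R$ fires only in the countable case; this gives the first bullet. The second bullet then holds because invalid $y$ yield $R_{x,y} = \emptyset$ while a valid $(\beta,w)$ yields a genuine infinite path of $\T_x$ (the $d^\beta$-path lies inside $\T_x$, and prefixing the chain from $x$ to $w$ makes it a path from the root $x$). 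The third bullet is surjectivity: every infinite path $\alpha$ of $\T_x$ is realized as $R_{x,y}$ for $y = (\rho(\alpha), w_\alpha)$. The only delicate step is the B\"uchi-to-word transfer establishing regularity of the derivative predicates; everything afterwards is routine composition of already-regular relations.
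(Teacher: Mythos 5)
The paper offers no proof of Theorem \ref{thm:KL3} at all --- it states the result and defers to \cite{KhRS03} --- so there is no internal argument to compare against; judged on its own merits, your proof is essentially correct, and its decomposition (finite Cantor--Bendixson rank from Theorem \ref{thm:CBpotree}, rank-plus-isolating-node coding of paths, regularity of the derivative predicates via the B\"uchi lift behind Theorem \ref{thm:KL2}) is in substance the route of the cited source. Three refinements would tighten it. First, each derivative $d^{\beta}(\T_x)$ is \emph{downward closed} (a predecessor of a node lying on two distinct infinite paths lies on those same two paths), so a path $\alpha$ satisfies your rank condition iff $\alpha\subseteq d^{\beta}(\T_x)$; a path never re-enters a derivative after leaving it, which makes your ``tail'' picture slightly off (harmlessly), and note also that two distinct infinite paths through $w$ necessarily split \emph{above} $w$, not below. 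Second, the uniformity in $x$ you assert deserves a lemma: $v\in d^{\beta}(\T_x)\iff v\geq x \;\wedge\; v\in d^{\beta}(\T)$, which holds because paths of $\T$ through $v\geq x$ and paths of $\T_x$ through $v$ differ only in a finite initial segment and distinctness of paths through $v$ is witnessed above $v$; this identity also yields $CB(\T_x)\leq CB(\T)$, which you use tacitly when concluding $d^{n}(\T_x)=\emptyset$ in the countable case. With it, each step of the iteration is first-order: $v\in d^{\beta+1}(\T)$ iff $v\in d^{\beta}(\T)$ and there exist incomparable $u_1,u_2>v$ both satisfying the predicate $P_{\beta}$ asserting membership on some infinite path of $d^{\beta}(\T)$, so the \emph{only} non-elementary ingredient is regularity of $P_{\beta}$ --- exactly the B\"uchi-to-word lemma that the paper itself only sketches for Theorem \ref{thm:KL2}, so deferring to it puts your proof on the same footing as the paper's treatment of that earlier theorem. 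Third, one edge case in the statement: if $\T_x$ has no infinite path at all, the first two bullets cannot hold simultaneously; your construction gives $\exists y\exists z\,R(x,y,z)$ iff $\T_x$ has at most countably many infinite paths \emph{and} at least one, which is the only consistent reading.
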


In this section and in the preceding one, we gave tight bounds on the ranks and heights of automatic well-founded partial orders, trees, ordinals, and partial order trees.  Such bounds indicate that the level of complexity of the structures in these classes is relatively low.  However, it does not give us concrete information on the members of these classes.  In the next section, 
we present classification theorems for various collections of automatic structures.

\subsection{Word automatic Boolean algebras}\label{sec:autBA}

We now use the results about non-automaticity from Subsection \ref{sec:NonAut} to give a classification of word automatic Boolean algebras.  Recall that a {\bf Boolean algebra} is a structure $(D; \vee, \wedge, \neg, 0, 1)$ which satisfies axioms relating the join ($\vee$), meet ($\wedge$), and complement ($\neg$) operations and the constants $0$ and $1$.  An {\bf atom} of a Boolean algebra is a minimal non-$0$ element.  An archetypal Boolean algebra is $\B_{\omega}$, the collection of all finite and co-finite subsets of $\omega$.  In this case, the Boolean operations are the Boolean set operations: $\vee = \cup$, $\wedge = \cap$, $\neg = ~^{c}$.   As we saw in Subsection \ref{sec:AutPresent}, $\B_{\omega}$ has a word automata presentation.  For each $n \geq 1$, we define $\B_{\omega}^n$ to be the $n$-fold Cartesian product of $\B_{\omega}$.  Since finite Cartesian products preserve automaticity, $\B_\omega^n$ is automatic.  The main theorem of this section says that each automatic Boolean algebra must be isomorphic to $\B_\omega^n$ for some $n$.

\begin{theorem}[Khoussainov, Nies, Rubin, Stephan; 2004]\label{thm:autBAchar}
A Boolean algebra is word automatic if and only if it is isomorphic to $\B^n_\omega$ for some $n \geq 1$.
\end{theorem}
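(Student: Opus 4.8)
The backward direction is immediate: each $\B_\omega^n$ is automatic since $\B_\omega$ has a word automata presentation (Subsection \ref{sec:AutPresent}) and finite Cartesian products preserve automaticity (Proposition \ref{prop:ProdDisjoint}). The entire content is in the forward direction: every word automatic Boolean algebra is isomorphic to some $\B_\omega^n$. The plan is to combine a structural dichotomy for Boolean algebras with the non-automaticity growth arguments of Subsection \ref{sec:NonAut}. The key classical fact I would invoke is that a countable Boolean algebra is isomorphic to $\B_\omega^n$ for some finite $n$ if and only if it has only finitely many atoms in a suitable derived sense; more precisely, the Boolean algebras of the form $\B_\omega^n$ are exactly the countable Boolean algebras with finitely many atoms whose quotient by the ideal generated by the atoms is the trivial (one- or two-element) algebra. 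So the crux is to rule out all other countable Boolean algebras.

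First I would set up the \emph{Cantor-Bendixson / Ershov-Tarski} machinery for Boolean algebras, analogous to the Cantor-Bendixson analysis used for linear orders in Subsection \ref{sec:AutPo}. Define the \textbf{atom ideal} $I$ generated by the atoms of an automatic Boolean algebra $\B$, and pass to the quotient $\B / I$. The first step is to show that this quotient is again automatic: the set of atoms is first-order definable, the ideal $I$ is then definable using the $(FO + \exists^\infty)$ logic (an element lies in $I$ iff it is a finite join of atoms, which is expressible once ``finitely many atoms below $x$'' is available via $\exists^\infty$), and hence by Proposition \ref{Prop:quotient} together with Theorem \ref{thm:decidKRS} the quotient $\B/I$ is automatic. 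This gives an automatic Cantor-Bendixson derivative that I can iterate, exactly paralleling the linear-order argument.

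The main obstacle, and the heart of the proof, is to show that this derivative process terminates after finitely many steps and that the final quotient is trivial. This is where the growth/pumping arguments of Theorem \ref{thm:GrowthGen} and the Constant Growth Lemma (Lemma \ref{lm:ConstantGrowth}) enter. The strategy is to assume toward a contradiction that $\B$ has infinitely many atoms that remain ``independent'' after the derivative (equivalently, that $\B$ is not of the form $\B_\omega^n$), and then to exhibit a subset $X$ of the domain---a set of pairwise-disjoint atoms or independent elements---which, under the Boolean operations $\vee, \wedge, \neg$, generates elements whose lengths grow too fast to be bounded linearly. Concretely, $n$ independent atoms generate a subalgebra of size $2^{2^n}$, so the generations $G_k(X)$ grow doubly-exponentially, contradicting the linear bound $G_k(X) \subseteq \Sigma^{\leq C \cdot k}$ forced by Theorem \ref{thm:GrowthGen}. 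This is the same doubly-exponential obstruction that defeats the free semigroup in the corollary to Theorem \ref{thm:GrowthGen}. The delicate part is verifying that an automatic Boolean algebra not isomorphic to any $\B_\omega^n$ must contain such an infinite independent set with a suitably slow-growing enumeration, so that Theorem \ref{thm:GrowthGen} applies; this requires a careful choice of representatives and an appeal to the regularity of the atom relation to extract the independent family at linearly bounded lengths.

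Finally, once the derivative terminates with a trivial quotient after finitely many stages, a bookkeeping step counts the number $n$ of ``blocks'' of atoms surviving the analysis and identifies $\B$ with $\B_\omega^n$ via the classical isomorphism theorem for Boolean algebras with finitely many atoms over a trivial quotient. I expect the growth-rate contradiction to be the genuinely hard step; the setup of the automatic quotient and the concluding structural identification are routine given the tools already assembled in the excerpt.
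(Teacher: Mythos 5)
You have the right opening moves (they match the paper's): the backward direction via Proposition \ref{prop:ProdDisjoint}, the definition of the equivalence $a \equiv_F b$ (symmetric difference a finite join of atoms), and the observation that the quotient stays automatic because $\equiv_F$ is $(FO+\exists^\infty)$-definable and Proposition \ref{Prop:quotient} applies. But your contradiction engine has a genuine gap. You propose to extract an infinite \emph{independent} family at linearly bounded lengths and contradict Theorem \ref{thm:GrowthGen} via the $2^{2^n}$-size free subalgebra it generates. The critical cases the theorem must exclude, however, are superatomic Boolean algebras such as the interval algebra of the ordinal $\omega^2$: these are not isomorphic to any $\B_\omega^n$ (their quotient $\B/\equiv_F$ is infinite --- for $\omega^2$ it is $\B_\omega$ itself), yet they contain \emph{no} infinite independent family whatsoever, since an infinite independent family generates a free, hence atomless, subalgebra, which cannot embed in a superatomic algebra. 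Your fallback of pairwise disjoint atoms does not rescue the count: $n$ pairwise disjoint elements generate only about $2^{n+1}$ elements, comfortably inside the $2^{O(n)}$ bound, so no contradiction arises. The paper's actual argument threads exactly this needle: using Lemma \ref{lm:BAlargeinfinite} it runs a definable tree-splitting construction (tracking ``large'' and ``infinite'' elements, always choosing the length-lexicographically least splitter so that each level is $(FO+\exists^\infty)$-definable, and invoking Lemma \ref{lm:ConstantGrowth} so lengths grow by only a constant per level) to produce \emph{quadratically} many --- at least $\frac{n(n+1)}{2}$ --- pairwise disjoint elements all of length $O(n)$. Disjointness then forces the generated subalgebra to have at least $2^{n(n+1)/2}$ distinct elements, all of length $O(n)$ by Lemma \ref{lm:GenMonoids}, against only $2^{O(n)}$ available strings. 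The point you are missing is quantitative: with disjoint elements you need superlinearly many short generators, and manufacturing them is where all the work lies.

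Your structural reduction is also aimed at the wrong criterion. You plan to show that the iterated atom-ideal derivative terminates in finitely many steps with trivial final quotient and then identify $\B$ with $\B_\omega^n$. That criterion does not characterize the algebras $\B_\omega^n$: the interval algebra of $\omega^2$ reaches the trivial algebra after two derivative steps but is not isomorphic to any $\B_\omega^n$. The correct dividing line, implicitly used in the paper, is that the \emph{first} quotient $\B/\equiv_F$ is already finite; accordingly the paper takes a single quotient, notes it is infinite whenever $\B \not\cong \B_\omega^n$, and derives the contradiction directly --- no iteration and no termination argument appear. Relatedly, your ``more precise'' classical fact is misstated: $\B_\omega^n$ has infinitely many atoms, and its quotient by the ideal generated by the atoms is the $2^n$-element algebra, not the one- or two-element one.
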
 

The discussion above shows that any Boolean algebra isomorphic to $\B_\omega^n$ is word automatic.  We now prove the converse.  Let $\B$ be an automatic Boolean algebra.  Assume for a contradiction that $\B$ is not isomorphic to any $\B_\omega^n$.  
We will construct a set of elements of $B$ which will contradict the Growth Lemma for Monoids (Lemma \ref{lm:GenMonoids}).  To do so, we recall some terminology for elements of Boolean algebras.  We say that $a,b \in B$ are $F$-equivalent, $a \equiv_F b$, if the element $(a \wedge \bar{b}) \vee (\bar{a} \wedge b)$ is a union of finitely many atoms of $\B$.  Note that $\B / \equiv_F$ is itself a Boolean algebra.  Moreover,  $\B/ \equiv_F$ is automatic because the equivalence relation $\equiv_F$ is $(FO + \exists^\infty)$ definable in $\B$ (Proposition \ref{Prop:quotient}).   Moreover, since $\B$ is not isomorphic to $\B_\omega^n$ for any $n$, $\B / \equiv_F$ is infinite.  We call $x \in B$ {\bf infinite} if there are infinitely many elements $y \in B$ such that $y \wedge x = y$ (i.e. $y \leq x$ in the partial order induced by the Boolean algebra).  We say that $x$ {\bf splits} $y$ in $\B$ if $x \wedge y \neq 0$ and $\bar{x} \wedge y \neq 0$.
We call $x \in B$ {\bf large} if the $\equiv_F$ equivalence class of $x$ is not a finite union of atoms of the quotient algebra.   The following lemma collects properties of infinite and large elements of a Boolean algebra.

\begin{lemma}\label{lm:BAlargeinfinite}
If $y$ is large then there is $x$ that splits $y$ such that $y \wedge x$ is large and $y \wedge \bar{x}$ is infinite.  If $y$ is infinite then there is $x$ that splits $y$ such that either $x \wedge y$ is infinite or $\bar{x} \wedge y$ is infinite.
\end{lemma}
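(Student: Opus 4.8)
The plan is to translate every occurrence of ``infinite'' and ``large'' into a statement about the quotient Boolean algebra $\B/\equiv_F$ and then reduce both claims to a single elementary splitting fact. First I would record two dictionary facts. Fact (i): an element $w$ is infinite (infinitely many elements lie below it) if and only if $w$ is not a finite union of atoms of $\B$, which in turn holds if and only if $[w] \neq 0$ in $\B/\equiv_F$; this is because $[0,w]$ is finite precisely when $w$ is a finite join of atoms, and $w \equiv_F 0$ says exactly that. Fact (ii): $w$ is large if and only if $[w]$ is an infinite element of the quotient, i.e. the interval below $[w]$ in $\B/\equiv_F$ is infinite; this is the same computation performed one level up, inside $\B/\equiv_F$. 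It is essential to keep these apart: ``infinite'' corresponds to being nonzero in the quotient, whereas ``large'' corresponds to being infinite in the quotient.

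The tool I would isolate is: in any Boolean algebra, if $z$ is infinite then $z$ is not an atom, so there is $z'$ with $0 < z' < z$; writing $z = z' \vee (z \wedge \bar{z'})$ as a disjoint join, the interval $[0,z]$ factors as $[0,z'] \times [0, z \wedge \bar{z'}]$, so at least one factor is infinite. Hence $z$ splits into two nonzero pieces, at least one of which is infinite, and I may designate that infinite one.

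For part 2, since $y$ is infinite it is not an atom, so by the tool above I may write $y = y_1 \vee y_2$ as a disjoint join of nonzero elements with, say, $y_1$ infinite. Taking $x = y_1$ produces an element that splits $y$ and satisfies $x \wedge y = y_1$ infinite, which is exactly the required alternative. For part 1, Fact (ii) says that $y$ large means $[y]$ is infinite in $\B/\equiv_F$; applying the tool inside the quotient I would write $[y] = [u] \vee [v]$ as a disjoint join with $[u]$ infinite in the quotient and $[v] \neq 0$. I then lift this by setting $x = u_0 \wedge y$ for a representative $u_0$ of $[u]$, so that $x \le y$, the class of $y \wedge x = x$ is $[u]$, and the class of $y \wedge \bar{x}$ is $[y] - [u] = [v]$. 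By Fact (ii) $y \wedge x$ is large, by Fact (i) $y \wedge \bar{x}$ is infinite since $[v] \neq 0$, and both being nonzero forces $x$ to split $y$.

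The main obstacle is the bookkeeping in this last lifting step, together with the temptation to make \emph{both} halves large. One must resist it, because an infinite element of a Boolean algebra need not split into two infinite parts: the top of a finite/cofinite algebra $\B_\omega$ cannot be so split (two disjoint cofinite sets would intersect), and this is precisely why part 1 can only promise that the second half $y \wedge \bar{x}$ is infinite rather than large. Keeping Fact (i) and Fact (ii) distinct throughout, so that ``nonzero in the quotient'' and ``infinite in the quotient'' are never conflated, is where the care is needed; once that is straight, the splitting is routine.
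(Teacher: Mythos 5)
Your proof is correct, and there is essentially nothing in the paper to compare it against: Lemma \ref{lm:BAlargeinfinite} is stated there without proof, as a collection of properties whose verification is deferred to the cited source \cite{KhNieRuSte04}, so your argument is a genuine addition rather than a variant. Its structure is sound. Both dictionary facts are instances of one observation --- in any Boolean algebra, $[0,z]$ is finite exactly when $z$ is a finite join of atoms --- applied once in $\B$ (so \emph{infinite} means $[z]\neq 0$ in $\B/\equiv_F$) and once in $\B/\equiv_F$ (so \emph{large} means $[z]$ is an infinite element of the quotient). The splitting tool, via the factorization $[0,z]\cong [0,z']\times[0,z\wedge\bar{z'}]$ for any $0<z'<z$, gives part 2 directly, and part 1 follows by running the tool inside the quotient and lifting: with $[y]=[u]\vee[v]$ disjoint, $[u]$ infinite in the quotient and $[v]\neq 0$, the element $x=u_0\wedge y$ satisfies $[y\wedge x]=[u]$ and $[y\wedge\bar{x}]=[v]$, so $y\wedge x$ is large, $y\wedge\bar{x}$ is infinite, and both are nonzero, i.e.\ $x$ splits $y$. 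The one step you use silently and should make explicit is that the canonical projection $\B\to\B/\equiv_F$ is a Boolean algebra homomorphism --- equivalently, that the finite joins of atoms form an ideal --- since the computation $[y\wedge\bar{x}]=[y]\wedge\overline{[x]}$ rests on it; this is exactly what the paper's assertion that $\B/\equiv_F$ is itself a Boolean algebra provides. Your closing caveat is also right and worth keeping: the top of $\B_\omega$ is infinite yet is not a disjoint join of two infinite elements (two disjoint cofinite sets cannot exist), so the asymmetric conclusion of part 1 --- one piece large, the other merely infinite --- is the most one can ask for.
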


We use Lemma \ref{lm:BAlargeinfinite} to inductively define a sequence of trees $\{\T_n\}$ whose nodes are binary strings.  To each node $\sigma$ of the tree $\T_n$ we associate an element $b_\sigma$ of the Boolean algebra.  We denote the set of elements of $\B$ associated to the leaves of the tree $\T_n$ by $X_n$.  At each step of the induction, we verify that the following properties hold:
\begin{itemize}
\item There is a leaf $\sigma\in \T_n$ such that $b_\sigma$ is large in $\B$.
\item There are $n$ leaves $\sigma_1, \ldots, \sigma_n \in \T_n$ such that $b_{\sigma_i}$ is infinite.
\item The tree $\T_n$ has as least $\frac{n (n+1)}{2}$ many leaves.
\item If $\sigma$ and $\tau$ are distinct leaves of $\T_n$, $b_\sigma \wedge b_\tau = 0$.
\end{itemize}
The base tree is $T_{0} = \{ \lambda\}$ (where $\lambda$ is the empty binary string) and $b_{\lambda} = 1$.  Given $\T_n$, we define $\T_{n+1}$ as the extension of $\T_n$ obtained by doing the following for each leaf $\sigma \in \T_n$.  If $b_\sigma$ is large, let $a \in B$ be the length lexicographically first element that splits $b_\sigma$ such that both $b_\sigma \wedge a$ and $b_\sigma \wedge \bar{a}$ are infinite and one of them is large.  Such an $a$ exists by Lemma \ref{lm:BAlargeinfinite}.  Add $\sigma 0$ and $\sigma 1$ to $\T_{n+1}$ (as leaves) and let $b_{\sigma 0} = b_\sigma \wedge a$ and $b_{\sigma 1} = b_\sigma \wedge \bar{a}$.   If $b_\sigma$ is one of the $n$ leaves from $\T_n$ for which $b_{\sigma}$ is infinite but is not large,  let $a \in B$ be the length lexicographically first element of $B$ that splits $b_{\sigma}$ such that one of $b_\sigma \wedge a$ and $b_\sigma \wedge \bar{a}$ is infinite.  Again, this $a$ exists by Lemma \ref{lm:BAlargeinfinite}.  Add $\sigma 0$ and $\sigma 1$ to $\T_{n+1}$ and let $b_{\sigma 0} = b_{\sigma} \wedge a$ and $b_{\sigma 1} = b_{\sigma} \wedge \bar{a}$.  For any leaf $\sigma \in \T_n$ that does not fall into either of these cases, let $\sigma \in \T_{n+1}$ also be a leaf.  

\smallskip

The first induction hypothesis holds for $\T_{n+1}$ because any leaf in $\T_n$ associated to a large element in $\B$ is extended by at least one node associated to a large element in $\B$.  Likewise, the second hypothesis is preserved because each of the $n$ nodes associated to infinite elements are extended by a node associated to an infinite element, plus the node associated to a large element is extended by a node associated to an infinite element.  The third hypothesis holds because at least $n+1$ leaves of $\T_n$ are replaced by two leaves each, so if $\T_n$ has at least $\frac{n (n+1)}{2}$ leaves then $\T_{n+1}$ has at least $\frac{(n+1) (n+2)}{2}$ many leaves.
Finally, the last hypothesis is preserved because if $\tau \in \T_n$ is a leaf, $\tau \wedge (b_\sigma \wedge a) = (\tau \wedge b_\sigma) \wedge a = 0 \wedge a = 0$. 

\smallskip

Each clause in the inductive definition can be formalized in $(FO + \exists^\infty)$.  Hence, for all $n$, $\T_n$ is word automatic.  In particular, the functions $f_0, f_1: X_n \to X_n$ which map $f_0(b_\sigma) = b_{\sigma 0}$ and $f_1(b_\sigma) = b_{\sigma 1}$ have regular graphs.  Therefore, the Constant Growth Lemma (Lemma \ref{lm:ConstantGrowth}) implies that there are constants $C_0, C_1$ such that
\[
|b_{\sigma 0}| \leq |b_\sigma| + C_0 ~\text{and}~|b_{\sigma 1}| \leq |b_\sigma| + C_1
\]
In particular, there is a constant $C_2$ such that for all $x \in X_n$, $|x| \leq C_2 \cdot n$.  In other words, $X_n \subseteq \Sigma^{C_2 \cdot n}$.  Lemma \ref{lm:GenMonoids} then gives that the Boolean algebra generated by $X_n$ is also a subset of $\Sigma^{O(n)}$ and therefore has at most $2^{O(n)}$ elements.  On the other hand, since there are at least $\frac{n (n+1)}{2}$ many leaves in $\T_n$ and distinct leaves yield disjoint elements of $X_n$, the Boolean algebra generated by $X_n$ has size at least $2^{\frac{n (n+1)}{2}}$.  This is a contradiction, and proves Theorem \ref{thm:autBAchar}. \qed

\smallskip

The classification of Boolean algebras in Theorem \ref{thm:autBAchar} gives rise to an algorithm solving the isomorphism problem for automatic Boolean algebras.

\begin{corollary}
It is decidable whether two automatic Boolean algebras are isomorphic.
\end{corollary}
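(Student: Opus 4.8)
The plan is to convert the structure theorem (Theorem \ref{thm:autBAchar}) into a complete, computable isomorphism invariant. That theorem tells us that every word automatic Boolean algebra is isomorphic to $\B_\omega^n$ for some $n \geq 1$, so the isomorphism problem reduces to the following: given an automatic presentation of a Boolean algebra $\B$, extract the unique $n$ for which $\B \cong \B_\omega^n$, and then compare the values extracted for the two input algebras. Thus the algorithm outputs \lqt isomorphic\rqt if and only if the two extracted invariants coincide.

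First I would confirm that $n$ is a genuine isomorphism invariant, that is, that $\B_\omega^m \cong \B_\omega^n$ forces $m = n$. Quotienting $\B_\omega$ by $\equiv_F$ (symmetric difference a finite union of atoms) collapses the finite sets to $0$ and the cofinite sets to $1$, so $\B_\omega / \equiv_F$ is the two-element Boolean algebra, which has exactly one atom. Consequently $\B_\omega^n / \equiv_F$ is the finite Boolean algebra with exactly $n$ atoms. Since $\equiv_F$ and atomhood are defined abstractly, the number of atoms of $\B / \equiv_F$ is an isomorphism invariant of $\B$, and it equals precisely the exponent $n$. This both shows the $\B_\omega^n$ are pairwise non-isomorphic and identifies the quantity we must compute.

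Next I would make the extraction effective. As already noted in the proof of Theorem \ref{thm:autBAchar}, $\equiv_F$ is $(FO + \exists^\infty)$-definable in $\B$, so by Theorem \ref{thm:decidKRS} it is regular, and by Proposition \ref{Prop:quotient} the quotient $\B / \equiv_F$ is again word automatic, with an automatic presentation constructible from that of $\B$. For each candidate $n = 1, 2, 3, \ldots$, the statement \lqt $\B / \equiv_F$ has exactly $n$ atoms\rqt is a first-order sentence, because being an atom is first-order definable; by Corollary \ref{cr:KNdecidability2} we can decide each such sentence. I would test $n = 1, 2, 3, \ldots$ in turn until one succeeds, and output that value. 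Applying this to both input algebras and comparing the results completes the algorithm.

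The mathematical weight of the corollary is entirely carried by the classification in Theorem \ref{thm:autBAchar}; there is no remaining combinatorial difficulty. The one point that genuinely requires the classification is termination of the unbounded search. A priori an arbitrary automatic Boolean algebra might have a quotient $\B / \equiv_F$ with infinitely many atoms, in which case no value of $n$ would pass the test and the search would diverge. It is exactly Theorem \ref{thm:autBAchar} that rules this out, guaranteeing $\B \cong \B_\omega^n$ and hence that $\B / \equiv_F$ is finite with $2^n$ elements. Thus the search is certain to halt, and this is the step I would flag as the crux of the argument.
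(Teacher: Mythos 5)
Your proof is correct and takes essentially the same route as the paper: both convert Theorem \ref{thm:autBAchar} into a computable isomorphism invariant (the exponent $n$), find it by an unbounded search whose individual steps are decidable sentences over an automatic structure, and rely on the classification for termination. The only difference is one of implementation --- you pass to the automatic quotient $\B/\equiv_F$ and count its atoms with pure first-order sentences, whereas the paper checks an equivalent property (\lqt $n$ disjoint elements each with infinitely many atoms below, and no more\rqt, using the $\exists^\infty$ quantifier) directly in $\B$ --- and your explicit verification that the $\B_\omega^n$ are pairwise non-isomorphic is a point the paper leaves implicit.
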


\begin{proof}
By Theorem \ref{thm:autBAchar}, two automatic Boolean algebras are isomorphic if and only if they are isomorphic to $B_{\omega}^n$ for the same $n$.  Given an automata presentations of a Boolean algebra, it satisfies
the first-order expressible property \lqt there are $n$ disjoint elements each with infinitely many atoms below, and for each $m > n$ there aren't $m$ disjoint elements each with infinitely many atoms below them\rqt if and only if it is isomorphic to $B_\omega^n$.  To decide if two automatic Boolean algebras $\B_1, \B_2$ are isomorphic, search for $n_1$ and $n_2$ such that $\B_1 \cong \B_\omega^{n_1}$ and $\B_2 \cong \B_\omega^{n_2}$ and then reply \lqt yes\rqt if and only if $n_1 = n_2$. \qed
\end{proof}

We have seen explicit descriptions for which ordinals and which Boolean algebras have automata presentations.  We have also seen theorems which give conditions on invariants (ordinal height, Cantor-Bendixson rank) of structures with automatic presentations.  In the following subsection, we will continue to accumulate results of both kinds.

\subsection{Word automatic finitely generated groups}\label{sec:AutGroups}

We now turn to finitely generated groups which have automata presentations.  As in the previous subsections, we consider only word automata presentations.  Recall that a group is a structure $(G; \cdot, ~^{-1}, e)$ where the group operation is associative and the inverse behaves as expected with respect to the identity. In Subsection \ref{sec:AutPresent}, we saw that finitely generated abelian (commutative) groups have automata presentations.  In Subsection \ref{sec:NonAut}, we saw that the free group $F(n)$ with $n > 1$ is not automatic.  These two examples represent extreme behaviours with respect to automata.  We now work towards finding the exact boundary between automaticity and non-automaticity for groups. 

\smallskip

Recall that a {\bf subgroup} is a subset of a group which is itself a group; the {\bf index} of a subgroup is the number of left cosets of the subgroup.  A good introduction to basic group theory is \cite{Rotman}.

\begin{definition}
A group is {\bf virtually abelian} if it has an abelian subgroup of finite index.  Similarly, for any property $X$ of groups, we say that a group is {\bf virtually $X$} if it has a subgroup of finite index which has property $X$.  
\end{definition}

\begin{definition}
A group is {\bf torsion-free} if the only element of finite order (the least number of times it must be multiplied by itself to yield the identity) is the identity. A subgroup $N$ of $G$ is {\bf normal} if for all $a \in G$, $aN = Na$.
\end{definition}

The following basic fact from group theory will play a key part in our analysis of automata presentable finitely generated groups.  It says that the abelian subgroup of a virtually abelian finitely generated group can be assumed to have a special form.

\begin{fact}\label{ft:GroupTheory}
Every virtually abelian finitely generated group has a torsion-free normal abelian subgroup of finite index.
\end{fact}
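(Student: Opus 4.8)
The plan is to achieve normality first and torsion-freeness second, preserving finite index and abelianness at each stage. Begin with an abelian subgroup $A \leq G$ of finite index, which exists by the hypothesis that $G$ is virtually abelian. Replace $A$ by its \emph{normal core} $N = \bigcap_{g \in G} g A g^{-1}$. Since $[G:A]$ is finite, there are only finitely many distinct conjugates $g A g^{-1}$, each of the same finite index, so their intersection $N$ still has finite index in $G$ (the intersection of finitely many finite-index subgroups has finite index). Moreover $N \leq A$ is abelian, and, being invariant under conjugation by construction, $N \trianglelefteq G$. This yields a normal abelian subgroup $N \trianglelefteq G$ of finite index.

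Next I would remove the torsion. A finite-index subgroup of a finitely generated group is again finitely generated (by a Reidemeister--Schreier / Nielsen--Schreier style argument), so $N$ is a finitely generated abelian group. By the structure theorem for such groups, $N \cong \Z^r \times T$ where $T$ is the finite torsion subgroup; let $m$ be the exponent of $T$, so $x^m = e$ for all $x \in T$. Consider the subgroup of $m$-th powers, $M = \{\, x^m : x \in N \,\}$, which is a subgroup precisely because $N$ is abelian (so $(xy)^m = x^m y^m$). Under the isomorphism $N \cong \Z^r \times T$ a direct computation gives $M \cong m\Z^r \times \{e\} \cong \Z^r$, so $M$ is torsion-free and abelian, while $N/M \cong (\Z/m\Z)^r \times T$ has order $m^r\,|T| < \infty$; hence $M$ has finite index in $N$ and therefore in $G$.

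Finally I would verify that $M$ is normal in $G$. The key observation is that $M$ is \emph{characteristic} in $N$: any automorphism $\varphi$ of $N$ satisfies $\varphi(x^m) = \varphi(x)^m$, so $\varphi(M) = M$. For each $g \in G$, conjugation by $g$ restricts to an automorphism of $N$ (because $N \trianglelefteq G$), and this automorphism fixes the characteristic subgroup $M$ setwise; therefore $g M g^{-1} = M$ for all $g$, i.e. $M \trianglelefteq G$. Thus $M$ is the desired torsion-free normal abelian subgroup of finite index.

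The delicate point is exactly the passage to torsion-freeness while retaining normality. One cannot simply project $N$ onto a ``free part'' $\Z^r$, since a complement to $T$ in $N \cong \Z^r \times T$ is not canonical and need not be invariant under conjugation by elements of $G$. Passing instead to the canonically defined subgroup of $m$-th powers circumvents this, because $M$ is characteristic in $N$ and hence automatically normal in $G$. A secondary point worth flagging is that the finite generation of $N$ must be established before the structure theorem can be invoked.
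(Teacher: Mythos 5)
Your proof is correct. Note that the paper itself gives no proof of this statement: it is quoted as a standard fact from group theory (the background reference being Rotman's book, \cite{Rotman}), so there is no argument in the paper to compare yours against. What you supply is the standard textbook argument, and it is complete: the normal core handles normality while preserving finite index (the intersection of finitely many finite-index subgroups has finite index, and the conjugates $gAg^{-1}$ depend only on the finitely many cosets $gA$) and abelianness; Schreier's lemma gives finite generation of the core $N$, which legitimizes the appeal to the structure theorem; and the passage to the subgroup $M$ of $m$-th powers, which you correctly identify as the delicate step, yields torsion-freeness while automatically retaining normality, because $M$ is characteristic in $N$ and a characteristic subgroup of a normal subgroup is normal. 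Your closing remark is exactly the right thing to flag: one cannot simply split off a copy of $\Z^r$ complementing the torsion subgroup, since such a complement is not canonical and need not be invariant under conjugation by elements of $G$; replacing it by the canonically defined subgroup of $m$-th powers is precisely what makes the argument go through.
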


We are now ready to prove that each member of a large class of finitely generated groups has an automata presentation.  This lemma significantly extends Example \ref{ex:abfg} which dealt with abelian finitely generated groups.

\begin{lemma}
Any virtually abelian finitely generated group is automatic.
\end{lemma}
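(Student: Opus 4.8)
The plan is to reduce the general virtually abelian case to the abelian case (Example \ref{ex:abfg}) by building the group from its abelian part via operations that preserve automaticity. First I would invoke Fact \ref{ft:GroupTheory} to replace the given virtually abelian finitely generated group $G$ with a torsion-free normal abelian subgroup $N$ of finite index. This is the crucial structural simplification: normality lets us treat $G$ as a group built on top of the cosets of $N$, and torsion-freeness (together with the fact that $N$ is finitely generated, being of finite index in a finitely generated group) guarantees that $N \cong (\Z^d; +)$ for some $d$, which is word automatic by Example \ref{ex:abfg}.

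Next I would fix a transversal $\{g_1, \ldots, g_k\}$ for the (finitely many) cosets of $N$ in $G$, with $g_1 = e$, so that every element of $G$ is written uniquely as a pair $\la i, n \ra$ with $1 \leq i \leq k$ and $n \in N$. The domain of the presentation is then a finite disjoint union of $k$ copies of the automatic presentation of $N \cong \Z^d$; since disjoint unions and finite iterations preserve automaticity (Proposition \ref{prop:ProdDisjoint}), the domain is regular. The key point is to exhibit the multiplication of $G$ in these coordinates as a regular relation. Because $N$ is normal, for each generator $g_i$ conjugation $n \mapsto g_i^{-1} n g_i$ is an automorphism of $N \cong \Z^d$, hence given by a fixed integer matrix $M_i \in GL_d(\Z)$; and for each pair $(i,j)$ we have $g_i g_j = g_{\ell(i,j)}\, c_{i,j}$ for a fixed coset index $\ell(i,j)$ and a fixed cocycle element $c_{i,j} \in N$. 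The product of $\la i, n \ra$ and $\la j, m \ra$ is then $\la \ell(i,j),\, c_{i,j} + M_j^{-1} n + m \ra$ (written additively in $N$), where the index bookkeeping involves only finitely many cases.

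The remaining task is to check that this multiplication is recognizable by a finite automaton. The index part $\ell(i,j)$ is a finite lookup and costs only finitely many automaton states. Within a fixed coset, the operation reduces to affine maps of $\Z^d$: adding a fixed constant $c_{i,j}$ and applying a fixed linear map $M_j^{-1}$ to an element of $\Z^d$. The hard part — and the only place that needs real care — is verifying that the graph of each such affine map on the automatic presentation of $\Z^d$ is regular. Fixed translations of $\Z^d$ are clearly regular, and each coordinate of $M_j^{-1} n$ is an integer linear combination of the coordinates of $n$; such linear maps are first-order definable from $+_2$ and the constant integers in the presentation of $\Z^d$, so by Corollary \ref{cr:Definability} their graphs are regular. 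Combining the finitely many coset cases by a finite case split (again preserving regularity) yields a regular graph for the full multiplication of $G$, and the inverse operation is handled identically. Hence $G$ is word automatic, as required. \qed
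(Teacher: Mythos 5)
Your proposal is correct and follows essentially the same route as the paper: invoke Fact \ref{ft:GroupTheory} to obtain a torsion-free normal abelian subgroup $N$ of finite index, decompose $G$ into finitely many cosets, and observe that multiplication becomes an affine-linear map on exponent coordinates (with finite coset bookkeeping), which is recognizable by automata over the presentation of $N \cong \Z^d$. The only discrepancy is an inessential bookkeeping slip (with your convention $\la i,n\ra \leftrightarrow g_i n$, the conjugation matrix should be $M_j$ rather than $M_j^{-1}$), which does not affect the argument since both are fixed integer matrices; your explicit appeal to first-order definability and Corollary \ref{cr:Definability} in fact spells out the regularity step that the paper leaves implicit.
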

\begin{proof}
Let $G$ be a virtually abelian finitely generated group.  By Fact \ref{ft:GroupTheory}, let $N = \la x_1, \ldots, x_k \ra$ be an abelian torsion-free normal subgroup of finite index of $G$.  Thus, there are $a_1, \ldots, a_n \in G$ such that $G = \sqcup_{i=1}^n a_i N$.  Since $N$ is normal, for any $j = 1,\ldots, n$ there are $h_1, \ldots, h_k \in N$ such that 
\[
x_1 a_j = a_j h_1 = a_j x_1^{m_{1,1}(j)} \cdots x_k^{m_{k,1}(j)}, ~\ldots~,
x_k a_j = a_j h_k = a_j x_1^{m_{1,k}(j)} \cdots x_k^{m_{k,k}(j)}.
\]
Moreover, there is a function $f: \{1, \ldots, n \} \times \{1, \ldots, n \} \to  \{1, \ldots, n \}$ such that
\[
a_i a_j  = a_{f(i,j)} h_{(i,j)} = a_{f(i,j)} x_1^{m_{1}(i,j)} \cdots x_k^{m_k(i,j)}
\]
For each $g \in G$ there is $h \in N$ and $i \in 1, \ldots, n$ so that $g = a_i h = a_i x_1^{m_1} \cdots x_k^{m_k}$.  Therefore, we can express the product of two elements of the group as
\begin{align*}
(a_i x_1^{p_1} \cdots x_k^{p_k} ) \cdot (a_j x_1^{q_1} \cdots x_k^{q_k} ) = &a_{f(i,j)}x_1^{q_1 + m_1(i,j)+\Sigma_{\ell=1}^k p_\ell m_{1,\ell}(j)  } \cdots \\
&\cdot x_k^{q_k + m_k(i,j) + \Sigma_{\ell=1}^k p_\ell m_{k,\ell}(j) }
\end{align*}
Expressing group multiplication in this way is well-suited to automata operations and leads to an automata presentation of $G$. \qed
\end{proof}

In fact, \cite{OliverThomas} contains the following theorem which shows that the class of virtually abelian finitely generated groups coincides exactly with the class of automata presentable finitely generated groups.

\begin{theorem}[Oliver, Thomas; 2005]\label{thm:autFGgroups}
A finitely generated group is automatic if and only if the group is virtually abelian.
\end{theorem}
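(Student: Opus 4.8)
The theorem states that a finitely generated group is automatic if and only if it is virtually abelian. The easy direction—that virtually abelian finitely generated groups are automatic—is exactly the preceding lemma, so my plan is to concentrate entirely on the hard direction: assuming $G$ is a finitely generated automatic group, I must show $G$ is virtually abelian. The strategy I would pursue combines the growth-rate machinery developed in Subsection~\ref{sec:NonAut} with structural results from geometric group theory. The crucial first observation is that the Growth Lemma for Monoids (Lemma~\ref{lm:GenMonoids}) forces automatic groups to have \emph{at most polynomial growth}. Indeed, if $C$ is the number of states of the automaton recognizing multiplication and $s_1,\ldots,s_n$ range over a fixed finite generating set, then every product of length $n$ has representative-length at most $\max|s_i| + C\lceil \log n\rceil = O(\log n)$, so the number of distinct group elements expressible as products of $n$ generators is bounded by $|\Sigma|^{O(\log n)} = n^{O(1)}$. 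Thus the word-growth function of $G$ is polynomially bounded.

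The second main step is to invoke Gromov's celebrated theorem: a finitely generated group of polynomial growth is virtually nilpotent. This reduces the problem to showing that an automatic virtually nilpotent finitely generated group is in fact virtually \emph{abelian}. The final step is to rule out any genuinely non-abelian nilpotent behaviour. Here I would argue that a virtually nilpotent group which is not virtually abelian must contain (a finite-index subgroup isomorphic to) a torsion-free nilpotent group of nilpotency class $\geq 2$, the prototype being the discrete Heisenberg group. Such groups have growth function $\sim n^d$ with $d \geq 4$ strictly larger than the linear growth forced by the $O(\log n)$ length bound: reading the Growth Lemma more carefully, the number of elements of representative-length $\leq m$ in an automatic structure over $\Sigma$ grows like $|\Sigma|^m$, while the composition bound $O(\log n)$ caps the number of $n$-fold products, and matching these against the Bass--Guivarc'h polynomial growth degree of a class-$2$ nilpotent group yields a contradiction. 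Equivalently, one shows directly that the Heisenberg group admits no automatic presentation by exhibiting a finitely-generated substructure whose growth violates Theorem~\ref{thm:GrowthGen}.

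\textbf{The main obstacle.} I expect the delicate point to be the last step: passing from ``virtually nilpotent'' to ``virtually abelian.'' Gromov's theorem is a black box that gives virtual nilpotency, but distinguishing abelian from higher-class nilpotent groups requires a sharper growth computation than the crude $O(\log n)$ bound provides. The subtlety is that the length bound from Lemma~\ref{lm:GenMonoids} controls the \emph{representative} lengths of products, whereas the growth degree of a nilpotent group is governed by its lower central series via the Bass--Guivarc'h formula; carefully comparing these two growth rates—and in particular showing that nilpotency class $2$ already forces a polynomial degree incompatible with automaticity—is where the real work lies. An alternative route that sidesteps Gromov, following Oliver and Thomas \cite{OliverThomas}, is to use the fact that automatic groups have a decidable first-order theory (Corollary~\ref{cr:KNdecidability2}) together with finer combinatorial analysis of regular relations coding the group operation, but I anticipate the growth-rate comparison to be the cleanest and most self-contained argument given the tools already established in this paper.
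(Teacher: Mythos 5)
Your first two steps coincide exactly with the paper's proof: Lemma~\ref{lm:GenMonoids} gives $G_n(\{a_1,\ldots,a_k\}) \subseteq \Sigma^{C\cdot \log n}$, hence polynomial growth, and Gromov's theorem then gives virtual nilpotency. But your third step --- ruling out nilpotency class $\geq 2$ by a sharper growth comparison --- cannot work, and it is precisely the point you flag as ``where the real work lies.'' Every finitely generated nilpotent group has \emph{polynomial} growth; by the Bass--Guivarc'h formula the discrete Heisenberg group grows like $n^4$, exactly as $\Z^4$ does. The bound extracted from automaticity is $|\Sigma|^{O(\log n)} = n^{C\log|\Sigma|}$, a polynomial whose degree depends on the automaton and is not bounded a priori, so degree-$4$ growth is perfectly consistent with it; no refinement of Theorem~\ref{thm:GrowthGen} can separate the Heisenberg group from $\Z^4$, since growth functions literally cannot tell them apart. (Your phrase ``the linear growth forced by the $O(\log n)$ length bound'' is a misreading: the bound caps representative \emph{length} at $O(\log n)$, which exponentiates to a polynomial, not linear, count of elements.)

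What actually closes the gap in the paper is logic, not growth. Since nilpotent implies solvable (Fact~\ref{fct:Groups}), the group is virtually solvable; since it is automatic, its first-order theory is decidable (Corollary~\ref{cr:KNdecidability2}); and the theorem of Romanovski$\breve{\i}$ and Noskov states that a virtually solvable group has a decidable first-order theory if and only if it is virtually abelian. (Morally, a finitely generated nilpotent group of class $\geq 2$ interprets enough arithmetic to make its theory undecidable --- an obstruction invisible to growth.) So the route you set aside as an ``alternative that sidesteps Gromov'' is not an alternative at all: the paper's argument needs Gromov to reach virtual solvability \emph{and} the decidability input to get from there to virtually abelian. Your proposal is missing exactly that second ingredient, and the growth-rate substitute you offer in its place fails.
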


To prove Theorem \ref{thm:autFGgroups}, it remains to prove only one direction of the classification.  We will use several definitions and facts from group theory to prove this direction (cf. \cite{Gromov}, \cite{Romanovskii}, \cite{Noskov}).

\begin{definition}
The {\bf commutator} of a group $G$ is the set 
\[
[G,G] = \{ [g,h] = g^{-1} h^{-1} gh : g, h \in G \}.  
\]
The powers of a group are defined inductively as
$G^0 = G$ and $G^{k+1} = [G_k,G_k]$.
The group $G$ is {\bf solvable} if there is some $n$ such that $G^n = \{e\}$.  The maps $\gamma_k$ are defined inductively by
\[
\gamma_0 (G) = G, \qquad \gamma_{k+1} (G) = [\gamma(G_k), G].
\]
The group $G$ is {\bf nilpotent} if $\gamma_n(G) =\{e \}$ for some $n$.
\end{definition}
\begin{fact}\label{fct:Groups}
If $G$ is nilpotent then $G$ is solvable.
\end{fact}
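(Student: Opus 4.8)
The plan is to compare the two descending series attached to $G$: the derived series $G^0 \supseteq G^1 \supseteq \cdots$ formed by the ``powers'' $G^{k+1} = [G^k, G^k]$, and the lower central series $\gamma_0(G) \supseteq \gamma_1(G) \supseteq \cdots$ with $\gamma_{k+1}(G) = [\gamma_k(G), G]$. Nilpotency says the lower central series reaches $\{e\}$, while solvability says the derived series reaches $\{e\}$; so it suffices to show that the derived series descends at least as fast as the lower central series. Concretely, I would prove by induction on $k$ that $G^k \subseteq \gamma_k(G)$ for every $k$.

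First I would record the one elementary fact about commutator subgroups on which the whole argument rests: the commutator operation is monotone in each argument, i.e. if $A \subseteq A'$ and $B \subseteq B'$ then $[A,B] \subseteq [A',B']$. This is immediate from the definition, since every generating commutator $[a,b]$ with $a \in A$ and $b \in B$ is already a generating commutator of $[A',B']$.

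For the induction, the base case is the equality $G^0 = G = \gamma_0(G)$. For the inductive step, assume $G^k \subseteq \gamma_k(G)$. Then, applying monotonicity twice together with the trivial inclusion $\gamma_k(G) \subseteq G$,
\[
G^{k+1} = [G^k, G^k] \subseteq [\gamma_k(G), \gamma_k(G)] \subseteq [\gamma_k(G), G] = \gamma_{k+1}(G),
\]
which closes the induction.

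Finally I would combine this inclusion with the hypothesis. If $G$ is nilpotent, then $\gamma_n(G) = \{e\}$ for some $n$, and the inclusion just proved yields $G^n \subseteq \gamma_n(G) = \{e\}$, so $G^n = \{e\}$ and $G$ is solvable. The only point requiring any care --- and hence the ``main obstacle,'' though it is a mild one --- is the inductive step, where one passes from $[\gamma_k(G), \gamma_k(G)]$ to $[\gamma_k(G), G]$. This is precisely where the lower central series (which commutes $\gamma_k(G)$ against all of $G$) dominates the derived series (which commutes $\gamma_k(G)$ only against itself), and it explains why the implication runs in this direction and not the reverse.
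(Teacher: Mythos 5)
Your proof is correct: the inclusion $G^k \subseteq \gamma_k(G)$, established by induction from the base case $G^0 = G = \gamma_0(G)$ using monotonicity of the commutator in each argument together with $\gamma_k(G) \subseteq G$, immediately yields solvability from nilpotency, and the argument works equally well whether $[A,B]$ is read as the paper's set of commutators or as the subgroup those commutators generate. Note that the paper states this as a background Fact with no proof at all (it is standard group theory, used only as a stepping stone toward the Oliver--Thomas characterization), so there is nothing to compare against; your argument is the standard one and correctly fills in what the paper leaves unproved.
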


The following theorems of group theory relate algorithmic and growth properties of a group to the group theoretic notions defined above.  These theorems are instrumental in proving Theorem \ref{thm:autFGgroups} since we understand the algorithmic and growth properties of groups with automata presentations.  Note that a finitely generated group is said to have {\bf polynomial growth} if the size of the $n^{th}$ generation ($G_n$) of the set of generators of the group is polynomial in $n$ (recall the definition of the generations of a structure from Subsection \ref{sec:NonAut}).

\begin{theorem}[Gromov; 1981]
If a finitely generated group has polynomial growth then it is virtually nilpotent.
\end{theorem}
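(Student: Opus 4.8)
The plan is to prove the hard direction -- that polynomial growth implies virtual nilpotence -- since the converse (virtual nilpotence yields polynomial growth) follows from the Bass--Guivarc'h degree formula and is comparatively routine. I would fix a finite generating set $S$, regard $G$ as a metric space under the associated word metric $d_S$, and argue by induction on the degree $d$ of the polynomial bounding the growth function. The base case $d=0$ is a group of bounded growth, hence finite, which is trivially virtually nilpotent.

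The geometric heart of the argument is to pass to the \emph{asymptotic cone}. Rescaling, I consider the pointed metric spaces $(G, \tfrac{1}{n} d_S, e)$ and extract a Gromov--Hausdorff limit $X$. The polynomial growth hypothesis supplies a uniform doubling estimate on balls, which is precisely the compactness condition guaranteeing that such a limit exists. I would then establish that $X$ is a complete, locally compact, connected, locally connected, finite-dimensional, and homogeneous metric space. Homogeneity comes from the near-transitivity of $G$ acting on itself by left translations, and finite dimensionality -- the most delicate of these properties -- uses the growth bound quantitatively.

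Next I invoke the structure theory behind Hilbert's fifth problem. Because $X$ is a finite-dimensional, locally compact, connected, locally connected, homogeneous space, its isometry group is a Lie group with finitely many components (Montgomery--Zippin, Gleason). The approximate action of $G$ on $X$ then yields a homomorphism from a finite-index subgroup of $G$ into this Lie group. Using that a group of polynomial growth cannot contain a nonabelian free subgroup (the obstruction coming from the Tits alternative), I analyze the image inside the Lie group to extract a finite-index subgroup $G'$ that surjects onto $\mathbb{Z}$. This reduction -- turning the geometric limit into an honest homomorphism onto $\mathbb{Z}$ -- is the crux of the whole proof.

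Finally I close the induction. Let $H$ be the kernel of $G' \to \mathbb{Z}$. Polynomial growth of $G$ forces $H$ to be finitely generated and to have polynomial growth of degree at most $d-1$. By the inductive hypothesis $H$ is virtually nilpotent, and an extension of a virtually nilpotent group by $\mathbb{Z}$ is again virtually nilpotent, using the standard closure of these properties under finite-index and extension operations; hence $G'$, and therefore $G$, is virtually nilpotent. I expect the main obstacle to lie entirely in the asymptotic-cone construction together with the solution to Hilbert's fifth problem: verifying the topological regularity of $X$ (especially its finite dimensionality) and producing the surjection onto $\mathbb{Z}$ is where all the genuine difficulty resides, whereas the group-theoretic bookkeeping in the inductive step is comparatively standard.
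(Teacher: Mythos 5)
The paper does not actually prove this statement: Gromov's theorem is quoted as an external black box (reference \cite{Gromov}) and used, together with the Romanovskii--Noskov theorem and the growth lemma for automatic monoids, to classify the word automatic finitely generated groups. So there is no in-paper proof to compare against; what you have written is an outline of Gromov's original argument (rescaled word-metric spaces, Gromov--Hausdorff compactness, the Montgomery--Zippin--Gleason solution of Hilbert's fifth problem, the Tits alternative, and induction on the growth degree), which is indeed the standard route. As a sketch, most of it is faithful to that strategy, with one minor imprecision worth flagging: polynomial growth does \emph{not} yield a doubling estimate uniformly at all scales, only along a suitable subsequence of radii, and the compactness extraction of the limit space must be run along such a subsequence.

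There is, however, a genuine error where you close the induction. You assert that ``an extension of a virtually nilpotent group by $\mathbb{Z}$ is again virtually nilpotent, using the standard closure of these properties under finite-index and extension operations.'' Virtual nilpotence is \emph{not} closed under extensions, not even under extensions by $\mathbb{Z}$: take $G' = \mathbb{Z}^2 \rtimes_A \mathbb{Z}$, where $A \in GL_2(\mathbb{Z})$ is a hyperbolic automorphism, say $A(x,y) = (2x+y, x+y)$. This is an extension of the abelian (hence nilpotent) group $\mathbb{Z}^2$ by $\mathbb{Z}$, yet it has exponential growth and therefore cannot be virtually nilpotent. The correct way to finish must use the polynomial growth of $G'$ a second time: once the kernel $H$ of $G' \to \mathbb{Z}$ is known to be finitely generated with growth degree at most $d-1$ (Milnor's lemma) and hence virtually nilpotent by induction, one concludes that $G'$ is virtually solvable (indeed virtually polycyclic), and then invokes the Milnor--Wolf theorem --- a finitely generated virtually solvable group of polynomial growth is virtually nilpotent --- or, equivalently, shows that the polynomial growth of $G'$ forces the $\mathbb{Z}$-action on $H$ to be virtually unipotent. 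Without this ingredient the induction does not close, so as written the final step of your argument fails.
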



\begin{theorem}[Romanovski$\bf{\breve{\i}}$; 1980. Noskov; 1984]
A virtually solvable group has a decidable first-order theory if and only if it is virtually abelian.
\end{theorem}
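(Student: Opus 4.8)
For the direction \emph{virtually abelian $\Rightarrow$ decidable theory}, in the finitely generated setting relevant to this paper I would simply invoke the preceding lemma: a finitely generated virtually abelian group is automatic, so Corollary~\ref{cr:KNdecidability2} immediately gives that its first-order theory is decidable. For arbitrary (not necessarily finitely generated) virtually abelian groups one instead appeals to the classical theorem (Szmielew) that every abelian group has a decidable first-order theory, and then transfers decidability across the finite-index extension by a Feferman--Vaught style argument, interpreting $G$ inside finitely many copies of its abelian finite-index subgroup together with the finite multiplication data on cosets.

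The substantive direction is \emph{not virtually abelian $\Rightarrow$ undecidable theory}, which I would prove by contraposition using the method of first-order interpretation: to show $\mathrm{Th}(G)$ is undecidable it suffices to first-order interpret, with parameters, a structure with undecidable theory, and I would target the ring $(\Z; +, \cdot)$, whose first-order theory is undecidable. The first move is to replace $G$ by a finitely generated solvable subgroup $H$ of finite index (finite-index subgroups of finitely generated groups are again finitely generated); since $G$ is not virtually abelian, neither is $H$. Because $H$ is definable in $G$ with parameters it is interpretable in $G$, and interpretations compose, so it suffices to interpret $(\Z;+,\cdot)$ in $H$ to conclude that $\mathrm{Th}(G)$ is undecidable.

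The heart of the argument is that commutators encode multiplication. In a class-$2$ nilpotent group the identity $[x^{m}, y^{n}] = [x,y]^{mn}$ holds, so the commutator map multiplies exponents, while inside any abelian section exponents add via $x^{m} x^{n} = x^{m+n}$. Thus an infinite, genuinely non-abelian class-$2$ nilpotent section simultaneously exhibits $+$ and $\cdot$ on its exponent group and yields a first-order interpretation of $(\Z; +, \cdot)$. The remaining group-theoretic task is to mine the derived and lower-central series of the solvable group $H$ and to use the failure of virtual abelianness to extract a definable section of this kind (or, more generally, a definable module over a ring rich enough to carry arithmetic), ensuring in particular that the interpreted integers form an \emph{infinite} ring rather than a finite one, whose theory would be harmlessly decidable.

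The main obstacle is precisely this extraction step: one needs the uniform first-order definability of the auxiliary objects---centralizers, terms of the commutator series, and the exponent indexing---together with enough combinatorial group theory to guarantee that a non-virtually-abelian virtually solvable group always possesses such an infinite non-abelian nilpotent (or suitable metabelian) section. Controlling this in an \emph{arbitrary} virtually solvable group, where the derived length may be large and the relevant sections are modules over potentially intricate rings, is exactly where the real work of Romanovskii and Noskov lies.
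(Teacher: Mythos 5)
Your proposal should first be measured against what the paper actually does: the paper never proves this theorem. Like Gromov's polynomial growth theorem just above it, it is imported verbatim as a black box from the literature (Romanovski$\breve{\i}$ 1980 for the almost polycyclic case, Noskov 1984 for general finitely generated almost solvable groups) and is used only as an ingredient in the proof of Theorem \ref{thm:autFGgroups}. So the question is whether your sketch could stand as a proof on its own, and it cannot. Your forward direction is fine in the finitely generated setting, which is the setting of the theorem (Noskov's result is about finitely generated groups): such a group is automatic by the preceding lemma, hence has decidable theory by Corollary \ref{cr:KNdecidability2}. But the substance of the theorem is the converse, and there you name the strategy (interpret $(\Z;+,\cdot)$) while explicitly deferring its execution --- the ``extraction step'' --- to ``the real work of Romanovskii and Noskov.'' That extraction step \emph{is} the theorem; a proof whose key step is a citation of the result being proved is an outline, not a proof.

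Two of the steps you do commit to are also genuinely problematic. First, you justify reducing from $G$ to the finite-index solvable subgroup $H$ by asserting that $H$ is ``definable in $G$ with parameters''; finite-index subgroups of a group need not be first-order definable, so transferring undecidability from $\mathrm{Th}(H)$ to $\mathrm{Th}(G)$ requires an argument you have not supplied. Second, the concrete mechanism you propose --- a definable infinite non-abelian class-$2$ nilpotent section in which $[x^m,y^n]=[x,y]^{mn}$ encodes multiplication --- cannot exist in general: the Baumslag--Solitar group $BS(1,2)$ (likewise the lamplighter group) is finitely generated, metabelian, and not virtually abelian, yet every nilpotent subgroup of it is abelian and every nilpotent section of it has finite commutator subgroup, so no such section can carry an infinite interpreted ring. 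For such groups the undecidability has to be extracted from the module structure induced by conjugation, which is exactly the content of your parenthetical hedge, i.e.\ of Noskov's argument. (Your Feferman--Vaught remark for non-finitely-generated virtually abelian groups has a similar soft spot: the coset decomposition realizes $G$ as interpretable in its abelian subgroup \emph{equipped with finitely many automorphisms}, that is, in a $\Z[F]$-module, and Szmielew's theorem about pure abelian groups says nothing about those; fortunately only the finitely generated case is needed here.)
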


\begin{proof}[Theorem \ref{thm:autFGgroups}]
Let $G$ be an automatic finitely generated group. Suppose $G = \la a_1, \ldots, a_k \ra$.  By the generation lemma for monoids (Lemma \ref{lm:GenMonoids}), for each $n$, $G_n( \{ a_1, \ldots, a_k \}) \subseteq \Sigma^{C \cdot \log(n) }$.  Therefore, $|G_n(\{a_1, \ldots, a_k \})| \leq n^C$, and so $G$ has polynomial growth.  By Gromov's theorem, $G$ is virtually nilpotent, hence Fact \ref{fct:Groups} implies that it is virtually solvable.  Since $G$ is automatic, it has a decidable first-order theory.  Hence, Romanovski$\breve{\i}$ and Noskov's theorem implies that $G$ is virtually abelian, as required. \qed
\end{proof}

We have just seen a complete description of the finitely generated groups which have word automata presentations.  In the Boolean algebra case, such a description led to an algorithm for the isomorphism question.  However, whether such an algorithm exists in the current context is still an open question: is the isomorphism problem for automatic finitely generated groups decidable?

\smallskip

Many other questions can be asked about automata presentations for groups.  For example, we might shift our attention away from finitely generated groups and ask whether the isomorphism problem for automatic torsion-free abelian groups is decidable. A problem which has been attempted without success by many of the most distinguished researchers in automatic structures is to determine whether the additive group of the rational numbers $(\Q, +)$ has a word automata presentation.  More details about the current state of the art in automata presentable groups may be found in the survey paper \cite{NiesSurvey}.

\section{
\em Complicated Structures}
\subsection{Scott ranks of word automatic structures}\label{sec:Scott}

In the previous lecture we saw positive characterization results and relatively low tight bounds on classes of automatic structures.  In particular, we saw that $\omega$ is the tight bound on the Cantor-Bendixson ranks of automatic partial order trees, and that $\omega^\omega$ is the tight bound on the ordinal heights of automatic well-founded relations.  We will now prove results at the opposite end of the spectrum: results that say that automatic structures can have arbitrarily high complexity in some sense.  This will have complexity theoretic implications on the isomorphism problem for the class of automatic structures.

\smallskip

We begin by recalling an additional notion of rank for the class of countable structures.  This {\bf Scott rank} was introduced in \cite{Sco65} in the context of Scott's famous isomorphism theorem for countable structures.
\begin{definition}
Given a countable structure $\A$, for tuples $\bar{a}, \bar{b} \in A^n$ we define the following equivalence relations.
\begin{itemize}
\item $\bar{a} \equiv^0 \bar{b}$ if $\bar{a}$ and $\bar{b}$ satisfy the same quantifier free sentences,
\item for ordinal $\alpha > 0$, $\bar{a} \equiv^\alpha \bar{b}$ if for all $\beta < \alpha$, for each tuple $\bar{c}$ there is a tuple $\bar{d}$ such that $\bar{a}, \bar{c} \equiv^\beta \bar{b}, \bar{d}$ and for each tuple $\bar{d}$ there is a tuple $\bar{c}$ such that $\bar{a}, \bar{c} \equiv^\beta \bar{b}, \bar{d}$.
\end{itemize}
The Scott rank of tuple $\bar{a}$ is the least $\beta$ such that for all $\bar{b} \in A^n$, $\bar{a} \equiv^\beta \bar{b}$ implies $(\A, \bar{a}) \cong (\A, \bar{b})$.  The {\bf Scott rank} of the structure $\A$, denoted $SR(\A)$, is the least ordinal greater than the Scott ranks of all tuples of $\A$.
\end{definition}

The Scott rank was extensively studied in the context of computable model theory.  In particular, Nadel \cite{Nadel85} and Harrison \cite{Harr68} showed that the tight upper bound for the Scott rank of computable structures is $\omega_1^{CK}+1$.  Recall that $\omega_1^{CK}$ is the first non-computable ordinal; that is, it is the least ordinal which is not isomorphic to a computable well-ordering of the natural numbers.  However, most common examples of automatic structures have low Scott ranks.  The following theorem from \cite{KM} proves that automatic structures have the same tight upper bound on the Scott rank as computable structures.  

\begin{theorem}[Khoussainov, Minnes; 2007]\label{thm:ScottRanks}
For each $\alpha \leq \omega_1^{CK} + 1$, there is an automatic structure with Scott rank at least $\alpha$.  Moreover, there is an automatic structure with Scott rank $\omega_1^{CK}$.
\end{theorem}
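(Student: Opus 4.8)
The plan is to derive the theorem from the corresponding (known) fact about computable structures by a reduction, since the hard direction here is realizability, not the bound itself. Because every automatic structure is computable (Subsection \ref{subsec:history}), the Nadel--Harrison analysis \cite{Nadel85}, \cite{Harr68} already gives $SR(\A) \leq \omega_1^{CK}+1$ for every automatic $\A$; thus the entire content of the statement is that high Scott ranks are \emph{attained} inside the automatic class. So first I would import the computable side: for each $\alpha \leq \omega_1^{CK}+1$ there is a computable structure $\C_\alpha$ with $SR(\C_\alpha) \geq \alpha$, together with a computable structure of Scott rank exactly $\omega_1^{CK}$ (these come from the same Nadel--Harrison circle of ideas, using Harrison's ill-founded but hyperarithmetically path-free orders for the ranks near the top). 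The remaining task is to manufacture, from each $\C_\alpha$, an automatic structure whose Scott rank is at least as large.

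The reduction I would build rests on the configuration space of a Turing machine, which is word automatic by Example \ref{ex:conf}. Fix a machine $M$ whose computations decide the atomic diagram of $\C_\alpha$. When configurations are presented by convolution, the one-step transition relation of $M$ is automaton-checkable position-by-position, so finite automata have access to $M$-computations on a local basis. I would use this to define an automatic structure $\B_\alpha$ whose domain elements carry, in convolved form, the computation data of $M$ that certifies the atomic relations among the coded elements of $\C_\alpha$, and whose basic relations are arranged so that an automaton can verify them \emph{locally} from this certifying data. The guiding principle is that $\B_\alpha$ never recomputes the undecidable content of $\C_\alpha$; that content is laid out in the domain elements in a shape a finite automaton can merely inspect.

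The core of the argument is a transfer lemma asserting $SR(\B_\alpha) \geq SR(\C_\alpha)$. I would prove it by comparing the back-and-forth equivalences $\equiv^\beta$ of the two structures, showing by induction on $\beta$ that any pair of tuples separated at level $\beta$ in $\C_\alpha$ induces a pair separated at level $\beta$ in $\B_\alpha$, so that the witnesses to high Scott rank survive the encoding. The family of lower bounds then follows by applying the reduction to each $\C_\alpha$, and taking $\alpha = \omega_1^{CK}+1$ together with the universal upper bound pins the top value exactly. For the \lqt moreover\rqt clause I would apply the construction to the rank-$\omega_1^{CK}$ computable structure and separately verify that the encoding does not inflate the rank past $\omega_1^{CK}$, using the universal bound to trap it at exactly $\omega_1^{CK}$.

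The main obstacle is precisely this transfer lemma, and it is delicate for a structural reason: automaticity is far more restrictive than computability. Since there are computable structures that are not automatic (for instance $(\omega;\times)$ is not word automatic), $\B_\alpha$ cannot in general be an isomorphic copy of $\C_\alpha$, so the reduction must reproduce the back-and-forth complexity of $\C_\alpha$ without being able to reproduce $\C_\alpha$ itself. The entire difficulty is controlling the interaction between the computation-carrying padding and the $\equiv^\beta$ hierarchy: the padding must not collapse the relevant distinctions between tuples (which would undershoot the Scott rank) nor introduce spurious ones (which could distort the ordinal), and the convolved layout must keep genuinely binary relations of $\C_\alpha$ automaton-checkable in $\B_\alpha$. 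Getting the encoding to thread this needle is where the real work of the proof lies.
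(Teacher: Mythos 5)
Your skeleton is the same as the paper's: reduce from computable structures via Turing machine configuration spaces (Example \ref{ex:conf}), prove a Scott-rank transfer lemma, and feed in computable structures of known high rank (the paper uses Goncharov--Knight \cite{GonKn02} for ranks cofinal in $\omega_1^{CK}$, Knight--Millar \cite{KnM} for rank exactly $\omega_1^{CK}$, and Harrison's ordering \cite{Harr68} for the top). The gap is that your proposal stops exactly where the real proof starts, and the concrete mechanisms you do suggest would fail as stated. Taken literally, ``domain elements carry, in convolved form, the computation data that certifies the atomic relations, verified locally by an automaton'' cannot work: the set of valid complete computation transcripts of a Turing machine, coded as single strings, is not regular --- only the one-step relation between two separately given configurations is automaton-checkable. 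So the computation must be spread across the structure as graph structure (configurations as vertices, one-step transitions as edges, each tuple $\bar{x}$ wired to the initial configuration of $\M$ on input $\bar{x}$), which is what the paper does. But for that raw structure your transfer lemma is false: computation chains have definite finite lengths (running times), and a chain's length is detectable at low back-and-forth levels, so two tuples that are $\equiv^\beta$-equivalent in $\C$ but whose computations run for different times become distinguishable almost immediately in the encoding --- precisely the ``spurious distinctions'' you warn about but do not eliminate. (Note also that your induction preserves separations, which is the easy direction; the Scott-rank lower bound needs the hard direction, namely that $\equiv^\beta$-equivalences survive the encoding while non-automorphy is preserved.) The paper threads this needle with two ideas absent from your sketch: first, replace $\M$ by a \emph{reversible} machine (Bennett \cite{Ben73}), so that the configuration space is a well-founded disjoint union of finite chains and $\omega$-chains; second, apply \emph{smoothing} operations --- adjoin infinitely many free chains of every finite length and of type $\omega$, and attach to the base of every computation chain infinitely many chains of each finite length --- so that all running-time information is homogenized away and the only isomorphism-invariant residue of a computation is whether it halts.

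The ``moreover'' clause has a further gap. You propose to get $SR \geq \omega_1^{CK}$ from a one-sided transfer lemma and then use ``the universal bound'' to trap the rank at exactly $\omega_1^{CK}$; but the universal Nadel--Harrison bound for computable (hence automatic) structures is $\omega_1^{CK}+1$, so it cannot exclude the value $\omega_1^{CK}+1$. What is needed is a two-sided transfer lemma, and that is what the paper proves (Lemma \ref{lm:ScottRank}): $SR(\C) \leq SR(\A) \leq 2 + SR(\C)$. Applied to the Knight--Millar structure, and using $2 + \omega_1^{CK} = \omega_1^{CK}$, this pins the rank exactly; the upper half of that inequality is again a consequence of the smoothing, which guarantees the encoding adds only a bounded finite amount of back-and-forth complexity.
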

\begin{proof}
We outline the idea of the proof.  The main thrust of the argument is the transformation of a given computable structure to an automatic structure which has similar Scott rank.  Let $\C = (C; R_1, \ldots, R_m)$ be a computable structure.  For simplicity in this proof sketch, we assume $m=1$ and that $C = \Sigma^\star$ for some finite $\Sigma$.  Let $\M$ be a Turing machine computing the relation $R$.  The configuration space $Conf(\M)$ of the machine $\M$ is a graph whose nodes encode configurations of $\M$ and where there is an edge between two nodes if $\M$ has an instruction which takes it in one step from the configuration represented by one node to that of the other (see Example \ref{ex:conf}).   Recall that $Conf(\M)$ is an automatic structure.  We call a deterministic Turing machine {\bf reversible} if its configuration space is well-founded; that is, it consists only of finite chains or chains of type $\omega$.  The following lemma from \cite{Ben73} allows us to restrict our attention to reversible Turing machines.

\begin{lemma}[Bennett; 1973]
Any deterministic Turing machine may be simulated by a reversible Turing machine.
\end{lemma}

Without loss of generality, we assume that $\M$ is a reversible Turing machine which halts if and only if its output is \lqt yes\rqt.  We classify the chains in $Conf(\M)$ into three types:  terminating computation chains are finite chains whose base is a valid initial configuration, non-terminating computation chains are infinite chains whose base is a valid initial configuration, and unproductive chains are chains whose base is not a valid initial configuration. We perform the following smoothing operations to $Conf(\M)$ in order to capture the isomorphism type of $\C$ within the new automatic structure. Note that each of these smoothing steps preserves automaticity.  First, we add infinitely many copies of $\omega$ chains and finite chains of every finite size.  Also, we connect to each base of a computation chain a structure which consists of infinitely many chains of each finite length.  Finally, we connect representations of each tuple $\bar{x}$ in $C$ to the initial configuration of $\M$ given $\bar{x}$ as input.  We call the resulting automatic graph $\A$.  The following lemma can be proved using the defining equivalence relations of Scott rank and reflects the idea that the automatic graph contains witnesses to many properties of the computable structure.

\begin{lemma}\label{lm:ScottRank}
$SR(\C) \leq SR(\A) \leq 2 + SR(\C)$.
\end{lemma}

At this point, we have a tight connection between Scott ranks of automatic and computable structures.  The following lemmas from  \cite{GonKn02} and \cite{KnM} describe the Scott ranks that are realized by computable structures,

\begin{lemma}[Goncharov, Knight; 2002]\label{lm:SRcomp1}
For each computable ordinal $\alpha < \omega_1^{CK}$, there is a computable structure whose Scott rank  is above $\alpha$.
\end{lemma}

\begin{lemma}[Knight, Millar; in print]\label{lm:SRcomp2}
There is a computable structure with Scott rank $\omega_1^{CK}$.
\end{lemma} 

We apply Lemma \ref{lm:ScottRank} to Lemmas \ref{lm:SRcomp1} and \ref{lm:SRcomp2} to produce automatic structures with Scott ranks above every computable ordinal and at $\omega_1^{CK}$.  To produce an automatic structure with Scott rank $\omega_1^{CK}+1$, we apply Lemma \ref{lm:ScottRank} to Harrison's ordering from \cite{Harr68}.  This concludes the proof of Theorem \ref{thm:ScottRanks}.
\qed
\end{proof}

\begin{corollary}[Khoussainov, Rubin, Nies, Stephan; 2004]
The isomorphism problem for automatic structures is $\Sigma_1^1$-complete.
\end{corollary}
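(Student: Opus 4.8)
The plan is to prove the two matching bounds: that the isomorphism problem lies in $\Sigma_1^1$, and that it is $\Sigma_1^1$-hard. For the upper bound, fix automatic structures $\A$ and $\B$ with regular domains $A, B \subseteq \Sigma^\star$ and regular basic relations. A witness to $\A \cong \B$ is a bijection $f : A \to B$ preserving all of the basic relations. Since the domains and each relation are regular, hence decidable, membership is computable, so the statement that $f$ is a relation-preserving bijection is arithmetical in $f$: the clauses "$f$ maps $A$ into $B$", "$f$ is injective", and "$f$ preserves each relation" are $\Pi^0_1$, and surjectivity is $\Pi^0_2$. Coding $f$ as a point of Baire space, the assertion $\exists f\,(f \text{ is an isomorphism of } \A \text{ onto } \B)$ is thus of the form $\exists f\,(\Pi^0_2)$, which is $\Sigma_1^1$; uniformizing over pairs of automata presentations keeps the definition $\Sigma_1^1$.

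For hardness, I would reduce from the isomorphism problem for computable structures, which is classically known to be $\Sigma_1^1$-complete. The reduction is the transformation $\C \mapsto \A_\C$ constructed in the proof of Theorem \ref{thm:ScottRanks}: from a reversible Turing machine $\M$ computing the relations of a computable structure $\C$, one forms the automatic configuration space of $\M$ (Example \ref{ex:conf}), attaches to each tuple its computation chain, and applies the smoothing operations. This map is computable, since from an index for $\C$ one effectively produces the finitely many automata presenting $\A_\C$. What must be checked is that it is an \emph{isomorphism reduction}, that is, $\C \cong \C'$ if and only if $\A_\C \cong \A_{\C'}$; the quantitative estimate $SR(\C) \leq SR(\A_\C) \leq 2 + SR(\C)$ of Lemma \ref{lm:ScottRank} is not itself enough for this, so the bidirectional preservation of isomorphism type is the real content.

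The hard part will be the reverse implication of this equivalence. The forward direction is routine because the construction is canonical: an isomorphism $\C \cong \C'$ lifts to a matching of computation chains and of the uniformly added junk, giving $\A_\C \cong \A_{\C'}$. The danger is a spurious isomorphism $\A_\C \cong \A_{\C'}$ when $\C \not\cong \C'$: the smoothing operations deliberately flood $\A_\C$ with infinitely many copies of every finite chain and of $\omega$, and one must ensure these extra components cannot be permuted against the encoding of $\C$ itself. The key step is therefore to verify \emph{faithfulness} --- that the part of $\A_\C$ recording $\C$, namely the bases of the computation chains together with the finite/infinite dichotomy that registers which atomic relations hold, is recoverable up to isomorphism from the abstract isomorphism type of $\A_\C$, so that every isomorphism $\A_\C \cong \A_{\C'}$ restricts to an isomorphism $\C \cong \C'$. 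Once faithfulness is established, the computable isomorphism problem many-one reduces to the automatic one, yielding $\Sigma_1^1$-hardness; together with the $\Sigma_1^1$ upper bound this shows the isomorphism problem for automatic structures is $\Sigma_1^1$-complete.
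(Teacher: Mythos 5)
Your proposal follows essentially the same route as the paper: the paper's proof likewise reduces the $\Sigma_1^1$-complete isomorphism problem for computable structures to the automatic case via the configuration-space transformation of Theorem \ref{thm:ScottRanks}, asserting that this transformation preserves isomorphism types. Your additions --- the explicit $\Sigma_1^1$ upper bound and the identification of two-way faithfulness of the reduction as the point needing verification --- are correct and in fact spell out what the paper leaves implicit.
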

\begin{proof}
In the proof of Theorem \ref{thm:ScottRanks}, the transformation of computable structures to automatic structures preserves isomorphism types.  Hence, the isomorphism problem for computable structures is reduced to that for automatic structures.  Since the isomorphism problem for computable structures is $\Sigma_1^1$-complete, the isomorphism problem for automatic structures is $\Sigma_1^1$-complete as well. \qed
\end{proof}

\subsection{More high bounds for word automatic structures}\label{sec:High}
The previous subsection contained an example where the behaviour of automatic structures matched that exhibited by the class of computable structures.  The techniques of Theorem \ref{thm:ScottRanks} can be used to obtain similar results in a couple of other contexts.  We first revisit the idea of automatic trees, introduced in Subsection \ref{sec:trees}.  In that subsection, we saw that all automatic partial order trees have finite Cantor-Bendixson rank (Theorem \ref{thm:CBpotree}).  Consider now a different viewpoint of trees.

\begin{definition}
A {\bf successor tree} is $\A = (A; S)$ where, if $\leq_{S}$ is the transitive closure of $S$, $(A; \leq_S)$ is a partial order tree.
\end{definition}

Observe that the successor tree associated with a given partial order tree is first-order definable in the partial order tree.  Hence, Corollary \ref{cr:Definability} implies that any automatic partial order tree is an automatic successor tree.  However, the inclusion is strict: the following theorem from \cite{KM} shows that the class of automatic successor trees is far richer than the class of automatic partial order trees. 

\begin{theorem}[Khoussainov, Minnes; 2007]\label{thm:CBsucc}
For each computable ordinal $\alpha < \omega_1^{CK}$ there is an automatic successor tree of Cantor-Bendixson rank $\alpha$.
\end{theorem}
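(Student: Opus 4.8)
The plan is to adapt the configuration-space machinery used in the proof of Theorem \ref{thm:ScottRanks}, transferring a computable tree of the desired complexity into an automatic successor tree while preserving its Cantor-Bendixson rank. Whereas for partial order trees the ancestor relation $\leq$ is directly available to the automaton (forcing finite CB rank by Theorem \ref{thm:CBpotree}), for a successor tree only the one-step relation $S$ must be regular; its transitive closure need not be, and this extra freedom is exactly what lets us realize arbitrarily high ranks below $\omega_1^{CK}$.

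First I would invoke a result from computable model theory: for each computable ordinal $\alpha < \omega_1^{CK}$ there is a computable successor tree $\T_\alpha$ (a tree whose domain and successor relation are both computable) with $CB(\T_\alpha) = \alpha$. Such trees can be built by effective transfinite recursion along a computable copy of $\alpha$; note that for limit $\alpha$ the tree must carry uncountably many infinite paths, consistently with Lemma \ref{lm:CBsucc}.

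Next I would encode $\T_\alpha$ into an automatic structure. Let $M$ be a deterministic Turing machine deciding the successor relation of $\T_\alpha$, which by Bennett's Lemma may be taken reversible. Following Example \ref{ex:conf}, the configuration space $Conf(M)$ is automatic, and I would use it to build a successor tree $\A$ in which every edge $v \to c$ of $\T_\alpha$ is replaced by the finite computation chain witnessing that $c$ is a child of $v$, with a fresh branch launched for each child. Because $M$ halts on every input, all these chains are finite, so $\A$ is, up to isomorphism, a finite subdivision of $\T_\alpha$; the smoothing steps of Theorem \ref{thm:ScottRanks} (adjoining finite chains and single $\omega$-chains) serve only to normalize the domain, and each adjoined gadget lies on no branch point.

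The crux is a preservation lemma: CB rank is invariant under finite subdivision of edges. The point is that a subdivision node has out-degree $1$, hence it lies on two distinct infinite paths exactly when its child does, so a single application of the derivative to the subdivided tree is again the subdivision of one derivative of $\T_\alpha$; iterating through successor and limit stages shows that $d^\beta(\A)$ is the subdivision of $d^\beta(\T_\alpha)$ for every $\beta$, whence $CB(\A) = CB(\T_\alpha) = \alpha$. I expect this rank bookkeeping to be the main obstacle: one must verify that neither the inserted computation chains nor the smoothing gadgets create or destroy iterated branch points, especially at limit stages and in the uncountable-path regime forced by limit $\alpha$, since it is precisely here that the distinction between the successor presentation and the partial order presentation must be exploited without accidentally collapsing the rank.
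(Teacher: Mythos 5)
Your proposal takes essentially the same route as the paper: the paper's entire proof of this theorem is the remark that it ``utilizes the configuration spaces of Turing machines, as in the proof of Theorem~\ref{thm:ScottRanks}'', which is exactly your plan --- start from a computable successor tree of Cantor-Bendixson rank $\alpha$, pass to the configuration space of a reversible Turing machine deciding its successor relation so that each edge becomes a finite computation chain, and show the rank survives the encoding. Your subdivision-invariance lemma, together with the flagged check that the unavoidable extra material (rejecting-computation chains and chains of unreachable configurations attached as appendages) neither creates nor destroys iterated branch points, is precisely the bookkeeping the paper leaves implicit, so the proposal is correct and matches the intended argument.
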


The proof of Theorem \ref{thm:CBsucc} utilizes the configuration spaces of Turing machines, as in the proof of Theorem \ref{thm:ScottRanks}.  Another setting in which these tools are useful is that of well-founded relations (see \cite{KM}).  In Subsection \ref{sec:AutPo}, we proved that automatic well-founded partial orders have ordinal heights below $\omega^\omega$.  If we relax the requirement that the relation be a partial order, we can attain much higher ordinal heights.

\begin{theorem}[Khoussainov, Minnes; 2007]\label{thm:WFheights}
For each computable ordinal $\alpha < \omega_1^{CK}$ there is an automatic well-founded relation whose ordinal height is at least $\alpha$.
\end{theorem}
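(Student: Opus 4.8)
The plan is to transfer a computable well-founded relation of height $\alpha$ into an automatic one of height at least $\alpha$, reusing the configuration-space machinery from the proof of Theorem \ref{thm:ScottRanks}. By Lemma \ref{lm:wfComputable}, I would fix a computable well-founded relation $\C = (C; R)$ with $r(\C) = \alpha$, and let $\M$ be a Turing machine that, on input $(x,y)$, halts if and only if $(x,y) \in R$. Invoking Bennett's lemma, I would assume $\M$ is reversible, so that its configuration space is automatic (Example \ref{ex:conf}) and well-founded (a disjoint union of finite chains and $\omega$-chains, each with a first configuration).

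For the construction, the domain of $\A$ consists of a marked copy $n_c$ of each element $c \in C$ together with all configuration nodes of the computations $\M(x,y)$; since each configuration carries its input $(x,y)$ on the tape, nodes of distinct computations are distinct and the input can be read off by an automaton. The well-founded relation $\prec$ of $\A$ is the union of three automatic pieces: (i) the one-step successor relation inside each computation chain; (ii) $n_x \prec I_{x,y}$ for \emph{every} pair, where $I_{x,y}$ is the initial configuration of $\M(x,y)$; and (iii) $F \prec n_y$ whenever $F$ is a halting configuration recording input $(x,y)$. Each piece is recognizable by identifying initial or halting configurations and comparing the marked element against the input displayed in the configuration, so $\prec$ is regular. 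The effect is that when $(x,y) \in R$ the computation halts and $n_x$ is joined to $n_y$ by the finite path $n_x \prec I_{x,y} \prec \cdots \prec F \prec n_y$, whereas when $(x,y) \notin R$ the chain out of $n_x$ runs forever and never reaches $n_y$.

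Next I would verify well-foundedness. Reading any infinite $\prec$-descending chain from the top, it alternates between element nodes and backward passes through computation chains; since each computation chain has a first configuration, every backward pass is finite, so the descending chain induces an infinite sequence $\cdots\, x_1 \mathrel{R} x_0 \mathrel{R} y_0$ in $\C$, contradicting well-foundedness of $R$. For the height bound I would show $r_\A(n_c) \geq r_\C(c)$ by induction on $r_\C(c)$: the minimal case is trivial, and for the inductive step $r_\A(n_y) \geq \sup\{r_\A(F)+1 : (x,y)\in R\}$, while $r_\A(F) \geq r_\A(I_{x,y}) \geq r_\A(n_x)+1 \geq r_\C(x)+1$, so $r_\A(n_y) \geq \sup\{r_\C(x)+1 : (x,y)\in R\} = r_\C(y)$. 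Taking the supremum over $c \in C$ yields $r(\A) \geq r(\C) = \alpha$, as required. (Note that the computation chains may inflate ranks, but this only helps, since we need a lower bound rather than equality; and relaxing to a non-transitive relation is precisely what lets us escape the $\omega^\omega$ barrier of Theorem \ref{thm:wfpo}.)

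The conceptual core, the rank inequality, is short; the main obstacle is the bookkeeping guaranteeing that pieces (ii) and (iii) are genuinely regular. This requires a reversible simulation in which the halting configuration still displays the original second input $y$, so that the edge $F \prec n_y$ is detectable by a finite automaton, which is exactly the input-preserving feature of Bennett's construction, together with an encoding ensuring that initial and halting configurations, and the projections to their recorded inputs, are automaton-recognizable.
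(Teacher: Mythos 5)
Your proposal is correct and takes essentially the same approach as the paper's (that of \cite{KM}, which the paper invokes by reference to the configuration-space technique of Theorem \ref{thm:ScottRanks}): reduce from a computable well-founded relation of height $\alpha$ via the configuration space of a reversible Turing machine, with elements placed below initial configurations and halting configurations placed below the corresponding second coordinates, so that $R$-edges become finite ascending paths while non-edges trail off into infinite chains. The one detail to repair is that the domain must be the regular set of \emph{all} configurations rather than only those occurring in actual computations (reachability of a configuration is not recognizable by an automaton); this costs nothing, since by reversibility the resulting ``unproductive'' chains are still well-founded and the extra elements can only increase ranks, which your lower-bound argument tolerates.
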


Theorem \ref{thm:WFheights} answers a question posed by Moshe Vardi in the context of program termination.  Given a program $P$, we say that the program is terminating if every computation of $P$ from an initial state is finite. If there is a computation from a state $x$ to $y$ then we say that $y$ is reachable from $x$. Thus, the program is terminating if the collection of all states reachable from the initial state is a well-founded set.  The connection between well-foundedness and program termination is explored further in \cite{BG06}.

\subsection{Borel structures}

Most of Lectures 2 and 3 so far dealt exclusively with word automatic structures.  To conclude this lecture, we turn again to automata on infinite strings (B\"uchi automata) and infinite trees (Rabin automata).  Recall from Subsection \ref{sec:TreeAut} that word automatic structures are a strict subset of tree automatic structures.  We would like a similar separation between B\"uchi automatic structures and Rabin automatic structures.  To arrive at such a separation, we recall a complexity hierarchy of sets from descriptive set theory (a good reference is \cite{Kech}).

\begin{definition}
A set is called {\bf Borel} in a given topology if it is a member of the smallest class of sets which contains all open sets and closed sets and is closed under countable unions and countable intersections.
\end{definition}

In the context of automatic structures, we have an underlying topology which depends on the objects being processed by the automata.  If the input objects are infinite binary strings (as in the case of B\"uchi automata), the basic open sets of the topology are defined as
\[
[\sigma] = \{ \alpha \in \{0,1\}^\omega : \sigma \prec \alpha \}
\]
for each finite string $\sigma$.  Based on this topology, we have the following definition.

\begin{definition}
A structure is {\bf Borel} if its domain and basic relations are Borel sets.
\end{definition}

\begin{fact}\label{ft:BucBorel}
Every B\"uchi automatic structure is Borel.
\end{fact}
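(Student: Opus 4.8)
The plan is to reduce the statement to a single clean fact from descriptive set theory: a set that is simultaneously analytic and co-analytic is Borel (Suslin's theorem; see \cite{Kech}). First I would dispense with relations. A $k$-ary B\"uchi recognizable relation $R \subseteq (\Sigma^\omega)^k$ is Borel precisely when its convolution $c(R) \subseteq (\Sigma^k)^\omega$ is Borel, since the convolution map $c : (\Sigma^\omega)^k \to (\Sigma^k)^\omega$ is a homeomorphism for the product topology, and (by definition) $R$ is B\"uchi recognizable exactly when $c(R)$ is. Thus it suffices to show that an arbitrary B\"uchi recognizable language $L \subseteq \Gamma^\omega$ over a finite alphabet $\Gamma$ is Borel; applying this to the domain and to each basic relation of a B\"uchi automatic structure then yields the Fact.

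Next I would show that every B\"uchi recognizable language is analytic ($\Sigma^1_1$). Fix an automaton $\M = (S, \iota, \Delta, F)$ recognizing $L$ and consider the set of accepting run pairs
\[
A = \{ (\alpha, \rho) \in \Gamma^\omega \times S^\omega : \rho_0 = \iota,\ \forall i\, (\rho_i, \alpha_i, \rho_{i+1}) \in \Delta,\ \mathrm{Inf}(\rho) \cap F \neq \emptyset \}.
\]
The two consistency requirements are closed conditions, while $\mathrm{Inf}(\rho)\cap F \neq \emptyset$ is the finite union over $s \in F$ of the $\Pi^0_2$ conditions \lqt $s$ occurs in $\rho$ infinitely often\rqt; hence $A$ is Borel in the Polish space $\Gamma^\omega \times S^\omega$. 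Since $L$ is exactly the projection of $A$ onto the first coordinate (an $\alpha$ is accepted iff it has an accepting run), $L$ is the continuous image of a Borel set and is therefore analytic.

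Finally I would invoke closure under complementation. By Theorem \ref{thm:BucAlgs}, the complement $\Gamma^\omega \setminus L$ is again B\"uchi recognizable, so the previous paragraph shows that it too is analytic; equivalently $L$ is co-analytic ($\Pi^1_1$). Being both $\Sigma^1_1$ and $\Pi^1_1$, Suslin's theorem forces $L$ to be $\Delta^1_1$, that is, Borel, which completes the argument. The one genuinely deep ingredient — and the step I expect to be the real obstacle if one wanted a fully self-contained treatment — is the closure of B\"uchi recognizable languages under complementation: without it we obtain only that $L$ is analytic, which in general need not be Borel. Everything else is soft, being a homeomorphism, a routine complexity count of the acceptance condition, and an appeal to Suslin's theorem.
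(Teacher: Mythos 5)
Your argument is correct, but there is nothing in the paper to compare it against: the authors state this as a Fact with no proof at all, so any proof must be supplied from outside. Your route is a legitimate one: the set of pairs (input, accepting run) is Borel in $\Gamma^\omega \times S^\omega$ (closed transition/initial-state conditions intersected with a finite union of $\Pi^0_2$ conditions), so every B\"uchi recognizable language is analytic; closure under complementation (Theorem \ref{thm:BucAlgs}) makes its complement analytic as well; Suslin's theorem then yields Borelness, and the convolution homeomorphism transfers the statement from languages to relations and hence to structures. You also correctly flag complementation as the one deep ingredient. For comparison, the argument most treatments use goes through McNaughton's determinization theorem instead: every B\"uchi recognizable language is accepted by a \emph{deterministic} Muller automaton, and for a deterministic automaton the run is a continuous (indeed Lipschitz) function of the input, so the language is a finite Boolean combination of sets of the form \lqt state $s$ is visited infinitely often\rqt, each of which is $\Pi^0_2$. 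That version pins the complexity far lower in the hierarchy --- Boolean combinations of $\Pi^0_2$ sets rather than merely $\Delta^1_1$ --- and dispenses with Suslin's theorem, at the cost of invoking determinization, a result of depth comparable to complementation. So the two proofs trade one hard automata-theoretic theorem for another; yours buys a softer descriptive-set-theoretic finish, the McNaughton route buys a sharper conclusion.
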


We can use Fact \ref{ft:BucBorel} to prove that not all Rabin automatic structures are recognizable by B\"uchi automata.  \cite{HKMN08} contains the following theorem about Rabin automatic structures.

\begin{theorem}[Hjorth, Khoussainov, Montalb\'an, Nies; 2008]
There is a Rabin automatic structure that is not Borel and, therefore, is not B\"uchi automatic.
\end{theorem}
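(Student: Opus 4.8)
The plan is to use Fact~\ref{ft:BucBorel} as the only bridge to Büchi automaticity. Since every Büchi automatic structure is Borel, any structure possessing a Büchi presentation is Borel presentable, so contrapositively a structure with \emph{no} Borel presentation cannot be Büchi automatic. Thus the entire theorem reduces to producing one Rabin automatic structure $\A$ that is not isomorphic to any Borel structure. I would stress at the outset that the content is non-Borel \emph{presentability}, an isomorphism invariant, and not merely the non-Borelness of one chosen tree presentation: relabelling the domain can wash out the topological complexity of any single copy, so the argument must show the complexity survives every presentation.

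The complexity itself will come from the one expressive power Rabin automata have over Büchi automata, namely quantification over infinite branches. Viewing a Rabin automaton as an MSO formula on the binary tree $\T$ (Subsection~\ref{sec:TreeAut}), a clause of the form ``\,there is a branch $\eta$ along which $\varphi$ holds\,'' is a projection of a Borel condition over the Polish space of branches, and such projections are $\Sigma_1^1$. It is classical that some regular tree languages arising this way are $\Sigma_1^1$-complete, hence non-Borel. First I would fix such a non-Borel Rabin recognizable set $U \subseteq Tree(\Sigma)$; by the closure and definability results for Rabin automata, any structure whose domain and basic relations are obtained from $U$ and the tree operations by MSO definitions is Rabin automatic by construction.

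The central step is to build $\A$ around $U$ so that the non-Borel complexity becomes rigid. Here the plan is to make $\A$ rigid (no nontrivial automorphisms) and to equip it with enough definable structure that each domain element is the unique realization of a Borel-definable $L_{\omega_1\omega}$ type, while a single basic unary predicate $P$ is interpreted as $U$. The payoff is a transfer lemma: if $\A$ had a Borel presentation $\B$, then rigidity together with Borel-definability of the element types would force the unique isomorphism $h : \B \to \A$ to be Borel measurable. Since $P^{\B}$ is a basic relation of the Borel structure $\B$ it is Borel, and, $h$ being an injective Borel map, the image $U = P^{\A} = h(P^{\B})$ would be Borel as well, contradicting the $\Sigma_1^1$-completeness of $U$. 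This contradiction yields that $\A$ has no Borel presentation, and then Fact~\ref{ft:BucBorel} gives that $\A$ is not Büchi automatic, while $\A$ is Rabin automatic by the previous paragraph. The same circle of ideas should also furnish the counterexample behind Theorem~\ref{thm:BucEquiv}, since $\A$ can be arranged as a quotient by a Rabin recognizable equivalence relation.

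The main obstacle I expect is exactly the transfer lemma: the abstract isomorphism between two presentations need not a priori be Borel, so the whole argument hinges on forcing it to be. Meeting this requires designing the encoding so that it is simultaneously (i) Rabin recognizable, keeping $\A$ inside the class of Rabin automatic structures, and (ii) rigid with each element pinned down by a Borel-faithful invariant, so that any two Borel copies are Borel isomorphic. Balancing these two demands --- rich enough for rigidity and Borel-definable types, yet still recognizable by finite tree automata --- is the delicate heart of the proof, together with verifying that a $\Sigma_1^1$-complete relation genuinely cannot be the injective Borel image of a basic relation of any Borel copy.
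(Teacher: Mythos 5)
Your plan follows the same route as the paper: use Fact~\ref{ft:BucBorel} to reduce the theorem to exhibiting a Rabin automatic structure with no Borel presentation, start from a non-Borel Rabin recognizable tree language, and package it into a structure so rigid that any Borel copy would force the language itself to be Borel. Your ``transfer lemma'' is precisely the step the survey compresses into the single sentence ``if it had a Borel copy then $V$ would be Borel,'' and your emphasis that the content is non-Borel \emph{presentability} (an isomorphism invariant), not the non-Borelness of one chosen copy, is exactly right; the same device --- a pointwise definable skeleton forcing isomorphisms of Borel copies to be Borel, followed by the fact that injective Borel images of Borel sets are Borel --- reappears in the paper's proof of Theorem~\ref{thm:BucEquiv}.

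The genuine gap is that you never exhibit the witness: the theorem is an existence statement, and your proposal only lists desiderata (Rabin recognizability, rigidity, Borel-definable types) while conceding their joint realization as ``the delicate heart.'' The paper shows that heart to be a very short concrete construction. Take $V=\{(\T,v): \text{every path of } \T \text{ carries finitely many } 1\text{s}\}$; it is Rabin recognizable, and pulling it back under the embedding $n_1\cdots n_k\mapsto 1^{n_1}01^{n_2}0\cdots 01^{n_k}$ of $\omega^{<\omega}$ into $\T$ yields exactly the trees with no infinite path, a $\Pi^1_1$-complete set, so $V$ is not Borel. (Minor correction: this $V$ has universal-branch form, hence is $\Pi^1_1$-complete; your existential-branch projections give the dual $\Sigma^1_1$ side --- either suffices.) The structure is then $(D; S, V, Left', Right')$, where $D$ is the set of all $\{0,1\}$-labelled trees, $S$ is the set of trees with exactly one node labelled $1$, and $Left', Right'$ mimic the successor operations. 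The rigid skeleton you were trying to engineer is simply $(S; Left', Right')\cong(\{0,1\}^\star; Left, Right)$: every singleton tree is first-order definable, a labelled tree is pinned down by its relationship to these definable singletons, so an isomorphism from any Borel copy is determined coordinatewise by countably many Borel conditions and is therefore Borel, and injectivity then transfers Borelness of the copy's predicate to $V$ itself, the desired contradiction. Your plan is sound, but as a proof it stops exactly where the paper's concrete example begins.
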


\begin{proof}
Consider the tree language $$V = \{ (\T, v) : \text{each path through} ~\T~ \text{has finitely many}~1s\}.$$  It is easy to see that $V$ is Rabin recognizable.  However, it is not Borel: consider the embedding of $\omega^{< \omega}$ into $\T$ which takes $n_1 \cdots n_k$ to $1^{n_1} 0 1^{n_2} 0 \ldots 1^{n_k}$.  The pre-image of $V$ is the set of trees in $\omega^{<\omega}$ which have no infinite path.  This set is an archetypal example of a non-Borel set, and hence $V$ is not Borel.  We will now transfer this example to the setting of structures by coding $V$ into a Rabin automatic structure.  The domain of this structure is the class of $\{0,1\}$-labelled trees $(\T, v)$.  The structure has two unary predicates: $S = \{ (\T, v) : \text{there is a unique}~x~\text{for which}~v(x) = 1\}$, and $V$ as above.  In addition, the structure contains two unary functions $Left', Right'$ which mimic the operations on $\T$.  Then $(D; S, V, Left', Right')$ is Rabin automatic.  But, if it had a Borel copy then $V$ would be Borel, and so the structure is not Borel. Thus, we have a Rabin automatic structure that is not B\"uchi automatic.\qed
\end{proof}

We conclude these lectures by proving Theorem \ref{thm:BucEquiv} from Subsection \ref{subsec:AutStructs}.  To do so, we need to exhibit a B\"uchi automatic structure and a B\"uchi recognizable equivalence relation such that their quotient is not B\"uchi recognizable. By Fact \ref{ft:BucBorel}, it suffices to show that the quotient is not Borel.  We will use the following well-known fact from descriptive set theory (see \cite{Hjorth} for a survey of relevant results in this area).  Recall that for $X, Y \subseteq \N$, $X =^\star Y$ means that $X$ and $Y$ agree except at finitely many points.

\begin{fact}\label{ft:BorelStar}
There is no Borel function $F: \P(\N) \to \{0,1\}^\omega$ such that for all $X, Y \subseteq \N$, $X =^\star Y$ if and only if $F(X) = F(Y)$
\end{fact}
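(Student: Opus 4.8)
The plan is to recognize $=^\star$ as the eventual-equality relation $E_0$ on $\{0,1\}^\N$ and to show it is not \emph{smooth}, i.e. admits no Borel separating invariant. Identifying each $X \subseteq \N$ with its characteristic sequence, I regard $F$ as a Borel map $\{0,1\}^\N \to \{0,1\}^\omega$, and the hypothesis says precisely that $F$ is constant on each $=^\star$-class and takes distinct values on distinct classes. I would equip $\{0,1\}^\N$ with the fair-coin product measure $\mu$ (coordinates independent, each equal to $0$ or $1$ with probability $1/2$) and derive a contradiction from a zero-one law.

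The central step is as follows. Fix a coordinate $k \in \omega$ and consider the Borel set $A_k = \{ X : F(X)(k) = 1 \}$. Since $F$ is constant on $=^\star$-classes, membership of $X$ in $A_k$ is unchanged when $X$ is altered at finitely many positions; hence $A_k$ lies in the tail $\sigma$-algebra $\bigcap_n \sigma\big(X(n), X(n+1), \ldots\big)$. By Kolmogorov's zero-one law, $\mu(A_k) \in \{0,1\}$, so there is a bit $b_k$ with $F(X)(k) = b_k$ for $\mu$-almost every $X$. Intersecting over the countably many coordinates, the set $\{X : F(X)(k) = b_k \text{ for all } k\}$ is a countable intersection of full-measure sets and so has $\mu$-measure $1$; writing $b = (b_k)_{k \in \omega}$, this says $F(X) = b$ for $\mu$-almost every $X$. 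But each $=^\star$-class is the orbit of a point under the countable group of finite coordinate flips, hence is countable and $\mu$-null, so a full-measure set meets at least two distinct classes. Choosing $X, Y$ with $F(X) = F(Y) = b$ that are not $=^\star$-equivalent contradicts the requirement that $F(X) = F(Y)$ imply $X =^\star Y$, and completes the argument.

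The only real content lies in the zero-one step, and the one thing to get right is the observation that $F$-invariance pushes each coordinate-set $A_k$ into the tail $\sigma$-algebra; once that is seen, Kolmogorov's law does all the work and the rest is routine. An equivalent route replaces measure by Baire category: a Borel $=^\star$-invariant set has the Baire property and, by the topological zero-one law of Kuratowski--Ulam, is meager or comeager, and one argues identically using that every $=^\star$-class is meager. Either way the moral is that $E_0$ is ergodic for a natural invariant, so no Borel invariant can separate its classes.
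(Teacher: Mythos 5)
Your proof is correct, and it is worth noting that the paper itself does not prove this fact at all: it states it as a well-known result from descriptive set theory and points to Hjorth's survey \cite{Hjorth}, so your argument fills in a black box rather than diverging from an argument in the text. What you give is the standard measure-theoretic proof that the eventual-equality relation $E_0$ is not smooth, and all the steps check out. The preimages $A_k = \{X : F(X)(k) = 1\}$ are Borel since each is the $F$-preimage of a clopen set; the one point needing care, which you correctly flag, is that $=^\star$-invariance puts $A_k$ in the tail $\sigma$-algebra. This is legitimate for Borel sets: invariance under finite modifications means that for each $n$ membership depends only on coordinates $\geq n$, and the corresponding section (say over $0^n$) is Borel as a preimage of $A_k$ under a continuous map, so $A_k \in \sigma\bigl(X(n), X(n+1), \ldots\bigr)$ for every $n$. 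Kolmogorov's zero-one law then forces $F$ to be $\mu$-almost-everywhere constant, and since each $=^\star$-class is a countable orbit of the finite-flip group and hence $\mu$-null, a conull set must meet two distinct classes, contradicting that $F(X) = F(Y)$ implies $X =^\star Y$. Note that your argument only uses the two directions of the hypothesis separately and in the right places: constancy on classes for the tail-invariance step, injectivity across classes for the final contradiction. Your alternative via Baire category (Borel invariant sets have the Baire property and are meager or comeager by the topological zero-one law, while every class is meager) is an equally standard route and works verbatim. What your write-up buys over the paper's citation is self-containedness at the cost of half a page; the paper buys brevity, which is reasonable given that the fact is a cornerstone of the theory of Borel equivalence relations (it is the base case of the Harrington--Kechris--Louveau dichotomy).
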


\begin{proof}[Theorem \ref{thm:BucEquiv}]
The structure we will consider is an expansion of the disjoint union of two familiar structures.  Let $\B = ( \P(\N); \subseteq)$ and $\B^\star = (\P(\N)/ =^\star; \leq)$.  We will study the disjoint of union of $\B$ and $\B^\star$ along with a unary relation $U(x)$ satisfying $U(x) \iff x \in \P(\N)$ and a binary relation $R(x,y)$ interpreted as the canonical projection of $\P(\N)$ into $\P(\N) / =^\star$.

\smallskip

Suppose that $\B_0 = (B_0; \leq_0)$ and $\B_1=(B_1; \leq_1)$ are two disjoint presentations of $\B = (\P(\N); \subseteq)$.  Let $U^0$ pick out $B_0$ and $R^0$ be the bijection from $B_0$ to $B_1$ which acts like the identity on $\P(\N)$.  Then $\A = (B_0 \sqcup B_1; \leq_0 \sqcup \leq_1, U^0, R^0)$ is a B\"uchi  recognizable structure.  We define the equivalence relation
\[
E(x,y) \iff \begin{cases} x,y \in B_0 ~\&~ x=y \\
x,y \in B_1 ~\&~ x=^\star y \\
\end{cases}.
\]
It is not hard to see that $E(x,y)$ is B\"uchi recognizable given that $\B_0$ and $\B_1$ are. Observe that $\A / E \cong (\P(\N) \sqcup \P(\N)/ =^\star; \leq, U, R)$.  

\smallskip

Assume for a contradiction that $\A / E$ has a B\"uchi automata presentation.  By Fact \ref{ft:BucBorel}, this implies that $\A/ E$ has a Borel presentation $\C = (C; \leq^C, U^C, R^C)$.  Let $\Phi: \A/E \to \C$ be an isomorphism and $G: \B_0 \to \C$ be the restriction of $\Phi$ to $B_0$.  The following lemma tells us that $G$ is a Borel function.

\begin{lemma}
If $\S = ( S; \leq)$, $\S' = ( S'; \leq')$ are B\"uchi structures and $E, E'$ are B\"uchi equivalence relations such that $\S / E, \S'/ E' \cong (\P(\N); \subseteq)$ then if $\Psi: S/ E \to S'/ E'$ is an isomorphism, 
$graph(\Psi) = \{ \la x,y \ra \in S \times S' : \Phi( [x]_E ) = [y]_{E'} \}$
is Borel.
\end{lemma}

To prove the lemma, it suffices to show that $graph(\Psi)$ is a countable intersection of Borel sets.  Recall that $(\P(\N); \subseteq)$ can be viewed as a Boolean algebra whose atoms are singleton sets $\{n\}$ for $n \in \N$.  Let $\{ [a_n]_E : n \in \N \}$ be a list of atoms of $\S / E$ and let $b_n \in S'$ be such that $\Psi([a_n]_E ) = [b_n]_{E'}$.  
Since $\Psi$ is an isomorphism of Boolean algebras, for any $x \in S, y \in S'$
\[
\Psi([x]_E) = [y]_{E'} \iff \forall n (a_n \leq x \iff b_n \leq' y).
\]
Because $\S, \S'$ are B\"uchi structures, the relations $a_n \leq x$ and $b_n \leq' y$ are Borel and we have a Borel definition for $\Psi([x]_E) = [y]_{E'}$. The lemma is now proved and we return to the proof of Theorem \ref{thm:BucEquiv}.

\smallskip

The function $G: \B_0 \to \C$ satisfies the hypotheses of the above lemma and hence is Borel.  We define the map $F = R' \circ G$ from $\B_0 \cong \P(\N)$ to $\C$.  Note that since $F$ is the composition of Borels, it is Borel. Moreover, for any $X, Y \subseteq \N$
\begin{align*}
X =^\star Y &\iff R^0 (X) = R^0 (Y) \iff \Phi( R^0 (X)) = \Phi( R^0(Y))\\
 &\iff R' (G(X)) = R' (G(Y)) \iff F(X) = F(Y).
\end{align*}
contradicting Fact \ref{ft:BorelStar}.
\qed

\end{proof}

\end{document}